\documentclass[12pt]{article}

\usepackage[T1]{fontenc}          
\usepackage[utf8]{inputenc}

\usepackage{amsmath,amsfonts,amssymb,amsthm,mathtools}
\usepackage{mathrsfs}             
\usepackage{bm}

\usepackage[a4paper]{geometry}
\usepackage{graphicx}
\usepackage{float}
\usepackage{placeins}
\usepackage{caption}
\usepackage{marginnote}
\usepackage{comment}

\usepackage{array}
\usepackage{multirow}
\usepackage{booktabs}
\usepackage{longtable}
\usepackage{tabularx}
\usepackage{needspace}

\usepackage[shortlabels]{enumitem}

\usepackage{algorithm}
\usepackage{algpseudocode}

\usepackage{tikz}
\usetikzlibrary{positioning}
\usetikzlibrary{calc}

\usepackage[unicode]{hyperref}
\hypersetup{
	colorlinks=true,
	linkcolor=blue,
	citecolor=blue,
	urlcolor=blue
}

\numberwithin{figure}{section}
\numberwithin{equation}{section}

\makeatletter

\newtheoremstyle{myplain}
{6pt}{6pt}%
{\itshape}
{}%
{\bfseries}
{.}%
{0.5em}%
{}%

\newtheoremstyle{mydefinition}
{6pt}{6pt}%
{\normalfont}
{}%
{\bfseries}
{.}%
{0.5em}%
{}%

\newtheoremstyle{myremark}
{6pt}{6pt}%
{\normalfont}
{}%
{\bfseries}
{.}%
{0.5em}%
{}%

\renewenvironment{proof}[1][Proof]{%
	\par\pushQED{\qed}\normalfont
	\topsep6pt\relax
	\trivlist
	\item[\hskip\labelsep\bfseries #1.]%
}{%
	\popQED\endtrivlist\@endpefalse
}

\makeatother

\theoremstyle{myplain}
\newtheorem{theorem}{Theorem}[section]
\newtheorem{proposition}[theorem]{Proposition}
\newtheorem{lemma}[theorem]{Lemma}
\newtheorem{corollary}[theorem]{Corollary}

\theoremstyle{mydefinition}
\newtheorem{definition}[theorem]{Definition}

\theoremstyle{myremark}
\newtheorem{remark}[theorem]{Remark}

\theoremstyle{myplain}

\theoremstyle{mydefinition}

\theoremstyle{myremark}

\graphicspath{{GM_figures/}}
\title{A Canonical $m$-Atomic Decomposition of Bipartite Graphs via a Grid Model}
\author{Béla Jónás\footnote{ORCID: \href{https://orcid.org/0009-0009-4608-4434}{0009-0009-4608-4434}}}

\date{August 2026 \hspace{3pt} Budapest, Hungary}

\begin{document}
	
	\maketitle
	
	\begin{abstract}
		We study finite, connected, simple bipartite graphs in a grid model, in which a graph
		is drawn as a rectangular array and its structure is read off from empty subrectangles,
		called holes. In this model we attach to every brick a numerical invariant, its
		\emph{characteristic} $m$, the difference between the number of rows and the largest
		proper independent set. A brick is \emph{excessive} if $m>0$.

		Our main results concern this invariant. We determine the characteristic of a
		disconnected excessive brick from those of its components, showing that
		$m=\min_i\min\{m_i,\operatorname{imb}(W_i)\}$ while the imbalance is additive; and we
		prove that an $m$-excessive brick is $m$-extendable, that is, every matching of size $m$
		extends to a maximum matching. Since Plummer's notion of $n$-extendability is defined
		only for graphs carrying a perfect matching, and our proof nowhere uses balance, the
		characteristic extends that notion canonically to unbalanced bipartite graphs. Using the
		characteristic we partition bipartite graphs into eleven structural classes.

		The underlying decomposition into atomic blocks is the classical decomposition into
		elementary components, and the description of the maximum proper independent sets by
		ideals of the block poset is likewise classical; Section~\ref{sec:related} states
		precisely which results are classical and are not claimed here. What the grid model adds
		is a single geometric framework in which holes, characteristics and the block triangular
		form are read off from one picture.
	\end{abstract}
	
	\textbf{MSC (2020)}: 05C70 (primary); 05C38, 15A21 (secondary)
	
	\textbf{Keywords:} bipartite graph, Dulmage--Mendelsohn decomposition, 
	\par Hall deficiency, matchings, Block Triangular Form (BTF), atomic
	\par decomposition, $m$-excessive property
	
	\newpage
	\tableofcontents
	\newpage
	
\section{Introduction}
	Matching theory has long been a cornerstone of combinatorial optimization and structural graph theory. Among its most fundamental results is the Dulmage--Mendelsohn (DM) decomposition, which provides a canonical structural description of bipartite graphs. In the matrix- and constraint-system literature, the three DM regions are known as the underconstrained ($D$), overconstrained ($E$), and well-constrained ($C$) parts. While this partitioning, and its associated block triangular form (BTF), has proven invaluable in fields ranging from matrix theory to the analysis of overdetermined systems \cite{PothenFan1990, ParkerEtAl2023}, the structural properties of the connected components of $D$ and $E$ --- in particular their excessiveness and their geometric characterization --- are the focus of the present work.

	In this paper, we introduce the concept of \emph{$m$-atomic bricks} (Definition~\ref{def:m-atomic}): connected $m$-excessive induced subgraphs, generalizing the classical notion of elementary graphs \cite{LovaszPlummer1986} to both balanced and unbalanced bipartite graphs.

	We prove that every finite, connected, simple bipartite graph admits a unique canonical decomposition into $m$-atomic bricks. First, an \emph{excessive decomposition} (Definition~\ref{def:exc-decomp-prelim}) partitions $G_0$ into a BTF with blocks $D$, $C_1, \ldots, C_k$, $E$, each excessive; the square blocks $C_i$ are balanced, hence connected and atomic (Lemma~\ref{lem:balanced-excessive-atomic}), while $D$ and $E$ need not be connected. Second, the \emph{atomic decomposition} refines $D$ and $E$ into the connected components of $G_0[D]$ and $G_0[E]$, each of which is unbalanced, excessive, and hence atomic. The value of $m$ may differ between blocks.

	This is made transparent by a geometric, grid-based framework in which the edges of $G_0$ fall into two structurally distinct classes: edges within atomic blocks (each belonging to some maximum matching, by Proposition~\ref{extendable} and Proposition~\ref{prop:atomic-iff-connected}), and forbidden edges between atomic blocks from different regions (belonging to no maximum matching, by Corollary~\ref{cor:forbidden-dot}); see Figure~\ref{fig:gridmodel}.

	The atomic blocks themselves are not new: they are the elementary components of classical matching theory, and the factorization of maximum matchings over them, as well as the description of the maximum stable sets by ideals of a canonical order, belong to the classical Dulmage--Mendelsohn theory. Section~\ref{sec:related} says precisely which statements below are classical and are not claimed here.

	What the classical theory does not supply is a numerical invariant attached to the blocks. The contribution of this paper is the \emph{characteristic} $m$ (Definition~\ref{def:characteristic}) and its arithmetic: Proposition~\ref{prop:excess-components} computes the characteristic of a disconnected excessive brick from those of its components, $m=\min_i\min\{m_i,\mathrm{imb}(W_i)\}$, and Proposition~\ref{extendable} shows that $m$-excessiveness implies $m$-extendability with the same $m$, which extends Plummer's notion of $n$-extendability \cite{Plummer1980} from balanced to unbalanced bipartite graphs. From the characteristic we further obtain a classification of bipartite graphs into eleven structural classes, and a geometric reading of the whole structure in the grid model. The main results are stated precisely in Section~\ref{sec:main-results}.

	\section{Related Work}\label{sec:related}
		The Dulmage--Mendelsohn (DM) decomposition \cite{Dulmage1958, Dulmage1959, Dulmage1963, LovaszPlummer1986, Murota2000} partitions bipartite graphs into $D$, $E$, and $C$ regions. Pothen and Fan \cite{PothenFan1990} provide a comprehensive algorithmic treatment of the block triangular form, distinguishing a \emph{coarse decomposition} (identifying the $D$, $C$, $E$ regions) from a \emph{fine decomposition}. Their fine decomposition of the $C$-region uses Tarjan's SCC algorithm \cite{Tarjan1972}; for the $D$- and $E$-blocks they explicitly identify the connected components and give algorithms for computing them.

		\medskip
		\noindent\textbf{What is classical.} We state at the outset which of the structures below are already part of the classical theory, so that our own contribution can be located precisely against them. An edge of a bipartite graph is \emph{allowed} if it lies in some maximum matching, and the connected components of the subgraph formed by the allowed edges are the \emph{elementary components}; see \cite{LovaszPlummer1986, Murota2000} and, for a compact modern account, \cite[Sections~1.2 and~2]{Kita2016}. Two facts about them are classical:
		\begin{enumerate}[label=(C\arabic*),leftmargin=*]
			\item\label{classical:factorization} A set of edges is a maximum matching if and only if it is the disjoint union of maximum matchings of the elementary components.
			\item\label{classical:ideals} The elementary components carry a canonical partial order, and the maximum stable sets --- equivalently, by K\H{o}nig duality, the minimum vertex covers --- are exactly the sets obtained from complementary pairs of \emph{normalized} ideals of that order \cite[Theorem~2.9]{Kita2016}. That the resulting family is a distributive lattice then follows from Birkhoff's representation theorem \cite{Birkhoff1937}; see also \cite{Minasyan2013}.
		\end{enumerate}
		The atomic blocks of the present paper coincide with the elementary components. Indeed, every edge inside an atomic block is allowed (Proposition~\ref{extendable}), every edge between blocks of different regions is forbidden (Corollary~\ref{cor:forbidden-dot}), and distinct components of the same region are joined by no edge at all; hence the allowed-edge subgraph is the disjoint union of the atomic blocks, and, each block being connected, its components are precisely those blocks. Consequently property~(4) of Theorem~\ref{thm:main_decomp} is a restatement of~\ref{classical:factorization}, and Corollary~\ref{cor:MPIS} is~\ref{classical:ideals}, with the $C$-block poset in place of the poset of all elementary components (the $D$- and $E$-blocks are forced, being minimal resp.\ maximal in that order). We do not claim these as new.

		\medskip
		\noindent\textbf{What is new here.} The classical theory identifies the elementary components and the order between them, but attaches no numerical invariant to them. Our contribution is to supply one, and to develop its consequences:
		\begin{itemize}[leftmargin=*]
			\item \textbf{The characteristic $m$.} Every brick, and in particular every atomic block, carries a characteristic $m = b - \alpha_p$ (Definition~\ref{def:characteristic}). Excessiveness ($m>0$) of the blocks is, given the classical fact that $D$ and $E$ have positive surplus \cite[Theorem~3.2.4]{LovaszPlummer1986}, a short consequence, since the surplus function is computed componentwise (Lemma~\ref{lem:excessive-components}). What does \emph{not} follow from the classical theory is the arithmetic of the characteristic: Proposition~\ref{prop:excess-components} determines the characteristic of a disconnected excessive brick from those of its components, $m = \min_i \min\{m_i, \mathrm{imb}(W_i)\}$, together with the additivity of the imbalance.
			\item \textbf{Extendability.} $m$-excessiveness implies $m$-extendability with the same $m$ (Proposition~\ref{extendable}). Plummer's notion of $n$-extendability \cite{Plummer1980} is defined only for graphs carrying a perfect matching, hence in the bipartite case only for balanced graphs; the proof given here nowhere uses balance, so the characteristic extends the notion canonically to unbalanced bipartite graphs, with the perfect matching replaced by a maximum matching of size $\mathrm{small}(T_0)$ (Remark~\ref{rem:plummer}).
			\item \textbf{Classification.} The eleven structural classes (Theorem~\ref{thm:main_classification}), together with the impossibility arguments for the excluded combinations.
			\item \textbf{Geometry.} The grid model itself, in which holes, characteristics and the block triangular form are read off from a single picture, and in which the intersection of all maximum holes is identified (Theorem~\ref{thm:ExD-intersection}).
		\end{itemize}

		Recent extensions of the DM decomposition to general graphs \cite{Kita2012, Kita2024} address the internal structure of factor-components rather than the bipartite $D$- and $E$-blocks that are the focus here. The classical elementary graph theory \cite{LovaszPlummer1986} is restricted to the balanced case; the characteristic extends it to all bipartite graphs via a single geometric invariant.

		Table~\ref{tab:literature-comparison} summarizes the relationship between the present work and the closest prior references.

		\begin{table}[ht]
		\centering
		\footnotesize
		\setlength{\tabcolsep}{4pt}
		\begin{tabular}{p{5.6cm}cccc}
		\toprule
		\textbf{Result} & \textbf{L--P} & \textbf{P--F} & \textbf{Classical} & \textbf{Here} \\
		& \textbf{(1986)} & \textbf{(1990)} & \textbf{DM} & \\
		\midrule
		$D$ and $E$ blocks excessive & $\checkmark$ (Thm.~3.2.4) & --- & --- & --- \\
		Atomic blocks $=$ elementary components & --- & --- & $\checkmark$ & --- \\
		Max.\ matchings factor over the blocks & --- & --- & $\checkmark$ \ref{classical:factorization} & --- \\
		MPIS $\leftrightarrow$ ideals of the block poset & --- & --- & $\checkmark$ \ref{classical:ideals} & --- \\
		\midrule
		Characteristic $m$ of a block & --- & --- & --- & $\checkmark$ Def.~\ref{def:characteristic} \\
		$m=\min_i\min\{m_i,\mathrm{imb}(W_i)\}$; $\mathrm{imb}$ additive & --- & --- & --- & $\checkmark$ Prop.~\ref{prop:excess-components} \\
		$m$-excessive $\Rightarrow$ $m$-extendable (unbalanced) & --- & --- & --- & $\checkmark$ Prop.~\ref{extendable} \\
		11-class structural classification & --- & --- & --- & $\checkmark$ Thm.~\ref{thm:main_classification} \\
		$\bigcap\mathrm{MH} = \mathrm{row}(E)\times\mathrm{col}(D)$ & --- & --- & --- & $\checkmark$ Thm.~\ref{thm:ExD-intersection} \\
		Unified geometric framework & --- & --- & --- & $\checkmark$ \\
		\bottomrule
		\end{tabular}
		\caption{Scope of the present work. The upper block lists results that are classical and are \emph{not} claimed here; the lower block lists the contributions of this paper. L--P = Lov\'asz--Plummer; P--F = Pothen--Fan; ``Classical DM'' refers to \cite{Dulmage1958, Dulmage1959, Dulmage1963, Murota2000} as presented in \cite[Sections~1.2, 2]{Kita2016}.}
		\label{tab:literature-comparison}
		\end{table}
		
	\section{Preliminaries}\label{sec:prelim}
	We follow the terminology of Lovász and Plummer~\cite{LovaszPlummer1986}.
	In addition:
	
	For $k \in \mathbb{N}$, let $[k] := \{1,2,\dots,k\}$. 
	Unless stated otherwise, $G_0 = (A,B,E)$ denotes a finite, simple bipartite graph with no isolated vertices, $a = |A|$, $b = |B|$ and $a\le b$, and $G_0$ is \emph{not} the complete bipartite graph $K_{a,b}$. The extremal case $G_0 = K_{a,b}$ is excessive with characteristic $m = b$, is DM-irreducible, and requires no decomposition; it is excluded from the general theory unless stated otherwise.
	Subgraphs of $G_0$ are not assumed to satisfy these conditions.

	\textbf{Notation convention.} The fixed graph $G_0$ and its grid $T_0$ serve as the \emph{ambient space}: all subgraphs and sub-bricks under study are subgraphs of $G_0$ (resp.\ sub-bricks of $T_0$), unless explicitly stated otherwise. In particular, when we refer to a brick, hole, or strip without further qualification, these are understood as sub-bricks of $T_0$.

	For a brick $T = Y \times X$ (where $Y \subseteq B$, $X \subseteq A$), we write $\mathrm{row}(T) := Y$ and $\mathrm{col}(T) := X$; the full definition appears in Section~\ref{sec:definitions}.
	Every brick $T$ (as a subgrid of $T_0$) is identified with its corresponding induced bipartite subgraph $G_0[\mathrm{col}(T), \mathrm{row}(T)]$. We therefore use $T$ uniformly for both the geometric and graph-theoretic object, and employ standard graph-theoretic notation: $V(T)$, $E(T)$, $\alpha(T)$, etc. Definitions and properties stated for a brick $T$ apply equally to the corresponding induced subgraph, and vice versa.

	A sub-brick $T \subseteq T_0$ inherits from $G_0$ the properties of being finite, simple, and bipartite. It is not assumed to be connected or vertically oriented, and unlike $T_0$ it may contain isolated rows or columns; such assumptions are stated explicitly when required. Results proved for $T_0$ apply equally to any induced sub-brick $T$, regarded as a $T_0$ in its own right, with $a$, $b$, $\delta$ replaced by the corresponding values of $T$.
	
	For $X \subseteq A \cup B$, let $N(X)$ denote its neighborhood, and set $\mathrm{imb}(T) = ||\mathrm{col}(T)| - |\mathrm{row}(T)||$.
	
	\begin{definition}[Proper independent set]
		Let $T$ be a brick. An independent set $F \subseteq \mathrm{col}(T) \cup \mathrm{row}(T)$ is \emph{proper} if
		\[
		F \cap \mathrm{col}(T) \neq \emptyset, \quad F \cap \mathrm{row}(T) \neq \emptyset, \quad \mathrm{col}(T) \not\subseteq F, \quad \mathrm{row}(T) \not\subseteq F.
		\]
		A \emph{maximum proper independent set} is a proper independent set of maximum cardinality, and
		\[
		\alpha_p(T) = \max\{\, |F| : F \text{ is a proper independent set of } T \,\}.
		\]
	\end{definition}

	\begin{definition}[Characteristic, excess, excessive]\label{def:characteristic}
		Let $T_0$ be a brick and set $m = b - \alpha_p(T_0)$, where $b = |\mathrm{row}(T_0)|$. The value $m$ is called the \emph{characteristic} of $T_0$. Then $T_0$ is:
		\begin{itemize}
			\item \emph{$m$-excessive}, with \emph{excess} $m$, if $m > 0$; we also say $T_0$ is \emph{excessive} if it is $m$-excessive for some $m > 0$;
			\item \emph{liminal} if $m = 0$;
			\item \emph{deficient}, with \emph{deficit} $\delta = -m$, if $m < 0$.
		\end{itemize}
	\end{definition}

	This trichotomy defines the characteristic of $T_0$; its structural consequences are developed in Section~\ref{sec:grid}. The term \emph{liminal} (from Latin \emph{limen}, threshold) indicates that $m = 0$ is the boundary between excess and deficiency; the condition is equivalent to the Hall condition ($\alpha_p(T_0) = b$). We adopt it as a concise term, no standard one existing for this case.

	The three cases describe how the short side of $T_0$ relates to its matching capacity, via the hole condition (Corollary~\ref{cor:matching-from-hole}). A \emph{deficient} brick fails the Hall condition: a dot-free rectangle of $\mathrm{vcount}$ exceeding the long side $b$ witnesses a set of columns with fewer neighbors than columns, so no matching saturates the short side. A \emph{liminal} brick is exactly Hall-tight: a matching saturates the short side, but the largest hole already reaches $\mathrm{vcount} = b$, leaving no slack. An \emph{excessive} brick has strict surplus --- every nonempty proper set of columns has strictly more neighbors than columns (the Strong Marriage Condition, in the balanced case) --- so its largest hole has $\mathrm{vcount} < b$. Complete bipartite bricks are the most excessive, whereas any set of columns with fewer neighbors than columns makes a brick deficient.

	A word on why the characteristic $m$, rather than the Hall deficiency, is taken as the organizing invariant. The two are closely related --- for a deficient brick, $m = -\delta$ where $\delta$ is the maximum Hall deficiency --- but they play different structural roles. The Hall deficiency $\delta$ is a single global quantity: it records the worst violation of the marriage condition over all of $A$, and is naturally attached to the graph as a whole. The characteristic $m$, by contrast, is defined for \emph{every} sub-brick and is preserved under the decomposition: each atomic block carries its own characteristic, and these may differ from block to block. It is this \emph{locality and additivity across blocks} that makes $m$ the appropriate invariant for a structural theory --- it survives restriction to sub-bricks and refinement of the decomposition, whereas the global Hall deficiency does not localize in this way. In the grid model, $m$ has a direct geometric meaning as the distance between the characteristic segment and the Hall line (Section~\ref{sec:grid}), which is what allows the excessive and atomic decompositions to be read off geometrically.

	\begin{definition}[$m$-atomic brick]\label{def:m-atomic}
		Let $T$ be a brick.
		We say that $T$ is \emph{$m$-atomic} if it is connected and $m$-excessive.
		We say that $T$ is \emph{atomic} if it is $m$-atomic for some $m > 0$.
	\end{definition}

	\begin{remark}\label{rem:atomic-vs-elementary}
		In the balanced case ($|\mathrm{col}(T)| = |\mathrm{row}(T)|$), atomicity coincides with the classical notion of an elementary graph in the sense of Lovász and Plummer~\cite{LovaszPlummer1986}: a balanced graph is atomic if and only if every edge belongs to some perfect matching. For unbalanced graphs ($|\mathrm{col}(T)| < |\mathrm{row}(T)|$), this provides a canonical generalization: the condition of a perfect matching is replaced by a maximum matching of size $|\mathrm{col}(T)|$, and the classical expanding condition is replaced by the (weaker) $m$-excessive condition, defined via maximum holes.

		Note that the value of $m$ may differ between atomic blocks in the atomic decomposition; what is required is only that each block is $m$-atomic for some $m > 0$.

		A remark on terminology is in order, since several nearby terms are reserved in the classical literature. In the Kotzig--Lov\'asz--Plummer decomposition of general graphs \cite{LovaszPlummer1986}, a \emph{brick} is a $3$-connected bicritical graph (necessarily non-bipartite), and its bipartite analogue is a \emph{brace}: a connected bipartite graph in which every matching of size at most two extends to a perfect matching. Both notions presuppose a \emph{balanced} graph carrying a perfect matching. Our $m$-atomic units are, by contrast, typically \emph{unbalanced} ($m$-excessive with $|\mathrm{col}(T)| < |\mathrm{row}(T)|$); in the balanced case they reduce exactly to braces (elementary bipartite graphs), and for $m > 0$ they are their canonical unbalanced generalization. We therefore do not adopt the term \emph{brace}, which is balanced by definition and would not cover the unbalanced units. Throughout, \emph{brick} denotes the purely geometric object of the grid model --- a rectangular sub-grid (Definition~\ref{Brick}) --- and never the bicritical graph of \cite{LovaszPlummer1986}; the atomic graph-theoretic units are the \emph{$m$-atomic bricks}, equivalently the \emph{$m$-excessive components}, of Definition~\ref{def:m-atomic}.
	\end{remark}

	The following structural notions are used in the statement of the main results; their detailed construction is given in Sections~\ref{sec:excessive-decomp} and~\ref{sec:atomic}. The grid-model formulations are deferred to those sections.

	\begin{definition}[Block Triangular Form]\label{def:BTF}
		An ordered partition of a brick $T$ into sub-bricks $B_1, B_2, \ldots, B_r$ is called a \emph{block triangular form} (BTF) if there are no edges from $V(B_i)$ to $V(B_j)$ for $i > j$; that is, all edges between blocks run ``forward'' in the ordering. Equivalently, the adjacency matrix of $T$ (with vertices ordered according to the block sequence) is block lower-triangular --- the transpose of the upper-triangular convention common in numerical linear algebra, since here all inter-block edges run forward, from earlier to later blocks. The full grid-model formulation is given in Section~\ref{sec:excessive-decomp}.
	\end{definition}

	\begin{definition}[Excessive decomposition]\label{def:exc-decomp-prelim}
		The \emph{excessive decomposition} of $G_0$ is a BTF partition into excessive induced subgraphs $D, C_1, \ldots, C_k, E$ (any of which may be absent), where $D$ is unbalanced with $|A \cap V(D)| > |B \cap V(D)|$ (horizontal: more columns than rows), each $C_i$ is balanced ($|A \cap V(C_i)| = |B \cap V(C_i)|$), and $E$ is unbalanced with $|A \cap V(E)| < |B \cap V(E)|$ (vertical: more rows than columns). Its existence, uniqueness, and full construction are established in Section~\ref{sec:excessive-decomp}.
	\end{definition}

	\begin{definition}[Atomic decomposition]\label{def:atom-decomp-prelim}
		The \emph{atomic decomposition} of $G_0$ is the refinement of the excessive decomposition obtained by splitting $D$ and $E$ into the connected components of the induced subgraphs $G_0[D]$ and $G_0[E]$. The resulting blocks $D_1,\ldots,D_p$, $C_1,\ldots,C_k$, $E_1,\ldots,E_q$ are all atomic. Its properties are established in Section~\ref{sec:atomic}.
	\end{definition}

	In a more modern interpretation \cite{Berczi2018}, a connected bipartite graph is DM-irreducible if its DM-decomposition consists of the graph itself; in our framework this coincides with $G_0$ being excessive (the blocks of the excessive decomposition are excessive by construction, and an excessive graph admits no further splitting by Lemma~\ref{lem:excessive-irreducible}). This includes both balanced excessive graphs (a single $C$-block) and unbalanced ones (a single $E$-block, no $D$-block). We follow this convention throughout; the classical definition \cite{LovaszPlummer1986} restricted DM-irreducibility to the balanced case (see p.~137).

	\begin{definition}[DM-irreducible]\label{def:dm-irred-prelim}
		A connected bipartite graph $G_0$ (or brick $T_0$) is \emph{DM-irreducible} if its excessive decomposition consists of a single block, i.e., $G_0$ itself is excessive. Equivalently, either $G_0$ is balanced with $k=1$ and no $D$- or $E$-block, or $G_0$ is unbalanced with $k=0$, no $D$-block, and $G_0$ itself serving as the single $E$-block.
	\end{definition}

	\noindent\textbf{Grid--graph dictionary.} Every grid-model primitive has a precise
	graph-theoretic meaning; each row below reads as ``grid notion $\equiv$ graph notion''.
	This correspondence is used silently throughout, so that geometric statements may
	always be read back as statements about $G_0 = (A,B,E)$.
	{\small
	\begin{longtable}{@{}p{0.40\textwidth}p{0.50\textwidth}@{}}
	\toprule
	\textbf{Grid model} & \textbf{Bipartite graph $G_0=(A,B,E)$} \\
	\midrule
	\endfirsthead
	\toprule
	\textbf{Grid model} & \textbf{Bipartite graph $G_0=(A,B,E)$} \\
	\midrule
	\endhead
	dot at cell $(b,a)$ & edge $ab \in E$ \\
	brick $Y \times X$ ($X\subseteq A$, $Y\subseteq B$) & induced subgraph $G_0[X \cup Y]$ \\
	hole $Y \times X$ (dot-free) & no edges between $X$ and $Y$ \\
	$\mathrm{vcount}$ of a hole, $|X|+|Y|$ & size of the proper independent set $X \cup Y$ \\
	maximum hole (MH) & maximum proper independent set (MPIS) \\
	remote mate of an MH & minimum proper vertex cover (MPCS) \\
	matching (no two dots see each other) & matching of $G_0$ \\
	characteristic $m = b - \alpha_p$ & $|B|$ minus the maximum proper independent set size \\
	excessive / liminal / deficient & positive / zero / negative surplus from the short side \\
	\bottomrule
	\end{longtable}}

	\vspace{0.5\baselineskip}
	\section*{Notation Summary}
	\addcontentsline{toc}{section}{Notation Summary}
	\nopagebreak
	\begin{center}
	\small
	\begin{longtable}{@{}lp{7.5cm}l@{}}
	\toprule
	\textbf{Symbol} & \textbf{Description} & \textbf{Reference} \\
	\midrule
	\endhead
	\multicolumn{3}{l}{\textit{Graphs}} \\[2pt]
	$G_0 = (A,B,E)$ & Fixed finite, simple bipartite graph & Sec.~\ref{sec:prelim} \\
	$a = |A|$, $b = |B|$ & Column and row counts of $G_0$; $a \leq b$ & Sec.~\ref{sec:prelim} \\
	$G$ & Arbitrary finite bipartite graph (brick) & Sec.~\ref{sec:prelim} \\
	$N(X)$ & Neighborhood of $X \subseteq A \cup B$ & Sec.~\ref{sec:prelim} \\
	$\delta$ & Hall deficiency of $G$; $\delta = -m$ when $m < 0$ & Def.~\ref{def:characteristic} \\[6pt]
	\multicolumn{3}{l}{\textit{Grid model}} \\[2pt]
	$T_0 = B \times A$ & Ambient grid of $G_0$ & Sec.~\ref{sec:grid} \\
	$T$ & Arbitrary brick (sub-grid of $T_0$) & Def.~\ref{Brick} \\
	$\mathrm{row}(T)$, $\mathrm{col}(T)$ & Row and column index sets of $T$ & Sec.~\ref{sec:definitions} \\
	$\mathrm{vstrip}(T)$, $\mathrm{hstrip}(T)$ & Vertical and horizontal strips of $T$ & Sec.~\ref{sec:definitions} \\
	$\mathrm{vcount}(T)$ & $|\mathrm{row}(T)| + |\mathrm{col}(T)|$; vertex count of $T$ & Sec.~\ref{sec:definitions} \\
	$\mathrm{imb}(T)$ & $\bigl||\mathrm{col}(T)| - |\mathrm{row}(T)|\bigr|$; imbalance of $T$ & Sec.~\ref{sec:prelim} \\
	$\mathrm{long}(T)$ & $\max(|\mathrm{row}(T)|, |\mathrm{col}(T)|)$; longer side & Sec.~\ref{sec:grid} \\[6pt]
	\multicolumn{3}{l}{\textit{Characteristic and excess}} \\[2pt]
	$m$ & Characteristic of $T$; $m = |\mathrm{row}(T)| - \alpha_p(T)$ & Def.~\ref{def:characteristic} \\
	$\alpha_p(T)$ & Maximum proper independent set size of $T$ & Def.~\ref{def:characteristic} \\
	MH & Maximum hole in $T$ & Sec.~\ref{sec:grid} \\
	MPIS & Maximum proper independent set & Def.~\ref{def:characteristic} \\[6pt]
	\multicolumn{3}{l}{\textit{Decomposition}} \\[2pt]
	BTF & Block triangular form & Def.~\ref{def:BTF} \\
	$D$, $C_1,\ldots,C_k$, $E$ & Blocks of the excessive decomposition & Def.~\ref{def:exc-decomp-prelim} \\
	$D_1,\ldots,D_p$ & Atomic sub-blocks of $D$ (connected components) & Def.~\ref{def:atom-decomp-prelim} \\
	$E_1,\ldots,E_q$ & Atomic sub-blocks of $E$ (connected components) & Def.~\ref{def:atom-decomp-prelim} \\
	\bottomrule
	\end{longtable}
	\end{center}

\section{Main Results}\label{sec:main-results}

	The theorems below are stated using the preliminary definitions above
	(Definitions~\ref{def:BTF}--\ref{def:dm-irred-prelim}). Proofs of the constituent lemmas
	(Lemma~\ref{lem:balanced-excessive-atomic}, Lemma~\ref{lem:excessive-components},
	Corollary~\ref{cor:forbidden-dot}, Lemma~\ref{lem:matching-union})
	are given in Sections~\ref{sec:grid}--\ref{sec:excessive-decomp}.

	Theorem~\ref{thm:main_decomp} records the canonical decomposition into atomic blocks in the form in which it will be used below. As explained in Section~\ref{sec:related}, its blocks are the classical elementary components and its property~(4) is the classical factorization of maximum matchings over them; we restate it here in grid-model language, and because the later sections refer to it constantly, rather than as a new result. The results of this paper proper are the characteristic $m$ and its arithmetic (Theorem~\ref{thm:excess_fundamental} and Proposition~\ref{prop:excess-components}), the extendability property (Proposition~\ref{extendable}), and the classification into eleven structural classes (Theorem~\ref{thm:main_classification}). Corollary~\ref{cor:MPIS}, which describes the maximum proper independent sets by ideals of the $C$-block poset, is the classical ideal characterization \cite[Theorem~2.9]{Kita2016} in grid-model form; we include it with proof because the bijection with maximum holes is what the grid model makes visible, and because Section~\ref{sec:consequences} builds on it.
	
	\begin{theorem}[Fundamental Properties of Excessiveness]\label{thm:excess_fundamental}
		The excessive property provides a unified characterization of matching-related structural features:
		\begin{enumerate}[label=(\roman*)]
			\item \textbf{Generalization of expansion:} If a bipartite graph $G = (A,B,E)$ is $k$-expanding ($|N(X)| \ge |X|+k$ for all $X \subseteq A$), then $G$ is at least $m$-excessive with $m \geq k$.
			\item \textbf{Equivalence to the Strong Marriage Condition:} A balanced bipartite graph $G$ has positive excess ($m > 0$) if and only if it satisfies the Strong Marriage Condition.
			\item \textbf{Excessiveness as extendability and indecomposability:} Excessiveness is
			\emph{quantitatively} an extendability property: if $G$ is $m$-excessive, then $G$ is
			$m$-extendable --- every matching of size $m$ extends to a maximum matching, with the
			\emph{same} $m$ (Proposition~\ref{extendable}). For a connected bipartite graph $G$,
			positive excess ($m > 0$) is moreover equivalent to $m$-extendability for some $m > 0$
			(Proposition~\ref{prop:atomic-iff-connected}), and also to indecomposability: $G$ admits no partition into two or more excessive sub-bricks (Proposition~\ref{prop:atomic-minimal}). Since \emph{atomic} is defined as connected and excessive, these give two non-definitional characterizations of atomicity: a matching-theoretic one (extendability) and a structural one (minimal indecomposability under the excessive decomposition).
		\end{enumerate}
	\end{theorem}
	
	\begin{theorem}[Main theorem: Canonical Atomic Decomposition]\label{thm:main_decomp}
		Every finite, connected, simple bipartite graph $G_0$ admits a unique canonical decomposition into induced subgraphs, called \emph{atomic blocks}, arranged in a block triangular form (BTF). This decomposition has the following properties:
		\begin{enumerate}[label=(\arabic*),leftmargin=*]
			\item Each block is atomic: it is $m$-atomic for some $m > 0$, that is, connected
			and $m$-excessive. The value of $m$ may differ between blocks.
			\item The decomposition refines the classical Dulmage--Mendelsohn (DM) structure
			in two steps. First, the excessive decomposition (Section~\ref{sec:excessive-decomp})
			partitions $G_0$ into blocks $D, C_1,\ldots,C_k, E$ arranged in BTF, where each block
			is excessive. The square blocks $C_1,\ldots,C_k$ are balanced and excessive, hence
			connected and atomic (Lemma~\ref{lem:balanced-excessive-atomic}). The blocks $D$ and
			$E$ are excessive but not necessarily connected. Second, the atomic decomposition
			(Section~\ref{sec:atomic}) refines $D$ and $E$ by decomposing the induced subgraphs
			$G_0[D]$ and $G_0[E]$ into their connected components $D_1,\ldots,D_p$ and
			$E_1,\ldots,E_q$\linebreak respectively. By Lemma~\ref{lem:excessive-components}, each such
			component is unbalanced and excessive, hence atomic.
			\item Edges between atomic blocks from different regions may exist in $G_0$, but
			are forbidden with respect to every maximum matching of $G_0$.
			\item Every maximum matching of $G_0$ is the disjoint union of maximum matchings of the individual atomic blocks. Since each block is excessive, it satisfies the hole condition, hence admits a matching of size $\mathrm{small}(T)$ by the equivalence of the Hall and hole conditions (Corollary~\ref{cor:matching-from-hole}). No edges between blocks from different regions belong to any maximum matching (Corollary~\ref{cor:forbidden-dot}).
			\item The decomposition is invariant under the choice of maximum matching and under the choice of maximum hole in the excessive decomposition. In particular, the set of atomic blocks is uniquely determined; the blocks are arranged in a BTF that is unique up to permutation of topologically incomparable blocks (Theorem~\ref{thm:canonical-unique}, Corollary~\ref{cor:atomic-unique}).
		\end{enumerate}
	\end{theorem}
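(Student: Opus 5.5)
The plan is to assemble the theorem from the constituent results named in its statement, following the two-stage construction. First I would fix a maximum matching $M$ of $G_0$ and a maximum hole of $T_0$. From these I would build the excessive decomposition $D, C_1,\ldots,C_k, E$ of Definition~\ref{def:exc-decomp-prelim}, in the classical Dulmage--Mendelsohn way:
\begin{itemize}
\item $D$ is the set of vertices reachable by $M$-alternating paths from unsaturated columns;
\item $E$ is the set of vertices reachable by $M$-alternating paths from unsaturated rows;
\item the remaining perfectly matched part splits into strongly connected components of the directed graph obtained by orienting matching edges one way and non-matching edges the other. These components, topologically sorted, give the $C_i$.
\end{itemize}
The BTF property (no backward edges) then follows from the reachability closure: an edge running backward would extend an alternating path into an earlier region. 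I would verify this in the grid picture, where the hole certifies that the corresponding off-diagonal rectangle is dot-free.

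For property (1) and the refinement in (2), I would argue as follows. Each $C_i$ is balanced and strongly connected. By the Strong Marriage Condition (Theorem~\ref{thm:excess_fundamental}(ii)) it is excessive, and hence connected and atomic by Lemma~\ref{lem:balanced-excessive-atomic}. The blocks $D$ and $E$ have positive surplus by the classical Lov\'asz--Plummer Theorem~3.2.4. Lemma~\ref{lem:excessive-components} then transfers excessiveness and unbalancedness to each connected component $D_j$, $E_l$, and these components are atomic by definition.

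Properties (3) and (4) follow from Corollary~\ref{cor:forbidden-dot}, since inter-region edges lie in no maximum matching. Hence every maximum matching restricts to a matching inside each block. A counting argument shows these restrictions are maximum:
\begin{itemize}
\item the sum over blocks of $\mathrm{small}(T)$ equals $\nu(G_0)$;
\item each block attains $\mathrm{small}(T)$ by Corollary~\ref{cor:matching-from-hole};
\item Lemma~\ref{lem:matching-union} gives the converse, namely that a union of block-maximum matchings is a maximum matching of $G_0$.
\end{itemize}

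The main obstacle is property (5), independence of the choices of $M$ and of the maximum hole. My first approach would be to characterize the blocks intrinsically:
\begin{itemize}
\item $D$ (resp.\ $E$) is the set of columns (resp.\ rows) left unsaturated by some maximum matching, together with their neighbourhoods;
\item the $C_i$ are the connected components of the allowed-edge subgraph on the rest.
\end{itemize}
Both descriptions are visibly matching-independent, and by the identification with elementary components in Section~\ref{sec:related} they agree with the construction. I would then invoke Theorem~\ref{thm:canonical-unique} and Corollary~\ref{cor:atomic-unique} for uniqueness of the block set. Uniqueness of the order up to permuting incomparable blocks follows because the BTF order is a linear extension of the reachability partial order among blocks.
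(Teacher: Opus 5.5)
Your route is correct in outline, but it differs from the paper's. The paper builds the blocks from maximum holes rather than from a matching. In the MPIS-driven recursion (Algorithm~\ref{mpis:btf-decomp}) every leaf is excessive by the stopping rule, and Theorem~\ref{thm:structural_properties} gives the shapes. Each inter-block dot is a remote-mate dot of the hole used at some split, and (4) comes from iterating the one-split Lemma~\ref{lem:matching-union} (Lemma~\ref{lem:matching-factorization}). Because the recursion involves choices, canonicity is proved separately in Theorem~\ref{thm:canonical-unique}, via Lemma~\ref{lem:hall-deficiency-occurrences} and Corollary~\ref{cor:scc-invariant}.

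You start instead from the classical alternating-path construction and derive excessiveness from classical facts: positive surplus of $D$ and $E$ from Lov\'asz--Plummer, and strong connectivity of the $C_i$. This is the alternative the paper itself mentions in Section~\ref{sec:related}. The paper's route is self-contained in the grid model. Yours gives a shorter (5): once (1), (3) and $\nu(G_0)=\sum_T\mathrm{small}(T)$ hold, Proposition~\ref{extendable} makes every intra-block edge allowed. The blocks are then exactly the components of the allowed-edge subgraph, which is intrinsic to $G_0$. The same argument applies to the output of any run of the MPIS procedure, and you should say so explicitly (or cite Theorem~\ref{thm:dm-excessive}). This matters because (2) and (5) concern the decomposition of Section~\ref{sec:excessive-decomp} and the choice of maximum hole in it, which your construction never touches. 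With that bridge, the appeal to Theorem~\ref{thm:canonical-unique} is not even needed.

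Three local steps need repair.

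(a) Theorem~\ref{thm:excess_fundamental}(ii) only equates the Strong Marriage Condition with excessiveness for balanced graphs. The implication ``strongly connected $\Rightarrow$ excessive'' is Proposition~\ref{prop103}, and it concerns the embedded digraph with matching edges contracted (Section~\ref{sec:embedding}). If you merely orient the matching edges, a matching edge on no alternating cycle splits into two singleton strongly connected components, which are not balanced blocks; a $1\times1$ $C$-block is an example.

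(b) Corollary~\ref{cor:forbidden-dot} applies only to dots lying in the remote mate of a maximum hole of $T_0$, so for (3) you must exhibit such a hole for each inter-block edge:
\begin{itemize}
\item for edges into $\mathrm{row}(D)$, take the hole with column set $\mathrm{col}(D)$;
\item for edges out of $\mathrm{col}(E)$, take the hole with column set $A\setminus\mathrm{col}(E)$;
\item for an edge from $\mathrm{col}(C_j)$ to $\mathrm{row}(C_i)$, take $V_J$, where $J$ is the smallest set containing $i$ with $N(X_J)\subseteq\mathrm{row}(D)\cup\bigcup_{l\in J}\mathrm{row}(C_l)$. This $J$ omits $j$, since otherwise the edge would close a cycle among distinct strongly connected components.
\end{itemize}

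(c) The count $\nu(G_0)=\sum_T\mathrm{small}(T)$ should be justified from your fixed $M$. It matches $\mathrm{row}(D)$ into $\mathrm{col}(D)$ and $\mathrm{col}(E)$ into $\mathrm{row}(E)$, and restricts to perfect matchings of the $C_i$. The converse half of (4) is then immediate, and Lemma~\ref{lem:matching-union} is not needed for it.
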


	This theorem is proved as Theorem~\ref{thm:atomic-decomp} in Section~\ref{sec:atomic}; its two refinement steps are constructed in Sections~\ref{sec:excessive-decomp} and~\ref{sec:atomic}, and the matching factorization of property~(4) is isolated as Lemma~\ref{lem:matching-factorization}.
	
	\begin{corollary}[Structural Symmetry of $D$ and $E$]\label{cor:DE-symmetry}
		The atomic blocks of $D$ and $E$ are the connected components of $G_0[D]$ and $G_0[E]$ respectively; in both cases they are unbalanced $m$-excessive bricks of the same structural type, differing in orientation ($D$-blocks are horizontal, $E$-blocks are vertical) and in their imbalance parameters. The decompositions are canonical and independent of the choice of maximum matching. The precise parameter relations (including $\mathrm{imb}(D) = \delta$ and $\mathrm{imb}(E) = (b-a)+\delta$) are recorded in Sections~\ref{sec:excessive-decomp} and~\ref{sec:atomic}.
	\end{corollary}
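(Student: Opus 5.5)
The corollary is essentially a repackaging of property~(2) of Theorem~\ref{thm:main_decomp}, plus a duality (transposition) argument. I will therefore derive it from Theorem~\ref{thm:main_decomp}, Lemma~\ref{lem:excessive-components} and Theorem~\ref{thm:canonical-unique}, and prove no new combinatorics.

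\textbf{Step 1: identification of the blocks.} By Definition~\ref{def:atom-decomp-prelim}, the atomic blocks inside $D$ and $E$ are, by construction, the connected components of $G_0[D]$ and $G_0[E]$. Lemma~\ref{lem:excessive-components}, applied to the excessive brick $D$ and separately to the excessive brick $E$, shows that each component is unbalanced and excessive. Each component is therefore $m_i$-atomic for its own characteristic $m_i>0$.

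\textbf{Step 2: orientation.} I need that every component of $D$ is horizontal and every component of $E$ is vertical. For $E$ I would argue by contradiction. Suppose some component $E_j$ were horizontal or balanced. Then its set of rows $Y_j$ is a nonempty subset of the long side of $E$ with $|N(Y_j)\cap \mathrm{col}(E)| = |\mathrm{col}(E_j)| \le |Y_j|$. This contradicts the strict surplus that excessiveness of $E$ gives on its long side. Equivalently, in the grid picture, the hole $Y_j$ versus $\mathrm{col}(E)\setminus\mathrm{col}(E_j)$ would have $\mathrm{vcount}\ge |\mathrm{row}(E)|$, contradicting $m(E)>0$. The case of $D$ is identical after transposing the grid. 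Transposition swaps rows and columns and maps the $D$-region of $G_0$ to the $E$-region of the transposed graph. This transposition is exactly the "same structural type, differing in orientation" symmetry.

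\textbf{Step 3: independence of choices.} Canonicity and independence from the maximum matching follow from Theorem~\ref{thm:canonical-unique}, which shows that $D$ and $E$ themselves are independent of these choices. Taking connected components of a fixed induced subgraph involves no further choice (Corollary~\ref{cor:atomic-unique}).

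\textbf{Step 4: imbalance parameters.} I would take the imbalance parameters from the construction in Section~\ref{sec:excessive-decomp}. There, $\mathrm{imb}(D)=\delta$ is the Hall deficiency realised by the maximum hole defining $D$. Since the $C_i$ are balanced, counting gives $\mathrm{imb}(E) = (b-a) + \mathrm{imb}(D) = (b-a)+\delta$. Additivity of imbalance over components (Proposition~\ref{prop:excess-components}) then distributes these totals over the $D_i$ and $E_j$.

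The main obstacle is Step~2: making precise that excessiveness of the unbalanced brick $E$ forbids a non-vertical component. The argument must use the hole formulation of $m$ rather than Hall's condition directly, since $E$ itself may be disconnected.
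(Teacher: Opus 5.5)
Your outline matches the paper's proof, which likewise just assembles Theorem~\ref{thm:structural_properties} ($\mathrm{imb}(D)=\delta$, $\mathrm{imb}(E)=(b-a)+\delta$), Lemma~\ref{lem:excessive-components}, Proposition~\ref{prop:excess-components} and Theorem~\ref{thm:canonical-unique}/Corollary~\ref{cor:atomic-unique}. Your Step~4 derivation of $\mathrm{imb}(E)$ by counting, using that the $C_i$ are balanced, is a harmless variant; it is valid in the non-excessive setting $\delta\ge 0$, which is where these relations are stated. The trouble is Step~2, the step you flag as the main obstacle: the argument you give there does not work.

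You argue on the long side of $E$. For a horizontal or balanced component $E_j$ one has $|\mathrm{col}(E_j)|\ge|\mathrm{row}(E_j)|=|Y_j|$, so your inequality $|\mathrm{col}(E_j)|\le|Y_j|$ is reversed; it holds, with equality, only in the balanced case. Moreover, excessiveness gives no strict surplus on the long side. For nonempty $Y\subsetneq\mathrm{row}(E)$ with $N(Y)\ne\mathrm{col}(E)$ it only yields $|N(Y)|>|Y|-\mathrm{imb}(E)$. Accordingly, the hole you exhibit has
\[
\mathrm{vcount}\bigl(Y_j\times(\mathrm{col}(E)\setminus\mathrm{col}(E_j))\bigr)=|\mathrm{row}(E)|-\mathrm{imb}(E)+\bigl(|\mathrm{row}(E_j)|-|\mathrm{col}(E_j)|\bigr),
\]
which is strictly less than $|\mathrm{row}(E)|$ whenever $E_j$ is horizontal or balanced, so no contradiction arises.

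The correct hole is the other complement, on the short side. Inside $E$ the columns of $E_j$ see only rows of $E_j$, so $(\mathrm{row}(E)\setminus\mathrm{row}(E_j))\times\mathrm{col}(E_j)$ is a hole. It is proper once $E$ has at least two components; otherwise $E_j=E$ and there is nothing to prove. Its $\mathrm{vcount}$ is $|\mathrm{row}(E)|+|\mathrm{col}(E_j)|-|\mathrm{row}(E_j)|\ge|\mathrm{row}(E)|$, contradicting $m(E)>0$. This is precisely item~3 of Lemma~\ref{lem:excessive-components}, which you already invoke in Step~1 and which covers both orientations. The paper simply cites it (the components ``inherit the orientation of their parent block'') rather than re-deriving it, so neither your Step~2 nor the transposition remark for $D$ is needed.
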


	The decomposition has two principal consequences. The first concerns the maximum proper independent sets; it is proved in Section~\ref{sec:consequences} (Corollary~\ref{cor:MPIS}).

	\begin{theorem}[Lattice of maximum proper independent sets]\label{thm:lattice-main}
		The family of maximum proper independent sets of $G_0$, ordered by the inclusion of their $A$-side projections, is isomorphic to the lattice of ideals of the $C$-block poset, and hence forms a distributive lattice.
	\end{theorem}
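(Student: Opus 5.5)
We will prove the statement by exhibiting an explicit bijection between maximum holes of $T_0$ and the ideals of the $C$-block poset, and then checking that the bijection respects order. Fix the canonical atomic decomposition of Theorem~\ref{thm:main_decomp}, with BTF blocks $D_1,\dots,D_p$, $C_1,\dots,C_k$, $E_1,\dots,E_q$. Order the $C$-blocks by $C_i \preceq C_j$ whenever there is a directed chain of inter-block dots leading from $C_i$ to $C_j$ in the forward direction of the BTF. Since the BTF has no backward edges, this relation is a partial order.

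\textbf{Step 1: every maximum hole is a union of whole blocks.} We first show that a maximum hole $H = Y \times X$ never cuts an atomic block. Let $B'$ be a block, and set $X' = X \cap \mathrm{col}(B')$ and $Y' = Y \cap \mathrm{row}(B')$. If the restriction were a proper partial hole inside $B'$, then excessiveness of $B'$ (Definition~\ref{def:characteristic}, $m_{B'}>0$) would give $|X'|+|Y'| < \mathrm{long}$-side size of $B'$, i.e.\ strictly less than the contribution of the appropriate full side. We can then swap $X'\cup Y'$ for that full side of $B'$ without creating a dot, because of the BTF direction and the corresponding choice of side. This produces a strictly larger hole and contradicts maximality. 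The same counting, combined with Corollary~\ref{cor:matching-from-hole} and the matching factorization (Lemma~\ref{lem:matching-factorization}), forces a fixed pattern on the $D$- and $E$-blocks: every $D$-block contributes its columns and every $E$-block contributes its rows. This pattern is consistent with Theorem~\ref{thm:ExD-intersection}. Finally, each $C$-block $C_i$ contributes either all of its columns or all of its rows. Both options give the same count $|\mathrm{col}(C_i)|=|\mathrm{row}(C_i)|$.

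\textbf{Step 2: the bijection.} Send a maximum hole $H$ to the set $I(H)$ of $C$-blocks whose rows lie in $H$. We must check two things. First, $H$ is dot-free exactly when $I(H)$ is closed in the direction dictated by the forward inter-block edges, i.e.\ when $I(H)$ is an ideal (or filter, after reversing the order convention). Second, every ideal arises in this way. For the converse, take an ideal $I$ and form the set consisting of the columns of all $D$-blocks and of all $C$-blocks outside $I$, together with the rows of all $E$-blocks and of all $C$-blocks in $I$. This set is independent because all edges run forward. Its size equals $\alpha_p$, because of the count in Step 1 and because some maximum hole exists. Properness holds automatically: since $G_0\neq K_{a,b}$, the set meets both sides and omits part of each side.

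\textbf{Step 3: order and distributivity.} The $A$-side projection of $H$ consists of the columns of the $D$-blocks together with the columns of the $C$-blocks outside $I(H)$. Inclusion of these projections therefore corresponds to reverse inclusion of ideals, and hence to inclusion of complementary filters. Either way we obtain a lattice isomorphic to the ideal lattice of the poset or of its dual, which is isomorphic to the ideal lattice of the dual poset. We will fix the order convention so that the statement holds literally. Distributivity then follows from Birkhoff's theorem, since ideals are closed under union and intersection.

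The main obstacle is Step 1, the no-cutting argument, especially for $D$- and $E$-blocks that are disconnected across several components. We will attempt it by applying the excessiveness of each atomic block separately, using Lemma~\ref{lem:excessive-components} and Proposition~\ref{prop:excess-components}.
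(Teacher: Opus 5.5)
Your architecture is the paper's. You show that every maximum hole is a union of whole block sides, read off an order ideal, prove the converse, and apply Birkhoff; the paper does this in Lemma~\ref{lem:hall-deficiency-occurrences}, Propositions~\ref{prop:ideal-hole} and~\ref{prop:ideal-bijection}, and Corollary~\ref{cor:MPIS}. Steps~2--3 are fine once you fix the orientation convention. Your dot-freeness argument for the closure of $I(H)$ is also more direct than the counting in Step~4 of Lemma~\ref{lem:hall-deficiency-occurrences}.

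\textbf{The gap is the mechanism of Step~1.} The exchange ``replace $X'\cup Y'$ by a full side of $B'$ without creating a dot, because of the BTF direction'' is not justified. Adding all of $\mathrm{col}(B')$ to $X$ can create a dot with a row of another block that lies in $Y$ and is joined to $\mathrm{col}(B')$ by a green edge. Adding all of $\mathrm{row}(B')$ to $Y$ can create a dot with a column of another block lying in $X$. Nothing local rules out both at once. That is excluded only once you know $H$ is aligned with the other blocks, which is exactly what Step~1 is meant to establish.

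The missing idea is that no exchange is needed, because the count is global and additive. Put $X'_{B'}=X\cap\mathrm{col}(B')$ and $Y'_{B'}=Y\cap\mathrm{row}(B')$. Then $\mathrm{vcount}(H)=\sum_{B'}\bigl(|X'_{B'}|+|Y'_{B'}|\bigr)$, and each $Y'_{B'}\times X'_{B'}$ is dot-free.
\begin{itemize}
\item Connectivity of the atomic block $B'$ forbids one of the two sets being full while the other is nonempty.
\item Excessiveness gives $|X'_{B'}|+|Y'_{B'}|<\mathrm{long}(B')$ when both are nonempty.
\item Hence every term is at most $\mathrm{long}(B')$, with equality only for the whole long side (either whole side of a $C$-block).
\end{itemize}
Summing, and using $\mathrm{imb}(D)=\delta$ and $\alpha_p(T_0)=b+\delta$ in the non-excessive case, you get
\[
\mathrm{vcount}(H)\le|\mathrm{col}(D)|+\sum_{l}|\mathrm{col}(C_l)|+|\mathrm{row}(E)|=b+\delta=\alpha_p(T_0).
\]
So a maximum hole forces equality in every block simultaneously. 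That gives all of Step~1, including the $D$/$E$ pattern. This is the $\mathrm{vcount}$ form of the paper's blockwise deficiency count in Steps~1--3 of Lemma~\ref{lem:hall-deficiency-occurrences}, which is carried out there with $|X|-|N(X)|$ over the coarse blocks $D$, $C_i$, $E$.

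\textbf{Second, the properness claim in Step~2 is false,} and $G_0\neq K_{a,b}$ has nothing to do with it. If there is no $D$-block ($\delta=0$) and $I$ contains every $C$-block, your set is all of $B$. If there is also no $E$-block, $I=\emptyset$ gives all of $A$. Every ideal yields a proper set precisely in the deficient case $\delta>0$. There $D$ and $E$ are both present, so $\mathrm{col}(D)$ and $\mathrm{row}(E)$ are always included and $\mathrm{row}(D)$, $\mathrm{col}(E)$ are always excluded.

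This cannot be patched, because the statement itself needs that hypothesis:
\begin{itemize}
\item Take the liminal path with edges $x_1y_1$, $x_2y_1$, $x_2y_2$. Its $C$-block poset is a two-element chain with three ideals, yet $\{x_1,y_2\}$ is its only MPIS.
\item For connected excessive $G_0$, Step~1 fails outright: $G_0$ is a single atomic block, and every maximum hole is a proper hole inside it. For example, the $6$-cycle has three MPISs but a one-element $C$-block poset.
\end{itemize}
The paper's argument works with holes of $\mathrm{vcount}$ $b+\delta$ and tacitly requires the same restriction. You should assume $\delta>0$ explicitly rather than derive properness from $G_0\neq K_{a,b}$.
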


	The second consequence is the structural classification.
	
	As a consequence of the canonical decomposition, the bipartite graphs are organized into a finite taxonomy. The following classification records, for each combination of balance and characteristic, which blocks occur; the impossibility of the excluded combinations is part of the statement (proved in Theorem~\ref{thm:main_classification}).

	\begin{corollary}[Structural Classification]\label{cor:structural-classification}
		The universe of bipartite graphs with $|A| \leq |B|$ is partitioned into exactly eleven disjoint structural classes, according to the presence of $D$, $C$, and $E$ blocks in their canonical decomposition (Table~\ref{tab:block_composition}).
	\end{corollary}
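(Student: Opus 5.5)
The plan is to read off the classification from the excessive decomposition of Theorem~\ref{thm:main_decomp}, recording for each graph $G_0$ with $|A|\le|B|$ the triple of indicators ``$D$ present'', ``$k\ge 1$'' and ``$E$ present'', and to cross this with the sign of the characteristic $m$ of $G_0$ (excessive, liminal, deficient) and with balance versus imbalance. There are at most $2^3\cdot 3\cdot 2=48$ raw combinations. The corollary then amounts to two things: exhibiting a graph in each of the eleven surviving classes of Table~\ref{tab:block_composition}, and proving that every other combination is impossible. Disjointness and exhaustiveness are automatic, because the decomposition is unique (Theorem~\ref{thm:canonical-unique}). So every graph lands in exactly one combination, and the only content is which combinations are realizable.

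For the impossibility part I would rely on a counting identity. Since the blocks partition $A\cup B$, we have $b-a=\mathrm{imb}(E)-\mathrm{imb}(D)$, where balanced $C$-blocks contribute zero. Combining this with the recorded relations $\mathrm{imb}(D)=\delta$ and $\mathrm{imb}(E)=(b-a)+\delta$ from Corollary~\ref{cor:DE-symmetry} yields the following constraints.
\begin{enumerate}[label=(\alph*),leftmargin=*]
\item $D$ is present if and only if $\delta>0$, that is, $G_0$ is deficient.
\item $E$ is present if and only if $(b-a)+\delta>0$.
\end{enumerate}
Hence in the balanced case $D$ and $E$ occur together or not at all. In the unbalanced case $E$ is always present. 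In addition, $G_0$ is excessive exactly when it is DM-irreducible (Definition~\ref{def:dm-irred-prelim}), which forces a single block: a lone $C$ if $G_0$ is balanced, a lone $E$ if it is unbalanced. Liminal means $\delta=0$ but $G_0$ is not excessive, so there is no $D$ and at least two blocks. These rules eliminate most of the 48 combinations. Each elimination should be a one-line appeal to (a), (b), or the irreducibility criterion, together with Lemma~\ref{lem:excessive-irreducible} to show that an excessive graph cannot split.

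For the existence part I would write down small explicit grids: complete bipartite graphs for the excessive classes, paths and stars glued in BTF for the liminal and deficient classes, and so on. Each example would be checked by exhibiting its maximum hole.

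I expect the main obstacle to be matching the surviving combinations exactly to eleven. The count depends on conventions that are not visible before the table. Does ``$C$ present'' distinguish $k=0$ from $k\ge1$ only? Is the excluded case $K_{a,b}$ counted separately? Does the connectedness hypothesis restrict the possibilities, for instance forbidding $D$ alone? I would first enumerate the combinations permitted by (a), (b) and irreducibility. In the balanced case these are the excessive class $\{C\}$, the liminal class $\{C^{\ge2}\}$, and the deficient classes $\{D,E\}$ and $\{D,C,E\}$. In the unbalanced case they are the excessive class $\{E\}$, the liminal classes $\{C,E\}$, and the deficient classes $\{D,E\}$ and $\{D,C,E\}$. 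I would then reconcile this list with the table, for example by checking whether liminal balanced splits further according to its block count, and adjust the indicator set until the realizable classes number exactly eleven.
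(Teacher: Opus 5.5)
Your exclusion rules are essentially right, and your list of eight classes is exactly what ``presence of $D$, $C$, $E$'' yields when read literally. But the proposal stops there, and ``adjust the indicator set until the realizable classes number exactly eleven'' fits the answer rather than proving it. The missing idea is the correct third invariant. Table~\ref{tab:block_composition} and Theorem~\ref{thm:main_classification} index the classes by (balance, sign of $m$, $k\in\{0,1,{>}1\}$). That gives $2\cdot 3\cdot 3=18$ a priori cells. Lemma~\ref{lem:excessive-irreducible} removes four of them, since excessive forces $k=1$ if balanced and $k=0$ if unbalanced. Lemma~\ref{lem:liminal-core} removes three, since liminal forces $k\ge1$, and $k\ge2$ if balanced.

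Your suggested reconciliation also points the wrong way. The balanced liminal class is the one $C$-containing class that does \emph{not} split. You have already shown $k\ge2$ there, since $k=1$ would make $T_0$ a single excessive square, and the table records it as the single cell $k>1$. What splits are the three classes in which $C$ coexists with other blocks: unbalanced liminal $\{C,E\}$ and the two deficient classes $\{D,C,E\}$. Each splits into $k=1$ and $k>1$, giving $8+3=11$. Correspondingly, the existence half must exhibit, in each of these three classes, one graph with exactly one $C$-block and one with at least two. Choosing complete bipartite graphs, paths and stars per coarse class does not address this.

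Separately, rule (b) is false as you state it. The relations $\mathrm{imb}(D)=\delta$ and $\mathrm{imb}(E)=(b-a)+\delta$ come from Theorem~\ref{thm:structural_properties}, which assumes $T_0$ is non-excessive. Take the path $b_1a_1b_2a_2b_3$ with $A=\{a_1,a_2\}$ and $B=\{b_1,b_2,b_3\}$. Its largest hole has $\mathrm{vcount}$ $2$, so $m=1=b-a$ and $(b-a)+\delta=0$. Yet this graph is excessive and unbalanced, hence a lone $E$-block, so (b) contradicts your own irreducibility rule. Restrict (b) to $\delta\ge0$ and treat the excessive row only through Lemma~\ref{lem:excessive-irreducible}. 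The correct universal rule is that $E$ is present if and only if $G_0$ is unbalanced or deficient. Finally, your worry about ``$D$ alone'' needs no connectedness hypothesis. By your counting identity, $\delta>0$ forces $\mathrm{imb}(E)=(b-a)+\delta>0$, so $E$ is present.
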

	\begin{table}[h!]
		\centering
		\caption{Classification of the eleven structural classes based on balance, excessive-liminal-deficient type, and the number of $C$-blocks ($k$). The symbol ``--'' denotes structurally impossible combinations (see Theorem~\ref{thm:main_classification}).}
		\label{tab:block_composition}
		\renewcommand{\arraystretch}{1.5}
		\resizebox{\textwidth}{!}{%
			\begin{tabular}{|l|c|c|c|c|c|c|}
				\hline
				\multirow{2}{*}{\textbf{Characteristic}} & \multicolumn{3}{c|}{\textbf{Balance ($|A|=|B|$)}} & \multicolumn{3}{c|}{\textbf{Imbalance ($|A|<|B|$)}} \\ \cline{2-7} 
				& $\mathbf{k=0}$ & $\mathbf{k=1}$ & $\mathbf{k>1}$ & $\mathbf{k=0}$ & $\mathbf{k=1}$ & $\mathbf{k>1}$ \\ \hline
				\textbf{Excessive  ($m>0$)} & -- & $C$ & -- & $E$ & -- & -- \\ \hline
				\textbf{Liminal ($m=\delta=0$)} & -- & -- & $C_1, \dots, C_k$ & -- & $C, E$ & $C_1, \dots, C_k, E$ \\ \hline
				\textbf{Deficient ($-m=\delta>0$)} & $D, E$ & $D, C, E$ & $D, C_1, \dots, C_k, E$ & $D, E$ & $D, C, E$ & $D, C_1, \dots, C_k, E$ \\ \hline
			\end{tabular}%
		}
	\end{table}
	
	While independent of any specific representation, these results admit a natural geometric interpretation via the grid model. Figure~\ref{fig:gridmodel} illustrates the canonical block triangular arrangement of the $D, C$, and $E$ regions, serving as a visual roadmap for the structural analysis developed in Section~\ref{sec:grid}.
		
	\begin{figure}[htb]
		\centering
		\includegraphics[width=\linewidth,keepaspectratio]{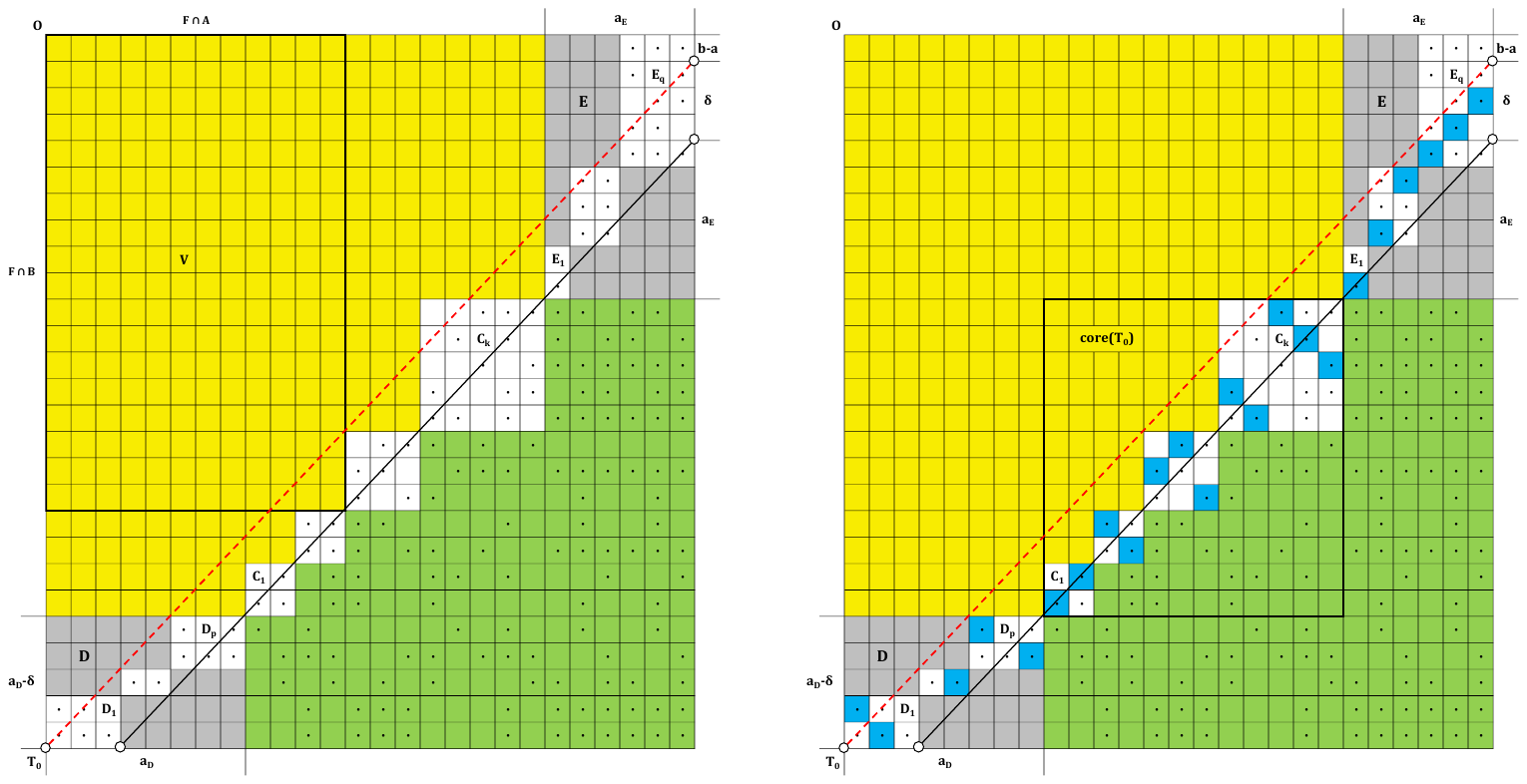}
		\caption{The canonical atomic decomposition in the grid model.}\label{fig:gridmodel}
	\end{figure}

	\noindent\textbf{Figure~\ref{fig:gridmodel} -- Visual Indicators:}
	\begin{itemize}[leftmargin=1.5em, itemsep=2pt]\small
		\item \textbf{Yellow:} Maximum holes (MPISs); no edge of $G_0$ appears here. Their common $\mathrm{vcount}$ $b+\delta$ determines the characteristic $m$ of $T_0$, where $m = -\delta$.
		\item \textbf{White:} Atomic blocks: square $C_1,\ldots,C_k$ (balanced, atomic by Lemma~\ref{lem:balanced-excessive-atomic}), and components $D_1,\ldots,D_p$, $E_1,\ldots,E_q$ of $G_0[D]$, $G_0[E]$ (unbalanced, atomic by Lemma~\ref{lem:excessive-components}).
		\item \textbf{Blue:} Edges of a maximum matching of $G_0$; every maximum matching is the union of maximum matchings of the atomic blocks.
		\item \textbf{Grey:} Absent edges between distinct atomic blocks within the same $D$- or $E$-region; their presence would contradict the connectivity of the components.
		\item \textbf{Green:} Edges between atomic blocks from different regions; these exist in $G_0$ but are forbidden with respect to every maximum matching.
	\end{itemize}
	\FloatBarrier

\section{The Grid Model}\label{sec:grid}
	
	We represent a bipartite graph $G_0 = (A,B,E)$ on the grid $T_0 := B \times A$, mapping each edge $(a,b) \in E$ to a dot at position $(b,a)$. This geometric representation encodes the adjacency structure as a dot set, enabling a structural interpretation of the graph in terms of axis-aligned subrectangles, termed \emph{bricks}.
	
	Unlike the classical Dulmage--Mendelsohn (DM) decomposition or the BTF approach of Pothen and Fan \cite{PothenFan1990}, which are built upon maximum matchings and alternating paths, the grid model is grounded directly in \emph{maximum holes} (largest dot-free subrectangles). In this framework, concepts such as the Strong Marriage Condition (SMC) and strong connectivity emerge as intrinsic structural properties rather than algorithmic primitives. Although the grid $T_0$ coincides as an array with the bipartite adjacency (bi-)matrix used in the matrix-theoretic treatment of the DM decomposition \cite{Murota2000}, the grid model is not merely a restatement of that view but a geometric reinterpretation of it: its primitive objects are not matrix entries but \emph{holes} (dot-free sub-rectangles), and the characteristic, the excessive property, and the BTF are all defined through maximum holes rather than through rank or matching. Table~\ref{tab:comparison} highlights the conceptual shifts that motivate this representation.
	
	\begin{table}[htbp]
		\centering
		\caption{Conceptual comparison of structural frameworks.}
		\label{tab:comparison}
		\small
		\begin{tabular*}{\textwidth}{@{\extracolsep{\fill}} l lll}
			\toprule
			& \textbf{DM Theory} & \textbf{Pothen (BTF)} & \textbf{Grid Model} \\
			\midrule
			Starting point      & Max. matching   & Directed graph  & Maximum holes \\
			Alternating paths   & Fundamental     & Implicit        & Implicit \\
			$D$-part refinement & None            & Algorithmic     & Structural \\
			Matching dependence & Invariant       & Invariant       & Canonical \\
			Motivation          & Theoretical     & Numerical       & Structural \\
			\bottomrule
		\end{tabular*}
	\end{table}
	\subsection{Detailed Definitions}\label{sec:definitions}
	
	\begin{definition}[Brick]\label{Brick}
		A \emph{brick} is a rectangular subgrid of $T_0$, that is, a set of the form
		\[
		T = Y \times X,
		\]
		where $Y \subseteq B$ and $X \subseteq A$ are nonempty, together with the dots contained in these cells. 
		The length of each side equals the number of unit squares it spans.
	\end{definition}
	
	\begin{definition}[vcount of a brick]
		For a brick $T$, let $\mathrm{vcount}(T)$ denote the total number of rows and columns spanned by $T$, so that if $T$ has size $r\times s$, then $\mathrm{vcount}(T)=r+s$.
		If it is clear which brick is meant, we simply write $\mathrm{vcount}$.
	\end{definition}
	
	\begin{definition}[Cells and dots]
		Each pair $(b,a) \in B \times A$ is called a \emph{cell}. 
		A \emph{dot} is placed in the cell $(b,a)$ if and only if $(a,b) \in E$.
	\end{definition}
	
	We denote by $T_0 = B \times A$ the ambient $b \times a$ grid, 
	whose rows are indexed by $B$ and columns by $A$. Dots represent the edges of $G_0$.
	
	\begin{definition}[Isotopes]
		Rows and columns (together with their corresponding vertices) may be permuted arbitrarily without changing the underlying bipartite graph. 
		Different representations obtained by such permutations are called \emph{isotopes} of each other.
	\end{definition}
	
	\begin{definition}[Induced brick]\label{InducedBrick}
		Let $Y\subseteq B$ and $X\subseteq A$. The set of cells $Y \times X$, together with the dots contained in them, is called the \emph{brick induced by} the vertex set $Y \cup X$.
	\end{definition}
	
	By the notation convention, $T$ serves as both the brick and its corresponding induced subgraph; we employ standard graph-theoretic notation for the latter, for example: $V(T)$, $E(T)$, $\alpha(T)$, etc.
	
	\begin{definition}[Visible Brick]
		A brick $T$ is called \emph{visible} if its rows and columns appear in consecutive positions. If, in addition, its top-left corner is at the origin, we say that $T$ is \emph{origin-aligned}.
	\end{definition}
	We will often assume that a given brick is made visible and origin-aligned by permuting rows and columns. Throughout, phrases such as ``move $V$ to the origin'' or ``$V$ permuted to the origin'' refer formally to this operation: the application of a row permutation and a column permutation (an isotope, in the sense above) that places the rows and columns of $V$ in consecutive positions starting at the origin. Such permutations do not change the underlying bipartite graph.
	
	\begin{definition}[Rows, columns, and strips of a brick]
		For a brick $T$ induced by a vertex set $W \subseteq A \cup B$, define
		\[
		\mathrm{row}(T) := W \cap B, \quad \mathrm{col}(T) := W \cap A,
		\]
		\[
		\mathrm{vstrip}(T) := B \times \mathrm{col}(T), \quad 
		\mathrm{hstrip}(T) := \mathrm{row}(T) \times A.
		\]
	\end{definition}
	
	\begin{definition}[Proper brick, hole]
		Let $T$ be a brick.
		We call $T$ a \emph{proper brick} if $\mathrm{row}(T) \subsetneq B$ and $\mathrm{col}(T) \subsetneq A$.
		A proper brick $T$ is called a \emph{hole} if it contains no dots.
	\end{definition}
	
	Thus, a hole corresponds to a zero submatrix in the adjacency matrix.
	
	\begin{remark}
		By Definition~\ref{Brick}, any brick $T$ in $T_0$ satisfies $\mathrm{row}(T) \neq \emptyset$ and $\mathrm{col}(T) \neq \emptyset$, 
		so degenerate bricks do not occur. 
		In general, one may have $\mathrm{row}(T) = B$ or $\mathrm{col}(T) = A$, 
		but this is not allowed for proper bricks or holes.
	\end{remark}
	
	\subsection{Proper Independent Sets and Proper Vertex Cover}
	Since an entire color class $A$ or $B$, while always independent, is structurally trivial and obscures Hall-type deficiencies, we restrict attention to independent sets meeting both color classes non-trivially. We call these \emph{proper independent sets}, abbreviated \textsc{PIS}; a maximum such set is an \textsc{MPIS}. This restriction excludes exactly the two trivial maximum independent sets $A$ and $B$; in particular, when $\delta > 0$ every maximum independent set is proper, so the MPISs coincide with the ordinary maximum independent sets.
	
	Since isolated vertices are excluded, every nonempty $X \subset A$ satisfies $N(X) \neq \emptyset$. 
	If $N(X) \subset B$, then $X \cup (B \setminus N(X))$ is a PIS of cardinality 
	\[
	|X \cup (B \setminus N(X))| = |X| + |B \setminus N(X)|.
	\]
	Since $b = |N(X)| + |B \setminus N(X)|$, we have
	\[
	|X| \,\circ\, |N(X)| \quad \Longleftrightarrow \quad
	|X| + |B \setminus N(X)| \,\circ\, b,
	\]
	for $\circ \in \{<, =, >\}$.
	
	Thus, there is a tight connection between the size of PISs and the length $b$ of the longer side of the grid $T_0$.
	
	\begin{definition}[Proper vertex cover]
		A vertex cover $L \subseteq A \cup B$ is called a \emph{proper vertex cover} if
		\[
		L \cap A \neq \emptyset, \quad L \cap B \neq \emptyset, \quad A \not\subseteq L, \quad B \not\subseteq L.
		\]
		
		Let $\tau_p(G_0)$ denote the minimum cardinality of a proper vertex cover, that is,
		\[
		\tau_p(G_0) = \min\{\, |L| : L \text{ is a proper vertex cover} \,\}.
		\]
	\end{definition}
	
	In the classical sense, where a vertex cover may contain an entire color class, 
	the value of $\tau(G_0)$ cannot exceed $a$, since the set $A$ itself is a vertex cover of size $a$. 
	
	In contrast, since $\tau_p(G_0)$ is defined only over proper vertex covers, it can be as large as $a+b-2$; for example, if $G_0$ contains exactly one pair of vertices (one from $A$ and one from $B$) that are not adjacent.
	
	We abbreviate: proper vertex cover as \textsc{PCS}, minimum proper vertex cover as \textsc{MPCS}.
	
	\par
	Proper independent sets and proper vertex covers are complements of each other within $V(G_0)$, hence the following relation between the two parameters holds:
	\[
	\alpha_p(G_0) + \tau_p(G_0) = |A| + |B|.
	\]

	\subsection{Matchings in the Grid Model}

	\begin{definition}
		A set of dots in an $r \times s$ brick $T$ ($s \leq r$) is called a \emph{matching} if any two distinct dots lie in different rows and different columns.
		A \emph{maximum matching} in $T$ is a matching of maximum cardinality.
	\end{definition}
	
		Clearly, the matchings of the graph $G$ and the matchings of the corresponding $r \times s$ brick $T$ are in one-to-one correspondence.
		A maximum matching in $G$ corresponds to a maximum matching in $T$, and vice versa.
	
		A maximum matching in $T$ can contain at most $s$ edges;
	otherwise, some column would contain two edges of the matching.
	
	In the brick representation, matching edges can be visualized by coloring their cells (typically blue).
	We say that two cells or two dots \emph{do not see each other} if they lie in different rows and different columns; for dots this means the corresponding edges share no endpoint, i.e., they are independent (a pair of matching edges).
	
	\subsection{Hall Condition and Hole Condition}
	\begin{definition}[Sub-brick-induced partition]
		Let $T$ be a proper brick. 
		Let $X = \mathrm{col}(T)$ and $Y = \mathrm{row}(T)$. 
		Then $X \subset A$ and $Y \subset B$ are nonempty, 
		and $T_0$ can be partitioned into four parts:
		\begin{itemize}
			\item $Y \times X$, the brick $T$ itself,
			\item $(B \setminus Y) \times X$, the \emph{vertical auxiliary brick} of $T$,
			\item $Y \times (A \setminus X)$, the \emph{horizontal auxiliary brick} of $T$,
			\item $(B \setminus Y) \times (A \setminus X)$, the \emph{remote mate} of $T$.
		\end{itemize}
		This decomposition is called the \emph{partition induced by the brick $T$}.
	\end{definition}	
	Figure~\ref{GM_PDF02} illustrates the partition induced by a hole $V$, 
	with $X = \mathrm{col}(V)$ and $Y = \mathrm{row}(V) = B \setminus N(X)$. 
	The four parts are denoted $V$, $T_1$, $T_2$, and $U$, respectively.
	
	\begin{proposition}[MPIS--hole correspondence]\label{prop:mpis-hole-corr}
		A maximum proper independent set (MPIS) in $G_0$ and a hole in $T_0$ with maximum $\mathrm{vcount}$ are in one-to-one correspondence.
	\end{proposition}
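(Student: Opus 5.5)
The plan is to construct an explicit map from proper independent sets to holes, show that it is a bijection preserving size ($|F| = \mathrm{vcount}$), and then restrict it to the maximizers on both sides.

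\emph{The map.} Let $F$ be a proper independent set of $G_0$, and put $X := F \cap A$ and $Y := F \cap B$. Properness gives four facts: $X \neq \emptyset$, $Y \neq \emptyset$, $X \subsetneq A$ and $Y \subsetneq B$. Hence the brick $V_F := Y \times X$ is a proper brick. Independence of $F$ means there is no edge between $X$ and $Y$, so $V_F$ contains no dots and is therefore a hole. Its size is $\mathrm{vcount}(V_F) = |X| + |Y| = |F|$.

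\emph{The inverse.} Conversely, let $V = Y \times X$ be a hole. Then $X, Y$ are nonempty proper subsets of $A$, $B$ respectively, and no dot lies in $Y \times X$. So $F_V := X \cup Y$ is independent. It meets both colour classes and contains neither class entirely, so it is a proper independent set, with $|F_V| = \mathrm{vcount}(V)$. The two maps $F \mapsto V_F$ and $V \mapsto F_V$ are clearly mutually inverse, because a brick is determined by its row and column sets. This gives a size-preserving bijection between all PISs and all holes.

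\emph{Restriction to maximizers.} Since the bijection preserves size, the maximum values coincide: $\alpha_p(G_0) = \max\{\mathrm{vcount}(V) : V \text{ a hole}\}$. Consequently MPISs correspond exactly to holes of maximum $\mathrm{vcount}$.

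The only point needing care is the nonemptiness and properness bookkeeping: one has to check that the four conditions in the definition of PIS match exactly the requirements that a hole be a nonempty proper brick. I expect no real obstacle. One should also observe that a PIS exists at all, so that both maxima are taken over nonempty families. This follows from the standing assumption that $G_0 \neq K_{a,b}$: there is a non-adjacent pair $a_0 \in A$, $b_0 \in B$, and $\{a_0, b_0\}$ is a proper independent set (properness uses $a, b \ge 2$, or else one handles the degenerate case directly). Equivalently, the $1\times 1$ cell $\{b_0\} \times \{a_0\}$ is a hole.
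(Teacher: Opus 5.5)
Your proof is correct and follows the paper's argument: both identify a proper independent set $F$ with the dot-free proper brick $(F\cap B)\times(F\cap A)$, note that $|F|=\mathrm{vcount}$, and compare maxima in each direction. The one difference is that the paper's forward direction uses the brick $(B\setminus N(X))\times X$. For a maximum $F$ this equals your brick only once one observes that maximality forces $F\cap B=B\setminus N(X)$, which the paper leaves implicit. Your direct map avoids that step and also gives the slightly stronger size-preserving bijection between all PISs and all holes.
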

	
	\begin{proof}
		\textbf{First direction.} 
		Let $W$ be an MPIS in $G_0$, and let $X = A \cap W$. 
		Since $W$ is proper, $X \subset A$ and $B \setminus N(X) \subset B$, 
		so $V = (B \setminus N(X)) \times X$ is a hole with $\mathrm{col}(V) = X$ and $\mathrm{row}(V) = B \setminus N(X)$.
		Note that $\mathrm{vcount}(V) = |X| + |B \setminus N(X)| = |W|$.
		If $Z$ is a hole with $\mathrm{vcount}(Z) > |W|$, then $W' = \mathrm{col}(Z) \cup \mathrm{row}(Z)$ is a proper independent set of size $\mathrm{vcount}(Z) > |W|$, contradicting the maximality of $W$. Hence $V$ is a maximum hole.
		
		\textbf{Second direction.} 
		Let $V$ be a hole in $T_0$ with maximum $\mathrm{vcount}$. Then $W = \mathrm{row}(V) \cup \mathrm{col}(V)$ is a proper independent set with $|W| = \mathrm{vcount}(V)$.
		If a proper independent set $Z$ had $|Z| > |W|$, then the brick $(Z \cap B) \times (Z \cap A)$ would be a hole with $\mathrm{vcount} = |Z| > \mathrm{vcount}(V)$, contradicting the maximality of $V$. Hence $W$ is an MPIS in $G_0$.
	\end{proof}
	
	Thus, the cardinality of a maximum proper independent set $W$ equals $\mathrm{vcount}(W)$.
	Holes are typically colored yellow. 
	A hole with maximum $\mathrm{vcount}$ is called a \emph{maximum hole}, MH for short.
	In Figure~\ref{GM_PDF02}, the $\mathrm{vcount}$ of the hole $V$ is
	\[
	\mathrm{vcount}(V) = |X| + |B \setminus N(X)|.
	\]
	\begin{figure}[htb]
		\centering
		\includegraphics[width=\linewidth,keepaspectratio]{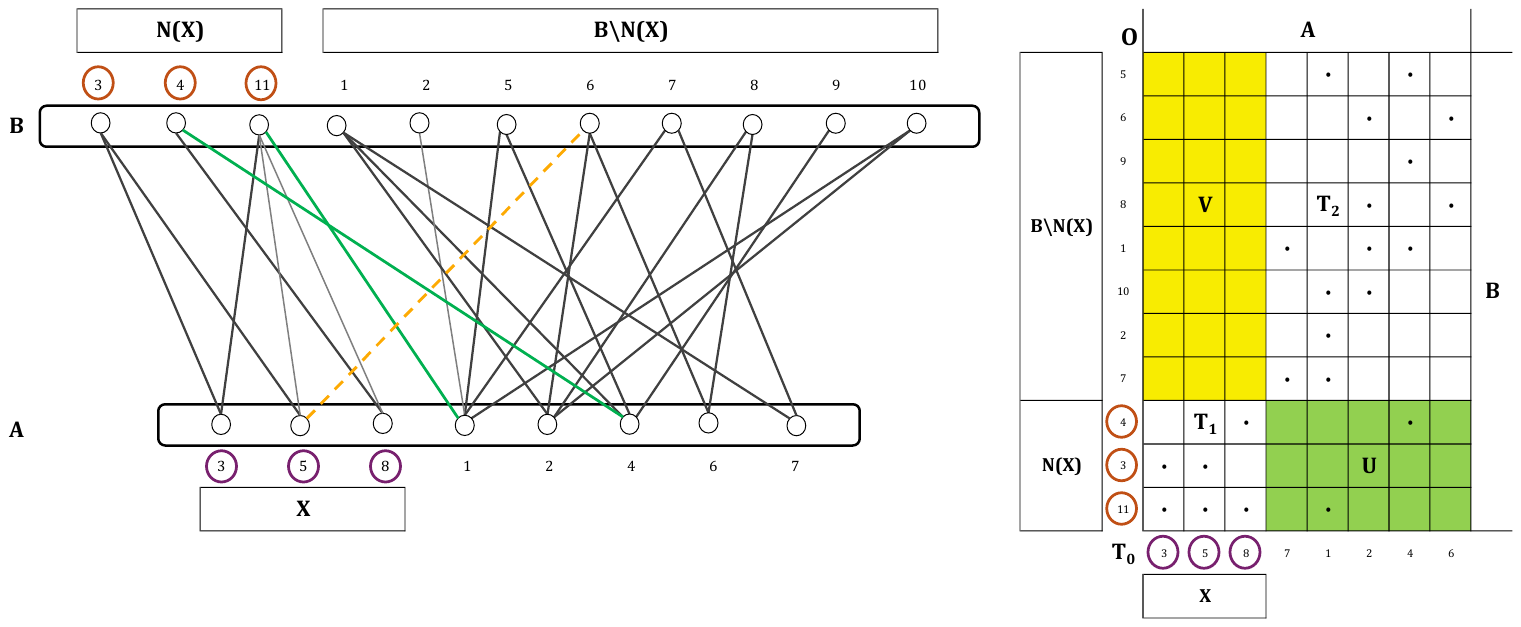}
		\caption{A hole with maximum $\mathrm{vcount}$ in the grid brick $T_0$}
		\label{GM_PDF02}
	\end{figure}
	
	Let $\alpha_p(T_0)$ denote the maximum $\mathrm{vcount}$ in the $b \times a$ grid brick $T_0$ corresponding to $G_0$. 
	Then there exists a hole $V$ such that 
	$
	\alpha_p(T_0) = \mathrm{vcount}(V)
	$,
	and consequently,
	$ 2 \leq \alpha_p(T_0) \leq a + b - 2. $
	\begin{corollary}
		By Proposition~\ref{prop:mpis-hole-corr}, we have
		$
		\alpha_p(T_0) = \alpha_p(G_0).
		$
		
	\end{corollary}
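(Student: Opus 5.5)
The statement asserts that $\alpha_p(T_0)=\alpha_p(G_0)$: the maximum $\mathrm{vcount}$ of a hole in $T_0$ equals the maximum cardinality of a proper independent set of $G_0$. I plan to obtain it directly from Proposition~\ref{prop:mpis-hole-corr}, checking two inequalities separately.

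For $\alpha_p(G_0)\ge\alpha_p(T_0)$, take any hole $V$ and set $W=\mathrm{row}(V)\cup\mathrm{col}(V)$. We need $W$ to be a proper independent set of size $\mathrm{vcount}(V)$.
\begin{itemize}
\item $W$ is independent, since $V$ contains no dots.
\item $W$ meets both colour classes, because the brick $V$ has nonempty row and column sets.
\item $W$ contains neither colour class entirely, because a hole is a proper brick, so $\mathrm{row}(V)\subsetneq B$ and $\mathrm{col}(V)\subsetneq A$.
\end{itemize}
Hence $W$ is proper, and taking $V$ to be a maximum hole gives the inequality.

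For $\alpha_p(G_0)\le\alpha_p(T_0)$, take any proper independent set $Z$ and form the brick $(Z\cap B)\times(Z\cap A)$. Both index sets are nonempty and proper subsets of $B$ and $A$ respectively, by the properness conditions. The brick is dot-free because $Z$ is independent, so it is a hole with $\mathrm{vcount}=|Z|$.

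Both inequalities are exactly the two directions already carried out in the proof of Proposition~\ref{prop:mpis-hole-corr}. There is no real obstacle; the only point to check is that the four properness conditions on independent sets match exactly the nonemptiness and strict-inclusion conditions defining a hole. Existence of a maximum on both sides is guaranteed by finiteness, together with the existence of at least one hole; that holds because $G_0\neq K_{a,b}$ by the standing assumption.
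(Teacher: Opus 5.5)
Your argument is correct and is essentially the paper's: the corollary is read off from Proposition~\ref{prop:mpis-hole-corr}, whose proof uses exactly your two conversions ($V \mapsto \mathrm{row}(V)\cup\mathrm{col}(V)$ and $Z \mapsto (Z\cap B)\times(Z\cap A)$), while its extra identification of an MPIS with the hole $(B\setminus N(X))\times X$ is not needed for the numerical equality. One small precision: a hole exists because $G_0\neq K_{a,b}$ together with the absence of isolated vertices forces $a,b\ge 2$. Existence is not really needed anyway, since your two maps already show that the set of hole $\mathrm{vcount}$s coincides with the set of proper-independent-set sizes.
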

	
	For any brick $W$, define
	\[
	\mathrm{long}(W) := \max\{|\mathrm{row}(W)|, |\mathrm{col}(W)|\},
	\]
	\[
	\mathrm{small}(W) := \min\{|\mathrm{row}(W)|, |\mathrm{col}(W)|\}.
	\]

	Throughout, the ambient brick satisfies $a = |\mathrm{col}(T_0)| \le b = |\mathrm{row}(T_0)|$, so that $\mathrm{long}(T_0) = b$ and $\mathrm{small}(T_0) = a$; for a sub-brick $W$, $\mathrm{long}(W)$ and $\mathrm{small}(W)$ denote its own longer and shorter sides, which need not align with those of $T_0$.

	\begin{definition}[Hole condition]
		A brick $T$ satisfies the \emph{hole condition} 
		if it contains no hole with $\mathrm{vcount}$ greater than $\mathrm{long}(T)$.
	\end{definition}
	
	\begin{proposition}[Equivalence]
		The Hall condition \cite{Hall1935} and the hole condition are equivalent.
	\end{proposition}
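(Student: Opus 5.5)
The plan is to reduce both conditions to the same family of inequalities, one for each nonempty $X \subseteq A$, using the translation between $|X|$ versus $|N(X)|$ and $|X| + |B\setminus N(X)|$ versus $b$ derived just before Definition of proper vertex cover. Here the Hall condition is taken from the short side $A$ of $T_0$ (with $a \le b$, so $\mathrm{long}(T_0)=b$): $|N(X)| \ge |X|$ for every $X \subseteq A$. The argument is carried out for $T_0$. By the convention of Section~\ref{sec:prelim} it then applies to any sub-brick regarded as a $T_0$ in its own right; one transposes the brick if the short side is the row side.

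\textbf{Hall $\Rightarrow$ hole.} Let $V$ be a hole with $X=\mathrm{col}(V)$ and $Y=\mathrm{row}(V)$. Since $V$ is dot-free, $Y \subseteq B\setminus N(X)$, so
\[
\mathrm{vcount}(V) \le |X| + |B\setminus N(X)| = b - \bigl(|N(X)|-|X|\bigr) \le b .
\]
Hence no hole has $\mathrm{vcount} > \mathrm{long}(T_0)$.

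\textbf{Hole $\Rightarrow$ Hall.} Suppose instead that some nonempty $X\subseteq A$ has $|N(X)|<|X|$. I want to exhibit the hole $(B\setminus N(X))\times X$, whose $\mathrm{vcount}$ is $|X|+b-|N(X)|>b$. For this I must check that the brick is a legitimate hole:
\begin{itemize}
\item $X\neq A$;
\item $B\setminus N(X)\neq\emptyset$;
\item the row set is a proper subset of $B$.
\end{itemize}

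\textbf{Main obstacle: degenerate cases.} The real work is in these degenerate cases.
\begin{itemize}
\item Since there are no isolated vertices, $N(X)\neq\emptyset$, so $B\setminus N(X)\subsetneq B$.
\item Since $|N(X)|<|X|\le a\le b$, we get $B\setminus N(X)\neq\emptyset$.
\item The case $X=A$ would give $|N(A)|<a$. But $N(A)=B$ (no isolated vertices in $B$), so $|N(A)|=b\ge a$, a contradiction.
\end{itemize}
So the brick is a proper, dot-free brick, i.e.\ a hole, and it violates the hole condition.

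For sub-bricks that may have isolated rows or columns, I would handle the edge cases separately:
\begin{itemize}
\item An isolated column already violates Hall. It also yields a hole made of that column together with all rows but one, with $\mathrm{vcount} = 1+(r-1)$. This requires additional care, and I would state the equivalence under the standing no-isolated-vertex assumption, noting the sub-brick adaptation.
\end{itemize}
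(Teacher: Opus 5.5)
Your proof is correct and is essentially the paper's argument. The paper splits $X\subseteq A$ into the cases $X=\emptyset$, $N(X)=B$ (settled by $b\ge a$), and $X\neq\emptyset$ with $N(X)\subsetneq B$ (where $(B\setminus N(X))\times X$ is a hole of $\mathrm{vcount}$ at most $b$), which is your contrapositive with the degenerate cases sorted differently; your use of $\le$ rather than $=$ in the first direction and your explicit check that this brick is proper (via $N(A)=B$ and $N(X)\neq\emptyset$) make precise what the paper leaves implicit. Keeping the no-isolated-vertex hypothesis is the right call, since without it the equivalence fails: a $2\times 2$ brick with an isolated column, regarded as a $T_0$ in its own right, has only $1\times 1$ holes, so it satisfies the hole condition while violating Hall.
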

	
	\begin{proof}
		\textbf{(Hall $\Rightarrow$ hole condition)}  
		Let the Hall condition hold, and let $V$ be any hole in $T_0$ with $X = \mathrm{col}(V)$. 
		Then $|N(X)| \ge |X|$, so 
		$|B \setminus N(X)| \le b - |X|$, and hence 
		$\mathrm{vcount}(V) = |X| + |B \setminus N(X)| \le b$.
		
		\textbf{(Hole condition $\Rightarrow$ Hall)}  
		Assume the hole condition holds. Let $X \subseteq A$ be arbitrary:
		
		\begin{enumerate}
			\item $X = \emptyset$: trivially, $|N(X)| = 0 \ge |X|$.
			\item $N(X) = B$: then $|N(X)| = |B| \ge |X|$ since $|B| \ge |A| \ge |X|$.
			\item $X \neq \emptyset$ and $N(X) \subset B$: set $V = (B \setminus N(X)) \times X$, which is a hole. By assumption, $\mathrm{vcount}(V) \le b$, i.e.,
			\[
			|X| + |B \setminus N(X)| \le b.
			\]
			Since $|N(X)| + |B \setminus N(X)| = b$, it follows that $|X| \le |N(X)|$.
		\end{enumerate}
		
		All cases are covered, so the hole condition implies the Hall condition.
	\end{proof}
	
	The following formulation remains valid even if the exact dimensions of the brick are not known.
	
	\begin{corollary}\label{cor:matching-from-hole}
		A brick $W$ contains a matching of size $\mathrm{small}(W)$ if and only if it contains no hole of $\mathrm{vcount}$ greater than $\mathrm{long}(W)$.
	\end{corollary}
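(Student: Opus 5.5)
The plan is to reduce the statement to the Equivalence proposition just proved (Hall condition $\Leftrightarrow$ hole condition), combined with Hall's theorem, applied to $W$ viewed as a $T_0$ in its own right. As stated in the Preliminaries, results proved for $T_0$ apply to any induced sub-brick with its own parameters. Since the exact dimensions of $W$ are not known, I first fix its orientation. If $|\mathrm{col}(W)|\le|\mathrm{row}(W)|$, the short side is the column set and $\mathrm{long}(W)=|\mathrm{row}(W)|$. Otherwise I transpose $W$, exchanging the roles of rows and columns (and of $A$ and $B$). This changes neither the matchings nor the holes nor their $\mathrm{vcount}$, because a hole $Y\times X$ and its transpose have the same $\mathrm{vcount}$ $|X|+|Y|$. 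So I may assume the columns form the short side, and then $\mathrm{small}(W)=|\mathrm{col}(W)|$.

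With this normalization, a matching of size $\mathrm{small}(W)$ is exactly a matching saturating $\mathrm{col}(W)$. By Hall's theorem \cite{Hall1935}, such a matching exists if and only if $|N_W(X)|\ge|X|$ for every $X\subseteq\mathrm{col}(W)$, that is, if and only if $W$ satisfies the Hall condition. The Equivalence proposition, read inside $W$, then turns the Hall condition into the hole condition for $W$: no hole of $W$ has $\mathrm{vcount}$ greater than $\mathrm{long}(W)$. This gives both directions at once.

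The main obstacle is checking that the proof of the Equivalence proposition really survives the passage to a sub-brick. Two points need attention.

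\textbf{Isolated lines.} $W$ may contain isolated rows or columns, whereas the proposition was phrased for $T_0$ without isolated vertices.

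\textbf{Properness of holes.} The definition of a hole requires it to be proper, so $\mathrm{col}\ne$ all of $\mathrm{col}(W)$ and $\mathrm{row}\ne$ all of $\mathrm{row}(W)$.

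I would handle these by rereading the case analysis of the Equivalence proof.

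\textbf{Isolated column $x$.} Take $X=\{x\}$, so $N_W(X)=\emptyset$. The subrectangle $\mathrm{row}(W)\times\{x\}$ is not a proper hole, so it must be treated directly. If $|\mathrm{col}(W)|\ge2$, one can instead take the proper hole $(\mathrm{row}(W)\setminus\{y\})\times\{x\}$. Its $\mathrm{vcount}$ is $|\mathrm{row}(W)|=\mathrm{long}(W)$, which is not greater than $\mathrm{long}(W)$, so this case needs the convention that such degenerate empty full strips count as holes witnessing failure, or else an explicit check.

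\textbf{Interpreting the statement.} To settle this I will read the corollary with ``hole'' understood as any dot-free sub-brick, including full-length strips with $N(X)=\emptyset$. I will then verify the three cases $X=\emptyset$, $N(X)=\mathrm{row}(W)$ and $\emptyset\ne N(X)\subsetneq\mathrm{row}(W)$ exactly as in the Equivalence proof. Each case either is trivial or produces the dot-free rectangle $(\mathrm{row}(W)\setminus N(X))\times X$ with $\mathrm{vcount}$ $|X|+|\mathrm{row}(W)|-|N(X)|$, which exceeds $\mathrm{long}(W)$ exactly when $|X|>|N(X)|$.
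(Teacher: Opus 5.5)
Your proposal follows the paper's proof: normalize so the columns form the short side, invoke Hall's theorem, and translate the Hall condition into the hole condition via the Equivalence proposition applied inside $W$. Your extra scrutiny is justified. The paper silently reuses an equivalence whose proof assumed no isolated vertices, and if holes must be proper relative to $W$ the ``if'' direction fails (e.g.\ a $2\times 2$ brick with an empty column), so reading ``hole'' as any dot-free sub-brick, as you do, is what makes the statement true. One small fix: state your third case as $X\neq\emptyset$, $N(X)\subsetneq\mathrm{row}(W)$, so that it actually includes the isolated-column case $N(X)=\emptyset$ you set out to cover.
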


	\begin{proof}
		Without loss of generality assume $W$ is vertical, that is, $|\mathrm{col}(W)| \leq |\mathrm{row}(W)|$; then $\mathrm{small}(W) = |\mathrm{col}(W)|$ and $\mathrm{long}(W) = |\mathrm{row}(W)|$. By Hall's theorem \cite{Hall1935}, $W$ contains a matching of size $|\mathrm{col}(W)|$ if and only if the Hall condition holds: $|N(X)| \geq |X|$ for all $X \subseteq \mathrm{col}(W)$. By the equivalence proved above, the Hall condition is equivalent to the hole condition: no hole of $\mathrm{vcount}$ greater than $|\mathrm{row}(W)| = \mathrm{long}(W)$.
	\end{proof}
	
	\begin{remark}
		Corollary~\ref{cor:matching-from-hole} is the central bridge used throughout: it converts every matching question into a question about hole sizes. It is the form in which the Hall/hole equivalence is applied to excessive blocks, to the BTF recursion, and to the matching factorization of Lemma~\ref{lem:matching-factorization}.
	\end{remark}

	\subsection{Hall Line, Characteristic}
	
	Let $T_0$ be a $b \times a$ brick with $a \le b$, and origin-aligned. 
	From the lower endpoint of the vertical side, draw a $45^\circ$ line passing through grid points at Manhattan distance $b$ from the origin; this is the \emph{Hall line}. 
	For a hole of $\mathrm{vcount} = b$ aligned to the origin by permutation, one of its corners lies at an interior grid point on this line. 
	The vertical auxiliary brick of a hole with $\mathrm{vcount} = b$ of size $(b-k)\times k$ is a square $k \times k$ ($T_1$ in Figure~\ref{GM_PDF03}); 
	the horizontal auxiliary brick $T_2$ is square iff $a=b$.
	
	\begin{figure}[htb]
		\centering
		\includegraphics[width=\linewidth,keepaspectratio]{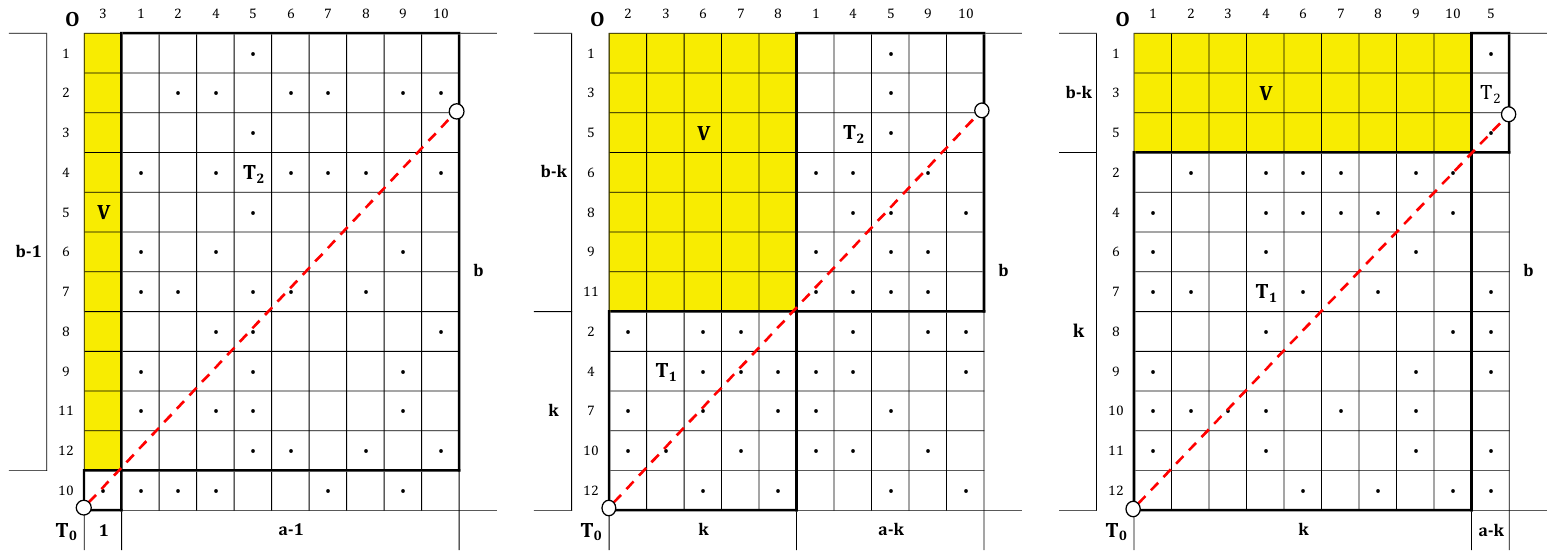}
		\caption{Hall line - Auxiliary bricks}\label{GM_PDF03}
	\end{figure}
	
	In the grid model the characteristic of $T_0$ (Definition~\ref{def:characteristic}) has a direct geometric reading. With $\alpha_p(T_0)$ the maximum hole $\mathrm{vcount}$ and $m = b - \alpha_p(T_0)$, draw a $45^\circ$ line at Manhattan distance $\alpha_p(T_0)$ from the origin; its portion inside $T_0$ is the \emph{characteristic segment} (a line segment). The sign of $m$ distinguishes the deficient ($m<0$), liminal ($m=0$), and excessive ($m>0$) cases as before.
	The characteristic segment indicates grid points relevant for maximum holes or MPISs, including the origin if applicable, excluding boundary points. 
	Bricks are classified by $m$: $m=0$ is \emph{liminal}, $m<0$ \emph{deficient}, $m>0$ \emph{$m$-excessive}, with the magnitude of deficit/excess given by the Manhattan distance from the Hall line. 
	These lines remain fixed under row/column permutations; grid points on the frame are not allowed except the origin.
	
	\begin{figure}[htb]
		\centering
		\includegraphics[width=\linewidth,keepaspectratio]{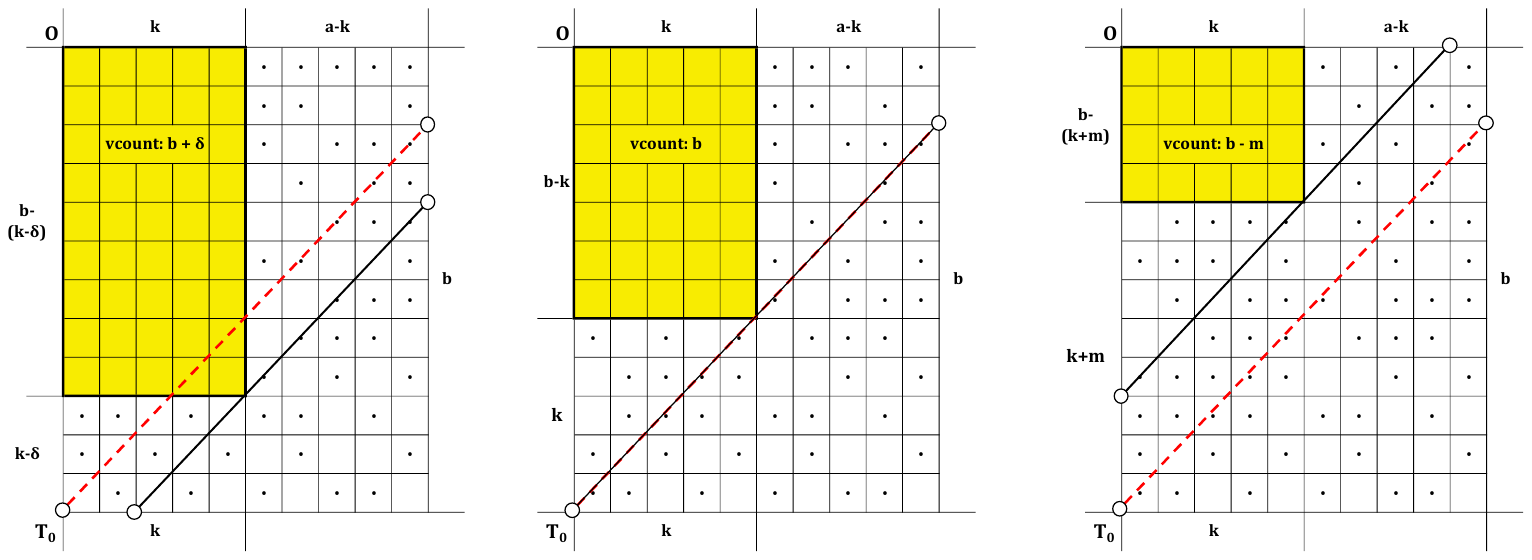}
		\caption{Hall line - characteristic segment}\label{GM_PDF04}
	\end{figure}
	
	For non-excessive bricks ($m \le 0$), define $\sigma_A(G_0) = \max_{X \subseteq A}(|X| - |N(X)|)$ and $\sigma_B(G_0) = \max_{Y \subseteq B}(|Y| - |N(Y)|)$,
	called the \emph{left} and \emph{right deficiency} of $G_0$, respectively.

	\begin{lemma}
		Let $T_0$ be non-excessive with deficit $\delta=-m\ge0$. 
		For a maximum hole $V$ with columns $X$ and rows $Y$, 
		\[
		|X|-|N(X)| = \delta \quad\text{and}\quad |Y|-|N(Y)| = (b-a)+\delta,
		\] 
		so $\sigma_A(G_0)=\delta$, $\sigma_B(G_0)=(b-a)+\delta$.
	\end{lemma}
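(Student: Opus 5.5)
The plan is to read everything off the single identity $\mathrm{vcount}(V)=|X|+|Y|=\alpha_p(T_0)=b-m=b+\delta$, after first showing that a maximum hole is ``saturated'' on both sides. The two equalities for $V$ then follow by counting. The identification of $\sigma_A$ and $\sigma_B$ follows by comparing an arbitrary $X'\subseteq A$ (resp.\ $Y'\subseteq B$) with the hole it generates.

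\textbf{Step 1 (saturation).} I will show that $Y=B\setminus N(X)$ and $X=A\setminus N(Y)$.
\begin{itemize}
\item Since $V$ is dot-free, $Y\subseteq B\setminus N(X)$.
\item Suppose the inclusion were strict. Then $(B\setminus N(X))\times X$ would still be a hole of strictly larger $\mathrm{vcount}$, contradicting maximality.
\item For this hole to be proper we need $B\setminus N(X)\neq B$. This holds because $X\neq\emptyset$ and $G_0$ has no isolated vertices.
\item The argument with rows and columns exchanged gives $X=A\setminus N(Y)$. Here $A\setminus N(Y)\neq A$ because $Y\neq\emptyset$.
\end{itemize}

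\textbf{Step 2 (the two equalities).} Using $|N(X)|=b-|Y|$ from Step 1,
\[
|X|-|N(X)|=|X|+|Y|-b=\delta .
\]
Using $|N(Y)|=a-|X|$ from Step 1,
\[
|Y|-|N(Y)|=|X|+|Y|-a=(b-a)+\delta .
\]
These show $\sigma_A(G_0)\ge\delta$ and $\sigma_B(G_0)\ge(b-a)+\delta$.

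\textbf{Step 3 (upper bound for $\sigma_A$).} Take an arbitrary $X'\subseteq A$ and split into three cases.
\begin{itemize}
\item If $X'=\emptyset$, the value is $0\le\delta$, since the brick is non-excessive.
\item If $N(X')=B$, the value is $|X'|-b\le a-b\le 0\le\delta$.
\item Otherwise $X'\ne\emptyset$ and $N(X')\subsetneq B$. Then $(B\setminus N(X'))\times X'$ is a hole; it is proper because $X'=A$ would force $N(X')=B$, again by the absence of isolated vertices. Its $\mathrm{vcount}$ is at most $\alpha_p=b+\delta$, which rearranges to $|X'|-|N(X')|\le\delta$.
\end{itemize}

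\textbf{Step 4 (upper bound for $\sigma_B$).} The symmetric argument applies to $Y'\subseteq B$.
\begin{itemize}
\item If $Y'=\emptyset$, the value is $0\le(b-a)+\delta$.
\item If $N(Y')=A$, the value is $|Y'|-a\le b-a$.
\item Otherwise the hole $Y'\times(A\setminus N(Y'))$ gives $|Y'|+a-|N(Y')|\le b+\delta$, i.e.\ $|Y'|-|N(Y')|\le(b-a)+\delta$.
\end{itemize}

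The main point needing care is properness, both in the saturation step and in the case analysis of Steps 3 and 4. This is exactly where the standing hypothesis of no isolated vertices is used: it guarantees that $N(X')\ne B$ forces $X'\ne A$, and that $N(Y')\ne A$ forces $Y'\ne B$. The sign condition $\delta\ge0$ is used only to dominate the trivial cases $X'=\emptyset$, $N(X')=B$ and $Y'=\emptyset$, $N(Y')=A$.
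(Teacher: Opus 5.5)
Your proof is correct and takes essentially the paper's route. Both arguments first use maximality of $V$ to get $Y=B\setminus N(X)$ and $N(Y)=A\setminus X$, then read the two identities off $|X|+|Y|=b+\delta$; your Steps~3--4 spell out, via the holes $(B\setminus N(X'))\times X'$ and $Y'\times(A\setminus N(Y'))$, the maximality claim the paper compresses into ``boundary cases yield smaller values''. One small detail to add: in Step~3, properness of that hole also needs $N(X')\neq\emptyset$, which follows from $X'\neq\emptyset$ and the absence of isolated vertices, exactly as in your Step~1.
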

	
	\begin{proof}
		Assume $m \le 0$, so that $\delta = -m \ge 0$. 
		Let $V$ be an MH of size $b + \delta$ in $T_0$, and let $X = \mathrm{col}(V)$, $Y = \mathrm{row}(V)$. Then
		\[
		|X|-|N(X)| = |X| + |B\setminus N(X)| - b = \delta.
		\]
		Since $V$ is a hole, no dot lies in $Y \times X$, so $N(Y) \cap X = \emptyset$, i.e., $N(Y) \subseteq A \setminus X$. Moreover, every vertex in $A \setminus X$ is adjacent to some vertex in $Y = B \setminus N(X)$ (otherwise the hole $V$ could be enlarged, contradicting its maximality), hence $N(Y) = A \setminus X$ and $|N(Y)| = a - |X|$. Since $|Y| = b - |N(X)| = b + \delta - |X|$, we obtain
		\[
		|Y|-|N(Y)| = (b + \delta - |X|) - (a - |X|) = (b-a) + \delta.
		\]

		Boundary cases ($X=\emptyset$, $X=A$, $Y=\emptyset$, $Y=B$) yield smaller values, 
		so MHs attain the maximal values:
		\[
		\sigma_A(G_0) = \delta \quad \text{and} \quad \sigma_B(G_0) = (b-a) + \delta.
		\]
	\end{proof}
	The difference satisfies the well-known equality:
	\[
	\sigma_B(G_0) - \sigma_A(G_0) = b - a.
	\]
	
	\begin{corollary}
		For a maximum hole $V$, the vertical auxiliary brick $T_1$ is a horizontal brick with $\mathrm{imb}(T_1)=\delta$, and the horizontal auxiliary brick $T_2$ is a vertical brick with $\mathrm{imb}(T_2)=(b-a)+\delta$. Maximal A and B side deficiencies appear in pairs along the MH sides, except for trivial cases with no hole ($X=\emptyset$, $X=A$, $Y=\emptyset$, $Y=B$).
	\end{corollary}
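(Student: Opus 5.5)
We read off the shapes of the two auxiliary bricks from the dimensions of $V$ and then apply the preceding lemma. Write $X=\mathrm{col}(V)$ and $Y=\mathrm{row}(V)$. Maximality of $V$ forces $Y=B\setminus N(X)$; otherwise we could add rows to the hole. Hence the vertical auxiliary brick is
\[
T_1=(B\setminus Y)\times X = N(X)\times X .
\]
It has $|X|$ columns and $|N(X)|$ rows. By the lemma just proved, $|X|-|N(X)|=\delta\ge 0$. So $T_1$ has at least as many columns as rows, i.e.\ it is horizontal (in the weak sense when $\delta=0$), and
\[
\mathrm{imb}(T_1)=\bigl||X|-|N(X)|\bigr|=\delta .
\]

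The horizontal auxiliary brick is $T_2=Y\times(A\setminus X)$. In the proof of the lemma we showed that every column of $A\setminus X$ meets $Y$; this is again by maximality, since otherwise such a column could be added to $V$. Thus $A\setminus X=N(Y)$, and so $T_2=Y\times N(Y)$ has $|Y|$ rows and $|N(Y)|$ columns. The lemma gives $|Y|-|N(Y)|=(b-a)+\delta\ge 0$. Hence $T_2$ is vertical with $\mathrm{imb}(T_2)=(b-a)+\delta$.

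For the pairing statement, note that the lemma also shows $\sigma_A(G_0)=\delta$ and $\sigma_B(G_0)=(b-a)+\delta$. These maxima are attained simultaneously by $X$ and $Y$, the column set and row set of one and the same maximum hole. The maximal $A$-side and $B$-side deficiencies are therefore realised as a pair along the two sides of $V$. They sit exactly in the auxiliary bricks $T_1$ and $T_2$ adjacent to $V$.

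The exceptional cases $X=\emptyset$, $X=A$, $Y=\emptyset$, $Y=B$ do not correspond to holes, since holes are proper bricks. They are handled by the boundary remark already made in the lemma's proof.

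The only real obstacle is to ensure that both equalities $Y=B\setminus N(X)$ and $A\setminus X=N(Y)$ follow from the maximality of $V$. Each holds because otherwise $V$ could be enlarged while remaining proper. For the enlargement we must check that adding the missing row or column cannot make the brick improper. Adding a column $x$ would give $\mathrm{col}=A$ only if $A\setminus X=\{x\}$. But then $x$ meets no row of $Y$, so $x$ together with $X$ has no neighbour in $Y$. This makes $Y\times A$ dot-free, and since $Y\neq\emptyset$ it gives a row with no neighbours at all. That contradicts the standing assumption that $G_0$ has no isolated vertices. The row case is symmetric.
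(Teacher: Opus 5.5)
Your proposal is correct and follows essentially the paper's route: the corollary is read off from the preceding lemma once you identify $T_1=N(X)\times X$ and $T_2=Y\times N(Y)$ via the maximality of $V$. The paper gives no separate proof; in Theorem~\ref{thm:structural_properties} it obtains the two imbalances even more directly from $|\mathrm{row}(V)|+|\mathrm{col}(V)|=b+\delta$. Your extra check that the enlarged brick stays proper, using the absence of isolated vertices, fills in a step the lemma's proof passes over silently.
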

	
	\begin{remark}
		Our definitions of deficiency and $m$-excess are given in terms of the grid structure, but coincide with the classical quantities (deficiency and surplus) via the identities $|X|-|N(X)|$ and $|N(X)|-|X|$.

		A key feature of the hole-based approach is that $\mathrm{vcount}(V) = |\mathrm{row}(V)| + |\mathrm{col}(V)|$ is symmetric in the two color classes: it places rows and columns on equal footing, and the single invariant $m = |\mathrm{row}(T_0)| - \alpha_p(T_0)$ captures the structural imbalance without distinguishing the two sides. In contrast, the classical Hall condition $|N(X)| \geq |X|$ and the Strong Marriage Condition $|N(X)| \geq |X|+1$ explicitly distinguish the two sides via $X \subseteq A$. The grid model thus provides a representation in which the asymmetry between the two color classes does not appear at the level of the fundamental structural invariant, though it remains present in the orientation of the brick ($|\mathrm{col}(T_0)| \leq |\mathrm{row}(T_0)|$).

		More importantly, the grid model collapses the \emph{branching} of the classical theory into the sign of a single quantity. The deficient, liminal, and excessive regimes are classically governed by three separate conditions --- a Hall deficiency $|N(X)| < |X|$, K\H{o}nig-type equality, and positive surplus (the Strong Marriage Condition $|N(X)| \geq |X|+1$ in the balanced case) --- whereas here all three are the one statement that the maximum hole $\mathrm{vcount}$ is, respectively, greater than, equal to, or less than $b$. Replacing this case analysis by the sign of $m = b - \alpha_p(T_0)$ is precisely what allows the excessive and atomic decompositions, and the excessiveness of the connected components of $D$ and $E$ (Lemma~\ref{lem:excessive-components}), to be read off geometrically rather than re-derived condition by condition.
	\end{remark}
	
	\subsection{Excessive -- Expanding, Matchings}
	The $m$-expanding property requires $|N(X)| \ge |X| + m$ for all nonempty $X \subseteq \mathrm{col}(T_0)$, and is only meaningful when $\mathrm{imb}(T_0) \ge m$. The following definition, proposition, and corollary show that the $m$-excessive property is a strictly weaker, more general condition.

	\begin{definition}[$m$-expanding]\label{def:m-expanding}
		A brick $T_0$ is \emph{$m$-expanding} if $|N(X)| \ge |X| + m$ for every nonempty $X \subseteq \mathrm{col}(T_0)$.
	\end{definition}

	\begin{corollary}\label{cor:excessive-generalizes-expanding}
		The $m$-excessive property generalizes the $m$-expanding property, and the inclusion is strict: there exist $m$-excessive bricks that are not $m$-expanding. Figure~\ref{GM_PDF05} illustrates this: all four bricks are 3-excessive, but only the 3rd and 4th satisfy the 3-expanding property. In particular, $m$-excessiveness does not imply $m$-expanding whenever $m < \mathrm{imb}(T_0)$.
	\end{corollary}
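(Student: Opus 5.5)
The corollary has two parts, and I will prove them separately.

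The first part is the inclusion: an $m$-expanding brick is at least $m$-excessive. The second is strictness: some $m$-excessive bricks are not $m$-expanding.

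For the inclusion, let $T_0$ be $m$-expanding with $m>0$, and take any hole $V$ in $T_0$. Put $X=\mathrm{col}(V)$, which is nonempty and a proper subset of $A$. Since $V$ is dot-free, $\mathrm{row}(V)\subseteq B\setminus N(X)$. This gives
\[
\mathrm{vcount}(V)\le |X|+b-|N(X)|\le |X|+b-(|X|+m)=b-m .
\]
Maximizing over holes gives $\alpha_p(T_0)\le b-m$, hence the characteristic $b-\alpha_p(T_0)\ge m>0$. By Proposition~\ref{prop:mpis-hole-corr}, $\alpha_p$ is the maximum hole $\mathrm{vcount}$, so this bound is on the right quantity.

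This argument only uses the expansion inequality on proper nonempty subsets $X$. It therefore also yields part (i) of Theorem~\ref{thm:excess_fundamental}: a $k$-expanding graph has characteristic at least $k$. One small point needs checking. If a proper independent set has $X$ with $N(X)=B$, then its $B$-part would be empty, and that is excluded by properness. So all admissible $X$ are covered.

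For strictness, a single example suffices, and I will take it from Figure~\ref{GM_PDF05}. I will also give a general mechanism that explains the final sentence of the statement. Note that $X=A$ forces $|N(A)|\le b$. So $m$-expanding requires $b\ge a+m$, that is, $\mathrm{imb}(T_0)\ge m$.

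The excessive condition never tests $X=A$, because holes must be proper. So I want a brick with $m>0$ but $\mathrm{imb}(T_0)<m$; such a brick is $m$-excessive yet cannot be $m$-expanding. The plan is the complete bipartite brick $K_{a,b}$ with $b-a<m$.

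$K_{a,b}$ has no holes at all, which makes $\alpha_p$ degenerate. A safer choice is a near-complete brick: delete a single dot from $K_{a,b}$ with $a\ge 2$. The only holes are then the $1\times1$ hole and its sub-holes, so $\alpha_p=2$ and $m=b-2$.

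Meanwhile, taking $X=A$ gives $|N(A)|-|A|=b-a$. Whenever $a\ge 3$ we have $b-a<b-2=m$, so the brick is $m$-excessive but not $m$-expanding. A concrete instance of this kind is the first brick in Figure~\ref{GM_PDF05}.

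For the closing claim, the phrase "whenever $m<\mathrm{imb}(T_0)$" should presumably be read as "whenever $m>\mathrm{imb}(T_0)$". In that case the $X=A$ obstruction applies immediately and the implication fails for every such brick.

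I expect the main obstacle to be this interpretive point, not any computation. As literally written, the condition $m<\mathrm{imb}(T_0)$ does not force failure of $m$-expanding in general. I would state explicitly which reading is proved: failure of the implication is guaranteed whenever $m>\mathrm{imb}(T_0)$, and is merely possible otherwise, as the first two bricks of the figure illustrate.
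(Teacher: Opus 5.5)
Your inclusion argument is correct: a hole with column set $X$ has $\mathrm{vcount}\le |X|+b-|N(X)|\le b-m$, so the characteristic is at least $m$. Your family ($K_{a,b}$ minus one edge, $a\ge 3$) is also correct. It gives an explicit, checkable witness of strictness, where the paper only points to Figure~\ref{GM_PDF05} and then proves Proposition~\ref{excessive_expanding}. You are right that the test set $X=A$ is what the excessive condition never sees, and that the printed condition should read $m>\mathrm{imb}(T_0)$.

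Your closing hedge is wrong, however. You say that for $m\le\mathrm{imb}(T_0)$ failure of $m$-expanding is ``merely possible'', and that the first two bricks of the figure illustrate this. Both claims contradict Proposition~\ref{excessive_expanding}, which the paper proves right after the corollary. Take a nonempty $X\subsetneq A$ with $N(X)\ne B$. Then $(B\setminus N(X))\times X$ is a hole, so excessiveness gives $|N(X)|\ge |X|+m$. Any $X$ with $N(X)=B$ has $|N(X)|-|X|\ge b-a=\mathrm{imb}(T_0)$. This covers $X=A$, because $G_0$ has no isolated vertices.

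Hence every $m$-excessive brick is $\min\{m,\mathrm{imb}(T_0)\}$-expanding. More precisely, it is $m$-expanding \emph{if and only if} $m\le\mathrm{imb}(T_0)$: $X=A$ is not just one obstruction, it is the only possible one. So the printed clause ``whenever $m<\mathrm{imb}(T_0)$'' is not merely unproved; it is false for every brick.

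The first two bricks of Figure~\ref{GM_PDF05} are $3$-excessive but not $3$-expanding, so they must have imbalance at most $2$. They illustrate the regime $m>\mathrm{imb}(T_0)$, not the other one. Your own family shows the boundary: with $a=2$ you get $m=b-2=\mathrm{imb}(T_0)$, and that brick is $m$-expanding. Replace your final hedge with this exact dichotomy.
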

	
	\begin{figure}[htb]
		\centering
		\includegraphics[width=\linewidth,keepaspectratio]{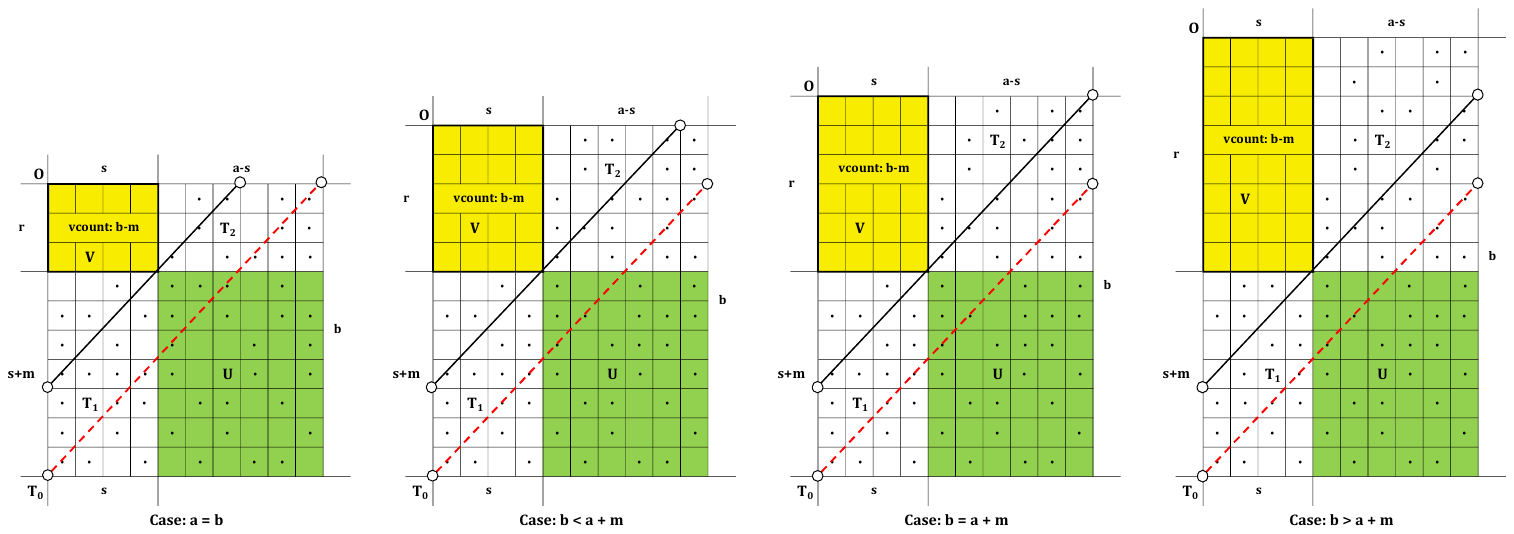}
		\caption{Excessive - expanding}\label{GM_PDF05}
	\end{figure}
	
	\begin{proposition}\label{excessive_expanding}
		Let $T_0$ be an unbalanced $m$-excessive brick, and define
		\[
		m_0 = \min\{m, \mathrm{imb}(T_0)\}.
		\]
		Then $T_0$ is $m_0$-expanding. Moreover, if $m \ge \mathrm{imb}(T_0)$, it is strictly $\mathrm{imb}(T_0)$-expanding.
	\end{proposition}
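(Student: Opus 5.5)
Throughout, $T_0$ is vertical: $a=|\mathrm{col}(T_0)|<b=|\mathrm{row}(T_0)|$, so $\mathrm{imb}(T_0)=b-a\ge 1$ and $\mathrm{long}(T_0)=b$. By $m$-excessiveness every hole $V$ of $T_0$ has $\mathrm{vcount}(V)\le \alpha_p(T_0)=b-m$. The plan is to convert this hole bound into a neighbourhood bound $|N(X)|\ge |X|+m_0$ for every nonempty $X\subseteq A$. We use the computation from the subsection on proper independent sets: for nonempty $X\subseteq A$ with $N(X)\subsetneq B$, the brick $(B\setminus N(X))\times X$ is dot-free, and $|X|-|N(X)|=|X|+|B\setminus N(X)|-b$.

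We split into cases according to $X$.

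\emph{Case 1: $X\ne A$ and $N(X)\ne B$.} Then $V=(B\setminus N(X))\times X$ is a proper brick with no dots, hence a hole. Therefore $|X|+|B\setminus N(X)|\le b-m$, which gives $|N(X)|\ge |X|+m\ge |X|+m_0$.

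\emph{Case 2: $N(X)=B$.} Then $|N(X)|=b\ge |X|+(b-a)\ge |X|+m_0$, since $|X|\le a$.

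\emph{Case 3: $X=A$ and $N(A)\ne B$.} This cannot occur, because $G_0$ has no isolated vertices, so every row carries a dot; alternatively it reduces to Case 2. If sub-bricks with isolated rows are admitted, this is the one delicate point. In that situation $X=A$ should be excluded from the $m_0$-expanding requirement, or handled by noting that the hole $(B\setminus N(A))\times X'$ with $X'\subsetneq A$ gives the needed bound.

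In all cases $|N(X)|\ge |X|+m_0$, so $T_0$ is $m_0$-expanding.

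For the ``moreover'' part, assume $m\ge \mathrm{imb}(T_0)$, so $m_0=b-a$. Expansion by $b-a$ is the maximum possible, since $X=A$ gives $|N(A)|\le b=|A|+(b-a)$; this is the sense in which the expansion is sharp. For ``strictly'', we read the claim as: $|N(X)|>|X|+(b-a)$ for every nonempty proper $X\subsetneq A$ with $N(X)\ne B$, while equality holds at $X=A$. This follows from Case 1 whenever $m>b-a$. When $m=b-a$ exactly, we would instead argue that equality is attained only at $X=A$, since any other $X$ attaining equality would force $N(X)=B$.

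The main obstacle is pinning down the intended meaning of ``strictly $\mathrm{imb}(T_0)$-expanding'' and treating the boundary subsets $X=A$ and $N(X)=B$, which are not holes. The first thing to try is to separate these boundary cases explicitly, as above, and to check that the trivial count $|N(X)|\le b$ is exactly what caps the expansion at $\mathrm{imb}(T_0)$.
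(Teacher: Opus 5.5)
Your core argument is the paper's. For nonempty $X\subseteq A$ with $N(X)\neq B$, the dot-free brick $(B\setminus N(X))\times X$ gives $|N(X)|\ge |X|+m$. For $N(X)=B$ one has $|N(X)|=b\ge |X|+\mathrm{imb}(T_0)$. Your separate treatment of $X=A$ is more careful than the paper's, whose second bullet silently treats $(B\setminus N(A))\times A$ as a hole. It is the absence of isolated rows that forces $N(A)=B$. For the ``moreover'' part, the paper means only that the constant $\mathrm{imb}(T_0)$ is attained at $X=A$, i.e.\ $|N(A)|=b=|A|+\mathrm{imb}(T_0)$. Your sharpness sentence, together with $m_0=\mathrm{imb}(T_0)$, already gives this.

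The stronger reading of ``strictly'' that you then adopt is false in the boundary case $m=\mathrm{imb}(T_0)$. Your justification, that any other $X$ attaining equality must have $N(X)=B$, fails. If $V$ is a maximum hole, then $X=\mathrm{col}(V)$ is a nonempty proper subset of $A$ with $N(X)=B\setminus\mathrm{row}(V)\neq B$ and $|N(X)|-|X|=b-\mathrm{vcount}(V)=m$. For instance, take $a=2$, $b=4$, $N(x_1)=\{y_1,y_2,y_3\}$, $N(x_2)=\{y_2,y_3,y_4\}$. Then $m=2=\mathrm{imb}(T_0)$, and equality holds at $X=\{x_1\}$. Strict inequality on proper subsets holds only when $m>\mathrm{imb}(T_0)$, so drop that reading.

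Your fallback for bricks with isolated rows also does not work. The hole $(B\setminus N(A))\times(A\setminus\{x\})$ yields only $|N(A)|\ge |A|+m-1$, and the statement genuinely fails there. Take $a=2$, $b=5$, $N(x_1)=N(x_2)=\{y_1,\ldots,y_4\}$ with $y_5$ isolated. Then $m=3=\mathrm{imb}(T_0)$, but $|N(A)|=4<|A|+3$. The no-isolated-vertices hypothesis is therefore essential, not a technicality to be patched. Your Case~1 also uses it, via $N(X)\neq\emptyset$.
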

	
	\begin{proof}
		Consider any subset $X \subseteq A$:
		\begin{itemize}
			\item If $N(X) = B$, then $|N(X)| - |X| \ge |B| - |A| = \mathrm{imb}(T_0)$.
			\item If $N(X) \subset B$, then there exists a hole, so 
			$\mathrm{vcount}((B \setminus N(X)) \times X) \le b - m$, 
			which implies $b - |N(X)| + |X| \le b - m$, i.e., $|N(X)| \ge |X| + m$.
		\end{itemize}
		Hence, for all $X \subseteq A$, $|N(X)| - |X| \ge m_0$, and $T_0$ is $m_0$-expanding. 
		Furthermore, if $\mathrm{imb}(T_0) \le m$, then for $X = A$, 
		\[
		|N(X)| = |X| + \mathrm{imb}(T_0),
		\]
		i.e., $T_0$ is strictly $\mathrm{imb}(T_0)$-expanding.
	\end{proof}
	
	\begin{remark}\label{rem122}
		For a balanced graph $G$, the Strong Marriage Condition 
		\[
		|N(X)| \ge |X| + 1 \quad \text{for any non-empty } X \subset A
		\] 
		is equivalent to the corresponding $T_0$ brick being excessive, 
		since both conditions prohibit a maximum hole (MH) with $\mathrm{vcount}$ at least $n$, where $n = |A| = |B|$.
	\end{remark}
	
	\begin{lemma}\label{auxiliary_brick}
		Let $V$ be an MH in $T_0$. Then its two auxiliary bricks $T_1$ and $T_2$ are not deficient.
	\end{lemma}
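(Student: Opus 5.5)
The plan is to argue by contradiction, transferring any violation of the hole condition inside an auxiliary brick back to a hole of $T_0$ with larger $\mathrm{vcount}$ than $V$. Write $X=\mathrm{col}(V)$ and $Y=\mathrm{row}(V)$. By maximality of $V$, $Y=B\setminus N(X)$, so $T_1=N(X)\times X$ and $T_2=Y\times(A\setminus X)$.

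For a sub-brick $T$, regarded as a $T_0$ in its own right (so with $b$ replaced by $\mathrm{long}(T)$ after reorienting), being \emph{not deficient} means $\alpha_p(T)\le \mathrm{long}(T)$. This says exactly that $T$ contains no hole, in its own sense, of $\mathrm{vcount}$ greater than $\mathrm{long}(T)$; that is, $T$ satisfies the hole condition, which by Corollary~\ref{cor:matching-from-hole} is also equivalent to $T$ having a matching of size $\mathrm{small}(T)$. So it suffices to show that neither $T_1$ nor $T_2$ contains a hole $Z$ with $\mathrm{vcount}(Z)>\mathrm{long}(\cdot)$.

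\emph{The brick $T_1$.} Suppose $Z=Y'\times X'$ is a hole of $T_1$, with $\emptyset\ne X'\subsetneq X$, $\emptyset\ne Y'\subsetneq N(X)$, and $|X'|+|Y'|>\mathrm{long}(T_1)\ge |X|$.
\begin{itemize}
\item Since $Y\times X$ is dot-free, so is $Y\times X'$. Together with $Z$ being dot-free, the brick $(Y\cup Y')\times X'$ contains no dots.
\item This brick is proper. We have $X'\subsetneq A$, and $Y\cup Y'\ne B$, since otherwise the nonempty set $X'$ would have no neighbours, contradicting the absence of isolated vertices.
\item So it is a hole of $T_0$. Its $\mathrm{vcount}$ is $|Y|+|Y'|+|X'|>|Y|+|X|=\mathrm{vcount}(V)$, contradicting the maximality of $V$.
\end{itemize}
The key point is that the bound $\mathrm{long}(T_1)\ge|X|$ holds regardless of whether $T_1$ is horizontal (the case $\delta>0$) or vertical, so no case split on orientation is needed.

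\emph{The brick $T_2$.} The argument is symmetric. Take a hole $Z=Y''\times X''$ of $T_2$ with $|Y''|+|X''|>\mathrm{long}(T_2)\ge |Y|$.
\begin{itemize}
\item The brick $Y''\times (X\cup X'')$ is dot-free.
\item It is proper: $X\cup X''\ne A$, since otherwise the vertices of $Y''$ would be isolated.
\item Its $\mathrm{vcount}$ is $|Y''|+|X|+|X''|>|X|+|Y|$, again contradicting maximality of $V$.
\end{itemize}

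The main obstacle is not the combinatorics but the bookkeeping. One must make precise what ``deficient'' means for a sub-brick whose orientation may differ from that of $T_0$; the plan is to use $\mathrm{long}$ in place of $b$, per the convention in Section~\ref{sec:prelim}. One must also check the properness of the enlarged hole, which is exactly where the no-isolated-vertices hypothesis on $G_0$ enters.
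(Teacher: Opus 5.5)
Your proof is correct and follows the paper's argument. A hole of $T_1$ (resp.\ $T_2$) is stacked onto $V$ along the shared columns (resp.\ rows) to give a hole of $T_0$, so maximality of $V$ bounds its $\mathrm{vcount}$ by $|\mathrm{col}(V)|$ (resp.\ $|\mathrm{row}(V)|$), and hence by the long side of the auxiliary brick.

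Your explicit properness check covers a point the paper skips, but it does not need the no-isolated-vertices hypothesis. Already $Y'\subsetneq \mathrm{row}(T_1)=B\setminus Y$ gives $Y\cup Y'\neq B$, and likewise $X''\subsetneq A\setminus X$ gives $X\cup X''\neq A$.
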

	
	\begin{proof}
		Let $V$ be an $r \times s$ MH permuted to the origin, with its interior vertex lying on the characteristic segment. Let $T_1$ be the vertical and $T_2$ the horizontal auxiliary brick of $V$, and let $U$ denote the remote mate. 
		
		Consider the top-left corners of $T_1$ and $T_2$ as local origins. Let $V_1$ be an $r_1 \times s_1$ MH in $T_1$ and $V_2$ an $r_2 \times s_2$ MH in $T_2$. By the definition of a hole:
		\[
		r_1 < s + m, \quad s_1 < s, \quad r_2 < s, \quad s_2 < r + s.
		\]
		
		Then the combined bricks 
		\[
		(r+r_1) \times s_1 \quad \text{and} \quad r_2 \times (s+s_2)
		\]
		are themselves holes. Since $V$ is a maximum hole, their $\mathrm{vcount}$s cannot exceed that of $V$, i.e.,
		\[
		r + r_1 + s_1 \le r + s \quad \text{and} \quad r_2 + s + s_2 \le r + s,
		\]
		or equivalently,
		\[
		r_1 + s_1 \le s \quad \text{and} \quad r_2 + s_2 \le r.
		\]
		
		Hence, any hole in $T_1$ or $T_2$ has $\mathrm{vcount}$ at most equal to the longer side of the respective brick. Therefore, neither $T_1$ nor $T_2$ can be deficient.
	\end{proof}
	
	Intuitively, the portion of the characteristic segment inside $T_1$ lies at distance $s$ from the origin of $T_1$, so the interior vertex of any hole permuted to the origin cannot lie farther than this segment. Similarly, the portion inside $T_2$ lies at distance $r$ from the origin of $T_2$, restricting the hole's internal vertex within $T_2$.
	
	Moreover, the characteristic segment coincides with the Hall line of $T_1$ if the side of $T_1$ shared with $V$ is at least as long as its other side, and with the Hall line of $T_2$ if the side of $T_2$ shared with $V$ is at least as long as its other side (see Figure~\ref{GM_PDF06}).
	In other cases, the Hall line of the brick is farther from the origin than the characteristic segment. Thus, for both auxiliary bricks of $V$, either the excessive or the liminal property holds.
	
	\begin{figure}[htb]
		\centering
		\includegraphics[width=\linewidth,keepaspectratio]{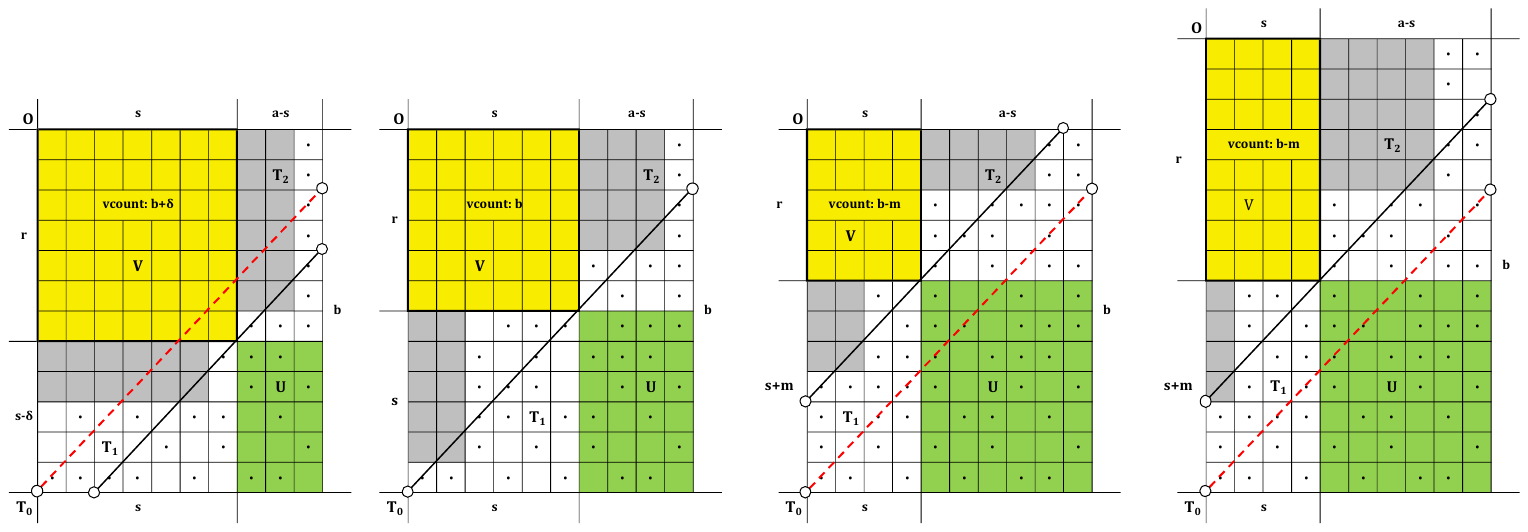}
		\caption{Non-deficient auxiliary bricks}\label{GM_PDF06}
	\end{figure}
	
	\begin{corollary}
		For a maximum hole $V$ in $T_0$, its auxiliary bricks $T_1$ and $T_2$ satisfy the hole condition, and hence also the Hall condition.
	\end{corollary}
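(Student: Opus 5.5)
The statement follows quickly from Lemma~\ref{auxiliary_brick} and Corollary~\ref{cor:matching-from-hole}, so the plan is a short chain.

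\emph{Step 1 (non-deficiency).} Let $V$ be a maximum hole with auxiliary bricks $T_1$ (vertical) and $T_2$ (horizontal). By Lemma~\ref{auxiliary_brick}, neither is deficient, so each is liminal or excessive. Equivalently, for $i=1,2$ the maximum hole $\mathrm{vcount}$ inside $T_i$ is at most $\mathrm{long}(T_i)$. This is exactly the hole condition for $T_i$.

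\emph{Step 2 (consistency of sides).} The characteristic is defined via $|\mathrm{row}(T)|$, while the hole condition uses $\mathrm{long}(T)$. To invoke the lemma cleanly, I would apply the characteristic to each auxiliary brick regarded as a $T_0$ in its own right, oriented so that its longer side plays the role of $b$. This convention is explicitly allowed in the Preliminaries.

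Independently of that convention, the proof of Lemma~\ref{auxiliary_brick} bounds holes directly: $r_1+s_1\le s$ in $T_1$ and $r_2+s_2\le r$ in $T_2$. Here $s$ is the side of $T_1$ shared with $V$ and $r$ the side of $T_2$ shared with $V$. Each of these is at most the long side of its brick, so every hole satisfies $\mathrm{vcount}\le\mathrm{long}(T_i)$. This verifies the hole condition directly.

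\emph{Step 3 (Hall condition).} By the Proposition (Equivalence), the hole condition is equivalent to the Hall condition for the short side. By Corollary~\ref{cor:matching-from-hole}, each $T_i$ then contains a matching of size $\mathrm{small}(T_i)$.

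\emph{Main obstacle.} The only delicate point is the degenerate bookkeeping in Step 2, specifically the orientation of $T_1$ and $T_2$ and the treatment of possibly empty hole sides. That is why I would rely on the explicit inequalities from the lemma's proof rather than on the sign of $m$ alone.
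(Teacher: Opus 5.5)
Your proposal is correct and follows the paper's own route: the corollary is read off from Lemma~\ref{auxiliary_brick}, whose proof gives $r_1+s_1\le s$ and $r_2+s_2\le r$, so every hole in $T_i$ has $\mathrm{vcount}\le\mathrm{long}(T_i)$; the Hall/hole equivalence then does the rest. One detail is left implicit, in the paper as well: applying that equivalence to the sub-bricks $T_1,T_2$ requires them to have no isolated rows or columns, and this follows from the maximality of $V$ together with the absence of isolated vertices in $G_0$.
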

	
	Consequently, if $T_0$ is excessive (Figure~\ref{GM_PDF06}, bricks 3 and 4), it contains a matching of size $a$, and all hole $\mathrm{vcount}$s are less than $b$.
	If $T_0$ is deficient ($\delta > 0$, brick 1) or liminal ($\delta = 0$, brick 2), the auxiliary bricks $T_1$ and $T_2$ still satisfy the Hall condition. 
	Therefore, $T_1$ contains a matching of size $s-\delta$, and $T_2$ contains a matching of size $a-s$. 
	The union of these matchings yields a matching of size
	\[
	(a-s) + (s-\delta) = a - \delta.
	\]
	
	\begin{lemma}\label{lem:matching-union}
		Let $V$ be a maximum hole (MH) in a non-excessive brick $T_0$, with vertical auxiliary brick $T_1$, horizontal auxiliary brick $T_2$, and remote mate $U$. Let $M$ be a matching in $T_0$. Then $M$ is a maximum matching in $T_0$ if and only if it is the union of a maximum matching $M_1$ in $T_1$ and a maximum matching $M_2$ in $T_2$.
	\end{lemma}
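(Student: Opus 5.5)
The plan is to prove both directions by combining K\H{o}nig's theorem with the size count $a-\delta$ already obtained just before the statement. Write $V = Y\times X$ with $|X| = s$ and $|Y| = r = b+\delta-s$. Then $T_1 = (B\setminus Y)\times X$ has $s-\delta$ rows and $s$ columns, and $T_2 = Y\times(A\setminus X)$ has $r$ rows and $a-s$ columns. By Lemma~\ref{auxiliary_brick} and its corollary, both auxiliary bricks satisfy the hole condition. Corollary~\ref{cor:matching-from-hole} therefore gives maximum matchings of sizes $\mathrm{small}(T_1) = s-\delta$ and $\mathrm{small}(T_2) = a-s$; here $r \ge a-s$ follows from Lemma~\ref{auxiliary_brick}. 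The first step is to show that $\nu(T_0) = a-\delta$ exactly, so that the union construction is optimal.

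\textbf{Upper bound.} Take $L = (B\setminus Y)\cup(A\setminus X)$, the complement of the hole. It is a vertex cover, since $V$ contains no dot, and
\[
|L| = (s-\delta) + (a-s) = a-\delta .
\]
Since every matching has size at most the size of any vertex cover, $\nu(T_0)\le a-\delta$. The union of maximum matchings of $T_1$ and $T_2$ is a matching, because the two bricks share neither rows nor columns. Its size is $a-\delta$, so it is a maximum matching. This proves the direction ($\Leftarrow$): any union $M_1\cup M_2$ of maximum matchings has size $a-\delta=\nu(T_0)$.

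\textbf{Converse.} Let $M$ be a maximum matching, so $|M| = a-\delta = |L|$. Each edge of $M$ covers at least one vertex of $L$, and distinct edges cover distinct vertices, so each vertex of $L$ is covered by exactly one edge of $M$. Moreover, no edge of $M$ can have both endpoints in $L$: otherwise the $|M|$ edges would together use more than $|L|$ vertices of $L$, which is impossible. An edge with both endpoints in $L$ is exactly a dot in the remote mate $U$, so $M$ has no dot in $U$. No dot lies in $V$ either, so every edge of $M$ lies in $T_1$ or in $T_2$. Set $M_i = M\cap T_i$. Then $|M_1|\le s-\delta$ and $|M_2|\le a-s$, while $|M_1|+|M_2| = a-\delta$. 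Both inequalities must therefore be equalities, so each $M_i$ is a maximum matching of $T_i$.

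The main obstacle is the lower bound $\nu(T_0)\ge a-\delta$. It depends on the auxiliary bricks carrying matchings of full short-side size, which requires the side comparisons $s-\delta\le s$ (immediate) and $a-s\le r$. The latter follows from $\delta\ge 0$ together with $a\le b$, since $r = b+\delta-s \ge a-s$. Once those orientations are checked, Corollary~\ref{cor:matching-from-hole} applies as intended, and the rest is the counting argument above. The hypothesis that $T_0$ is non-excessive is used exactly to guarantee $\delta\ge0$, which is what makes these dimensions and $T_1$'s row count $s-\delta$ consistent.
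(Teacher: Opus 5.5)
Your proof is correct and is essentially the paper's argument. The paper bounds $|M|$ by noting that the horizontal strip $T_1\cup U$ holds at most $s-\delta$ matching dots and the vertical strip $T_2\cup U$ at most $a-s$, with each dot of $U$ counted twice; this is your vertex-cover bound with $L=(B\setminus Y)\cup(A\setminus X)$, together with your observation that no edge of a maximum matching can meet $L$ twice. Two small attributions should be corrected. You only use the weak inequality $\nu\le|L|$, not K\H{o}nig's theorem. And $r\ge a-s$ comes directly from $r+s=b+\delta\ge a$, as you note later, rather than from Lemma~\ref{auxiliary_brick}.
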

	
	\begin{proof}
		Let $V$ be an $r \times s$ MH. Since $T_0$ is non-excessive, its $\mathrm{vcount}$ satisfies $r+s = b+\delta$, with $\delta \ge 0$.
		
		\medskip
		(\emph{Forward direction})  
		Let $M_1$ be a maximum matching in $T_1$ of size $s-\delta$ and $M_2$ a maximum matching in $T_2$ of size $a-s$. Then their union $M = M_1 \cup M_2$ is a matching of size $a-\delta$, since no two elements see each other. It is maximum because:  
		\begin{itemize}
			\item The strip $U \cup T_2$ can contain at most $a-s$ matching elements.  
			\item The strip $U \cup T_1$ can contain at most $s-\delta$ matching elements.  
		\end{itemize}
		Any overlap $k$ in $U$ reduces the total size to $a-\delta-k$, so the maximum is achieved when $k=0$.
		
		\medskip
		(\emph{Reverse direction})  
		Suppose $M$ is a matching in $T_0$ with $k > 0$ elements in the remote mate $U$. Each element in $U$ is counted in both the strip $U \cup T_1$ and the strip $U \cup T_2$. Since the strip $U \cup T_2$ contains at most $a - s$ matching elements and the strip $U \cup T_1$ contains at most $s - \delta$ matching elements, and the $k$ elements in $U$ are counted in both strips, the total size of $M$ satisfies
		\[
		|M| \le (a-s) + (s-\delta) - k = a - \delta - k < a - \delta.
		\]
		Hence $M$ cannot be a maximum matching, and no element of $U$ can belong to a maximum matching in $T_0$.
	\end{proof}
	
	\begin{definition}
		Let $T$ be a brick. A dot in $T$ is called \emph{forbidden} if it does not appear in any maximum matching of $T$ (of size $\mathrm{small}(T)$).
	\end{definition}
	
	\begin{corollary}\label{cor:forbidden-dot}
		Let $T_0$ be a non-excessive brick. If a dot lies in the remote mate $U$ of some MH in $T_0$, then it is forbidden in $T_0$. This follows directly from the reverse direction of Lemma~\ref{lem:matching-union}.
	\end{corollary}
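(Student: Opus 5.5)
The statement is Corollary~\ref{cor:forbidden-dot}. I would derive it directly from the reverse direction of Lemma~\ref{lem:matching-union}.

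Fix a non-excessive brick $T_0$ with deficit $\delta=-m\ge 0$. Let $V$ be an MH of size $r\times s$, so that $r+s=b+\delta$, with auxiliary bricks $T_1$, $T_2$ and remote mate $U$. Let $d$ be a dot in $U$. Suppose, for contradiction, that some maximum matching $M$ of $T_0$ contains $d$. Then $M$ has $k\ge 1$ elements in $U$.

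The next step is the double-counting estimate from the lemma. By Lemma~\ref{auxiliary_brick}, $T_1$ and $T_2$ satisfy the hole condition, which gives the maximum sizes $s-\delta$ of a matching in $T_1$ and $a-s$ of a matching in $T_2$. These in turn bound the number of matching elements in the vertical strip $U\cup T_1$ and the horizontal strip $U\cup T_2$; the hole $V$ contains no dots, so these two strips contain every dot of $M$. Counting the $k$ elements of $U$ twice then gives
\[
|M|\le (s-\delta)+(a-s)-k<a-\delta .
\]

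The forward direction of the same lemma exhibits a matching $M_1\cup M_2$ of size exactly $a-\delta$. So the maximum matching size of $T_0$ is $a-\delta$, and $M$ cannot be maximum. This is a contradiction, so $d$ lies in no maximum matching and is forbidden.

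The main point to check is how ``maximum matching'' is read in the definition of a forbidden dot. In the non-excessive case the maximum size is $a-\delta$, which may be smaller than $\mathrm{small}(T_0)$, and I would interpret the definition as referring to maximum-cardinality matchings. The other thing to check is that the strip bounds really are bounds on the parts of $M$ lying in $U\cup T_1$ and in $U\cup T_2$. The paper states these strip bounds without justification. I would take them as given, or justify them by König duality, using that the rows of $T_2$ together with the columns of $T_1$ form a vertex cover of size $(b-r)+(a-s)\cdots$ that matches the count $a-\delta$.
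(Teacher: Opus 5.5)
Your argument is correct and is the paper's own route: the double count from the reverse direction of Lemma~\ref{lem:matching-union}. You also make explicit two things the paper leaves implicit, namely the existence of a matching of size $a-\delta$ (from the forward direction) and the maximum-cardinality reading of ``forbidden''.

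The strip bounds you flag need neither the matching sizes of $T_1,T_2$ nor K\H{o}nig duality. The strip $U\cup T_1=\mathrm{hstrip}(U)$ has only $b-r=s-\delta$ rows, the strip $U\cup T_2=\mathrm{vstrip}(U)$ has only $a-s$ columns, and a matching uses each row and column at most once; note that you have the two strip labels swapped. In cover language, the relevant cover is $\mathrm{row}(U)\cup\mathrm{col}(U)$, i.e.\ the rows of $T_1$ and the columns of $T_2$. The rows of $T_2$ and columns of $T_1$ that you name form the independent set of the hole $V$ itself.
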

	
	\begin{corollary}
		Let $V$ be a hole with $\mathrm{vcount}$ $b+\delta$ in $T_0$ ($\delta \ge 0$). If neither of its auxiliary bricks $T_1$ and $T_2$ is deficient, then there exists a matching of size $a-\delta$, so that $V$ is an MH. Otherwise, there would exist a hole with $\mathrm{vcount}$ greater than $b+\delta$ in $T_0$, which would imply that the maximum matching size is less than $a-\delta$ (by Corollary~\ref{cor:matching-from-hole}), contradicting the assumption.
	\end{corollary}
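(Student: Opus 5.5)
The plan is to build the matching of size $a-\delta$ directly from the two auxiliary bricks, and then rule out any larger hole by a counting bound on matchings. Let $V$ be the hole, of size $r\times s$, with $X=\mathrm{col}(V)$ and $Y=\mathrm{row}(V)$, so that $r+s=b+\delta$.

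\textbf{Step 1: dimensions of the auxiliary bricks.} The vertical auxiliary brick is $T_1=(B\setminus Y)\times X$, with $b-r=s-\delta$ rows and $s$ columns. Since $\delta\ge 0$, it is horizontal, with $\mathrm{long}(T_1)=s$ and $\mathrm{small}(T_1)=s-\delta$. The horizontal auxiliary brick is $T_2=Y\times(A\setminus X)$, with $r$ rows and $a-s$ columns. Because $r=b+\delta-s\ge a-s$, it is vertical, with $\mathrm{long}(T_2)=r$ and $\mathrm{small}(T_2)=a-s$.

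\textbf{Step 2: a matching of size $a-\delta$.} As in the proof of Lemma~\ref{auxiliary_brick}, I read ``not deficient'' for a sub-brick as the hole condition relative to its own longer side: no hole of vcount exceeding $\mathrm{long}$. Corollary~\ref{cor:matching-from-hole}, which is stated orientation-free, then gives a matching $M_1$ of size $s-\delta$ in $T_1$ and a matching $M_2$ of size $a-s$ in $T_2$. The degenerate cases, where one of these sizes is $0$, are trivial.

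The bricks $T_1$ and $T_2$ use disjoint row sets ($B\setminus Y$ versus $Y$) and disjoint column sets ($X$ versus $A\setminus X$). Hence no dot of $M_1$ sees a dot of $M_2$, and $M_1\cup M_2$ is a matching of size $(s-\delta)+(a-s)=a-\delta$.

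\textbf{Step 3: $V$ is a maximum hole.} This step carries the real content. Suppose, for contradiction, that some hole $V'$ has $\mathrm{vcount}(V')=b+\delta'$ with $\delta'>\delta$. Put $X'=\mathrm{col}(V')$. Since $\mathrm{row}(V')\subseteq B\setminus N(X')$, we get
\[
|X'|-|N(X')|\ \ge\ \mathrm{vcount}(V')-b=\delta'.
\]
Now bound any matching $M$ of $T_0$. Every column in $X'$ is matched into $N(X')$, so at most $|N(X')|$ columns of $X'$ are covered by $M$. At most $a-|X'|$ further columns lie outside $X'$. Therefore
\[
|M|\ \le\ a-|X'|+|N(X')|\ \le\ a-\delta'\ <\ a-\delta .
\]
This contradicts Step 2, which is the deficiency form of Hall/K\H{o}nig invoked via Corollary~\ref{cor:matching-from-hole} in the statement. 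Hence no hole has vcount exceeding $b+\delta$, and $V$ is a maximum hole.

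The main point of care is Step 1's orientation bookkeeping. Because $T_1$ is horizontal, the notion ``non-deficient'' must be applied with respect to $\mathrm{long}(T_1)=s$ rather than its row count. Once this is fixed, Corollary~\ref{cor:matching-from-hole} applies directly and the rest is counting.
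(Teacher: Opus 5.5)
Your route is the paper's. You build a matching of size $a-\delta$ as the union of saturating matchings of $T_1$ and $T_2$, then show that a hole of larger vcount would force every matching to be smaller. Your Step~3 is more careful than the paper. The paper cites Corollary~\ref{cor:matching-from-hole}, but that corollary only characterises matchings saturating the short side. Your bound $|M|\le a-|X'|+|N(X')|\le a-\delta'$ is the deficiency estimate actually needed.

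The gap is in Step~2, and the paper has the same gap. You fix the meaning of ``not deficient'' by appeal to Lemma~\ref{auxiliary_brick}. There a hole of a sub-brick must be proper \emph{relative to that sub-brick} ($s_1<s$), which is also what the $\alpha_p$-based Definition~\ref{def:characteristic} gives. Under that reading Corollary~\ref{cor:matching-from-hole} fails for $T_1$ and $T_2$. Its proof turns a Hall violator $R$ into the dot-free rectangle $R\times(\mathrm{col}\setminus N(R))$. If $R$ consists of empty rows of $T_1$, this rectangle spans all columns of $T_1$ and so is not a hole of $T_1$.

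Empty rows of $T_1$, or empty columns of $T_2$, occur exactly when $V$ is not inclusion-maximal. So this is not a side case, and the conclusion really fails there. Take $a=b=3$ with edges $x_1y_1,x_2y_1,x_3y_1,x_3y_2,x_3y_3$, and $V=\{y_3\}\times\{x_1,x_2\}$, a hole of vcount $3=b$, so $\delta=0$.
\begin{itemize}
\item $T_1=\{y_1,y_2\}\times\{x_1,x_2\}$ has dots only in row $y_1$. Its proper holes are $1\times1$ with vcount $2=\mathrm{long}(T_1)$, so $T_1$ is liminal ($\alpha_p(T_1)=2$).
\item $T_2=\{y_3\}\times\{x_3\}$ is a single dot, hence excessive.
\item Yet $T_1$ has no matching of size $2$. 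Since $N(\{x_1,x_2\})=\{y_1\}$, we get $\nu(G_0)=2<a-\delta$. Also $\{y_2,y_3\}\times\{x_1,x_2\}$ is a hole of vcount $4$.
\end{itemize}

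There are two ways to close the gap.

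\emph{Lenient reading.} Read ``$T_1$, $T_2$ not deficient'' as: no dot-free sub-rectangle of $T_1$ (resp.\ $T_2$) has vcount exceeding its long side, \emph{including} rectangles that span a full side of the sub-brick. These are still holes of $T_0$, because $X\subsetneq A$ and $Y\subsetneq B$, which fits the paper's convention that an unqualified hole is a hole of $T_0$. Then verify Hall directly instead of quoting the corollary. A row set $R$ of $T_1$ with $|N(R)\cap X|<|R|$ gives the dot-free rectangle $R\times(X\setminus N(R))$. Its column set is nonempty, since $|N(R)\cap X|<|R|\le s-\delta\le s$. Its vcount is $|R|+s-|N(R)\cap X|>s$. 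Column sets of $T_2$ are handled symmetrically.

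\emph{Extra hypothesis.} Keep the proper reading and add the hypothesis that $V$ is inclusion-maximal. Then $T_1$ has no empty row and $T_2$ no empty column. Empty columns of $T_1$ and empty rows of $T_2$ are already excluded, because $G_0$ has no isolated vertices. Every Hall violator then produces a proper hole of the sub-brick.

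With either repair, Steps~1 and~3 stand as written.
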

	
	\begin{definition}[Covering brick]
		Let $W$ be a proper brick in $T_0$, and let 
		\[
		L = \mathrm{col}(W) \cup \mathrm{row}(W)
		\]
		be the union of its column and row indices. We say that $W$ is a \emph{covering brick} if every dot in $T_0$ intersects $L$, i.e., every dot belongs to at least one row or column of $W$. Equivalently, the union of the vertical strip $\mathrm{vstrip}(W)$ and the horizontal strip $\mathrm{hstrip}(W)$ covering $W$ contains all dots of $T_0$.
		
		The cardinality of $L$ equals $\mathrm{vcount}(W)$. If $W$ is a covering brick in $T_0$, then $L$ is a vertex cover in the graph $G$ corresponding to $T_0$, so $W$ is also called a \emph{proper cover set brick (PCS)}.
		
		A covering brick $W$ is a \emph{minimum covering brick} if no covering brick of smaller $\mathrm{vcount}$ exists. In this case, $L$ is a minimum vertex cover in $G$, and $W$ is also called a \emph{minimum proper cover set brick (MPCS)}.
	\end{definition}
	
	\begin{corollary}\label{cor:remote-mate-bijection}
		Clearly, the remote mate of a PIS is a PCS and vice versa; similarly, the remote mate of an MPIS is an MPCS and vice versa.
	\end{corollary}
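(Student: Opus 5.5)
The plan is to reduce everything to complementation inside $V(G_0)=A\cup B$, using the identification of bricks with vertex sets. A PIS $W$ corresponds to the hole $V=Y\times X$ with $X=W\cap A$ and $Y=W\cap B$, as in Proposition~\ref{prop:mpis-hole-corr}. Its remote mate is $U=(B\setminus Y)\times(A\setminus X)$, whose index set is
\[
L=\mathrm{col}(U)\cup\mathrm{row}(U)=(A\setminus X)\cup(B\setminus Y)=V(G_0)\setminus W .
\]
So the corollary amounts to three checks: complementation sends independent sets to vertex covers and back; it preserves properness; and it reverses the order of cardinalities.

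First, the independence/cover step. $W$ is independent exactly when no dot lies in $Y\times X$, i.e.\ $V$ is dot-free. This holds iff every dot $(y,x)$ has $x\in A\setminus X$ or $y\in B\setminus Y$. That is precisely the statement that every dot lies in $\mathrm{vstrip}(U)\cup\mathrm{hstrip}(U)$, i.e.\ $U$ is a covering brick and $L$ is a vertex cover. Since each step is an equivalence, the converse direction (remote mate of a PCS is a PIS) comes for free.

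Second, properness. The four conditions $X\neq\emptyset$, $Y\neq\emptyset$, $X\neq A$, $Y\neq B$ translate verbatim into $A\setminus X\neq A$, $B\setminus Y\neq B$, $A\setminus X\neq\emptyset$, $B\setminus Y\neq\emptyset$. These are exactly the conditions that $L$ is a proper vertex cover and that $U$ is a proper brick, so $U$ is genuinely a brick in the sense of Definition~\ref{Brick}. The map $V\mapsto U$ is an involution, since the remote mate of $U$ is $V$. Hence it is a bijection between PIS-holes and PCS-bricks.

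Third, extremality. We have $\mathrm{vcount}(U)=a+b-\mathrm{vcount}(V)$, so maximizing $\mathrm{vcount}$ over holes is the same as minimizing it over covering bricks. Combined with $\alpha_p(G_0)+\tau_p(G_0)=a+b$, this shows that $V$ is an MH (equivalently an MPIS, by Proposition~\ref{prop:mpis-hole-corr}) iff $U$ is an MPCS.

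There is no real obstacle in this argument. The only point needing care is bookkeeping: we must confirm that properness is exactly what keeps both $V$ and $U$ nondegenerate proper bricks, so that the correspondence never leaves the class of objects being compared.
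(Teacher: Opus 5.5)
Your proof is correct and takes the same route as the paper. The paper treats the statement as immediate from two facts: proper independent sets and proper vertex covers are complements in $V(G_0)$, and remote mates satisfy $\mathrm{vcount}(V)+\mathrm{vcount}(U)=|A|+|B|$. Your argument is that same complementation, with the properness and nondegeneracy bookkeeping written out explicitly where the paper leaves it implicit.
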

	
	If the bricks $V$ and $U$ are remote mates of each other, then
	\[
	\mathrm{vcount}(V) + \mathrm{vcount}(U) = |A| + |B|.
	\]
	Hence, for the graph $G$ and its corresponding brick $T_0$, we have
	\[
	\alpha_p(G_0) + \tau_p(G_0) = |A| + |B|, \qquad
	\alpha_p(T_0) + \tau_p(T_0) = \mathrm{vcount}(T_0).
	\]
	
	\begin{corollary}
		In the non-excessive case $(\delta \ge 0)$, the remote mate of any MH $V$ is an MPCS of $\mathrm{vcount}$ $a-\delta$.
		By Lemma~\ref{lem:matching-union}, there exists a maximum matching of size $a-\delta$ partitioned by this MPCS. Therefore, for $\delta \ge 0$, we obtain the grid-model analogue of K\H{o}nig's theorem \cite{Konig1931}:
		\[
		\tau_p(T_0) = \nu(T_0).
		\]
		Furthermore, for any MPCS $U$, any maximum matching $M_0$ decomposes into a maximum matching $M_1$ in $T_1$ and a maximum matching $M_2$ in $T_2$, satisfying
		\[
		\mathrm{row}(T_1) = |M_1| = \mathrm{row}(U), \qquad
		\mathrm{col}(T_2) = |M_2| = \mathrm{col}(U).
		\]
	\end{corollary}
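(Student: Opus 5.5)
The plan is to read everything off the partition induced by a maximum hole, using only vcount arithmetic and the matching decomposition of Lemma~\ref{lem:matching-union}. Let $V$ be an MH of size $r\times s$ with columns $X$ and rows $Y$. Since $\delta\ge 0$ we have $r+s=b+\delta$. Its remote mate is $U=(B\setminus Y)\times(A\setminus X)$.

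\textbf{Step 1 (vcount of $U$).} First I compute $\mathrm{vcount}(U)=(b-r)+(a-s)=a+b-(b+\delta)=a-\delta$. By Corollary~\ref{cor:remote-mate-bijection}, the remote mate of an MPIS is an MPCS. Since $V$ is an MH, it is an MPIS by Proposition~\ref{prop:mpis-hole-corr}, so $U$ is an MPCS and $\tau_p(T_0)=a-\delta$. Equivalently, this follows from $\alpha_p+\tau_p=a+b$.

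\textbf{Step 2 (König analogue).} Next I record the sizes of the two auxiliary bricks:
\[
T_1=(B\setminus Y)\times X \text{ is } (s-\delta)\times s, \qquad T_2=Y\times(A\setminus X) \text{ is } r\times(a-s).
\]
For $T_2$ note that $r=b+\delta-s\ge a-s$. By Lemma~\ref{auxiliary_brick} neither auxiliary brick is deficient, so both satisfy the hole condition. Corollary~\ref{cor:matching-from-hole} then gives matchings of sizes $\mathrm{small}(T_1)=s-\delta$ and $\mathrm{small}(T_2)=a-s$. By Lemma~\ref{lem:matching-union}, their union is a maximum matching of $T_0$, so $\nu(T_0)=a-\delta=\tau_p(T_0)$.

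As a sanity check, every dot lies in $\mathrm{vstrip}(U)\cup\mathrm{hstrip}(U)$, and no two matching dots share a row or column. Hence any matching has size at most $\mathrm{vcount}(U)$, which is the easy inequality $\nu\le\tau_p$.

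\textbf{Step 3 (decomposition relative to an arbitrary MPCS).} Let $U$ be any MPCS. By Corollary~\ref{cor:remote-mate-bijection}, its remote mate $V$ is an MPIS, hence an MH. The auxiliary bricks of $U$ and $V$ coincide. The reverse direction of Lemma~\ref{lem:matching-union} shows that any maximum matching $M_0$ has no dot in $U$ and splits as $M_1\cup M_2$, with $M_i$ maximum in $T_i$. The sizes computed in Step 2 then give
\[
|M_1|=s-\delta=b-r=|\mathrm{row}(U)|=|\mathrm{row}(T_1)|, \qquad |M_2|=a-s=|\mathrm{col}(U)|=|\mathrm{col}(T_2)|.
\]
Here the displayed identities of the statement are read as cardinalities, with $\mathrm{row}(T_1)=\mathrm{row}(U)$ and $\mathrm{col}(T_2)=\mathrm{col}(U)$ as sets.

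The only delicate point, and the one I expect to be the main obstacle, is the claim that $M_i$ has size exactly $\mathrm{small}(T_i)$. This needs the orientation of $T_1$ and $T_2$, which the inequalities $s-\delta\le s$ and $r\ge a-s$ settle, together with the non-deficiency of the auxiliary bricks from Lemma~\ref{auxiliary_brick}. I will verify these explicitly before invoking Corollary~\ref{cor:matching-from-hole}.
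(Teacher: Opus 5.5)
Your proposal is correct and follows the paper's own justification. The remote-mate bijection (Corollary~\ref{cor:remote-mate-bijection}) together with $\mathrm{vcount}(V)+\mathrm{vcount}(U)=a+b$ gives an MPCS of size $a-\delta$. Lemma~\ref{auxiliary_brick}, Corollary~\ref{cor:matching-from-hole} and both directions of Lemma~\ref{lem:matching-union} then give $\nu(T_0)=a-\delta$ and the splitting of every maximum matching, with the displayed identities correctly read as cardinalities. The only cosmetic point is that the auxiliary bricks of $U$ and $V$ coincide with their vertical and horizontal roles interchanged; your identifications $\mathrm{row}(T_1)=\mathrm{row}(U)$ and $\mathrm{col}(T_2)=\mathrm{col}(U)$ already account for this.
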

	
	\begin{figure}[htb]
		\centering
		\includegraphics[width=.8\linewidth,keepaspectratio]{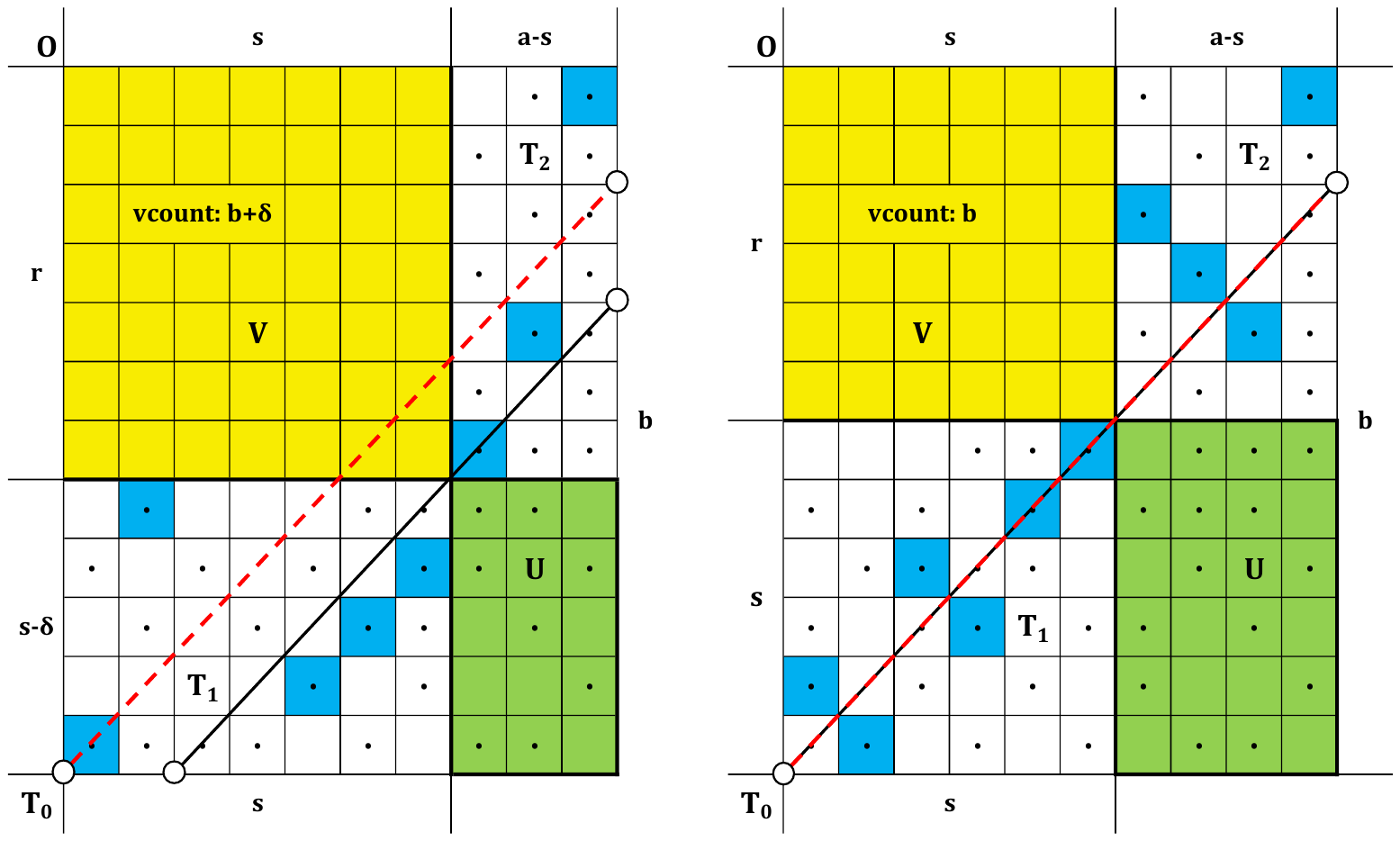}
		\caption{Grid-model analogue of K\H{o}nig's theorem for non-excessive bricks}\label{GM_PDF07}
	\end{figure}
	\FloatBarrier
	
	\subsection{Connectivity and Atomic Property}
	
	\begin{definition}
		A brick $T_0$ is \emph{connected} if it contains no sub-brick whose auxiliary bricks are holes.
	\end{definition}
	
	Clearly, $T_0$ is connected if and only if the corresponding bipartite graph $G$ is connected in the usual sense.
	
	\begin{definition}
		Let $T_0$ be a $b \times a$ brick ($a \le b$). We say that $T_0$ is \emph{$m$-extendable} if every matching of size $m$ can be extended to a maximum matching of size $a$.
	\end{definition}
	
	\begin{proposition}\label{extendable}
		Let $T_0$ be a $b \times a$ brick ($a \le b$) that is $m$-excessive. Then $T_0$ is $m$-extendable.
	\end{proposition}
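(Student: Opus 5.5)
Let $M$ be a matching of size $m$ in $T_0$, with column set $X_M\subseteq A$ and row set $Y_M\subseteq B$, $|X_M|=|Y_M|=m$. The natural approach is to delete the rows and columns used by $M$ and show that the remaining brick $T' = (B\setminus Y_M)\times(A\setminus X_M)$ has a matching saturating its short side $A\setminus X_M$. Adding $M$ to that matching then gives a matching of size $(a-m)+m=a$, which is maximum because $a=\mathrm{small}(T_0)$. We may assume $m<a$; otherwise $M$ is already of size $a$, or no matching of size $m$ exists and the statement is vacuous. The brick $T'$ is $(b-m)\times(a-m)$ with $a-m\le b-m$, so by Corollary~\ref{cor:matching-from-hole} it suffices to show that $T'$ contains no hole of $\mathrm{vcount}$ greater than $b-m$.

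To verify this hole condition, take any hole $H'=Y'\times X'$ of $T'$, with $X'\subseteq A\setminus X_M$ and $Y'\subseteq B\setminus Y_M$ nonempty, and suppose $|X'|+|Y'|>b-m$. Because $H'$ is dot-free in $T_0$ as well, $Y'\times X'$ is a dot-free rectangle of $T_0$. It remains to check that it is proper, i.e.\ a hole of $T_0$. This holds: $Y'\subseteq B\setminus Y_M\subsetneq B$ and $X'\subseteq A\setminus X_M\subsetneq A$, using $m\ge1$. Hence $H'$ is a hole of $T_0$, so $\mathrm{vcount}(H')\le\alpha_p(T_0)=b-m$, a contradiction.

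Alternatively, one can argue Hall-style with Proposition~\ref{excessive_expanding}. For nonempty $X\subseteq A\setminus X_M$, either $N(X)=B$, or $|N(X)|\ge|X|+m$. Removing $Y_M$ loses at most $m$ neighbours, so $|N_{T'}(X)|\ge|X|$ in the second case. In the first case $|N_{T'}(X)|=b-m\ge a-m\ge|X|$. Either way Hall's condition holds in $T'$.

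The step needing care, and so the main obstacle, is the properness and degenerate-case bookkeeping. One must confirm that a hole of the sub-brick is a \emph{proper} hole of $T_0$, which uses $m\ge1$ to guarantee that rows and columns were actually removed. One must also confirm that $T'$ is nonempty, or that the degenerate case $a=m$ is trivial. Notably, nothing in the argument uses $a=b$ or $a<b$, which is exactly what makes the statement apply to unbalanced bricks.
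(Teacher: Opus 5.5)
Your proof is correct and follows essentially the same route as the paper: remove the $m\times m$ square spanned by $M$, observe that any dot-free rectangle of the remote mate $(B\setminus Y_M)\times(A\setminus X_M)$ is a hole of $T_0$ and so has $\mathrm{vcount}\le b-m$, and combine $M$ with the resulting matching of size $a-m$. Your explicit use of $m\ge1$ to make such rectangles proper in $T_0$ is a detail the paper leaves implicit. It also covers rectangles spanning all columns of the sub-brick, where isolated columns could otherwise slip past the hole condition. The degenerate case $m\ge a$ and your Hall-style variant are likewise correct.
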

	
	\begin{proof}
		If $m < a$, consider an arbitrary matching $M_m$ of size $m$. Let $S$ be the $m \times m$ square induced by $M_m$, and let $R$ be its remote mate, a brick of size $(b-m) \times (a-m)$.

		We claim that $R$ contains no hole with $\mathrm{vcount}$ greater than $b - m$. Indeed, since $S$ and $R$ occupy disjoint row and column indices, any hole in $R$ (a subrectangle of $R$ containing no dot) is also a hole in $T_0$: no dot from $S$ can appear in the row/column range of $R$, so the absence of dots is preserved in $T_0$. If $R$ contained a hole of $\mathrm{vcount} > b - m$, this would be a hole in $T_0$ of $\mathrm{vcount} > b - m$, contradicting the assumption that $T_0$ is $m$-excessive. Hence $R$ is non-deficient and contains a maximum matching $M_{a-m}$ of size $a-m$.

		The elements of $M_m$ and $M_{a-m}$ do not see each other (they lie in disjoint parts of $T_0$), so their union forms a matching of size $a$, proving that $T_0$ is $m$-extendable.
	\end{proof}
	
	\begin{remark}\label{rem:plummer}
		Extendability is Plummer's notion~\cite{Plummer1980}: a graph is \emph{$n$-extendable} if
		every matching of size $n$ extends to a \emph{perfect} matching. The classical definition
		therefore presupposes a graph carrying a perfect matching, and in the bipartite case this
		means a \emph{balanced} graph.

		The proof of Proposition~\ref{extendable} nowhere uses balance: it is carried out for an
		arbitrary $b\times a$ brick with $a\le b$, the perfect matching being replaced by a maximum
		matching of size $\mathrm{small}(T_0)=a$. Thus $m$-excessiveness is a sufficient condition
		for $m$-extendability, and it extends Plummer's notion canonically to unbalanced bipartite
		graphs. For $m=1$ it specialises to the classical statement that every edge lies in a maximum
		matching.

		This also answers the question raised after Definition~\ref{def:characteristic}, of why the
		characteristic $m$ rather than the Hall deficiency is the organizing invariant: besides
		localizing to sub-bricks, $m$ carries a matching-theoretic meaning. It is exactly the number
		of edges that may be chosen freely before extendability to a maximum matching can be lost.
	\end{remark}
	
	\begin{proposition}\label{prop:atomic-iff-connected}
		Let $T_0$ be a $b \times a$ brick ($a \le b$). Then $T_0$ is atomic if and only if it is connected and $m$-extendable for some $m > 0$.
	\end{proposition}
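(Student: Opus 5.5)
The plan is to prove the two directions separately. The forward direction is a direct citation of earlier results; the reverse direction is by contraposition, using the hole structure of a non-excessive brick.

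\textbf{($\Rightarrow$)} Suppose $T_0$ is atomic. By Definition~\ref{def:m-atomic} it is connected and $m$-excessive for some $m>0$. Proposition~\ref{extendable} then gives $m$-extendability with the same $m$, so nothing further is needed.

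\textbf{($\Leftarrow$)} Suppose $T_0$ is connected and $m$-extendable for some $m>0$, and assume for contradiction that $T_0$ is not excessive, i.e.\ $\delta=-m_{T_0}\ge 0$. Fix a maximum hole $V$ with $\mathrm{vcount}(V)=b+\delta$. Let $T_1,T_2$ be its auxiliary bricks and $U$ its remote mate.

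The first step is to show that $U$ contains a dot. If $U$ were dot-free, every dot would lie in $T_1$ or $T_2$, since $V$ is a hole. Now $T_1$ uses the columns $\mathrm{col}(V)$ and the rows $B\setminus\mathrm{row}(V)$, while $T_2$ uses the rows $\mathrm{row}(V)$ and the columns $A\setminus\mathrm{col}(V)$. These two vertex sets are disjoint and no dot joins them, so $G_0$ would split into two nonempty parts with no edges between them. Both parts are nonempty because $V$ is proper. This contradicts connectivity, so $U$ contains a dot $e$.

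By Corollary~\ref{cor:forbidden-dot}, $e$ is forbidden: it lies in no maximum matching. In the deficient case ($\delta>0$) one can argue even more simply, since then there is no matching of size $a$ at all.

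The second step is to produce a matching of size $m$ that contains $e$, which would then violate $m$-extendability. Take a maximum matching $M$ and delete the at most two edges of $M$ meeting the row or column of $e$. What remains, together with $e$, is a matching of size at least $|M|-1$ containing $e$. Any subset of size $m$ of it that contains $e$ cannot extend to a matching of size $a$: such an extension would be a maximum matching containing the forbidden dot $e$. This gives the required contradiction.

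The main obstacle is this second step, specifically the range of $m$. The construction needs $m\le |M|-1$. When $m\ge \nu(T_0)$, $m$-extendability can hold vacuously, because no matching of size $m$ that contains $e$ exists. I would therefore first pin down the intended reading of $m$-extendability: that the statement concerns $0<m<a$, or equivalently that the definition requires matchings of size $m$ to exist and the maximum matching size to be $a$. Under that reading the argument above closes. If the definition is to be read literally for all $m$, I would instead try to first reduce $m$-extendability to $1$-extendability on a connected brick. I expect that reduction to fail in the vacuous range, so I would add the restriction $m<a$ as a hypothesis and note it explicitly in the statement.
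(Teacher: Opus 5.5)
Your reverse direction is the paper's argument: connectivity puts a dot $e$ into the remote mate $U$ of a maximum hole, Corollary~\ref{cor:forbidden-dot} makes $e$ forbidden, and this contradicts extendability. You are more careful than the paper at the one delicate point. The paper passes directly from ``$m$-extendable'' to ``every dot lies in a matching of size $a$'', and uses this both to exclude deficiency and to empty $U$. That inference is immediate only for $m=1$. Your swap step is exactly what justifies it in general, and it works only for $m\le\nu(T_0)-1$.

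Your worry about the vacuous range is correct: the statement as written is false. Every $b\times a$ brick is trivially $a$-extendable, because a matching of size $a$ is already maximum. For example, take the path $y_1x_1y_2x_2$ as a $2\times 2$ brick whose only empty cell is $(y_1,x_2)$. It is connected and $2$-extendable. Yet $\{y_1\}\times\{x_2\}$ is a hole of $\mathrm{vcount}$ $2=b$, so the brick is liminal and not atomic.

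Your two proposed readings are not equivalent, and neither suffices alone. The path above has $\nu(T_0)=a$ and has matchings of size $m=a$, so existence of size-$m$ matchings plus $\nu=a$ is not enough. In the other direction, take $a=b=4$ with $x_1$ adjacent to all rows and $x_2,x_3,x_4$ adjacent only to $y_1$. This brick is connected, deficient, has $\nu=2$, and is vacuously $3$-extendable, so $0<m<a$ alone is not enough either.

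What your argument actually uses is both conditions together: $1\le m\le\nu(T_0)$ and $m<a$. The deficient case then fails because no matching of size $a$ exists. The liminal case fails by your swap, which needs $m\le a-1$. The cleanest correct version is simply $m=1$, i.e.\ every dot lies in a matching of size $a$.

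Under such a restricted reading, your forward direction also needs a small fix. The characteristic of an atomic brick can be $\ge a$. For instance, with $a=2$, $b=5$, $x_1$ adjacent to all rows and $x_2$ to all rows but $y_5$, the characteristic is $m=3$. In that case Proposition~\ref{extendable}, applied with the characteristic itself, is vacuous.

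Instead, note that its proof works verbatim for any $1\le k\le\min\{m,a-1\}$. Holes in the $(b-k)\times(a-k)$ remote mate are holes of $T_0$, so they still have $\mathrm{vcount}\le b-m\le b-k$. Then take $k=1$; for $a=1$, $1$-extendability is trivial.
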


	\begin{proof}
		(\emph{Forward direction}) Suppose $T_0$ is atomic. By definition, it is connected and $m$-excessive for some $m > 0$. By Proposition~\ref{extendable}, $T_0$ is $m$-extendable.

		\medskip

		(\emph{Reverse direction}) Suppose $T_0$ is connected and $m$-extendable for some $m > 0$. Since $T_0$ is $m$-extendable, every dot can be extended to a matching of size $a$, so $T_0$ is not deficient. Hence $T_0$ is either liminal or excessive.

		Assume, for contradiction, that $T_0$ is liminal. Then it contains a hole $V$ of $\mathrm{vcount}$ $b$. Since $T_0$ is non-excessive ($m = 0$), Corollary~\ref{cor:forbidden-dot} applies: every dot in the remote mate $U$ of $V$ is forbidden (belongs to no maximum matching). Since $T_0$ is $m$-extendable ($m > 0$), every dot must belong to some maximum matching, so $U$ can contain no dots. Hence all dots of $T_0$ lie in the vertical auxiliary brick $T_1$ or the horizontal auxiliary brick $T_2$. Since $T_1$ and $T_2$ occupy disjoint strips with $V$ separating them, there are no edges between $V(T_1)$ and $V(T_2)$ in $T_0$, contradicting the assumption that $T_0$ is connected. Therefore, $T_0$ cannot be liminal, and since it is not deficient, it must be excessive. Together with connectedness, this means $T_0$ is atomic.
	\end{proof}
	
	\begin{remark}
		An atomic brick is at least $1$-extendable. Hence, for every dot there exists a maximum matching containing it, so no dot is forbidden.
	\end{remark}
	
	\begin{lemma}\label{lem:balanced-excessive-atomic}
		Every balanced excessive brick $T_0$ is connected, and therefore also atomic.
	\end{lemma}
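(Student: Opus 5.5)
We argue by contraposition: a balanced brick that is disconnected has characteristic $m\le 0$, so it cannot be excessive. Atomicity then follows at once from Definition~\ref{def:m-atomic}, since atomic means connected and excessive.

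Let $T_0$ be an $n\times n$ brick, so $a=b=n$, and suppose it is disconnected. By the definition of connectivity in the grid model, there is a sub-brick $W$ whose two auxiliary bricks are holes. In graph language, $V(T_0)$ splits into two nonempty parts $P_1=X_1\cup Y_1$ and $P_2=X_2\cup Y_2$ with no edges between them. Here $X_i\subseteq A$, $Y_i\subseteq B$, $X_1\cup X_2=A$ and $Y_1\cup Y_2=B$. For a sub-brick $T$ of $T_0$ the paper explicitly allows isolated rows or columns, so we cannot assume that both $X_i$ and $Y_i$ are nonempty. We therefore split into two cases.

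\emph{Case 1: some component misses a side.} Suppose, say, $P_1=X_1\subseteq A$ consists only of isolated columns, and some column $x$ is isolated. Take the hole $B\times\{x\}$. Strictly, holes must be proper bricks, so instead take $(B\setminus\{y\})\times\{x\}$ for any row $y$ (this needs $n\ge 2$). Its vcount is $n$, so $\alpha_p(T_0)\ge n$ and $m\le 0$. The trivial case $n=1$ with an empty cell is handled directly; the symmetric case of isolated rows is identical. Alternatively, this whole case can be absorbed into Case 2: an isolated column gives $|N(\{x\})|=0<1$, which violates the Strong Marriage Condition, and Remark~\ref{rem122} then gives non-excessiveness.

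\emph{Case 2: both components meet both sides.} Then $X_1,X_2,Y_1,Y_2$ are all nonempty. The bricks $Y_2\times X_1$ and $Y_1\times X_2$ are both holes, hence proper independent sets, with vcounts $|X_1|+|Y_2|$ and $|X_2|+|Y_1|$. Their sum is $|A|+|B|=2n$, so the larger of the two is at least $n=b$. Hence $\alpha_p(T_0)\ge b$ and $m\le 0$, contradicting excessiveness.

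The averaging step in Case 2 is where balance is used: for $a<b$ the sum $a+b$ is below $2b$ and the argument fails, which matches the fact that $E$ may be disconnected. The main obstacle is only the bookkeeping of properness in the degenerate cases, that is, ensuring the exhibited holes are proper bricks. For this, the cleanest route is to phrase the argument via the Strong Marriage Condition equivalence of Remark~\ref{rem122}. In Case 2, whichever of $X_1,X_2$ satisfies $|X_i|\ge|Y_i|$ is a nonempty proper subset of $A$ with $|N(X_i)|\le|Y_i|\le|X_i|$, which violates the Strong Marriage Condition. Such an $i$ exists because $|X_1|+|X_2|=|Y_1|+|Y_2|$.
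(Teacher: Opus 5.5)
Your Case~2 is exactly the paper's proof: the vertical and horizontal complements of a component are holes whose vcounts sum to $2n$, so one of them reaches $n=\mathrm{long}(T_0)$, contradicting excessiveness. Your Case~1 is extra care for isolated rows or columns, which the paper avoids through its standing no-isolated-vertices assumption; it is fine for $n\ge 2$. The $n=1$ empty cell, however, cannot be ``handled directly'' as you claim. A $1\times 1$ brick has no proper independent set at all, so it is excessive under the same convention by which the paper calls the one-dot $1\times 1$ brick excessive, and the Strong Marriage Condition holds vacuously. Yet it is disconnected, so that case is excluded only by the hypothesis, not proved.
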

	
	\begin{proof}
		The case $n=1$ (a single vertex on each side, with a single edge) is trivially connected and excessive, so we may assume $n \ge 2$.
		Suppose, for contradiction, that the $n\times n$ brick $T_0$ is not connected, and let $W_i$ be one of its components with $p$ columns and $q$ rows. Since $W_i$ is a connected component, no dot lies outside $W_i$ within its vertical strip, and no dot lies outside $W_i$ within its horizontal strip. Hence the vertical complement of $W_i$ in $T_0$ (of size $(n-q) \times p$) and the horizontal complement of $W_i$ in $T_0$ (of size $q \times (n-p)$) are both holes.

		Their $\mathrm{vcount}$s are $(n-q)+p$ and $q+(n-p)$ respectively, and their sum equals
		\[
		(n-q+p) + (q+n-p) = 2n.
		\]
		Since their two $\mathrm{vcount}$s sum to $2n$, at least one is at least the average $n = \mathrm{long}(T_0)$ --- a hole as large as the long side --- contradicting the assumption that $T_0$ is excessive. Therefore, $T_0$ must be connected, and by Proposition~\ref{prop:atomic-iff-connected}, it is atomic.
	\end{proof}
	
	\begin{corollary}\label{cor:SMC-connected}
		A balanced simple bipartite graph $G$ satisfying the Strong Marriage Condition is connected.
	\end{corollary}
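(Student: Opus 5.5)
The corollary is a translation of Lemma~\ref{lem:balanced-excessive-atomic} from the grid language back into graph language. The plan is to convert the Strong Marriage Condition into excessiveness of the associated brick, and then invoke that lemma.

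Let $G=(A,B,E)$ be balanced with $|A|=|B|=n$, and let $T_0$ be its $n\times n$ grid. First I would dispose of isolated vertices, which the standing convention of Section~\ref{sec:prelim} excludes but which the corollary does not explicitly forbid.
\begin{itemize}
\item An isolated vertex in $A$ is a singleton $X=\{x\}$ with $|N(X)|=0<2$, which violates the Strong Marriage Condition when $n\ge 2$.
\item An isolated vertex $y\in B$ is handled via $X=A\setminus\{\text{neighbours of }y\}$, or more simply by symmetry: the Strong Marriage Condition for $A$ in a balanced graph implies it for $B$, since for a nonempty proper $Y\subset B$ the set $X=A\setminus N(Y)$ has $N(X)\subseteq B\setminus Y$.
\item The case $n=1$ with the single edge present is trivially connected.
\end{itemize}

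Next I would apply Remark~\ref{rem122}, equivalently Theorem~\ref{thm:excess_fundamental}(ii). For every nonempty proper $X\subset A$ with $N(X)\subsetneq B$, the hole $(B\setminus N(X))\times X$ has $\mathrm{vcount}=|X|+n-|N(X)|\le n-1<n$. Every hole arises this way, up to shrinking its row set to $B\setminus N(X)$, which only enlarges it. Hence $\alpha_p(T_0)<n=b$, so $m>0$ and $T_0$ is excessive.

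Lemma~\ref{lem:balanced-excessive-atomic} then gives that $T_0$ is connected. Since brick connectivity coincides with graph connectivity, as noted after the definition of a connected brick, $G$ is connected.

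The only delicate point is the edge cases in the definition of $\alpha_p$: proper independent sets require both sides nonempty and neither side full. One must check that no hole of $\mathrm{vcount}\ge n$ escapes the Strong Marriage Condition estimate.
\begin{itemize}
\item Holes with $X=A$ are not proper, so they are excluded.
\item Holes with empty row set do not exist, because bricks are nonempty.
\end{itemize}
So the estimate above covers every hole. As a direct alternative avoiding the grid entirely, if $G$ were disconnected one could take a component with column set $X\neq\emptyset$. If $X\neq A$, then $N(X)$ lies in the component's rows and $|N(X)|\ge|X|+1$; applying the same to the complementary columns and summing gives $n\ge n+2$, a contradiction. This is exactly the sum-to-$2n$ argument of the lemma.
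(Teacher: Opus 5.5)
Your proposal is correct and is the paper's proof: Remark~\ref{rem122} turns the Strong Marriage Condition into excessiveness of the $n\times n$ brick, and Lemma~\ref{lem:balanced-excessive-atomic} then gives connectedness. Your closing direct argument is just that lemma's sum-to-$2n$ count.

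The only slip is in your optional isolated-vertex remark. For an isolated $y\in B$, both of your routes land on $X=A$, which the condition does not cover; take $X=A\setminus\{x\}$ instead, so that $|N(X)|\ge n$ forces $y\in N(X)$. Also, the edgeless $n=1$ graph satisfies the condition vacuously and is disconnected, so the paper's standing no-isolated-vertices convention cannot be dropped entirely.
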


	\begin{proof}
		By Remark~\ref{rem122}, the Strong Marriage Condition is equivalent to $G$ being excessive. By Lemma~\ref{lem:balanced-excessive-atomic}, every balanced excessive brick is connected.
	\end{proof}

	\begin{remark}
		This consequence of Lemma~\ref{lem:balanced-excessive-atomic} and Remark~\ref{rem122} follows immediately from the theory of elementary bipartite graphs \cite{LovaszPlummer1986}, where an elementary graph is defined as connected and such that every edge belongs to a perfect matching --- which is equivalent to being balanced and excessive. We state it explicitly here for clarity, as the connection via the hole condition may not be immediately apparent.
	\end{remark}

	\begin{lemma}\label{lem:excessive-components}
		Let $T_0$ be excessive but not connected. Then:
		\begin{enumerate}
			\item $T_0$ is unbalanced.
			\item Its components are atomic and unbalanced.
			\item Components of a vertical $T_0$ are vertical, and components of a horizontal $T_0$ are horizontal.
		\end{enumerate}
	\end{lemma}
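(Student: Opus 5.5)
The plan is to work entirely with holes and the inequality supplied by excessiveness. By transposition we may assume $T_0$ is vertical, $a\le b$; the horizontal case is the mirror image. Excessiveness means every hole of $T_0$ has $\mathrm{vcount}\le b-m<b$. Item~1 is immediate from Lemma~\ref{lem:balanced-excessive-atomic}: a balanced excessive brick is connected, so a disconnected excessive brick is unbalanced. What remains is to show that every component is vertical, and hence unbalanced, and that every component is excessive. Connectedness of a component is automatic, so excessiveness then makes it atomic by Definition~\ref{def:m-atomic}.

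\textbf{Orientation.} Let $W$ be a component with $p$ columns and $q$ rows. Since $T_0$ is disconnected, $W$ is not the whole brick. As in the proof of Lemma~\ref{lem:balanced-excessive-atomic}, the vertical complement $(B\setminus \mathrm{row}(W))\times \mathrm{col}(W)$ contains no dot. It is a hole (nonempty and proper) as soon as $p\ge1$ and $q<b$. In that case excessiveness gives $(b-q)+p<b$, i.e.\ $p<q$. So $W$ is strictly vertical and in particular unbalanced, which gives item~3 and the unbalanced part of item~2.

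\textbf{Excessiveness of components.} Let $H=Y'\times X'$ be any hole of $W$, with $Y'\subsetneq \mathrm{row}(W)$ and $X'\subsetneq\mathrm{col}(W)$. Enlarge it to
\[
H^{+}=\bigl(Y'\cup(B\setminus\mathrm{row}(W))\bigr)\times X' .
\]
This is dot-free, because the columns of $W$ meet no rows outside $W$. It is proper in $T_0$, because $\mathrm{row}(W)\setminus Y'\neq\emptyset$ and $X'\subsetneq A$. Its vcount is $\mathrm{vcount}(H)+(b-q)$. Excessiveness of $T_0$ then gives $\mathrm{vcount}(H)<q=\mathrm{long}(W)$, where the last equality uses $p<q$ from the orientation step. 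Hence $\alpha_p(W)<|\mathrm{row}(W)|$, so $W$ is excessive, and being connected it is atomic.

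\textbf{Main obstacle: degenerate configurations.} The main work is checking that every auxiliary rectangle used above really is a proper nonempty hole. Three cases need separate care.
\begin{itemize}
\item A component consisting of a single isolated row ($p=0$) is trivially vertical. It is excluded as an atomic block by the convention that bricks have nonempty sides, or else handled by noting that such a row together with any column not adjacent to it would enlarge holes and contradict excessiveness.
\item For an isolated column $x$, the argument uses the hole $(B\setminus\{y\})\times\{x\}$ for a row $y$. This hole has vcount $b$, so it is excluded by excessiveness.
\item The case $q=b$ cannot occur, since then every other component would consist of columns only, i.e.\ isolated columns, which were just excluded.
\end{itemize}
I expect this case bookkeeping, rather than the core counting, to be the only delicate part.
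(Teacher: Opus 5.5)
Your two hole constructions are the paper's. The orientation step uses the vertical complement $(B\setminus\mathrm{row}(W))\times\mathrm{col}(W)$ exactly as the paper does. Your enlarged hole $H^{+}$ is precisely the paper's ``vertical complement in $T_0$ of $T_{i1}$'', where $T_{i1}=(\mathrm{row}(W)\setminus Y')\times X'$, and it has the same count $\mathrm{vcount}(H)+(b-q)$. The core argument is therefore right.

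The flaw is in the degenerate-case bookkeeping, specifically isolated rows. Your claim that an isolated row ``would enlarge holes and contradict excessiveness'' is false. In fact the lemma itself fails once isolated rows are allowed. Take $K_{2,2}$ on rows $y_1,y_2$ and columns $x_1,x_2$, and add an empty row $y_0$. The only holes of this $3\times 2$ brick are $\{y_0\}\times\{x_1\}$ and $\{y_0\}\times\{x_2\}$, so $\alpha_p=2$ and $m=1$. The brick is excessive and disconnected, yet its nontrivial component is balanced.

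Your argument breaks in the orientation step. Properness of $(B\setminus\mathrm{row}(W))\times\mathrm{col}(W)$ requires $\mathrm{col}(W)\neq A$, not only $p\ge1$ and $q<b$. You rule out $q=b$ in your third bullet but never the symmetric case $p=a$. That is exactly what happens when every other component is an isolated row. Declaring the isolated row ``not an atomic block'' does nothing for the remaining component.

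The fix is the one the paper uses: invoke the standing assumption (Section~\ref{sec:prelim}) that $G_0$ has no isolated vertices. Then every component has $p\ge1$ and $q\ge1$. Since there are at least two components, $p<a$ and $q<b$ follow automatically. Every rectangle you use is then a proper nonempty hole, and your three bullets become unnecessary.

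Your isolated-column bullet is correct but superfluous. Note that rows and columns are not symmetric here: only an isolated column produces a hole of $\mathrm{vcount}=b$. That asymmetry is why your attempt to treat isolated rows the same way fails.
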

	
	\begin{proof}
		We prove the statement for a vertical brick $T_0$. By Lemma~\ref{lem:balanced-excessive-atomic}, $T_0$ cannot be balanced, since a balanced excessive brick is connected.
		Since $G_0$ has no isolated vertices (by assumption in Section~\ref{sec:prelim}), each component $W_i$ satisfies $|\mathrm{col}(W_i)|\ge1$ and $|\mathrm{row}(W_i)|\ge1$.

		\medskip
		\textit{Every component is vertical.}
		Suppose, for contradiction, that some component $W_i$ is balanced ($p \times p$) or horizontal ($q \times p$ with $q \le p$). Since no dot lies in the vertical strip of $W_i$ outside $W_i$, the vertical complement of $W_i$ in $T_0$ is a hole of $\mathrm{vcount}$ $(b - q) + p \geq b - q + q = b = \mathrm{long}(T_0)$, contradicting the excessiveness of $T_0$. Hence every component is vertical (unbalanced with more rows than columns).

		\medskip
		\textit{Every component is excessive.}
		Suppose, for contradiction, that a component $W_i$ is not excessive. Then $W_i$ contains a hole $V_i$ of size $c \times d$ with $c + d \geq \mathrm{long}(W_i)$. Let $T_{i1}$ be the vertical complement of $V_i$ in $W_i$, a brick of size $e \times d$ where $e \leq d$ (since $W_i$ is vertical and $V_i$ accounts for at least $\mathrm{long}(W_i)$ of the $\mathrm{vcount}$). Since no dot lies in $\mathrm{vstrip}(T_{i1})$ outside $W_i$ (component property), the vertical complement of $T_{i1}$ in $T_0$ is a hole of $\mathrm{vcount}$
		\[
		(b - e) + d = b - (e - d) \geq b = \mathrm{long}(T_0),
		\]
		since $e \leq d$. This contradicts the excessiveness of $T_0$. Therefore, all components are excessive, and since they are connected, they are atomic.
	\end{proof}
	
	Figure~\ref{GM_PDF08} illustrates the case $e = d$, when $T_{i1}$ is square.
	
	\begin{figure}[htb]
		\centering
		\includegraphics[width=0.5\linewidth]{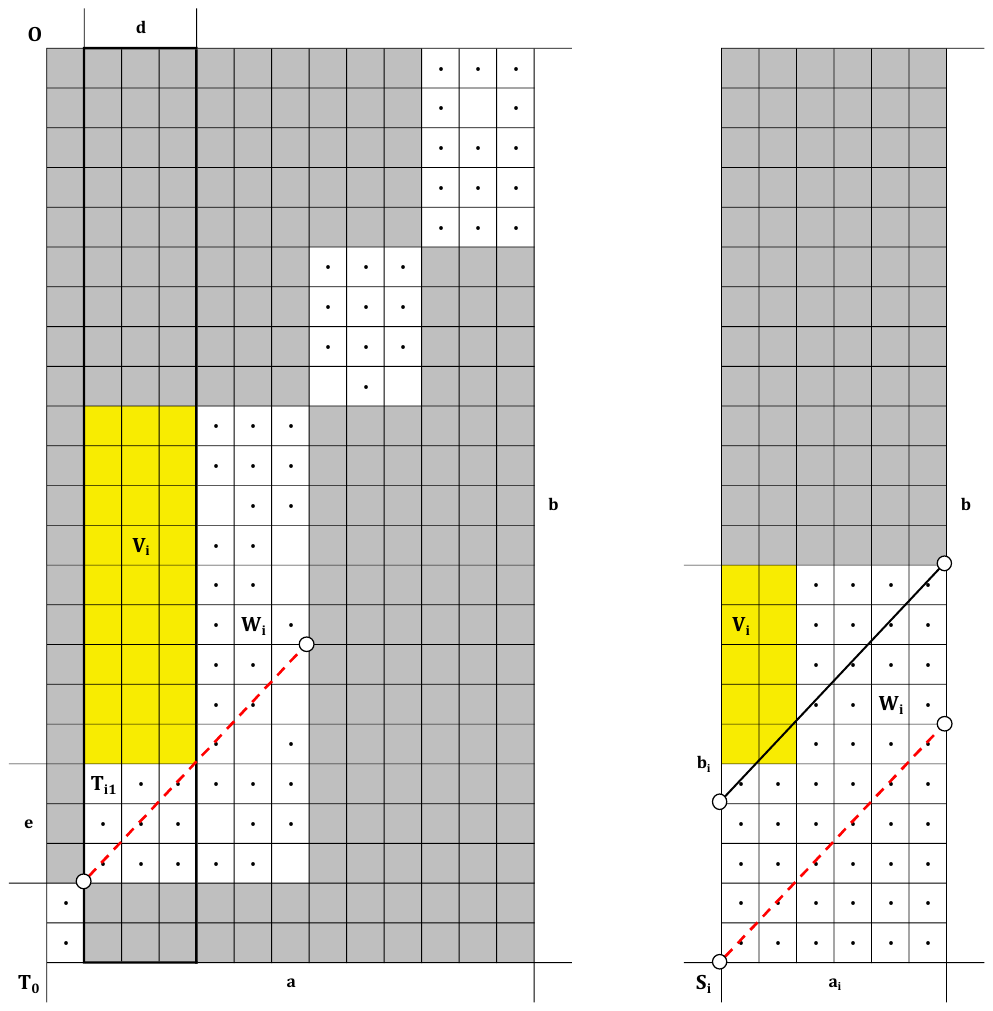}
		\caption{Atomic components}\label{GM_PDF08}
	\end{figure}
	
	Let $T_0$ be a $b\times a$ vertical brick that is $m$-excessive ($a < b$), and let $W_i$ ($i \in [p]$) be the components of $T_0$. By Lemma~\ref{lem:excessive-components}, each $W_i$ is vertical, excessive, and unbalanced. Let $m_i > 0$ denote the excess of $W_i$, $\mathrm{imb}(W_i) > 0$ its imbalance, $S_i$ the vertical strip of $W_i$, and define
	\[
	s_i = \min\{m_i, \mathrm{imb}(W_i)\}.
	\]
	
	\begin{proposition}\label{prop:excess-components}
		Each strip $S_i$ contains a hole with $\mathrm{vcount}$ $b - s_i$. Moreover, $T_0$ is $m$-excessive, where
		\begin{equation}\label{excess}
			m = \min\{s_1,\ldots ,s_p\},
		\end{equation}
		and the sum of the imbalances of the components equals that of $T_0$:
		\begin{equation}\label{imbalance}
			\mathrm{imb}(W_1)+\cdots +\mathrm{imb}(W_p) = \mathrm{imb}(T_0) = b-a.
		\end{equation}
	\end{proposition}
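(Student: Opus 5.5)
Our plan is to work in an isotope of $T_0$ where the components $W_1,\dots,W_p$ sit block-diagonally, so every dot lies in some $W_i$. Let $W_i$ have $q_i$ rows and $p_i$ columns, with $q_i>p_i$ by Lemma~\ref{lem:excessive-components}. Since the components partition the rows and columns, $\sum q_i=b$ and $\sum p_i=a$. This gives \eqref{imbalance} at once: $\sum_i \mathrm{imb}(W_i)=\sum_i(q_i-p_i)=b-a=\mathrm{imb}(T_0)$.

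\textbf{Holes in each strip.} For the first claim, fix $i$ and exhibit two holes in $S_i$, one for each value of $s_i$.
\begin{itemize}
\item \emph{Value $\mathrm{imb}(W_i)$.} Take the whole column set of $W_i$ together with all rows outside $W_i$, i.e.\ the vertical complement of $W_i$ in $T_0$. It is dot-free because $W_i$ is a component. It is proper because $p\ge2$ ensures rows outside $W_i$ exist, and because $W_i$ has at least one column while other components also have columns. Its $\mathrm{vcount}$ is $(b-q_i)+p_i=b-\mathrm{imb}(W_i)$.
\item \emph{Value $m_i$.} Take a maximum hole $V_i$ of $W_i$, of size $c\times d$ with $c+d=q_i-m_i$, and enlarge it by all rows of $T_0$ outside $W_i$. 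This gives a hole in $S_i$ with columns $\mathrm{col}(V_i)\subsetneq \mathrm{col}(T_0)$. Its $\mathrm{vcount}$ is $(b-q_i)+c+d=b-m_i$.
\end{itemize}
Since $s_i$ is one of $m_i$ and $\mathrm{imb}(W_i)$, the strip $S_i$ contains a hole of $\mathrm{vcount}$ $b-s_i$. This gives $\alpha_p(T_0)\ge b-\min_i s_i$.

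\textbf{Upper bound, setup.} The main work is the reverse inequality: every hole $H$ of $T_0$ has $\mathrm{vcount}(H)\le b-\min_i s_i$. Write $X=\mathrm{col}(H)$ and $Y=\mathrm{row}(H)$, and split them as $X_i=X\cap\mathrm{col}(W_i)$ and $Y_i=Y\cap\mathrm{row}(W_i)$. Then
\[
\mathrm{vcount}(H)=\sum_i(|X_i|+|Y_i|),
\]
and each $X_i\times Y_i$ is dot-free inside $W_i$. Bound each summand by cases on $(X_i,Y_i)$:
\begin{itemize}
\item $X_i=\emptyset$: the term is $|Y_i|\le q_i$.
\item $X_i\neq\emptyset$ and $X_i\times Y_i$ a proper hole of $W_i$: the term is at most $q_i-m_i$.
\item $X_i=\mathrm{col}(W_i)$: since $W_i$ is connected without isolated vertices, every row of $W_i$ meets a dot, so $Y_i=\emptyset$ and the term is $p_i=q_i-\mathrm{imb}(W_i)$.
\item $X_i\neq\emptyset$ with $Y_i=\emptyset$, $X_i$ proper: the term is $|X_i|<p_i$.
\item $Y_i=\mathrm{row}(W_i)$: this forces $X_i=\emptyset$, already covered.
\end{itemize}
So every index with $X_i\neq\emptyset$ contributes at most $q_i-s_i$, and every other index contributes at most $q_i$.

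\textbf{Upper bound, conclusion.} Because $H$ is proper, $X\neq\emptyset$, so some index $j$ has $X_j\neq\emptyset$. Summing the bounds gives
\[
\mathrm{vcount}(H)\le b-s_j\le b-\min_i s_i.
\]
Combined with the lower bound, $\alpha_p(T_0)=b-\min_i s_i$, which is \eqref{excess}.

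\textbf{Expected obstacle.} The delicate step is the case analysis for each component, where the boundary situations ($X_i$ all of $\mathrm{col}(W_i)$, $Y_i$ empty, or $Y_i$ all of $\mathrm{row}(W_i)$) must all land under $q_i-s_i$. This is exactly where the minimum with $\mathrm{imb}(W_i)$ enters the formula. For the case $X_i$ nonempty, proper, with $Y_i=\emptyset$, we use $|X_i|<p_i\le q_i-s_i$, which holds because $s_i\le \mathrm{imb}(W_i)=q_i-p_i$.
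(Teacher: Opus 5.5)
Your proof is correct and follows essentially the paper's route. The lower bound uses the same two holes in each strip $S_i$: the vertical complement of $W_i$, and a maximum hole of $W_i$ extended by the rows outside $W_i$. The upper bound splits an arbitrary hole of $T_0$ across the components and bounds each piece, with your case analysis playing the role of the paper's per-strip surplus bound $|N(X_i)|\ge |X_i|+s_i$.

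Your version handles the summation more carefully than the paper does. The paper subtracts $s_i$ for every index $i$, which fails when $X_i=\emptyset$. You subtract $s_i$ only for indices with $X_i\neq\emptyset$, and at least one such index exists because $H$ is proper.
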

	
	\begin{proof}
		We consider a vertical brick $T_0$; the horizontal case is analogous.
		
		\medskip
		\noindent\textbf{Characteristic of each strip.} 
		Each component $W_i$ is $m_i$-excessive. In the vertical strip $S_i$ of $W_i$, outside $W_i$ there are no dots, hence the vertical auxiliary brick of $W_i$ within $S_i$ is a hole with $\mathrm{vcount}$
		\[
		b - |\mathrm{row}(W_i)| + |\mathrm{col}(W_i)| = b - \mathrm{imb}(W_i).
		\]
		This shows that the characteristic of $S_i$ is at most $\mathrm{imb}(W_i)$.
		If $W_i$ contains no hole, then the only hole in $S_i$ is the vertical auxiliary brick of $W_i$, which has $\mathrm{vcount}$ $b - \mathrm{imb}(W_i)$. So $\mathrm{char}(S_i) = \mathrm{imb}(W_i)$ and $s_i = \mathrm{imb}(W_i)$ is achieved.
		Otherwise, let $V_i$ be a maximum hole in $W_i$, with vertical auxiliary brick $T_1$ of size $|\mathrm{row}(W_i)| - |\mathrm{row}(V_i)|$. Then there exists a hole with $\mathrm{vcount}$
		\[
		b - (|\mathrm{row}(W_i)| - |\mathrm{row}(V_i)|) + |\mathrm{col}(V_i)| 
		= b - m_i
		\]
		in $S_i$, showing $\mathrm{char}(S_i) \le b - (b - m_i) = m_i$. Combined with the upper bound $\mathrm{char}(S_i) \le \mathrm{imb}(W_i)$ from above, the characteristic of $S_i$ satisfies
		\[
		s_i = \min\{\mathrm{imb}(W_i), m_i\},
		\]
		and this value is achieved: a hole of $\mathrm{vcount}$ $b - s_i$ exists in $S_i$ (either the vertical auxiliary brick of $W_i$ if $s_i = \mathrm{imb}(W_i)$, or the hole constructed above if $s_i = m_i$).
		
		\medskip
		\noindent\textbf{Maximum hole in $T_0$.} 
		Let $m = \min\{s_1,\ldots ,s_p\}$. Let $j$ be an index with $s_j = m$. The strip $S_j$ contains a hole of $\mathrm{vcount}$ $b - m$ (constructed above), so $\mathrm{vcount}(V) \ge b - m$ for some hole $V$ in $T_0$. It remains to show that no hole has $\mathrm{vcount}$ greater than $b - m$.

		Let $V$ be any hole in $T_0$, $X = \mathrm{col}(V)$, $Y = \mathrm{row}(V)$. Define $X_i = \mathrm{col}(W_i)\cap X$ and $Y_i = \mathrm{row}(W_i)\cap Y$ for $i \in [p]$. Since the components $W_i$ partition both $A$ and $B$ (being connected components of the induced subgraph), the sets $X_i$ are pairwise disjoint and partition $X$, and the sets $N(X_i) \subseteq \mathrm{row}(W_i)$ are pairwise disjoint (as the row sets of different components are disjoint). Moreover, since $V$ is a hole, no dot lies in $Y \times X$, hence $Y \cap N(X) = \emptyset$, which gives $Y = B \setminus N(X)$ and
		\[
		|N(X)| = \sum_{i \in [p]} |N(X_i)|, \qquad |X| = \sum_{i \in [p]} |X_i|.
		\]
		Each strip $S_i$ is $s_i$-excessive, so $|N(X_i)| \ge |X_i| + s_i$ for every $i$. Therefore,
		\[
		\mathrm{vcount}(V) = |X| + b - |N(X)| = \sum_{i} |X_i| + b - \sum_{i} |N(X_i)| \le b - \sum_{i} s_i \le b - m,
		\]
		since $\sum_i s_i \ge m$ (all $s_i \ge m$ by definition of $m$). This shows $\mathrm{vcount}(V) \le b - m$.

		Equality $\mathrm{vcount}(V) = b - m$ is achieved by taking $X_i = \mathrm{col}(W_i)$ for each $i$ with $s_i = m$ (and $X_i = \emptyset$ otherwise), for which $|N(X_i)| = |X_i| + s_i$ holds since $S_i$ is exactly $s_i$-excessive. The corresponding $Y = B \setminus N(X)$ yields a hole of $\mathrm{vcount}$ $b - m$ in $T_0$.
		
		\medskip
		\noindent\textbf{Imbalance decomposition.}
		By Lemma~\ref{lem:excessive-components}, the components $W_i$ are vertical and their row and column sets partition those of $T_0$, hence
		\[
		\mathrm{imb}(T_0) = \sum_{i=1}^p \mathrm{imb}(W_i),
		\]
		which proves equation~\eqref{imbalance}. The argument for horizontal bricks is analogous, with directions swapped.
	\end{proof}
	
	\begin{corollary}\label{near_balanced}
		Let $T_0$ be an excessive brick with $\mathrm{imb}(T_0) = 1$ (near-balanced). Then, by equation~(\ref{imbalance}), $T_0$ is connected, since if it had at least two components, its imbalance would be at least $2$.
	\end{corollary}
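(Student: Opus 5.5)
The plan is to read the statement off equation~\eqref{imbalance} of Proposition~\ref{prop:excess-components}, with Lemma~\ref{lem:excessive-components} supplying the needed lower bound on each component's imbalance.

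First, I fix orientation. Since $\mathrm{imb}(T_0)=1\neq 0$, the brick $T_0$ is unbalanced. Following the paper's convention, I take it to be vertical; the horizontal case is symmetric with the roles of rows and columns swapped.

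Then I argue by contradiction: suppose $T_0$ is not connected, and let its components be $W_1,\dots,W_p$ with $p\ge 2$. Because $T_0$ is excessive and disconnected, Lemma~\ref{lem:excessive-components} applies. It says every component $W_i$ is unbalanced and has the same orientation as $T_0$, so each $W_i$ is vertical with $\mathrm{imb}(W_i)=|\mathrm{row}(W_i)|-|\mathrm{col}(W_i)|\ge 1$. The component row sets and column sets partition those of $T_0$, so the signed differences add up. This is exactly equation~\eqref{imbalance}:
\[
1=\mathrm{imb}(T_0)=\sum_{i=1}^p \mathrm{imb}(W_i)\ge p\ge 2,
\]
which is a contradiction. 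Hence $T_0$ is connected, and being excessive, it is atomic.

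The one point that needs care is that the imbalances add without cancellation. This holds precisely because all components share the orientation of $T_0$, which is item~3 of Lemma~\ref{lem:excessive-components}. Without that, a horizontal component could offset a vertical one. The rest is bookkeeping. The standing assumption that there are no isolated vertices guarantees that every component has at least one row and one column, so the components really do partition both sides.
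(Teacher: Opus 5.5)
Your proof is correct and follows the paper's own one-line argument. The imbalance is additive over components by equation~\eqref{imbalance}, and Lemma~\ref{lem:excessive-components} makes every component of a disconnected excessive brick unbalanced with the same orientation as $T_0$, so there is no cancellation and two or more components would force $\mathrm{imb}(T_0)\ge 2$. You also make explicit the orientation point (item~3 of the lemma) that the paper leaves implicit.
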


\subsection{Embedded Directed Graphs}\label{sec:embedding}

	Let $T_0$ be a balanced $n\times n$ non-deficient brick. Then $T_0$ has a maximum matching of size $n$. Color the cells of an arbitrary maximum matching blue, and number its elements from $1$ to $n$. The brick $T_0$, together with the blue cells, can then be interpreted as representing a directed graph on $n$ vertices.
	
	\medskip
	\noindent\textbf{Vertex correspondence.} 
	The matching elements correspond to the vertices of the graph $G$. 
	
	\medskip
	\noindent\textbf{Edge correspondence.} 
	Each non-matching dot in $T_0$ represents a directed edge from the matching element in its row (a $Y$-vertex) to the matching element in its column (an $X$-vertex). We denote such an edge by $(Y,X)$; for example, $(5,8)$ or $(8,6)$ as in Figure~\ref{GM_PDF09}.
	
	\medskip
	\noindent\textbf{Identification.} 
	Since every row and every column contains exactly one blue element, each white cell of $T_0$ can be uniquely identified with a pair $(Y,X)$, where $Y$ is the index of the matching element in its row and $X$ is the index of the matching element in its column. If the cell contains a dot, it corresponds to the edge $(Y,X)$. 
	
	\begin{remark}\label{rem:matching-choice}
		A different maximum matching clearly represents a different directed graph on the same vertex set, since the row and column assignments change accordingly.
	\end{remark}
	
	\begin{definition}\label{def:embedded-graph}
		An $n\times n$ non-deficient brick $T_0$ is called an \emph{embedding brick}, and a directed graph $H$ defined by an arbitrary maximum matching of size $n$ is called an \emph{embedded graph}.
	\end{definition}
	
	The original graph $G$ can also be transformed into a directed graph by orienting the edges of a perfect matching from their endpoints in $A$ to their endpoints in $B$, and the remaining edges in the opposite direction. This yields a directed graph $H^*$. The embedded graph $H$ defined above is a minor of $H^*$, obtained by contracting the edges of the matching.
	
	From \cite{Tarjan1972} we know that the strongly connected components (SCCs) partition the vertex set of $G$ into equivalence classes. This partition is unique, up to the ordering of the SCCs.
	
	If the SCCs are topologically ordered, then every edge points to a later component (forward edge).
	Consequently, the bricks in $T_0$ corresponding to the SCCs are in \emph{block triangular form (BTF)} if and only if the SCCs are topologically ordered.
	Figure~\ref{GM_PDF09} illustrates a balanced embedding brick $T_0$ with a perfect matching (blue cells), the embedded directed graph $G$, its decomposition into SCCs, and the BTF of $T_0$ obtained from the topological ordering of the SCCs. In the vertical strip of each block, there are no dots above the block, reflecting the forward-edge structure.
	
	\begin{figure}[htb]
		\centering
		\includegraphics[width=\linewidth]{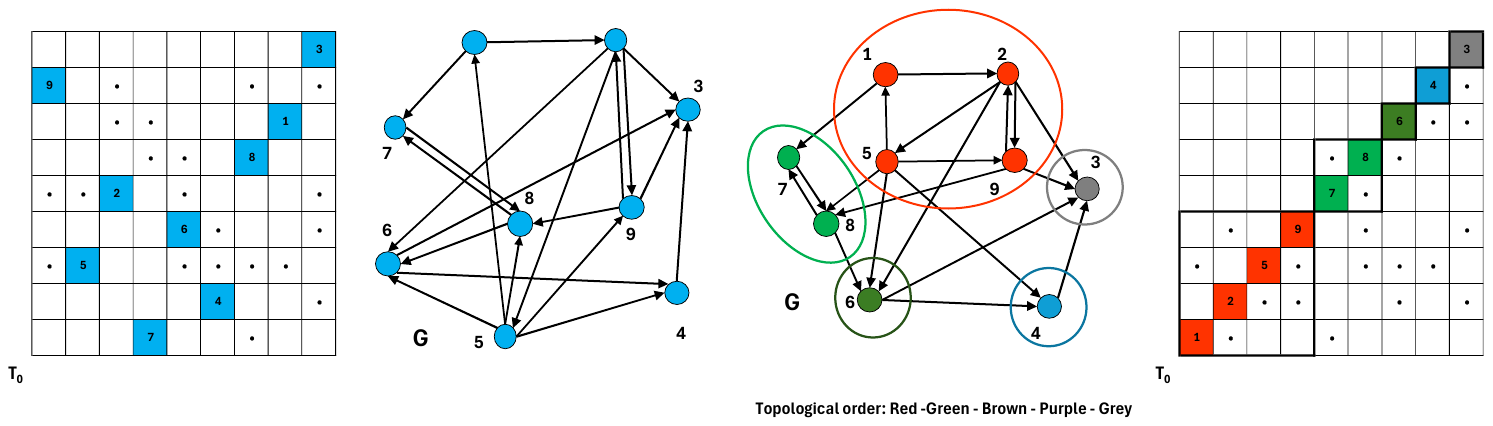}
		\caption{BTF obtained from the SCCs of an embedded directed graph}\label{GM_PDF09}
	\end{figure}
	
	\begin{proposition}\label{prop103}
		A directed graph on $n$ vertices is strongly connected if and only if its $n\times n$ embedding brick is atomic, where the dots in the (blue) cells corresponding to the vertices are also treated as ordinary dots.
	\end{proposition}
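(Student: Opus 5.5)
We prove Proposition~\ref{prop103} by a dictionary argument. A hole in the embedding brick $T_0$ is the same thing as a set of vertices of the directed graph $H$ with no edges leaving it, and strong connectivity is the absence of such a set. Both atomicity and strong connectivity are then reduced to the Strong Marriage Condition. Throughout, the blue cells are ordinary dots, so every row $i$ and column $i$ share the dot at $(i,i)$. Since $T_0$ is balanced, Lemma~\ref{lem:balanced-excessive-atomic} shows that atomic is equivalent to excessive. By Remark~\ref{rem122}, excessive is equivalent to the Strong Marriage Condition, that is, $|N(X)|\ge |X|+1$ for every nonempty proper $X\subset A$. So it suffices to show that $H$ is strongly connected if and only if $T_0$ satisfies the Strong Marriage Condition.

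\emph{Key translation.} Identify each column $X$-index and row $Y$-index with the vertex $i$ of $H$ owning the blue cell in that column or row. Fix a set $S$ of columns. Because of the blue diagonal, $N(S)\supseteq S$ viewed as row indices. A row $j\notin S$ lies in $N(S)$ exactly when some dot $(j,i)$ with $i\in S$ exists, that is, when $H$ has an edge $j\to i$ with $j\notin S$ and $i\in S$. Hence $|N(S)|=|S|+|\{j\notin S : j\to S\}|$. In particular, $|N(S)|=|S|$ holds exactly when no edge of $H$ enters $S$ from outside. This is the grid statement that the brick $(B\setminus S)\times S$ is a hole of $\mathrm{vcount}$ $n$.

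\emph{Both directions.} Suppose $H$ is not strongly connected. Then there is a nonempty proper subset $S$ with no incoming edges; for example, take a source SCC in the topological order of \cite{Tarjan1972}. By the translation, $|N(S)|=|S|$, so the Strong Marriage Condition fails and $T_0$ is not excessive. Conversely, suppose the Strong Marriage Condition fails for some nonempty proper $S$. We always have $|N(S)|\ge|S|$ from the diagonal, so $|N(S)|=|S|$ and no edge enters $S$. Since $S$ is nonempty and proper, $H$ is not strongly connected. Combining the two directions with the reduction of the first paragraph gives the proposition.

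The main obstacle is bookkeeping rather than depth. One must check that the orientation convention matches: a dot in row $Y$, column $X$ is the edge $(Y,X)$. With this convention a column set closed under incoming edges gives a hole, and the argument works equally well with "outgoing" in place of "incoming", since a digraph is strongly connected iff it has no nonempty proper set closed in either sense. One must also handle the degenerate case $n=1$: a single dot is trivially strongly connected and trivially atomic, as in the proof of Lemma~\ref{lem:balanced-excessive-atomic}.
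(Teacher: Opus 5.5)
Your proof is correct, but it takes a somewhat different route from the paper's. For the direction "not strongly connected $\Rightarrow$ not atomic" the two arguments are essentially the same. The paper takes the out-closed set $\mathrm{Access}(A)$ of vertices reachable from a vertex $A$ and observes that the horizontal complement of the square it induces is a hole of $\mathrm{vcount}$ $n$. You take an in-closed source SCC and use the vertical complement instead.

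The real difference is in the converse. The paper assumes a hole $V$ of $\mathrm{vcount}$ $n$ and notes that $T_0$ is then liminal. It then invokes Corollary~\ref{cor:forbidden-dot}, and hence the matching-union lemma, to conclude that the blue perfect matching avoids the remote mate of $V$. So the vertices split between the auxiliary bricks $T_1$ and $T_2$, and the emptiness of $V$ shows that no vertex of $T_2$ can reach $T_1$.

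You instead use the identity $|N(S)|=|S|+|\{j\notin S : j\to S\}|$, which the blue diagonal gives for free. This identity yields non-deficiency at once. It also shows that $|N(S)|=|S|$ forces $S$ to be in-closed. So you need no forbidden-dot machinery, and you get an explicit correspondence between holes of $\mathrm{vcount}$ $n$ and nonempty proper in-closed vertex sets.

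The cost is that you route both directions through Remark~\ref{rem122} (SMC $\Leftrightarrow$ excessive) and Lemma~\ref{lem:balanced-excessive-atomic}. The paper argues directly with holes and calls on the lemma only at the very end. Its version also ties the statement to the liminal/forbidden-dot picture developed just before it. Your handling of the orientation convention and of the case $n=1$ is fine.
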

	
	\begin{proof}
		(\emph{Forward direction}) Let $T_0$ be an atomic $n\times n$ brick and $M$ an arbitrary maximum matching in $T_0$. Suppose, for contradiction, that the corresponding embedded graph $H$ is not strongly connected. Then there exist vertices $A$ and $B$ such that there is no directed path from $A$ to $B$. Let $\mathrm{Access}(A)$ denote the set of vertices reachable from $A$, and permute rows and columns so that $\mathrm{Access}(A)$ occupies the leftmost columns and bottom rows (so $A$ lies in the lower-left corner). 
		
		The square $T_1$ induced by $\mathrm{Access}(A)$ cannot cover all of $T_0$, since $B \notin \mathrm{Access}(A)$. Then the horizontal complement of $T_1$ exists and contains no dots, hence it forms a hole with $\mathrm{vcount}$ $n$, which contradicts the atomicity of $T_0$. Therefore, $H$ is strongly connected.
		
		\medskip
		
		(\emph{Reverse direction}) Suppose $H$ is strongly connected. Then its embedding brick $T_0$ cannot be deficient because a maximum matching of size $n$ exists, and thus it contains no hole with $\mathrm{vcount}$ greater than $n$. It remains to show that $T_0$ contains no hole with $\mathrm{vcount}$ $n$ either. Suppose, for contradiction, that such a hole $V$ exists. Move $V$ to the origin. Let $T_1$ and $T_2$ be its vertical and horizontal auxiliary bricks. By Corollary~\ref{cor:forbidden-dot}, all dots in the distant pair of $V$ are forbidden if $V$ is liminal, so the blue cells (representing the vertices of $H$) must lie in $T_1$ and $T_2$. The remaining dots correspond to edges from $T_1$ to $T_2$, while there are no edges inside $V$. Consequently, there is no edge whose row corresponds to a blue element in $T_2$ and whose column corresponds to a blue element in $T_1$. Therefore, no vertex in $T_2$ can reach any vertex in $T_1$, contradicting strong connectivity as depicted in Figure~\ref{GM_PDF10}. 
		
		Hence, no hole with $\mathrm{vcount}$ $n$ exists, so $T_0$ is excessive. Since $T_0$ is also balanced, by Lemma~\ref{lem:balanced-excessive-atomic} it is atomic.
	\end{proof}
	\begin{figure}[htb]
		\centering
		\includegraphics[width=\linewidth,keepaspectratio]{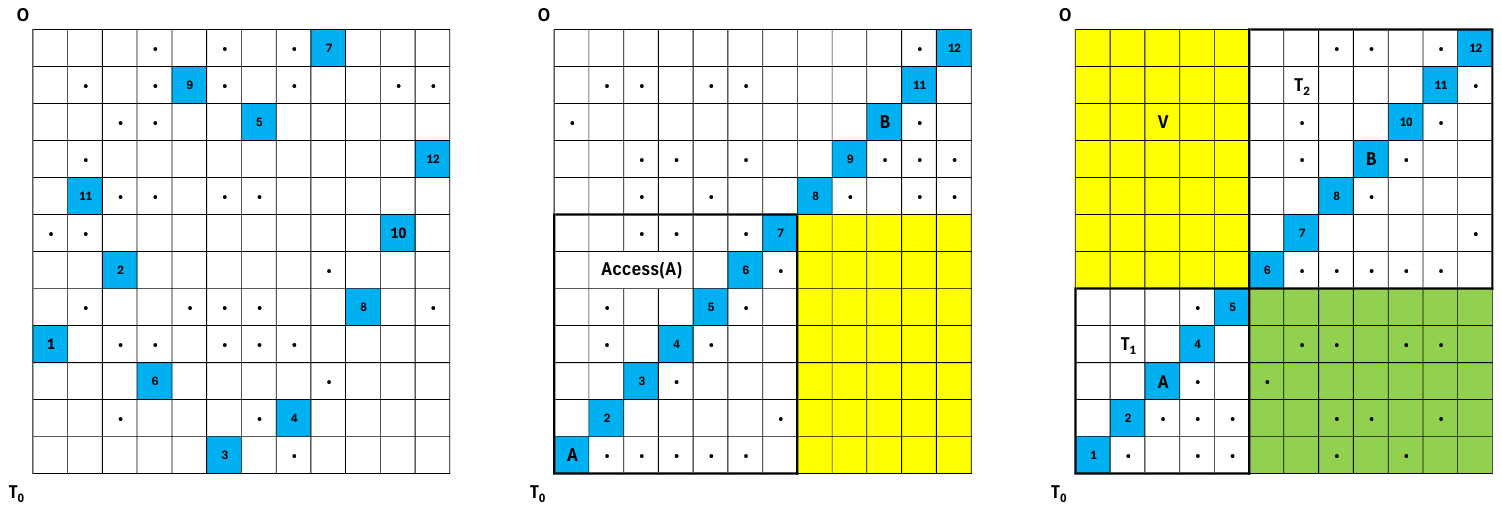}
		\caption{Excessive -- SCC}\label{GM_PDF10}
	\end{figure}
	
	\begin{corollary}\label{excessive_SCC}
		In an $n\times n$ excessive embedding brick $T_0$, the maximum matchings of size $n$ are in one-to-one correspondence with the strongly connected simple directed graphs embeddable in $T_0$.
	\end{corollary}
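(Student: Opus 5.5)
The plan is to build the correspondence explicitly as $M \mapsto H_M$, where $H_M$ is the embedded graph of Definition~\ref{def:embedded-graph} defined by the perfect matching $M$. I would then check three things: the map is well defined (each $H_M$ is simple and strongly connected), it is injective, and it is surjective. Throughout, ``a directed graph embeddable in $T_0$'' is read in the sense of Section~\ref{sec:embedding}: a digraph whose vertices are $n$ blue cells forming a perfect matching of dots of $T_0$, and whose arcs are all the remaining dots, each read as the pair $(Y,X)$ of the blue elements in its row and column.

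\textbf{Step 1: well-definedness.} Since $T_0$ is balanced and excessive, Lemma~\ref{lem:balanced-excessive-atomic} shows that it is atomic. In particular it is not deficient, so it has maximum matchings of size $n$ and is an embedding brick. Fix such an $M$. The forward direction of Proposition~\ref{prop103} applies to an atomic brick with an \emph{arbitrary} maximum matching, so $H_M$ is strongly connected. It is also simple:
\begin{itemize}[leftmargin=*]
	\item A loop $(i,i)$ would be the cell in the row and column of the $i$-th blue element, which is that blue cell itself and is therefore not an arc.
	\item Parallel arcs cannot occur, because by the identification paragraph each white cell corresponds to a unique pair $(Y,X)$, and a cell carries at most one dot.
\end{itemize}

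\textbf{Step 2: injectivity.} I would use that the vertex set of $H_M$ is the set of blue cells, that is, $M$ itself. Hence two distinct matchings $M\neq M'$ yield embedded graphs with different vertex cells, so $H_M\neq H_{M'}$ as embedded objects; this is Remark~\ref{rem:matching-choice}. I regard this step as the main obstacle. Its difficulty is definitional rather than mathematical: injectivity holds for digraphs \emph{as embedded in $T_0$}, i.e.\ together with their placement, but it may fail up to abstract isomorphism. I would therefore state explicitly that ``embeddable directed graph'' means the digraph together with its set of vertex cells, and make the correspondence precise on that basis.

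\textbf{Step 3: surjectivity.} Let $H$ be a strongly connected simple digraph embedded in $T_0$. By definition its vertex cells are $n$ dots lying in pairwise distinct rows and columns, so they form a matching $M$ of size $n$, which is maximum. Its arcs are exactly the remaining dots with the induced pairs, so $H=H_M$. This step does not need strong connectivity. Strong connectivity is automatic by Step~1, so every embedding of $T_0$ is strongly connected; equivalently, by the reverse direction of Proposition~\ref{prop103}, being strongly connected is no restriction here.

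Together, Steps 1--3 give the claimed bijection between maximum matchings of size $n$ and the strongly connected simple directed graphs embeddable in $T_0$.
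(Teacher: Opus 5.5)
Your proposal is correct and is essentially the argument the paper leaves implicit (the corollary is stated without a separate proof): Lemma~\ref{lem:balanced-excessive-atomic} makes $T_0$ atomic. The atomic $\Rightarrow$ strongly connected direction of Proposition~\ref{prop103}, proved there for an arbitrary maximum matching, then makes every $H_M$ strongly connected, and injectivity is Remark~\ref{rem:matching-choice}, which indeed holds only in your embedded reading (for $K_{n,n}$ every perfect matching gives the complete digraph).

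One small slip: in Step~3, the fact that strong connectivity is no restriction comes from the same forward direction you used in Step~1, not from the reverse direction of Proposition~\ref{prop103}.
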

\FloatBarrier

\section{Canonical Excessive Decomposition}\label{sec:excessive-decomp}
	In this section we construct the excessive decomposition using an MPIS-driven procedure. The algorithm uses an oracle $\mathcal{A}$ that computes a maximum proper independent set for a given brick; such an oracle exists (e.g., by exhaustive search), and its existence suffices for the structural analysis. We do not analyze computational complexity here. We note, however, that for bipartite graphs the oracle $\mathcal{A}$ is efficiently implementable: by the K\H{o}nig--Egerv\'ary theorem, a maximum proper independent set corresponds to a minimum vertex cover, which in turn is obtained from a maximum matching via the standard alternating-path construction. Using the Hopcroft--Karp algorithm \cite{HopcroftKarp1973}, this runs in $O(|E|\sqrt{|V|})$ time, so the entire BTF decomposition is computable in polynomial time. Since the procedure invokes the oracle at most once per extracted block, and there are at most $k+2$ blocks, the full decomposition runs in $O\bigl((k+2)\,|E|\sqrt{|V|}\bigr)$ time, which is polynomial in the size of $G_0$.
	
	Based on the algorithm $\mathcal{A}$, one can define a finite procedure that produces an isotope of the brick $T_0$ as a decomposition into excessive bricks arranged in block-triangular form (BTF), i.e., in the vertical strip of each brick there are no points above it, as illustrated in Figure~\ref{GM_PDF11}.
	
	\subsection{MPIS-driven procedure}\label{MH}
	Let $T_0$ be a $b\times a$ brick, where $a \leq b$. Let $\mathcal{A}$ be an algorithm that, on a given brick, outputs a pair $(H_1,H_2)$ consisting of the row and column indices of a maximum proper independent set (MPIS), if a hole exists in the brick; otherwise it returns $(\emptyset,\emptyset)$. Note that $H_1$ and $H_2$ are always simultaneously empty or simultaneously non-empty.
	
	The following MPIS-driven decomposition produces an isotope of $T_0$ in block-triangular form (BTF) in which the excessive bricks decompose $T_0$. The procedure maintains a global ordered list $\mathcal{L}$ of blocks, initially empty: each time the recursion reaches an excessive brick (the base case $\mathrm{vcount}(V) < \mathrm{long}(T)$, at which $T$ is not split further), that brick is appended to $\mathcal{L}$. The output is $\mathcal{L}$, the sequence of excessive blocks listed in block-triangular order.
	
	\begin{algorithm}[H]
		\caption{MPIS-driven BTF\_DECOMP}\label{mpis:btf-decomp}
		\begin{algorithmic}[1]
			
			\Procedure{BTF\_decomp}{$T$}
			\State $(H_1,H_2) \gets \mathcal{A}(T)$
			
			\If{$\mathrm{vcount}(H_1 \times H_2) < \mathrm{long}(T)$}
			\State append $T$ to $\mathcal{L}$ \Comment{$T$ is excessive: a leaf block}
			\State \Return
			\EndIf        
			
			\State $V \gets H_1 \times H_2$        
			\State Align $V$ to upper-left grid point of $T$ by permutation
			
			\State \Call{BTF\_decomp}{$\mathrm{vertical\_auxiliary}(V)$}
			\State \Call{BTF\_decomp}{$\mathrm{horizontal\_auxiliary}(V)$}
			\EndProcedure
			
			\State $\mathcal{L} \gets (\,)$ \Comment{global ordered list of blocks, initially empty}
			\State \Call{BTF\_decomp}{$T_0$}
			\State \Return $\mathcal{L}$ \Comment{excessive blocks in BTF order}
			
		\end{algorithmic}
	\end{algorithm}
	
	\begin{proposition}[Termination of the BTF recursion]
		The procedure MPIS-driven \textsc{BTF\_decomp} terminates after finitely many steps for any initial brick $T_0$.
	\end{proposition}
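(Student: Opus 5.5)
The plan is to use a strictly decreasing potential: the size of the brick, measured by $\mathrm{vcount}(T) = |\mathrm{row}(T)| + |\mathrm{col}(T)|$. The goal is to show that every recursive call of \textsc{BTF\_decomp} is made on a nonempty brick with strictly smaller $\mathrm{vcount}$ than its parent. The recursion tree then has finite depth, at most $a+b$. Each node has at most two children, so the tree is finite. Each node does only finitely much work: one oracle call to $\mathcal{A}$, one permutation, and possibly one append to $\mathcal{L}$. Hence the procedure terminates.

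The first step handles the base case. If $\mathcal{A}(T)$ returns $(\emptyset,\emptyset)$, we read $\mathrm{vcount}(H_1\times H_2)$ as $0$. Since $\mathrm{long}(T)\ge 1$, the leaf branch is taken. So a split happens only when $\mathcal{A}$ returns a genuine maximum hole $V = H_1\times H_2$ with $\mathrm{vcount}(V) \ge \mathrm{long}(T)$.

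The second step checks the recursive calls. By definition a hole is a proper brick, so $\emptyset \ne \mathrm{row}(V) \subsetneq \mathrm{row}(T)$ and $\emptyset \ne \mathrm{col}(V) \subsetneq \mathrm{col}(T)$. The vertical auxiliary brick $(\mathrm{row}(T)\setminus \mathrm{row}(V))\times \mathrm{col}(V)$ therefore has both index sets nonempty, so it is a legitimate brick in the sense of Definition~\ref{Brick}. Its $\mathrm{vcount}$ equals $\mathrm{vcount}(T) - |\mathrm{row}(V)| - |\mathrm{col}(T)\setminus\mathrm{col}(V)|$, which is strictly smaller than $\mathrm{vcount}(T)$ because $|\mathrm{row}(V)|\ge 1$. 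The symmetric argument applies to the horizontal auxiliary brick $\mathrm{row}(V)\times(\mathrm{col}(T)\setminus\mathrm{col}(V))$, which loses at least the nonempty set $\mathrm{col}(V)$.

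The third step closes the induction on $\mathrm{vcount}(T)$. The base case $\mathrm{vcount}(T)=2$ is a $1\times1$ brick. It admits no proper sub-brick and hence no hole, so it is a leaf. For larger $\mathrm{vcount}$, both children terminate by the induction hypothesis, so the parent call terminates too.

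The only delicate point is making sure a sub-brick passed to the recursion is never degenerate. A degenerate brick would have an empty row or column set, for which $\mathrm{long}$ and holes are ill-defined. Properness of holes rules this out, which is why I check it explicitly in the second step. The auxiliary bricks may contain isolated rows or columns, but that is harmless, since sub-bricks are allowed to have them. As a by-product, the leaves partition $\mathrm{row}(T_0)\times\mathrm{col}(T_0)$'s index sets, so $|\mathcal{L}|\le \min(a,b)+1$, which bounds the number of oracle calls.
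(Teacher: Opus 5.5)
Your proof is correct and follows the paper's argument. Both use $\mathrm{vcount}$ as a strictly decreasing potential, with the decrease coming from $|\mathrm{row}(V)|,|\mathrm{col}(V)|\ge 1$ for the returned maximum hole. Your explicit check that the auxiliary bricks are nondegenerate, via properness of the hole, fills in a detail the paper leaves implicit, and your leaf-count by-product is true but unneeded (it even tightens to $|\mathcal{L}|\le\min(a,b)$, since the leaves' column sets partition the columns).
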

	
	\begin{proof}
		Since $T_0$ is finite, the number of possible bricks is finite. 
		Consider the set of bricks processed during the recursion, and let $T$ be a $b \times a$ brick on which the procedure is called.
		
		The algorithm stops when $\mathrm{vcount}(H_1 \times H_2) < \mathrm{long}(T)$, which includes the case $H_1 = H_2 = \emptyset$ (since then $\mathrm{vcount} = 0 < \mathrm{long}(T)$).
		
		Suppose the stopping condition does not hold. Then $V = H_1 \times H_2$ is nonempty with size $r \times s$ where $r, s > 0$, and $\mathrm{vcount}(V) \ge \mathrm{long}(T)$. The procedure recursively calls itself on the two auxiliary bricks of $V$ in $T$: the vertical auxiliary brick of size $(b-r) \times s$ with $\mathrm{vcount} = (b-r)+s < a+b$, and the horizontal auxiliary brick of size $r \times (a-s)$ with $\mathrm{vcount} = r+(a-s) < a+b$. Both have strictly smaller $\mathrm{vcount}$ than $T$ (since $r > 0$ and $s > 0$). Hence, by strict decrease of the $\mathrm{vcount}$ at each recursive call, the recursion terminates after finitely many steps.
	\end{proof}
	
	\begin{theorem}[Structural Properties of the Decomposition] \label{thm:structural_properties}
		Let $T_0$ be a non-excessive bipartite brick. The MPIS-driven \textsc{BTF\_decomp} decomposition of $T_0$ yields a block-triangular form consisting of a sequence of excessive bricks with the following properties:
		\begin{enumerate}
			\item There is at most one horizontal excessive brick $D$ in the first position. If it exists, its size is $(a_D - \delta) \times a_D$, where $a_D = |\mathrm{col}(D)|$ and $\delta > 0$.
			\item There are $k \geq 0$ square excessive bricks $C_1, \dots, C_k$, each of them is a square.
			\item There is at most one vertical excessive brick $E$ in the last position. If it exists, its size is $((b-a)+ \delta + a_E) \times a_E$, where $a_E = |\mathrm{col}(E)|$ and $(b-a)+ \delta > 0$.
		\end{enumerate}
		The bricks follow the topological ordering, where any of the components may be empty.
	\end{theorem}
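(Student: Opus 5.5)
I would prove Theorem~\ref{thm:structural_properties} by induction on the recursion tree of \textsc{BTF\_decomp}, tracking at each node the shape of the brick and where it sits in the BTF. The first step is the root. Since $T_0$ is non-excessive, $\mathcal{A}$ returns a maximum hole $V$ of size $r\times s$ with $r+s=b+\delta$, $\delta\ge 0$. By the Lemma on $\sigma_A,\sigma_B$ and its Corollary, the vertical auxiliary brick $T_1$ is $(s-\delta)\times s$, so it is horizontal or square with $\mathrm{imb}(T_1)=\delta$. The horizontal auxiliary brick $T_2$ is $r\times(a-s)$, vertical or square with $\mathrm{imb}(T_2)=(b-a)+\delta$. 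By Lemma~\ref{auxiliary_brick}, both are non-deficient.

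Next I check the block-triangular orientation. Aligning $V$ to the origin puts $T_1$ below $V$ in the columns of $V$. Because $V$ is dot-free, the vertical strip of $T_1$ has no dots above $T_1$. The remote mate $U$ lies in the strip of $T_2$'s columns below $T_2$, so placing $T_1$'s block(s) before $T_2$'s is consistent with ``no dots above a block in its vertical strip.'' The recursion appends $T_1$'s leaves before $T_2$'s, so the BTF property propagates once each recursive output is itself in BTF.

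The core step is a lemma on non-deficient bricks. Let $T$ be a non-deficient brick with sides $p$ (rows) and $q$ (columns), and suppose it is split at a maximum hole of $\mathrm{vcount}$ equal to $\mathrm{long}(T)$ (the liminal case). Then the imbalance is inherited as follows: the auxiliary brick on the short-side orientation is square, and the other has the same orientation and imbalance as $T$. Concretely, take a vertical $T$ with $p\ge q$ and hole $r'+s'=p$. The vertical auxiliary brick is $(p-r')\times s'=s'\times s'$, a square. The horizontal auxiliary brick is $r'\times(q-s')$ with imbalance $r'-q+s'=p-q$. Lemma~\ref{auxiliary_brick} keeps both non-deficient, so the hypothesis is maintained.

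Applying this inductively gives the three claims:
\begin{itemize}
\item Square bricks only split into squares, and each square leaf is excessive; this gives the $C_i$.
\item A vertical brick produces squares (placed earlier) and at most one vertical child, which is placed last. So all vertical material funnels into a single last leaf $E$ of imbalance $(b-a)+\delta$.
\item Symmetrically, a horizontal brick produces a single horizontal leaf, first in order, of imbalance $\delta$. The symmetric computation has the square child appearing as the horizontal auxiliary brick, placed after it.
\end{itemize}
The sizes $(a_D-\delta)\times a_D$ and $((b-a)+\delta+a_E)\times a_E$ then follow from the preserved imbalances. If $\delta=0$, $T_1$ is square and no $D$ exists; if $T_2$ is square, no $E$ exists.

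I expect the main obstacle to be the ordering claim: that $D$ is literally first and $E$ literally last in $\mathcal{L}$. In the horizontal case the recursion calls the vertical auxiliary brick first, so I would need to verify that this child is the horizontal one. This requires a careful check of which auxiliary brick inherits the imbalance when $q>p$. Failing that, I would argue via a row/column transposition symmetry that the relative order is still topological, since leaves with no dots between them may be permuted.
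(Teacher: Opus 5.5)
Your proposal is correct and follows essentially the paper's argument. You split at a maximum hole into $T_1$ of imbalance $\delta$ and $T_2$ of imbalance $(b-a)+\delta$, keep all later bricks non-deficient via Lemma~\ref{auxiliary_brick}, and note that each liminal split peels off a square while the other child keeps the parent's orientation and imbalance; the paper gives the BTF-ordering invariant separately, right after the theorem. The ordering check you flag as the main obstacle is immediate, so your fallback is unnecessary: for a horizontal brick with $q>p$ split at a hole with $r'+s'=q$, the vertical auxiliary brick $(p-r')\times s'$ keeps imbalance $q-p$ and is recursed on first, while the horizontal auxiliary brick is $r'\times r'$, so $D$ is indeed the first leaf, exactly as your bullet list states.
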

	
	\begin{proof}
		Let $T_0$ be a non-excessive brick. Construct a $V$ MH in $T_0$ using the algorithm $\mathcal{A}$. 
		Then for the $r\times s$ hole $V$ we have $\mathrm{vcount}(V) \geq b$, hence $T_0$ is either deficient or liminal, so 
		\[
		\delta = b - \mathrm{vcount}(V) \geq 0.
		\]
		
		The vertical auxiliary brick $T_1$ of $V$ is an $s \times (s-\delta)$ horizontal brick, hence 
		\[
		\mathrm{imb}(T_1) = \delta.
		\] 
		The horizontal auxiliary brick $T_2$ is an $r \times (a-s)$ vertical brick, hence
		\[
		\mathrm{imb}(T_2) = r - (a-s) = (r+s) - a = b + \delta - a = (b-a) + \delta.
		\]
		
		Moreover, 
		\[
		\mathrm{long}(T_1) = |\mathrm{col}(V)| \quad \text{and} \quad \mathrm{long}(T_2) = |\mathrm{row}(V)|,
		\] 
		so the sides of $V$ can be regarded as the long sides of $T_1$ and $T_2$. 
		Therefore, the portion of the characteristic segment of $T_0$ lying in $T_1$ and $T_2$ is exactly the Hall line of the respective brick.
		
		By Statement~\ref{auxiliary_brick}, the bricks $T_1$ and $T_2$ cannot be deficient. 
		Thus, after the first step of the algorithm, all subsequently obtained auxiliary bricks are non-deficient.
		
		If a brick is excessive, the recursion stops along that branch. 
		If $T_1$ or $T_2$ is liminal, then it contains a hole $V_1$ or $V_2$ with 
		\[
		\mathrm{vcount}(V_1) = \mathrm{long}(T_1) \quad \text{or} \quad \mathrm{vcount}(V_2) = \mathrm{long}(T_2),
		\] 
		respectively. After making the hole origin-aligned by permutation, the corner grid point opposite to the origin lies on the characteristic line of $T_0$.
		
		Let $T_{11}$ and $T_{12}$ denote the auxiliary bricks of $V_1$ in $T_1$, and $T_{21}$ and $T_{22}$ those of $V_2$ in $T_2$. 
		These are also non-deficient, so the procedure can continue recursively.

		Since $V$ has size $r \times s$, we have $\mathrm{long}(T_1) = s$ and $\mathrm{long}(T_2) = r$. If $T_1$ is liminal, then $V_1$ has $\mathrm{vcount}(V_1) = s$, say $V_1$ has size $r_1 \times s_1$ with $r_1 + s_1 = s$. The vertical auxiliary brick $T_{12}$ of $V_1$ in $T_1$ has size $(s - r_1) \times s_1 = s_1 \times s_1$, which is a square. Similarly, if $T_2$ is liminal, then $V_2$ has $\mathrm{vcount}(V_2) = r$, say $V_2$ has size $r_2 \times s_2$ with $r_2 + s_2 = r$, and the horizontal auxiliary brick $T_{21}$ of $V_2$ in $T_2$ has size $r_2 \times (r - s_2) = r_2 \times r_2$, which is also a square.

		The imbalance of $T_{11}$ and $T_{22}$ equals $\delta$ and $(b-a)+\delta$ respectively at each recursive step, since a square block is extracted and the remaining imbalance is preserved.
		
		By construction, the first element of the brick sequence is always a horizontal brick (if it exists), and the last is a vertical brick (if it exists).
		
		If a square brick is liminal, then it contains a hole of size equal to its side length (e.g., brick $T_{12}$ in Figure~\ref{GM_PDF11}), and both resulting bricks are squares.
		At the end of the procedure, we obtain a decomposition in which all bricks are excessive. 
		From the construction it follows that there is at most one non-square horizontal excessive brick $D$ in the first position (if $\delta > 0$), with 
		\[
		\mathrm{imb}(D) = \delta,
		\] 
		and at most one non-square vertical excessive brick $E$ in the last position (if $b - a + \delta > 0$), with 
		\[
		\mathrm{imb}(E) = (b-a) + \delta.
		\]
		
	\end{proof}
	
	\begin{figure}[htb]
		\centering
		\includegraphics[width=\linewidth,keepaspectratio]{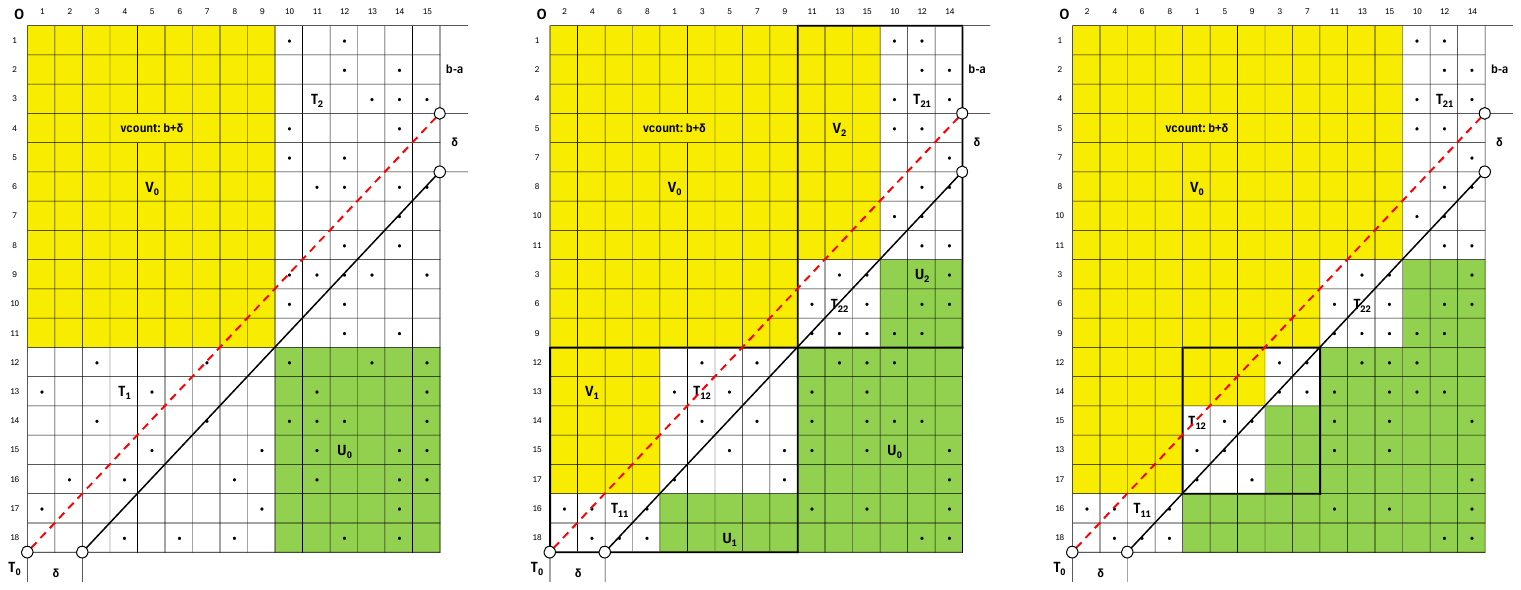}
		\caption{MPIS-driven decomposition}\label{GM_PDF11}
	\end{figure}
		
	Let $A_0 = \mathrm{col}(D)$ and $B_0 = \mathrm{row}(D)$ and for all $i\in [k]$:
	\[
	A_i = A_0 \cup \mathrm{col}(C_1) \cup \cdots \cup \mathrm{col}(C_i), \qquad
	B_i = B_0 \cup \mathrm{row}(C_1) \cup \cdots \cup \mathrm{row}(C_i).
	\]
	
	Then for all $i\in \{0\} \cup [k]$:
	\[
	\mathrm{vcount}((B\setminus B_i) \times A_i) = b+\delta.
	\] 
	so, each brick is origin-aligned, visible MH in $T_0$.
	
	The resulting sequence is a block triangular form (BTF): in the grid model this means that in the vertical strip of each block $B_i$ there are no dots above $B_i$ (equivalently, no edges to an earlier block $B_j$ with $j < i$). We verify this by an invariant of the recursion. Maintain the list of bricks produced so far, starting from the single brick $T_0$. At each step a non-excessive brick $T$ in the list is replaced, in place, by its vertical auxiliary brick followed by its horizontal auxiliary brick; an excessive brick is a leaf and is left untouched. It suffices to show that each replacement preserves the ordering.

	First, the two replacement bricks are correctly ordered \emph{relative to each other}: between them, the only backward region (from the horizontal auxiliary to the vertical auxiliary) is the maximum hole $V$, which is dot-free, so there is no backward edge; the forward edges all lie in the remote mate of $V$ and are forbidden with respect to every maximum matching (Corollary~\ref{cor:forbidden-dot}). Second, the replacement preserves the ordering \emph{relative to every other block} $B_j$ in the list: both replacement bricks are sub-bricks of $T$ (their rows and columns are subsets of those of $T$), so any edge between a replacement brick and $B_j$ is in particular an edge between $T$ and $B_j$, and therefore runs in the same (forward) direction that $T$ and $B_j$ already had. Hence each step preserves the BTF ordering, and by induction the final list of excessive bricks $D, C_1,\ldots,C_k, E$ is topologically ordered.

	\begin{definition}[Core]
		The sub-brick induced by the sequence of square bricks $C_1,\ldots,C_k$ is called the \emph{core}, denoted by $\mathrm{core}(T_0)$.
	\end{definition}

	The unique BTF arrangement of the excessive bricks $D, C_1,\ldots,C_k, E$ produced by the MPIS-driven procedure is the \emph{excessive decomposition} of $T_0$ (Definition~\ref{def:exc-decomp-prelim}); the construction just given establishes its existence and uniqueness.
	
	\begin{corollary}
		The result of an MPIS-driven procedure is a BTF if and only if the bricks $C_1,\ldots,C_k$ (if any) are topologically ordered.
		The excessive decomposition is a canonical structural characterization, and efficient computation is possible via existing matching algorithms.
	\end{corollary}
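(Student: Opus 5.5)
The statement to prove is the last corollary: the output of the MPIS-driven procedure is a BTF if and only if the bricks $C_1,\ldots,C_k$ are topologically ordered. The second sentence (canonicity and efficient computation) I will only refer back to the existence/uniqueness discussion and the Hopcroft--Karp remark at the start of the section.

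I plan to reduce the equivalence to the block structure of Theorem~\ref{thm:structural_properties}. The procedure places $D$ first and $E$ last. The invariant argument given just before the definition of the core already shows that every replacement step preserves forwardness relative to all other blocks.

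For edges involving $D$ and $E$, I will use the nested maximum holes $(B\setminus B_i)\times A_i$, $i\in\{0\}\cup[k]$, each of $\mathrm{vcount}$ $b+\delta$. The hole with $i=0$ is dot-free. Hence no dot lies in $\mathrm{row}(C_j)\times\mathrm{col}(D)$ or in $\mathrm{row}(E)\times\mathrm{col}(D)$, so there are no backward edges into $D$. Dually, the hole with $i=k$ gives $\mathrm{row}(E)\times A_k$ dot-free, so there are no backward edges from $E$. Consequently every possible backward edge runs between two core blocks $C_i$ and $C_j$. Both directions of the corollary then reduce to the same fact: the whole list is a BTF exactly when the core list has no edge from a later $C_j$ to an earlier $C_i$.

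That condition is, by Definition~\ref{def:BTF}, the statement that the core blocks are topologically ordered. To make this precise I will use the embedded-graph picture of Section~\ref{sec:embedding}. Each $C_i$ is balanced and excessive, hence atomic by Lemma~\ref{lem:balanced-excessive-atomic}. So, with the perfect matching restricted to $\mathrm{core}(T_0)$, each $C_i$ is an SCC by Proposition~\ref{prop103}. Inter-block dots are then exactly the condensation edges, and the absence of backward dots is exactly topological order of the SCCs.

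The main obstacle I expect is justifying that the blocks $C_i$ are the SCCs of the core's embedded graph, and not merely unions or fragments of them. Atomicity of $C_i$ gives strong connectivity inside each block. For maximality, I would argue that a directed cycle passing through two distinct blocks would need a dot in a dot-free hole $(B\setminus B_i)\times A_i$. Since the core lies inside these nested holes, this would contradict their emptiness.
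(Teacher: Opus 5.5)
Your proposal is correct and matches the paper's reasoning (the BTF invariant with $D$ first and $E$ last, so that only the order of the $C$-blocks matters). You dispose of $D$ and $E$ with the order-independent dot-free holes $(B\setminus\mathrm{row}(D))\times\mathrm{col}(D)$ and $\mathrm{row}(E)\times(A\setminus\mathrm{col}(E))$, and then identify the $C$-blocks as the SCCs of the core's embedded digraph by the same ``no directed cycle crosses a block boundary'' argument the paper later records in Corollary~\ref{cor:scc-invariant}. One wording fix: it is the backward cells $\mathrm{row}(C_j)\times\mathrm{col}(C_i)$, $j>i$, not the core itself, that lie in the holes $(B\setminus B_i)\times A_i$, and for $0<i<k$ these holes must be taken with respect to the procedure's own order of the $C$-blocks; this is harmless, since being an SCC does not depend on the ordering.
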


	\begin{corollary}\label{cor:unique-DE}
		If $\delta > 0$, then $D$ is the unique brick for which $\mathrm{vstrip}(D)$ contains a hole of $\mathrm{vcount}$ $b + \delta$. If $(b-a)+\delta > 0$, then $E$ is the unique brick for which $\mathrm{hstrip}(E)$ contains a hole of $\mathrm{vcount}$ $b + \delta$.
	\end{corollary}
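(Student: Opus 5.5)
The plan is to split the statement into an existence part and a uniqueness part, each for $D$ and $E$ separately. Existence will come from the explicit maximum holes $(B\setminus B_i)\times A_i$ recorded just after Theorem~\ref{thm:structural_properties}. Uniqueness will come from a Hall-type counting argument applied to the other blocks. Throughout, I read ``$\mathrm{vstrip}(W)$ contains a hole $V$'' as $\mathrm{col}(V)\subseteq \mathrm{col}(W)$, and ``$\mathrm{hstrip}(W)$ contains $V$'' as $\mathrm{row}(V)\subseteq\mathrm{row}(W)$.

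\emph{Existence.} If $\delta>0$, the vertical auxiliary brick $T_1$ of the first maximum hole has $\mathrm{imb}(T_1)=\delta>0$ (Theorem~\ref{thm:structural_properties}). Since imbalance is preserved when square blocks are extracted, this horizontal imbalance ends up in a nonempty block $D$. The hole $(B\setminus B_0)\times A_0$, with $A_0=\mathrm{col}(D)$, has $\mathrm{vcount}$ $b+\delta$ and lies in $\mathrm{vstrip}(D)$. Symmetrically, if $(b-a)+\delta>0$, then $E$ exists. The hole $(B\setminus B_k)\times A_k$ has row set $B\setminus B_k=\mathrm{row}(E)$ and $\mathrm{vcount}$ $b+\delta$, so it lies in $\mathrm{hstrip}(E)$.

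\emph{Uniqueness for $D$.} Let $W\in\{C_1,\dots,C_k,E\}$, and suppose a hole $V$ with $X=\mathrm{col}(V)\subseteq\mathrm{col}(W)$ has $\mathrm{vcount}(V)=b+\delta$. Then $V$ is a maximum hole, and the deficiency lemma gives $|X|-|N(X)|=\delta>0$. On the other hand, $W$ is excessive, hence satisfies the hole condition. Moreover $W$ is square or vertical, so $\mathrm{small}(W)=|\mathrm{col}(W)|$. By Corollary~\ref{cor:matching-from-hole}, $W$ has a matching saturating $\mathrm{col}(W)$. Hence $|N(X)|\ge|N_W(X)|\ge|X|$, which is a contradiction.

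\emph{Uniqueness for $E$.} Let $W\in\{D,C_1,\dots,C_k\}$, and suppose a hole $V$ of $\mathrm{vcount}$ $b+\delta$ has $Y=\mathrm{row}(V)\subseteq\mathrm{row}(W)$. The same lemma gives $|Y|-|N(Y)|=(b-a)+\delta>0$. But $W$ is horizontal or square, so $\mathrm{small}(W)=|\mathrm{row}(W)|$, and Corollary~\ref{cor:matching-from-hole} yields a matching saturating $\mathrm{row}(W)$. Thus $|N(Y)|\ge|Y|$, again a contradiction.

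The main point needing care is the interpretation of ``contains'' and the use of the deficiency lemma. That lemma must be applied to a hole which is genuinely maximum in $T_0$; this holds because its $\mathrm{vcount}$ equals $\alpha_p(T_0)=b+\delta$. It also requires that $N(\cdot)$ is taken in $T_0$, which only enlarges neighbourhoods relative to $W$, so the inequality goes the right way.
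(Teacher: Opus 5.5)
Your proof is correct and follows the paper's route. Existence comes from the maximum hole that the construction places above $D$ (resp.\ to the left of $E$); uniqueness holds because every other block is excessive and not horizontal (resp.\ not vertical), so it cannot carry the required Hall deficiency. Routing this through a matching that saturates $\mathrm{col}(W)$ (Corollary~\ref{cor:matching-from-hole}) makes explicit a point the paper's one-line appeal to excessiveness leaves implicit: a hole in $\mathrm{vstrip}(W)$ need not lie inside $W$.
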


	\begin{proof}
		By Theorem~\ref{thm:structural_properties}, the excessive decomposition produces at most one horizontal brick $D$ (with $\mathrm{imb}(D) = \delta$) and at most one vertical brick $E$ (with $\mathrm{imb}(E) = (b-a)+\delta$). The vertical strip of $D$ contains a hole of $\mathrm{vcount}$ $b + \delta$ by construction (the remote mate of the MH used to extract $D$). No other vertical strip can contain such a hole: the $C$-blocks and $E$ are excessive, hence contain no hole of $\mathrm{vcount} \geq \mathrm{long}$, and $D$ is the unique horizontal block. The argument for $E$ is symmetric.
	\end{proof}

\section{Classification of bipartite graphs}\label{sec:classes}
	To each bipartite brick $T_0$ with $a \le b$ the canonical decomposition assigns three structural invariants: its \emph{imbalance class} (balanced, $b=a$, or unbalanced, $b>a$); its \emph{characteristic type} (excessive $m>0$, liminal $m=0$, or deficient $m<0$); and its number of $C$-blocks $k \in \{0,1,{>}1\}$. The block composition is determined by these invariants through the presence rules
	\[
		\begin{aligned}
			D \text{ present} &\iff \text{deficient},\\
			C \text{ present} &\iff k \ge 1,\\
			E \text{ present} &\iff \text{unbalanced or deficient},
		\end{aligned}
	\]
	which follow from $\mathrm{imb}(D) = \delta$ and $\mathrm{imb}(E) = (b-a)+\delta$ (Theorem~\ref{thm:structural_properties}). Of the $2 \times 3 \times 3 = 18$ a priori combinations, the two structural laws below exclude exactly seven, leaving the eleven classes of Table~\ref{tab:block_composition} (in which ``--'' marks the excluded combinations).

	\begin{lemma}[Excessive irreducibility]\label{lem:excessive-irreducible}
		If $T_0$ is excessive ($m>0$), its excessive decomposition is trivial: $T_0$ is its own unique block and has no $D$-block. If $T_0$ is balanced it is a single $C$-block ($k=1$); if unbalanced, a single $E$-block ($k=0$).
	\end{lemma}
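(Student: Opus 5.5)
The proof will come straight from the MPIS-driven procedure \textsc{BTF\_decomp} of Section~\ref{sec:excessive-decomp}, since the excessive decomposition is by definition the list $\mathcal{L}$ it outputs. The first step is to run the procedure on an excessive $T_0$. By definition $m = b - \alpha_p(T_0) > 0$, so every hole $V$ of $T_0$, and in particular the maximum hole $H_1\times H_2$ returned by the oracle $\mathcal{A}$, satisfies $\mathrm{vcount}(V) \le \alpha_p(T_0) = b - m < b = \mathrm{long}(T_0)$. If $\mathcal{A}$ returns $(\emptyset,\emptyset)$, the vcount is $0 < \mathrm{long}(T_0)$. In either case the stopping test in the first line of the procedure fires at the root call. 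Hence $T_0$ itself is appended to $\mathcal{L}$ and no recursion takes place, so $\mathcal{L} = (T_0)$ and $T_0$ is its own unique block.

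The second step identifies the type of this single block by its shape, using the labelling convention of Definition~\ref{def:exc-decomp-prelim} and Theorem~\ref{thm:structural_properties}. A block is $D$ exactly when it has more columns than rows, $C$ when it is balanced, and $E$ when it has more rows than columns. Since $a \le b$, the block $T_0$ is never horizontal, so there is no $D$-block. If $a=b$, the block is square, so $k=1$ and there is no $E$; here I will also cite Lemma~\ref{lem:balanced-excessive-atomic} to note that this square block is connected and atomic, as a genuine $C_i$ should be. If $a<b$, the block is vertical, so it is the $E$-block and $k=0$. This case is consistent with $\mathrm{imb}(E) = (b-a)+\delta$ under the convention that no deficit is present, $\delta = 0$.

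The main point needing care is the word ``unique''. The procedure depends on the oracle's choice of maximum hole, so I must check that no alternative run could split $T_0$. The argument above already covers this: the stopping criterion depends only on the value $\alpha_p(T_0) < b$, not on which maximum hole is chosen. Every run therefore stops at the root.

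For a more intrinsic statement I will add a short remark. Suppose $T_0$ admitted some BTF partition into two or more excessive blocks. Take the first block $B_1$ in that partition. By the BTF property there are no dots in the vertical strip of $B_1$ outside $B_1$, or symmetrically in the horizontal strip of the last block. This produces a hole whose complement shape forces $\mathrm{vcount} \ge b$, by the same counting as in Lemma~\ref{lem:excessive-components}: a non-vertical first block gives a hole of vcount at least $b$. That contradicts $m>0$. I expect this last step to be the only genuinely nontrivial one, and I would present it briefly as a corollary rather than as part of the main argument.
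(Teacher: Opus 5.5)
Your main argument is correct and is essentially the paper's proof. Every hole satisfies $\mathrm{vcount}\le\alpha_p(T_0)=b-m<b=\mathrm{long}(T_0)$, so the stopping test of \textsc{BTF\_decomp} fires at the root call for every choice of maximum hole. The single block is then $C_1$ or $E$ according to its shape, and there is no $D$ since $a\le b$. Reading the label off the shape is, if anything, slightly cleaner than the paper's appeal to connectedness via Proposition~\ref{prop:atomic-iff-connected}, which the lemma does not actually need.

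The optional ``intrinsic'' remark, however, is false as stated and should be dropped or restricted. An excessive brick can admit a BTF partition into two or more excessive blocks. Take $T_0$ to be the disjoint union of two copies of $K_{2,3}$, each with $2$ columns and $3$ rows, so $b=6$ and $a=4$. Every proper hole has its columns in one component and its rows in the other, so $\mathrm{vcount}\le 5<6$ and $T_0$ is excessive. Yet its two components are excessive (Lemma~\ref{lem:excessive-components}) and, having no edges between them, trivially form a BTF.

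Your counting gives the hole $\mathrm{row}(B_2)\times\mathrm{col}(B_1)$ of $\mathrm{vcount}$ $b-|\mathrm{row}(B_1)|+|\mathrm{col}(B_1)|=5<b$, because the first block is vertical. The bound $\mathrm{vcount}\ge b$ needs $|\mathrm{col}(B_1)|\ge|\mathrm{row}(B_1)|$, and nothing about an arbitrary BTF forces that. Connectedness does not rescue it either: $K_{3,3}$ minus one edge is excessive with $m=1$, and it has a BTF partition into a complete $2\times1$ block and a complete $1\times2$ block.

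The remark becomes correct only for partitions of the shape $D,C_1,\dots,C_k,E$ of Definition~\ref{def:exc-decomp-prelim}. With at least two such blocks, the first is $D$ or $C_1$, hence non-vertical, and its vertical complement is a proper hole of $\mathrm{vcount}\ge b$.

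Finally, the consistency check with $\mathrm{imb}(E)=(b-a)+\delta$ ``under $\delta=0$'' is ad hoc. The paper uses $\delta=-m<0$ in the excessive case, and Theorem~\ref{thm:structural_properties} is stated only for non-excessive bricks. Simply note $\mathrm{imb}(E)=\mathrm{imb}(T_0)=b-a$ directly.
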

	\begin{proof}
		Every hole of an excessive brick has $\mathrm{vcount} < \mathrm{long}(T_0)$, so the stopping condition of \textsc{BTF\_decomp} (Algorithm~\ref{mpis:btf-decomp}) holds at the first call and $T_0$ is not split. Being connected and excessive, $T_0$ is atomic (Proposition~\ref{prop:atomic-iff-connected}): a square $C$-block if balanced, a vertical $E$-block if unbalanced. Since $\delta < 0$, there is no Hall deficiency, hence no $D$-block.
	\end{proof}

	\begin{lemma}[Liminal core]\label{lem:liminal-core}
		If $T_0$ is liminal ($\delta=0$), its decomposition contains at least one $C$-block ($k \ge 1$); if moreover $T_0$ is balanced, then $k \ge 2$.
	\end{lemma}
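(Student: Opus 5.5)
If $T_0$ is liminal, then $\delta=0$. By Theorem~\ref{thm:structural_properties} there is then no $D$-block, since a $D$-block exists only when $\delta>0$. Let $V$ be a maximum hole of size $r\times s$ with $r+s=b$, so $V$ is a proper hole and $r,s\ge1$. Its vertical auxiliary brick $T_1$ has size $s\times s$, because $\mathrm{imb}(T_1)=\delta=0$. Its horizontal auxiliary brick $T_2$ has size $r\times(a-s)$, with $\mathrm{imb}(T_2)=b-a$. I will argue that $T_1$ contributes at least one $C$-block, and, in the balanced case, that $T_2$ contributes at least one more.

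\textbf{Step 1: $T_1$ yields a $C$-block.} $T_1$ is nonempty ($s\ge1$) and square. By Lemma~\ref{auxiliary_brick} it is not deficient. The recursion in \textsc{BTF\_decomp} applied to a non-deficient square brick only ever splits it along holes of $\mathrm{vcount}$ equal to the side length. Each such split produces two square non-deficient auxiliary bricks, as noted in the proof of Theorem~\ref{thm:structural_properties}: "if a square brick is liminal \dots both resulting bricks are squares". The $\mathrm{vcount}$ strictly decreases at each call, so the recursion on $T_1$ ends in at least one leaf. Every leaf is an excessive square, that is, a $C$-block. Hence $k\ge1$. One small point to check is that $T_1$ does have a nonempty row set. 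Since $\mathrm{row}(V)\subsetneq B$, we have $b-r=s\ge1$, so this holds.

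\textbf{Step 2: the balanced case gives a second $C$-block.} If $a=b$, then $T_2$ has size $r\times(a-s)=r\times r$, with $r=b-s=a-s\ge1$ because $\mathrm{col}(V)\subsetneq A$. So $T_2$ is also a nonempty non-deficient square brick. The argument of Step~1 applies to it as well and gives at least one $C$-block inside $T_2$. $T_1$ and $T_2$ have disjoint rows and columns, so these blocks are distinct from those in $T_1$, and therefore $k\ge2$. One could also argue by contradiction using Lemma~\ref{lem:excessive-irreducible}: if $k=1$ and there were no $D$ or $E$ block, then the single block would be $T_0$ itself, which is excessive, contradicting $\delta=0$.

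\textbf{Main obstacle.} The main difficulty is justifying that the recursion on a non-deficient square brick yields only square leaves and at least one leaf. This needs the invariant that, inside a square non-deficient brick, any maximum hole returned by $\mathcal{A}$ either has $\mathrm{vcount}$ less than the side, giving a leaf, or exactly equal to it, giving two squares. A larger $\mathrm{vcount}$ is excluded by non-deficiency via Lemma~\ref{auxiliary_brick}. With a hole of $\mathrm{vcount}$ exactly $n$, of size $r'\times s'$ in an $n\times n$ brick, the auxiliary bricks are $s'\times s'$ and $r'\times r'$, and they remain non-deficient by Lemma~\ref{auxiliary_brick} applied locally. Once this invariant is established, the rest is bookkeeping.
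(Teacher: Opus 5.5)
Your proof is correct and follows the paper's route: the vertical auxiliary brick of the first maximum hole is a nonempty $s\times s$ non-deficient square, and the recursion on it can only produce square excessive leaves, so $k\ge 1$. Your square-preservation invariant makes explicit what the paper's one-line ``extracts a square $C$-block'' leaves implicit. For the balanced case, the paper argues by contradiction, and this is your alternative: with $\delta=0$ and $a=b$ there is no $D$ or $E$, so $k=1$ would force $T_0$ to be a single excessive square. Your primary argument, that $T_2$ is then also a nonempty $r\times r$ non-deficient square contributing a second, disjoint $C$-block, is an equally valid direct version of the same observation.
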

	\begin{proof}
		A liminal brick has a hole of $\mathrm{vcount} = b = \mathrm{long}(T_0)$, so \textsc{BTF\_decomp} performs at least one step and extracts a square $C$-block from the vertical auxiliary brick of that hole; hence $k \ge 1$. A single $C$-block is an excessive square (Lemma~\ref{lem:balanced-excessive-atomic}); if $T_0$ were balanced with $k=1$ it would equal that square and be excessive, contradicting $m=0$. Hence balanced liminal forces $k \ge 2$.
	\end{proof}

	\begin{theorem}[Eleven-class classification]\label{thm:main_classification}
		Every bipartite brick $T_0$ with $a \le b$ has a well-defined invariant triple (imbalance class, characteristic type, $C$-count $k \in \{0,1,{>}1\}$), and distinct triples give disjoint classes. Subject to Lemmas~\ref{lem:excessive-irreducible} and~\ref{lem:liminal-core}, the triple realizes exactly the eleven values of Table~\ref{tab:block_composition}; each is attained, and by the Atomic Decomposition Theorem (Theorem~\ref{thm:main_decomp}) it is a complete, canonical invariant of the block composition of $T_0$. The seven excluded triples are precisely those violating one of the two lemmas.
	\end{theorem}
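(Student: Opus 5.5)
The proof splits into well-definedness, exclusion of the seven forbidden triples, realization of the eleven admissible ones, and completeness of the invariant.

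\textbf{Well-definedness and disjointness.} The imbalance class is read off from $a$ and $b$. The characteristic type is the sign of $m=b-\alpha_p(T_0)$, determined by Definition~\ref{def:characteristic}. The count $k$ is fixed by the uniqueness of the excessive decomposition (Theorem~\ref{thm:structural_properties} and the construction after it). So the triple is a function of $T_0$, and distinct triples give disjoint classes trivially.

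\textbf{Excluded cells.} Write $6=2\times 3$ for the combinations of imbalance class and characteristic type, each with three values of $k$. Lemma~\ref{lem:excessive-irreducible} forces $k=1$ for balanced excessive bricks, which kills $k=0$ and $k>1$. It forces $k=0$ for unbalanced excessive bricks, which kills $k=1$ and $k>1$. That removes four triples. Lemma~\ref{lem:liminal-core} forces $k\ge2$ for balanced liminal bricks, which kills $k=0$ and $k=1$. It forces $k\ge1$ for unbalanced liminal bricks, which kills $k=0$. That removes three more. The deficient row has no constraint, so exactly seven triples are removed and eleven remain, matching Table~\ref{tab:block_composition}.

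\textbf{Presence rules.} The block entries of each cell follow from $\mathrm{imb}(D)=\delta$ and $\mathrm{imb}(E)=(b-a)+\delta$ in Theorem~\ref{thm:structural_properties}. $D$ appears exactly when $\delta>0$. $E$ appears exactly when $b>a$ or $\delta>0$. In the excessive case, Lemma~\ref{lem:excessive-irreducible} applies instead.

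\textbf{Realization of each admissible triple.} I would give explicit examples by assembling blocks in BTF.

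\begin{itemize}
\item \emph{Excessive cells.} Use $K_{2,2}$ minus nothing, i.e.\ a balanced excessive square, for the balanced cell. Use a connected unbalanced excessive brick such as $K_{1,2}$ for the unbalanced cell, if the excluded $K_{a,b}$ is allowed here; otherwise use a path $P_5$ with $a=2,b=3$, checking that its maximum hole has $\mathrm{vcount}<3$.
\item \emph{Liminal cells.} Chain $k\ge2$ single-edge squares in a staircase, e.g.\ a path $P_4$ gives $k=2$. Then attach a vertical excessive block below with forward edges to get the unbalanced liminal cases with $k=1$ and $k>1$.
\item \emph{Deficient cells.} Place a horizontal excessive block $D$ (e.g.\ $K_{2,1}$ transposed) first, then $k\in\{0,1,>1\}$ squares, then an $E$-block, joined by forward edges to keep the graph connected. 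For the balanced deficient cells, choose $\mathrm{imb}(E)=\delta$. For the unbalanced ones, choose $\mathrm{imb}(E)>\delta$.
\end{itemize}

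For each construction, compute the maximum hole directly or via Proposition~\ref{prop:excess-components} to confirm the type.

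\textbf{Completeness.} By Theorem~\ref{thm:main_decomp} the block composition is canonical, and the presence rules show the triple determines which of $D$, $C$, $E$ occur. So the triple is a complete invariant of the block composition.

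\textbf{Main obstacle.} The hard step is verifying that the concrete examples decompose as claimed, with the right $k$ and no unintended merging of blocks. Forward edges could make the whole graph excessive or change the maximum hole. I would rely on Lemma~\ref{lem:matching-union} and Corollary~\ref{cor:unique-DE} to certify that the intended hole of $\mathrm{vcount}$ $b+\delta$ is maximum.
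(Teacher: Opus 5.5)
Your proposal follows the paper's proof essentially step for step: the triple is a function of $T_0$, Lemmas~\ref{lem:excessive-irreducible} and~\ref{lem:liminal-core} exclude $4+3$ of the $18$ triples row by row, the deficient row is unconstrained, and realizability is shown by one example per class (the paper just points to Figures~\ref{GM_PDF21}--\ref{GM_PDF25}, whereas you build the examples by stacking blocks). When you carry out those constructions, two adjustments are needed: use e.g.\ $C_6$ instead of $K_{2,2}$, since the complete graph is set aside in Section~\ref{sec:prelim}; and put every connecting dot in $\mathrm{row}(\text{earlier})\times\mathrm{col}(\text{later})$, the remote mate, keeping $\mathrm{row}(\text{later})\times\mathrm{col}(\text{earlier})$ dot-free as the maximum hole --- for instance, joining the edge $a_1b_1$ to the star $\{a_2b_2,a_2b_3\}$ by $b_1a_2$ gives the intended liminal $(C,E)$ brick, whereas joining them by $a_1b_2$ produces $P_5$, which is excessive.
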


	\begin{proof}[Proof of Theorem~\ref{thm:main_classification}]
		The triple is a function of $T_0$, so every brick lies in exactly one class; we account for the $18$ combinations row by row.

		\emph{Excessive row.} Lemma~\ref{lem:excessive-irreducible} forces $k=1$ when balanced and $k=0$ when unbalanced, excluding the four triples (balanced, excessive, $k \in \{0,{>}1\}$) and (unbalanced, excessive, $k \in \{1,{>}1\}$). The two survivors are a single $C$ and a single $E$.

		\emph{Liminal row.} Lemma~\ref{lem:liminal-core} excludes $k=0$ for both balances and, in the balanced case, also $k=1$ --- the three triples (balanced, liminal, $k \in \{0,1\}$) and (unbalanced, liminal, $k=0$). The three survivors are balanced with $k>1$ (a $C$-core) and unbalanced with $k \in \{1,{>}1\}$ (a $C$-core followed by $E$).

		\emph{Deficient row.} Here $\delta > 0$, so a $D$-block is present and neither lemma applies; all $2 \times 3 = 6$ combinations occur, each comprising $D$, a (possibly empty) $C$-core, and $E$.

		This removes $4 + 3 = 7$ combinations and leaves $2 + 3 + 6 = 11$, with block compositions given by the presence rules and tabulated in Table~\ref{tab:block_composition}. Realizability is exhibited by Figures~\ref{GM_PDF21}--\ref{GM_PDF25}, one brick per class; together with the exclusions, the eleven classes are exactly the nonempty ones.
	\end{proof}

	\begin{remark}[Why eleven, and minimality]
		The two lemmas are the only obstructions: the count $11 = 18 - 7$ is generated entirely by excessive irreducibility (four exclusions) and the liminal-core condition (three exclusions). The classification is \emph{minimal} --- no two classes share an invariant triple, and each of balance, characteristic type, and $k \in \{0,1,{>}1\}$ affects the block composition --- and \emph{exhaustive and disjoint}, hence a genuine partition of the universe of bipartite bricks. Two classes are classical: the balanced excessive class ($k=1$) is exactly the elementary (matching-covered) bipartite graphs of Lov\'asz--Plummer, and the deficient class with $k=0$ consists of graphs whose canonical decomposition has no $C$-block. The remaining classes have no previously named counterpart; the grid model describes them uniformly.
	\end{remark}
	
	The following figures illustrate one example from each of the 11 structural classes.

	\begin{figure}[H]
		\centering
		\includegraphics[width=.6\linewidth,keepaspectratio]{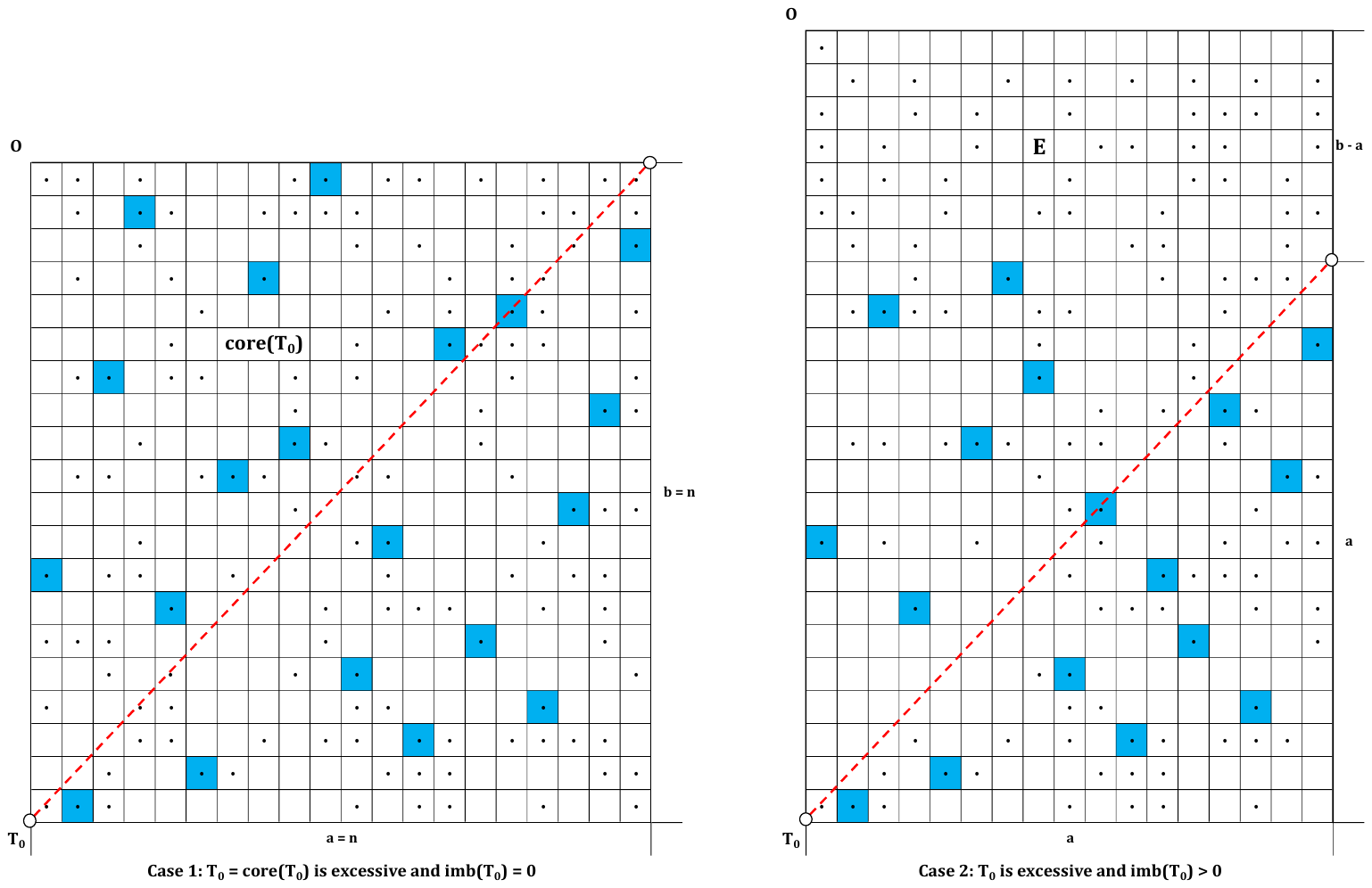}
		\caption{DM-irreducible cases}\label{GM_PDF21}
	\end{figure}

	\begin{figure}[H]
		\centering
		\includegraphics[width=.8\linewidth,keepaspectratio]{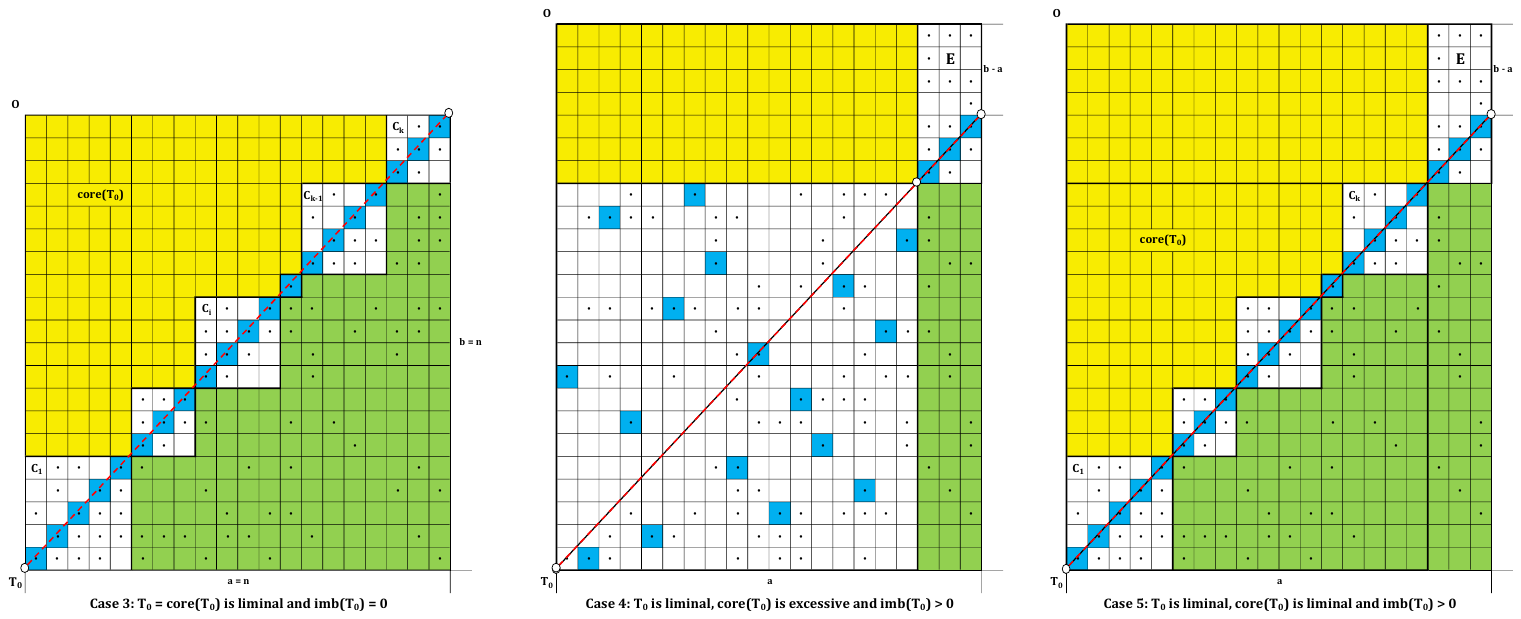}
		\caption{DM-reducible cases, no block $D$}\label{GM_PDF22}
	\end{figure}

	\begin{figure}[H]
		\centering
		\includegraphics[width=.6\linewidth,keepaspectratio]{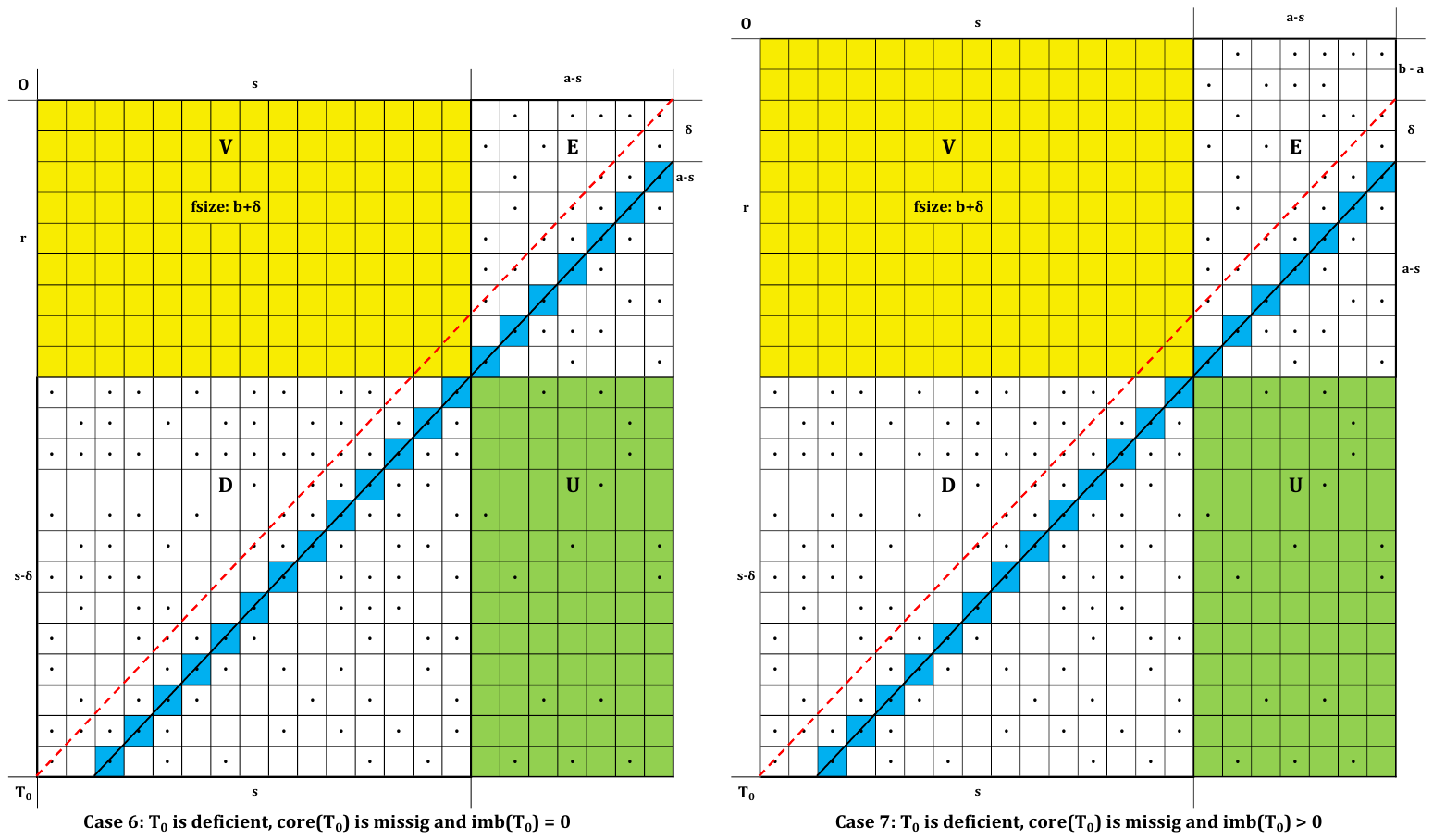}
		\caption{DM-reducible cases, no block $C$}\label{GM_PDF23}
	\end{figure}

	\begin{figure}[H]
		\centering
		\includegraphics[width=.6\linewidth,keepaspectratio]{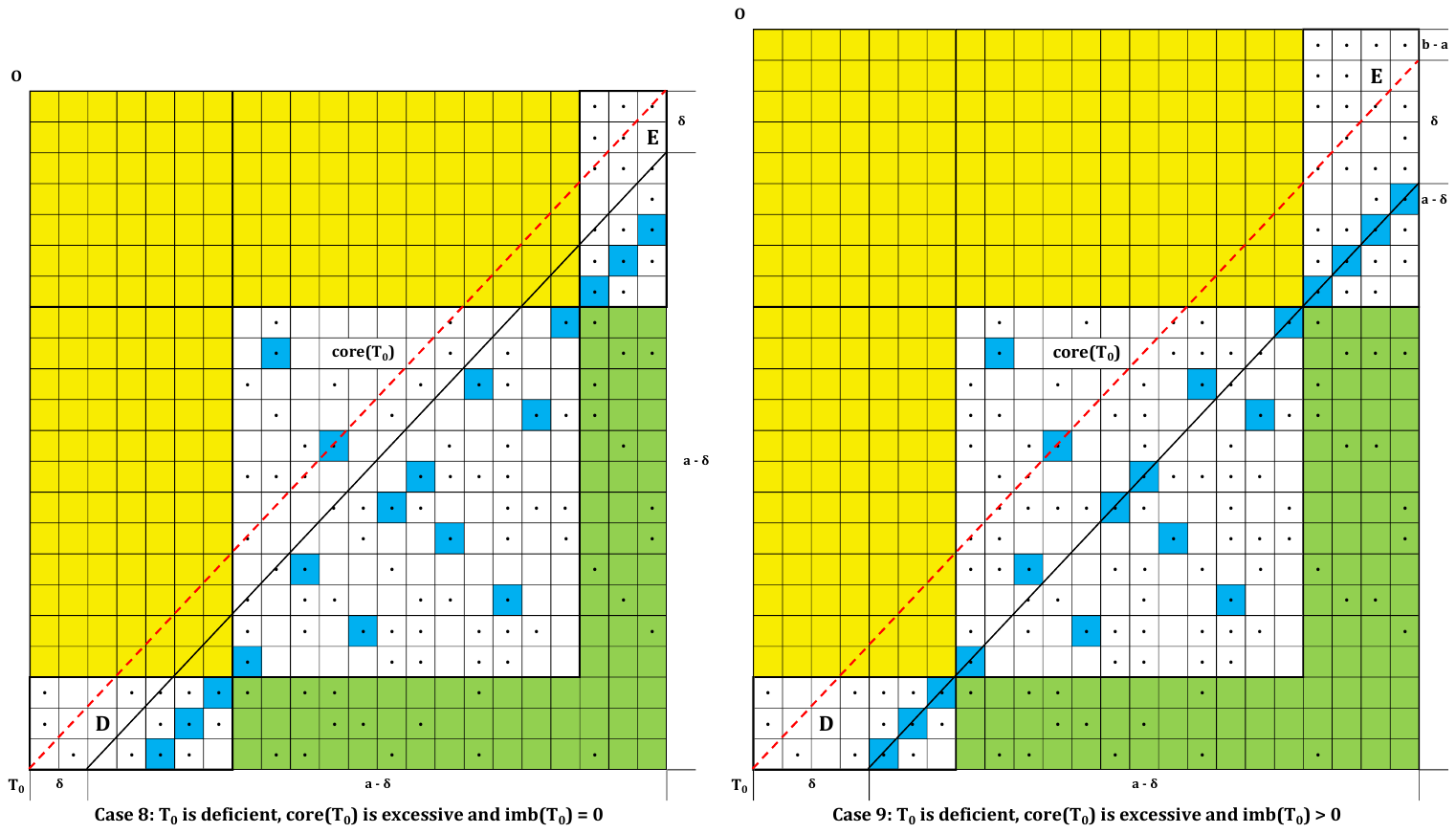}
		\caption{DM-reducible cases}\label{GM_PDF24}
	\end{figure}

	\begin{figure}[H]
		\centering
		\includegraphics[width=.6\linewidth,keepaspectratio]{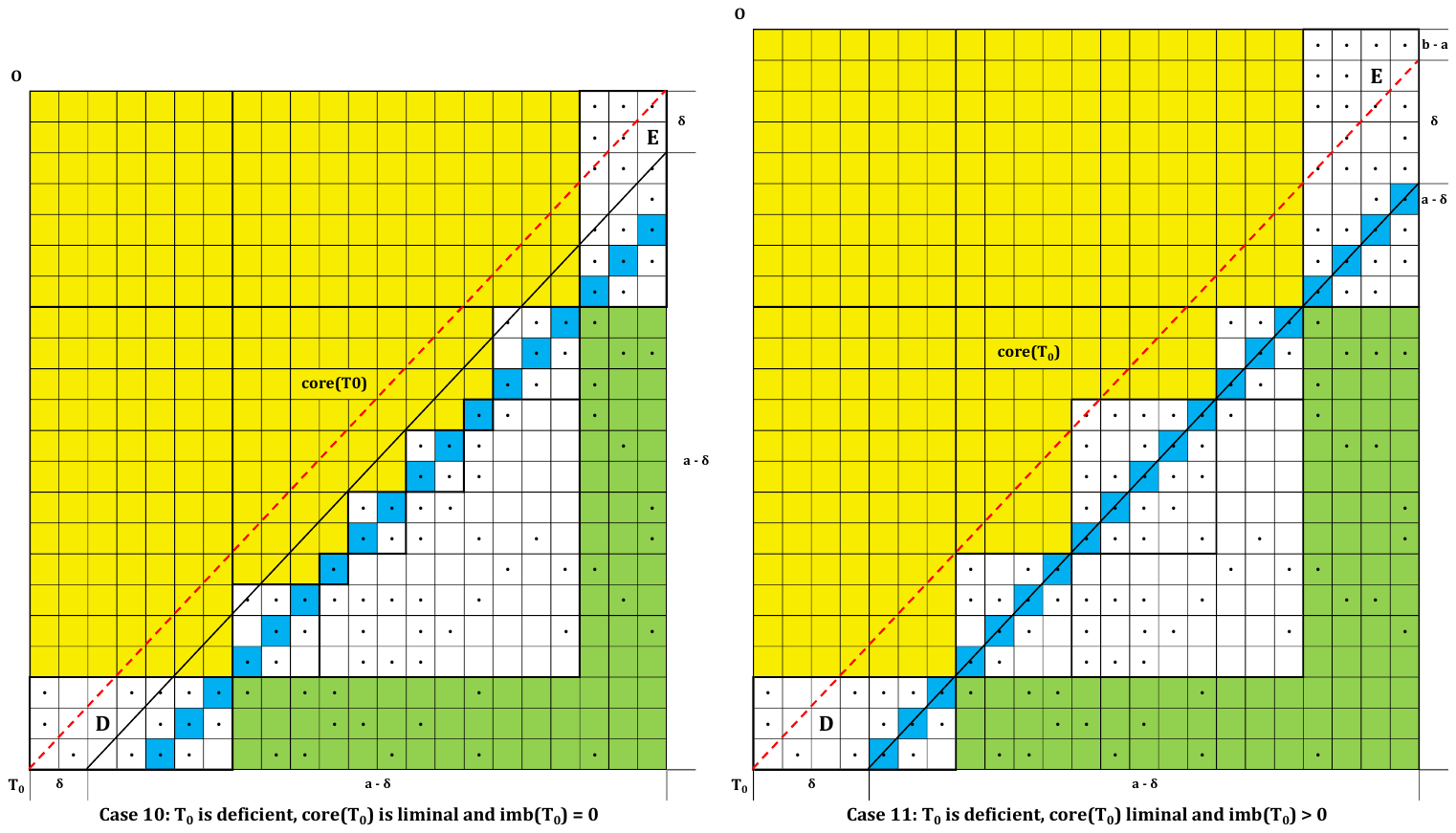}
		\caption{DM-reducible cases (continued)}\label{GM_PDF25}
	\end{figure}
\FloatBarrier

The eleven classes above exhaust all structural possibilities for a bipartite graph under the excessive decomposition.

\section{Structural Consequences and Symmetry}\label{sec:consequences}
\begin{theorem}[Structural symmetry of $D$ and $E$ blocks]
	In cases where both $D$ and $E$ blocks are present, the roles of $D$ and $E$ are structurally equivalent. Each can be obtained from the other by interchanging the two vertex classes.

	More precisely, any structural statement formulated in terms of $D$ blocks remains valid when $D$ and $E$ are interchanged, provided the corresponding parameters are interpreted symmetrically.
\end{theorem}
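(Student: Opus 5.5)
The natural approach is a transposition (duality) argument. We pass from the grid $T_0 = B\times A$ to its transpose $T_0^{\top} = A\times B$, obtained by interchanging the two colour classes, and show that the excessive decomposition is equivariant under this operation with the roles of $D$ and $E$ exchanged. Concretely, transposition sends a dot at $(b,a)$ to a dot at $(a,b)$. A hole $Y\times X$ becomes a hole $X\times Y$ with the same $\mathrm{vcount}$, and properness is preserved because the four conditions in the definition of a proper independent set are symmetric in $A$ and $B$. Hence the maximum holes of $T_0$ and $T_0^{\top}$ correspond bijectively (Proposition~\ref{prop:mpis-hole-corr}). The vertical and horizontal auxiliary bricks of an MH are exchanged, and the remote mate is preserved. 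Since $\mathrm{long}$, $\mathrm{small}$, $\mathrm{vcount}$ and $\mathrm{imb}$ are invariant under transposition, excessiveness of a sub-brick is preserved, provided we measure it against $\mathrm{long}$ as in the stopping test of Algorithm~\ref{mpis:btf-decomp}.

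The first key step is to run \textsc{BTF\_decomp} on $T_0^{\top}$ using the transposed MH at each call. By induction on the recursion (whose termination was established via decreasing $\mathrm{vcount}$), the output list for $T_0^{\top}$ is the transpose of the list for $T_0$, read in \emph{reverse} order. At each split, the vertical auxiliary brick of $V^{\top}$ is the transpose of the horizontal auxiliary brick of $V$, and conversely. Since the procedure processes the vertical auxiliary brick first, the order of the two children flips. By Theorem~\ref{thm:structural_properties} the first block is the unique horizontal block, with imbalance $\delta$, and the last is the unique vertical block, with imbalance $(b-a)+\delta$. After transposition, the former $E$ becomes a horizontal block in first position and the former $D$ becomes a vertical block in last position. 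The square $C_i$ stay square and excessive and appear in reversed topological order, which is consistent with the BTF condition, since reversing the order and transposing the adjacency matrix preserves triangularity. The uniqueness statement of Corollary~\ref{cor:unique-DE}, which is itself phrased symmetrically via $\mathrm{vstrip}(D)$ and $\mathrm{hstrip}(E)$, then confirms that the transposed blocks \emph{are} the $D$ and $E$ of $T_0^{\top}$, independently of the choice of MH.

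The second step is the parameter dictionary. After transposition the roles of $a$ and $b$ are exchanged, so the orientation convention $a\le b$ fails. We therefore measure deficiency from the other side, using $\sigma_A$ and $\sigma_B$: the earlier Lemma gives $\sigma_A=\delta=\mathrm{imb}(D)$ and $\sigma_B=(b-a)+\delta=\mathrm{imb}(E)$, and transposition swaps $\sigma_A$ with $\sigma_B$. This is exactly the sense in which parameters are ``interpreted symmetrically''. Atomic components also correspond: connected components of $G_0[D]$ become components of $G_0^{\top}[E^{\top}]$, connectivity being a graph property. Lemma~\ref{lem:excessive-components} and Proposition~\ref{prop:excess-components} were proved for the vertical case, with the horizontal case stated as analogous, and they transfer verbatim.

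The main obstacle is making ``any structural statement'' precise. I would formalize it as a metatheorem: every statement built from dots, holes, $\mathrm{vcount}$, $\mathrm{long}/\mathrm{small}$, matchings, and the decomposition blocks is invariant under the transposition functor, provided the substitutions $D\leftrightarrow E$, $C_i\leftrightarrow C_{k+1-i}$, $\sigma_A\leftrightarrow\sigma_B$ and vertical$\leftrightarrow$horizontal are made. The delicate point is that the normalization $a\le b$ (and hence the sign conventions for $m$ and $\delta$) is not transposition-invariant. I would handle this by restating $\delta$ and $(b-a)+\delta$ intrinsically as $\sigma_A,\sigma_B$ before invoking the functor.
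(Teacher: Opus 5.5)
Your argument is sound, but it takes a genuinely different route from the paper. The paper constructs no duality map; it says explicitly that the symmetry ``does not rely on a graph automorphism''. Instead it lists the parallel facts:
\begin{itemize}
\item $D$ and $E$ are both unbalanced excessive outputs of the MPIS recursion, with $\mathrm{imb}(D)=\delta$ and $\mathrm{imb}(E)=(b-a)+\delta$ (Theorem~\ref{thm:structural_properties});
\item their components are atomic and inherit the parent's orientation (Lemma~\ref{lem:excessive-components});
\item their excesses combine as in Proposition~\ref{prop:excess-components};
\item no edges join sub-blocks of the same region.
\end{itemize}
It then asserts that statements depending only on this ``excessive-unbalanced structure'' transfer under $\delta\leftrightarrow(b-a)+\delta$.

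You instead build the mechanism. The colour-class swap is an involution on all bipartite graphs, not an automorphism of $G_0$. \textsc{BTF\_decomp} is equivariant under it, with the two children exchanged at each split. Hence $D(T_0^{\top})=E(T_0)^{\top}$ and $E(T_0^{\top})=D(T_0)^{\top}$, the $C$-blocks appear in reversed order, and $\sigma_A\leftrightarrow\sigma_B$.

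What your route buys:
\begin{itemize}
\item a per-graph equivalence $P(G)\iff P^{\top}(G^{\top})$;
\item a literal reading of ``interchanging the two vertex classes'' and a precise meaning for ``interpreted symmetrically'';
\item the horizontal cases of Lemma~\ref{lem:excessive-components} and Proposition~\ref{prop:excess-components} for free, rather than by ``analogy''.
\end{itemize}

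The price is the one you flag yourself. Transferring a \emph{valid} statement requires a class of graphs closed under transposition, so the theory must be run without the normalization $a\le b$. Restating $\delta$ and $(b-a)+\delta$ as $\sigma_A$ and $\sigma_B$ fixes the parameter bookkeeping but not the scope. A $D$-result proved only for $a\le b$ dualizes to an $E$-result for $a\ge b$, not for $a\le b$. For example, $\mathrm{imb}(D)\le\mathrm{imb}(E)$ holds for every vertical brick, but its dual $\mathrm{imb}(E)\le\mathrm{imb}(D)$ fails for vertical bricks with $a<b$. So for each $D$-result you must still check that its proof never used $a\le b$. This is the precise form of the paper's informal restriction to statements that depend only on the excessive-unbalanced structure.

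One small repair is needed. Corollary~\ref{cor:unique-DE} is stated under $a\le b$, so you should not invoke it for $T_0^{\top}$. Instead, note that every run of \textsc{BTF\_decomp} on $T_0^{\top}$ is the transpose of a run on $T_0$, since maximum holes of $S^{\top}$ are exactly transposes of maximum holes of $S$. Independence from the choice of maximum hole then follows from Theorem~\ref{thm:canonical-unique} applied to $T_0$.
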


\begin{proof}
	Both $D$ and $E$ are excessive unbalanced bricks produced by the MPIS-driven decomposition (Theorem~\ref{thm:structural_properties}): $D$ is horizontal with $\mathrm{imb}(D) = \delta$, and $E$ is vertical with $\mathrm{imb}(E) = (b-a)+\delta$. By Lemma~\ref{lem:excessive-components}, the atomic sub-blocks of $D$ are the connected components of $G_0[D]$, each an unbalanced excessive brick; the same holds for $E$. The sub-blocks of $D$ are horizontal and those of $E$ are vertical, inheriting the orientation of their parent block. Their excess values relate as in Proposition~\ref{prop:excess-components}, and no edges exist between distinct sub-blocks within the same region.

	The symmetry does not rely on a graph automorphism, but rather on the structural parallelism of the MPIS construction applied to the two sides. Therefore, any structural statement about $D$ and its sub-blocks $D_1,\ldots,D_p$ that depends only on the excessive-unbalanced structure remains valid for $E$ and its sub-blocks $E_1,\ldots,E_q$, with the parameter correspondence $\delta \leftrightarrow (b-a)+\delta$.
\end{proof}

\begin{corollary}[Matching structure induced by the atomic decomposition]
	\label{lem:max-matching-unique}
	In each atomic block $D_i$ there exists a maximum matching of size $\mathrm{small}(D_i)$, in each atomic block $E_j$ one of size $\mathrm{small}(E_j)$, and in each square block $C_l$ there exists a perfect matching. The edges outside the atomic blocks (i.e., in the green region) are not covered by any maximum matching.
	
	Hence, a matching in $T_0$ is maximum if and only if it is the union of maximum matchings of the individual atomic blocks (Theorem~\ref{thm:atomic-decomp}, property~(4)).
\end{corollary}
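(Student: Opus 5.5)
The plan is to assemble the statement from three ingredients already in place: the existence of matchings in each atomic block (via the hole condition), the forbiddenness of inter-region dots (Corollary~\ref{cor:forbidden-dot}), and the iterated splitting of maximum matchings (Lemma~\ref{lem:matching-union}), applied along the recursion of \textsc{BTF\_decomp}.

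\textbf{Existence of block matchings.} Each $D_i$, $E_j$ is atomic by Lemma~\ref{lem:excessive-components}, and each $C_l$ is atomic by Lemma~\ref{lem:balanced-excessive-atomic}. In particular each block is excessive, so it has no hole of $\mathrm{vcount}\ge\mathrm{long}$. Corollary~\ref{cor:matching-from-hole} then gives a matching of size $\mathrm{small}$ in every block. For $C_l$, which is square, this matching is perfect.

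\textbf{Factorization.} I would argue by induction along the recursion tree of \textsc{BTF\_decomp}. The induction claim is: for each non-excessive brick $T$ reached by the recursion, a matching of $T$ is maximum if and only if it is the union of maximum matchings of the two auxiliary bricks of the chosen maximum hole.
\begin{itemize}
\item For the root this is exactly Lemma~\ref{lem:matching-union}.
\item For the liminal sub-bricks encountered later, Lemma~\ref{lem:matching-union} again applies, regarding the sub-brick as a $T_0$ in its own right (as permitted in Section~\ref{sec:prelim}). These bricks are non-deficient by Lemma~\ref{auxiliary_brick}, so their maximum matchings have size $\mathrm{small}$.
\end{itemize}
Unwinding the recursion, a matching of $T_0$ is maximum if and only if it is a disjoint union of maximum matchings of the leaves $D, C_1,\dots,C_k,E$.

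Next I would refine $D$ and $E$ into their components. Since there are no dots between distinct components, a matching of $D$ (resp.\ $E$) is the disjoint union of its restrictions to the components. Its size is maximal exactly when each restriction has size $\mathrm{small}(D_i)$, resp.\ $\mathrm{small}(E_j)$. Here I use that all components share the orientation of the parent block (Lemma~\ref{lem:excessive-components}), so these sizes sum to $\mathrm{small}(D)$, resp.\ $\mathrm{small}(E)$, by the additivity \eqref{imbalance}.

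\textbf{Green edges.} A dot outside all atomic blocks lies either between two blocks of different regions or in a remote mate at some stage of the recursion. In the latter case it is forbidden in that sub-brick by Corollary~\ref{cor:forbidden-dot}. By the factorization just established, any maximum matching of $T_0$ restricts to a maximum matching of that sub-brick, so the dot is forbidden in $T_0$ as well.

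\textbf{Main obstacle.} The hardest step is making the induction rigorous when passing to sub-bricks. I would state an explicit lemma: if every maximum matching of $T$ is a union of maximum matchings of its auxiliary bricks $T_1$ and $T_2$, and a brick $T'\in\{T_1,T_2\}$ factorizes in the same way, then substituting the factorization of $T'$ preserves the ``if and only if''. This holds because sizes add and the supports are disjoint.
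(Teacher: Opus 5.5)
Your proposal is correct and takes essentially the paper's route. The statement rests on Theorem~\ref{thm:atomic-decomp}(4), i.e.\ Lemma~\ref{lem:matching-factorization}, whose proof likewise iterates Lemma~\ref{lem:matching-union} down the \textsc{BTF\_decomp} recursion, splits $D$ and $E$ into components via $\mathrm{small}(D)=\sum_i\mathrm{small}(D_i)$, takes block matchings from Corollary~\ref{cor:matching-from-hole}, and treats green dots as remote-mate dots forbidden by Corollary~\ref{cor:forbidden-dot}. Your explicit substitution step, and your remark that a dot forbidden in an intermediate sub-brick stays forbidden in $T_0$ (every maximum matching of $T_0$ restricts to a maximum matching of that sub-brick), spell out a step the paper leaves implicit.
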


Moreover, by Corollary~\ref{cor:forbidden-dot}, every dot lying in the remote mate of any MH of $T_0$ is forbidden. The dots outside the atomic blocks (the green region, see Figure~\ref{fig:gridmodel}) lie precisely in such remote mates --- each is the remote mate of the MH produced by the MPIS-driven procedure at the step when the enclosing block was extracted --- and therefore belong to no maximum matching.
Thus the dots of $T_0$ (i.e., the edges of $G$) are partitioned into two classes:
\begin{itemize}
	\item those belonging to excessive bricks, each contained in at least one maximum matching,
	\item and the forbidden dots, which belong to no maximum matching.
\end{itemize}

Let us construct the DM-decomposition of a graph $G$ and the MPIS-driven excessive decomposition of the corresponding brick $T_0$. The proof below shows that the BFS-type reachability construction underlying the classical DM-decomposition (cf.~\cite{PothenFan1990}) is precisely the grid-model realization of the MPIS-driven procedure, thereby establishing their equivalence.

Then the following holds:

\begin{theorem}[DM--excessive equivalence]\label{thm:dm-excessive}
	The MPIS-driven excessive decomposition of a bipartite graph coincides with the decomposition obtained from its Dulmage--Mendelsohn decomposition by refining the $C$-part. More precisely:
	\begin{itemize}
		\item if $\delta > 0$, then the brick $D$ in the first position of the excessive decomposition corresponds exactly to the $D$-part of the DM-decomposition,
		\item if $(b-a)+\delta > 0$, then the brick $E$ in the last position corresponds exactly to the $E$-part of the DM-decomposition,
		\item all further excessive bricks are squares (if any); together they induce the middle $C$-part, and these excessive squares coincide with the refined blocks of the $C$-part in the DM-decomposition.
	\end{itemize}
\end{theorem}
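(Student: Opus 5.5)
Fix a maximum matching $M$ of $G_0$ and recall the classical Dulmage--Mendelsohn description via alternating reachability. $D_{\mathrm{DM}}$ is the set of vertices reachable by an $M$-alternating path from an $M$-unsaturated vertex of $A$; this side carries the Hall deficiency. $E_{\mathrm{DM}}$ is the set reachable from an unsaturated vertex of $B$. $C_{\mathrm{DM}}$ is the rest, which is perfectly matched by $M$ and is refined into the strongly connected components of the embedded directed graph of Section~\ref{sec:embedding}. The plan is to compare these sets with the blocks produced by \textsc{BTF\_decomp}, first at the level of the coarse regions and then inside the core.

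\textbf{Step 1 (coarse regions).} Take the first maximum hole $V$ used by the procedure, with auxiliary bricks $T_1$ and $T_2$. By Lemma~\ref{lem:matching-union}, $M$ splits as $M_1\cup M_2$, with $M_1$ maximum in $T_1$ and $M_2$ maximum in $T_2$, and no matching edge lies in the remote mate $U$.

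First, every vertex of $D_{\mathrm{DM}}$ lies in $\mathrm{col}(V)\cup\mathrm{row}(T_1)$. The unsaturated columns lie in $T_1$, since $T_1$ is horizontal with $\mathrm{imb}(T_1)=\delta$ while $T_2$ is perfectly matched on its columns. An alternating path from such a column can only leave $\mathrm{vstrip}(V)$ through a row in $\mathrm{row}(V)$, and the hole $V$ forbids that step. So the path moves from a column of $T_1$ to a row of $T_1$, and then along a matching edge that stays in $T_1$.

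Symmetrically, $E_{\mathrm{DM}}\subseteq \mathrm{row}(V)\cup\mathrm{col}(T_2)$. Iterating along the recursion, which by the invariant preceding the Core definition only ever cuts along maximum holes of $T_0$, confines $D_{\mathrm{DM}}$ to $V(D)$ and $E_{\mathrm{DM}}$ to $V(E)$.

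For the reverse inclusion, apply Lemma~\ref{lem:excessive-components} and Proposition~\ref{prop:excess-components} to $D$. Each component $D_i$ is horizontal and excessive. By Corollary~\ref{cor:matching-from-hole} it has a matching saturating its rows, and by Lemma~\ref{lem:matching-union} this matching is the restriction of $M$. Since $\mathrm{imb}(D_i)>0$, some column of $D_i$ is unsaturated. Excessiveness gives $|N(X)|>|X|$ for every nonempty proper $X$ of rows of $D_i$, read horizontally, and the standard Hall-surplus argument then shows that every vertex of $D_i$ is reachable by an $M$-alternating path from an unsaturated column. Hence $V(D)\subseteq D_{\mathrm{DM}}$. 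The argument for $E$ is symmetric.

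\textbf{Step 2 (core).} The core is the sub-brick left over, so it coincides with $C_{\mathrm{DM}}$. It is balanced and perfectly matched by $M$, so its embedded directed graph $H$ is defined.

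Each $C_i$ is balanced and excessive, so by Proposition~\ref{prop103} its embedded graph is strongly connected. Thus each $C_i$ lies inside a single SCC of $H$.

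Conversely, the blocks $C_i$ are arranged in BTF, so all inter-block edges run forward. By the discussion preceding Proposition~\ref{prop103}, a directed cycle cannot cross between blocks, so no SCC meets two blocks. Hence the squares $C_i$ are exactly the SCCs. Since SCCs are independent of the choice of $M$ (Remark~\ref{rem:matching-choice} together with \cite{Tarjan1972}, and classical DM invariance), the conclusion does not depend on the matching chosen.

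\textbf{Main obstacle.} The delicate step is the reverse inclusion $V(D)\subseteq D_{\mathrm{DM}}$. It requires turning the hole-based excessiveness of each component $D_i$ into alternating-path reachability. I would argue by contradiction. Let $R$ be the set of vertices of $D_i$ reachable from the unsaturated columns, and suppose $R\neq V(D_i)$. The $M$-edges together with the non-reachability of the complement force the complement of $R$ in $D_i$ to be perfectly matched by $M$. Its column set and the complementary rows then bound a hole in $D_i$ of $\mathrm{vcount}$ at least $\mathrm{long}(D_i)$, which contradicts excessiveness; if $R$ is empty, connectivity of $D_i$ gives the contradiction instead. Checking the exact counts in this hole argument is where care is needed.
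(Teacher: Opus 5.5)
Your argument is correct. For the $D$- and $E$-parts it takes a genuinely different route from the paper.

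\textbf{The paper's route.} The paper runs the alternating BFS from the $\delta$ unsaturated columns. It observes that the resulting sub-brick has imbalance $\delta$ and a hole of $\mathrm{vcount}$ $b+\delta$ above it, and identifies it with $D$ by invoking Corollary~\ref{cor:unique-DE}. A separate ``DM $\Rightarrow$ MPIS'' direction is argued via the stopping condition. The core is handled by citing the SCC correspondence; your Step~2 is in substance the argument of Corollary~\ref{cor:scc-invariant}.

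\textbf{Your route.} You prove $D_{\mathrm{DM}}=V(D)$ by two inclusions. The maximum hole $(B\setminus\mathrm{row}(D))\times\mathrm{col}(D)$ splits every maximum matching by Lemma~\ref{lem:matching-union}, and so traps all alternating paths from unsaturated columns inside $V(D)$. This hole is the $i=0$ instance of the invariant, so no iteration is needed; dually, $\mathrm{row}(E)\times(A\setminus\mathrm{col}(E))$ serves for $E$. Excessiveness of $D$ then forces every vertex of $D$ to be reached.

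\textbf{What each buys.} Your identification is self-contained. Corollary~\ref{cor:unique-DE} is a uniqueness statement about the blocks produced by the procedure, so applying it to the BFS brick presupposes that this brick is a block. Lemma~\ref{lem:hall-deficiency-occurrences} alone would only say that the BFS column set is $\mathrm{col}(D)$ together with some ideal of $C$-blocks. Your confinement inclusion is exactly what rules out the extra $C$-blocks, and the two inclusions give both directions of the equivalence at once. The paper's route is shorter and stays closer to the classical alternating-path construction.

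\textbf{The step you flagged.} Let $X_R,Y_R$ be the reached columns and rows of $D$, and $X',Y'$ the unreached ones. Then $N(X_R)\subseteq Y_R$, $|X_R|=|Y_R|+\delta$, and $M$ matches $X'$ onto $Y'$. The contradicting hole is $Y'\times X_R$ (unreached rows, reached columns). Its $\mathrm{vcount}$ is $|X'|+|X_R|=\mathrm{long}(D)$. It is a proper hole of $D$ because $\delta\ge1$, $|Y'|=|X'|>0$, and $Y_R\neq\emptyset$. The other pairing $Y_R\times X'$ need not be dot-free. The case $R=\emptyset$ cannot occur, and since $D$ itself is excessive, passing to the components $D_i$ is unnecessary.

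\textbf{Smaller points.} In Step~2, state that $M$ restricts to a perfect matching of each $C_i$ (Lemma~\ref{lem:matching-factorization}) before applying Proposition~\ref{prop103}. Matching-independence follows simply because your argument works for arbitrary $M$. Remark~\ref{rem:matching-choice} is the wrong citation for this, since it says the embedded digraph does change with $M$.
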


\begin{proof}
	We prove both directions of the equivalence.

	\medskip
	\noindent\textbf{(MPIS $\Rightarrow$ DM).}
	Since each block in the excessive decomposition is excessive, it satisfies the hole condition and hence admits a matching of size $\mathrm{small}(T)$ (Corollary~\ref{cor:matching-from-hole}). Since no edges between blocks from different regions belong to any maximum matching (Corollary~\ref{cor:forbidden-dot}), every maximum matching in $T_0$ can be expressed as the union of maximum matchings of the excessive bricks. Let $M_0$ be an arbitrary maximum matching, and let $M_D$ denote the elements of $M_0$ that lie in $D$.
	
	\begin{itemize}
		\item Let $X_0$ be the set of vertices in $A$ not matched by $M_D$, and define $\delta = |X_0|$. Then $|M_0| = a - \delta$. Let $Y_1 = N(X_0)$, and let $X_1$ denote the set of their $M_0$-matched vertices. Clearly $|X_1| = |Y_1|$ and $X_0 \cap X_1 = \emptyset$.
		
		Consider successive increments on the $Y$-side:
		\[
		Y_2 = N(X_1) \setminus Y_1, \quad
		X_2 = \text{the set of $M_0$-matched vertices of } Y_2,
		\]
		\[
		Y_3 = N(X_2) \setminus (Y_1 \cup Y_2), \ \ldots
		\]
		until 
		\[
		Y_i = N(X_i) \setminus (Y_1 \cup \cdots \cup Y_{i-1})
		\]
		becomes empty.
		
		By construction, the sets $X_0, X_1, \ldots, X_i$ are pairwise disjoint. Let
		\[
		X = X_1 \cup \cdots \cup X_i, \quad
		Y = Y_1 \cup \cdots \cup Y_i.
		\]
		Then $|X| = |Y|$ and $Y \subset B$, since unmatched vertices in $B$ are not contained in $Y$. Otherwise, there would exist an $M_0$-alternating augmenting path, contradicting the maximality of $M_0$.

		Note that $X_0$ is the set of $A$-vertices left unmatched by $M_0$ (not just by $M_D$): since $M_0$ is a maximum matching of size $a - \delta$, exactly $\delta$ vertices of $A$ are unmatched, and these are precisely $X_0$. The subsequent BFS construction depends only on $M_0$ and $X_0$, not on the particular choice of $M_D$ within $D$. Hence the sub-brick induced by $X_0 \cup X \cup Y$ is determined by $M_0$ alone. It has imbalance $|X_0| = \delta$, and its vertical auxiliary brick is a hole of $\mathrm{vcount}$ $b + \delta$. By Corollary~\ref{cor:unique-DE}, this is precisely the block $D$ of the excessive decomposition.
		
		\item The number of unmatched vertices on the right side is $b - a + \delta$. Applying the same construction as above, we obtain a sub-brick induced by the corresponding vertex sets, which is independent of the choice of matching, has imbalance $(b-a)+\delta$, and its horizontal auxiliary brick is a hole of $\mathrm{vcount}$ $b + \delta$. By Corollary~\ref{cor:unique-DE}, this is precisely the block $E$ of the excessive decomposition.
		
		\item The remaining vertices induce the middle $C$-part, which is square. The edges of $M_0$ contained in $C$ form a perfect matching $M_C$ in $C$. By Corollary~\ref{excessive_SCC}, the strongly connected components of the directed graph induced by $M_C$ correspond exactly to the bricks $C_1, \ldots, C_k$.
	\end{itemize}

	\medskip
	\noindent\textbf{(DM $\Rightarrow$ MPIS).}
	Suppose $D$ is the block produced by the DM-decomposition, with $\mathrm{imb}(D) = \delta$. Then $D$ is an excessive brick and the hole above it has $\mathrm{vcount}$ $b + \delta$. The MPIS-driven procedure selects a maximum hole in $T_0$, which has $\mathrm{vcount}$ $b + \delta$ (since $\delta$ is the maximum Hall deficiency, by definition of the characteristic). The vertical auxiliary brick of this hole is excessive: if it were larger than $D$ (i.e., contained some $C$-blocks as well), it would not be excessive, since the hole above it would still have $\mathrm{vcount}$ $b + \delta$, contradicting the stopping condition of the MPIS-driven procedure. Hence the vertical auxiliary brick produced by the MPIS-driven procedure coincides with $D$. The argument for $E$ is symmetric.
\end{proof}

\begin{lemma}[Occurrences of Hall deficiencies of size $\delta$]\label{lem:hall-deficiency-occurrences}
	Let $X \subseteq A$ be a set such that $|N(X)| = |X| - \delta$, 
	and hence the brick $(B \setminus N(X)) \times X$ 
	is a hole of $\mathrm{vcount}\ b+\delta$. 
	Then either $X = \mathrm{col}(D)$, 
	or $X$ can be expressed as the union of $\mathrm{col}(D)$ 
	and some of the sets $\mathrm{col}(C_i)$.
\end{lemma}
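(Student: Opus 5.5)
We plan to argue through maximum matchings and the block triangular form, using Lemma~\ref{lem:matching-union} and the block structure $D, C_1,\ldots,C_k, E$ from Theorem~\ref{thm:structural_properties}. Let $X\subseteq A$ satisfy $|N(X)|=|X|-\delta$. Then $V=(B\setminus N(X))\times X$ is a hole of $\mathrm{vcount}$ $b+\delta$, i.e.\ a maximum hole. (We assume $\delta\ge 0$ throughout, as in the surrounding section.) Lemma~\ref{lem:matching-union} therefore applies to $V$. Every maximum matching $M_0$ of $T_0$ lies in $T_1\cup T_2$, where $T_1=N(X)\times X$ and $T_2=(B\setminus N(X))\times(A\setminus X)$. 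Moreover, $M_0$ restricted to $T_1$ is a maximum matching of $T_1$ of size $|X|-\delta$. Consequently $X$ contains exactly $\delta$ vertices left unmatched by $M_0$, and $A\setminus X$ contains none.

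\textbf{Step 1: $\mathrm{col}(D)\subseteq X$.} Fix a maximum matching $M_0$. The set $X_0$ of $M_0$-unmatched $A$-vertices lies in $X$, by the count above. We then follow the alternating BFS from the proof of Theorem~\ref{thm:dm-excessive}. No dot lies in $\mathrm{row}(V)\times X$, so $N(X_0)\subseteq N(X)$. Every $M_0$-edge incident to $N(X)$ lies in $T_1$ (none lies in $U$ or $V$), so the $M_0$-mates of $N(X)$ lie in $X$. By induction every $X_j$ and $Y_j$ stays inside $X$, respectively $N(X)$. By Theorem~\ref{thm:dm-excessive} and Corollary~\ref{cor:unique-DE}, the union $X_0\cup X_1\cup\cdots$ is $\mathrm{col}(D)$, which gives $\mathrm{col}(D)\subseteq X$ and $\mathrm{row}(D)\subseteq N(X)$.

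\textbf{Step 2: $X$ contains no column of $E$.} Every vertex of $\mathrm{row}(E)$ is reachable by an $M_0$-alternating path from an $M_0$-unmatched $B$-vertex (the symmetric BFS). Unmatched $B$-vertices are not in $N(X)$, since $N(X)$ is saturated by $M_0\cap T_1$; hence they lie in $\mathrm{row}(V)$. Their neighbours lie in $A\setminus X$, whose mates again lie in $\mathrm{row}(V)$, because $M_0$-edges on $A\setminus X$ lie in $T_2$. Inducting, the whole $E$-block lies in $\mathrm{row}(V)\times(A\setminus X)$, so $\mathrm{col}(E)\cap X=\emptyset$.

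\textbf{Step 3: each $C_i$ is wholly inside or wholly outside $X$.} This is the main obstacle. Fix a perfect matching $M_{C_i}$ of $C_i$, extended to a maximum matching $M_0$ of $T_0$ by Theorem~\ref{thm:main_decomp}(4). By Lemma~\ref{lem:matching-union}, each $M_0$-edge of $C_i$ lies in $T_1$ or in $T_2$. Suppose $C_i$ meets both $X$ and $A\setminus X$. Let $P$ be the $M_{C_i}$-elements in $T_1$ and $Q$ those in $T_2$. The dots of $C_i$ joining the rows of $Q$ to the columns of $P$ would lie in $\mathrm{row}(V)\times X=V$, so there are none. In the embedded directed graph of $C_i$ (Section~\ref{sec:embedding}) there is then no edge from $Q$ to $P$. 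This contradicts strong connectivity, which holds by Proposition~\ref{prop103} since $C_i$ is atomic. Hence $\mathrm{col}(C_i)\subseteq X$ or $\mathrm{col}(C_i)\cap X=\emptyset$. Moreover, $C_i$ lies in the same strip throughout, because its rows are exactly the mates of its columns. The delicate point is to check that the edge orientation convention of Section~\ref{sec:embedding} really makes the empty block $V$ correspond to the missing $Q\to P$ direction.

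Combining Steps 1--3, $X$ is $\mathrm{col}(D)$ together with a union of some of the sets $\mathrm{col}(C_i)$. If $\delta=0$ and $D$ is absent, the same conclusion holds with $\mathrm{col}(D)=\emptyset$.
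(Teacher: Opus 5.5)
Your proof is correct, and it takes a different route from the paper's.

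\textbf{The paper's route.} The paper argues by surplus counting, block by block. A nonempty $X\cap\mathrm{col}(E)$, or a proper nonempty $X\cap\mathrm{col}(C_i)$, would contribute surplus coming from the excessiveness of that block and push $|X|-|N(X)|$ below $\delta$. The inclusion $\mathrm{col}(D)\subseteq X$ is read off from $\mathrm{col}(D)$ being the minimal deficiency-$\delta$ set (Corollary~\ref{cor:unique-DE}). The paper then adds a step, not needed for this statement, showing that the selected $C$-blocks form an ideal.

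\textbf{Your route.} You let Lemma~\ref{lem:matching-union} do all the localization. Every maximum matching lives in $N(X)\times X$ and $(B\setminus N(X))\times(A\setminus X)$. Hence exposed $A$-vertices lie in $X$, exposed $B$-vertices lie outside $N(X)$, and matching mates never cross the split. Alternating closure, via Theorem~\ref{thm:dm-excessive}, then traps $\mathrm{col}(D)$ inside $X$ and $\mathrm{col}(E)$ outside it. A $C_i$ that is split by $X$ produces the empty rectangle $\mathrm{row}(Q)\times\mathrm{col}(P)$.

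\textbf{What each buys.} Your route is more rigorous exactly where the paper's counting is terse. The paper's step for the $C$-blocks does not account for the possibility that the extra neighbours created inside a partially included $C_i$ are already in $N(X)$ through columns of other blocks. The global matching split makes this overlap issue disappear. The price is reliance on Theorem~\ref{thm:dm-excessive} to identify $D$ and $E$ with alternating-reachability sets.

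\textbf{Citation fix.} To extend $M_{C_i}$ to a maximum matching of $T_0$ you need the converse half of the factorization, which is Lemma~\ref{lem:matching-factorization}, not Theorem~\ref{thm:main_decomp}(4) as you cite. Alternatively, take any maximum matching: by (4), its restriction to $C_i$ is already perfect.

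\textbf{Step 3 simplifications.} The ``delicate point'' you flag is not one. Missing all arcs in one direction between two nonempty classes destroys strong connectivity under either orientation convention. You can in fact skip Proposition~\ref{prop103} entirely. The sub-brick $\mathrm{row}(Q)\times\mathrm{col}(P)$ is a dot-free proper sub-brick of $C_i$ with $\mathrm{vcount}$ $|P|+|Q|=\mathrm{long}(C_i)$, which contradicts the excessiveness of $C_i$ directly.

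\textbf{A bonus.} Your set-up also gives the paper's extra closure property at no cost. $N(X)$ is exactly the set of rows matched into $X$, namely $\mathrm{row}(D)\cup\bigcup_{i\in J}\mathrm{row}(C_i)$. So no column of a selected $C$-block sees a row of an unselected one.
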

\begin{proof}
	As usual, we identify each brick with its corresponding induced subgraph (Section~\ref{sec:definitions}). Let $X_D = X \cap \mathrm{col}(D)$, $X_i = X \cap \mathrm{col}(C_i)$ for $i \in [k]$, and $X_E = X \cap \mathrm{col}(E)$. Since $(B \setminus N(X)) \times X$ is a hole of $\mathrm{vcount}\ b + \delta$, we have $|X| - |N(X)| = \delta$.

	\textit{Step 1: $X_E = \emptyset$.}
	The block $E$ is excessive, so for any $\emptyset \neq X_E \subseteq \mathrm{col}(E)$ we have $|N(X_E)| > |X_E|$ in $G_0$ (since $E$ excessive means no hole of $\mathrm{vcount} \geq \mathrm{long}(E)$ exists in $E$, hence $|N(X_E)| > |X_E|$ for all nonempty $X_E \subsetneq \mathrm{col}(E)$, and $|N(\mathrm{col}(E))| > |\mathrm{col}(E)|$ similarly). Thus any nonempty $X_E$ contributes a net positive to $|N(X)| - |X|$, preventing $|X| - |N(X)|$ from reaching $\delta$. Hence $X_E = \emptyset$.

	\textit{Step 2: each $X_i$ is either $\emptyset$ or $\mathrm{col}(C_i)$.}
	Each $C_i$ is excessive, so for any $\emptyset \neq X_i \subsetneq \mathrm{col}(C_i)$ we have $|N_{C_i}(X_i)| \geq |X_i| + 1$ within $C_i$. This extra neighbor within $\mathrm{row}(C_i)$ lies in $N(X)$ but not in $B \setminus N(X)$, strictly reducing $|X| - |N(X)|$ below $\delta$. Hence $X_i \in \{\emptyset, \mathrm{col}(C_i)\}$.

	\textit{Step 3: $X_D = \mathrm{col}(D)$.}
	By Corollary~\ref{cor:unique-DE}, $\mathrm{col}(D)$ is the unique subset of $A$ that achieves the maximum Hall deficiency $\delta$: for any $X_D \subsetneq \mathrm{col}(D)$ we have $|X_D| - |N(X_D)| < \delta$. Since $X_E = \emptyset$ and $X_i \in \{\emptyset, \mathrm{col}(C_i)\}$, the contribution of the $C$-blocks to $|X| - |N(X)|$ is non-positive (each $C_i$ is excessive, hence $|N(\mathrm{col}(C_i))| \geq |\mathrm{col}(C_i)|$). Therefore $|X| - |N(X)| \leq |X_D| - |N(X_D)| < \delta$ unless $X_D = \mathrm{col}(D)$. Hence $X_D = \mathrm{col}(D)$.

	\textit{Step 4: the set $J = \{i : X_i = \mathrm{col}(C_i)\}$ is an ideal of the partial order induced by green edges of $\mathrm{core}(T_0)$.}
	In the BTF, green edges between $C$-blocks run forward: from $\mathrm{col}(C_i)$ to $\mathrm{row}(C_j)$ with $i < j$ in topological order. The partial order on $C$-blocks is defined by reachability along such forward green paths: $C_i \leq C_j$ if there is a directed path of green edges from $C_i$ to $C_j$ (with $i < j$). The set $J$ is an ideal of this order if $C_j \in J$ and $C_i \leq C_j$ implies $C_i \in J$.

	Suppose for contradiction that $J$ is not downward-closed. Then there exist $C_a, C_b$ with $C_a \leq C_b$, $C_b \in J$, but $C_a \notin J$, so there is a directed path of green edges $C_a = P_0 \to P_1 \to \cdots \to P_\ell = C_b$ with $\ell \geq 1$. Since $P_0 \notin J$ and $P_\ell \in J$, some consecutive pair satisfies $P_t \notin J$ and $P_{t+1} \in J$; write $C_i := P_t$ and $C_j := P_{t+1}$, joined by a direct green edge $C_i \to C_j$, so $\mathrm{col}(C_i)$ has a neighbor in $\mathrm{row}(C_j)$. Since $C_j \in J$, we have $\mathrm{col}(C_j) \subseteq X$, and since $C_j$ is excessive, $\mathrm{row}(C_j) \subseteq N(\mathrm{col}(C_j)) \subseteq N(X)$. Since $C_i \notin J$, $\mathrm{col}(C_i) \cap X = \emptyset$. Adding $\mathrm{col}(C_i)$ to $X$ increases $|X|$ by $|\mathrm{col}(C_i)|$, while $|N(X)|$ increases by at most $|\mathrm{col}(C_i)| - 1$ (at least one neighbor of $\mathrm{col}(C_i)$ in $\mathrm{row}(C_j)$ is already in $N(X)$). Hence $|X \cup \mathrm{col}(C_i)| - |N(X \cup \mathrm{col}(C_i))| > \delta$, contradicting maximality of $\delta$. Hence $J$ is downward-closed.
\end{proof}

\begin{definition}[C-block poset]\label{def:cblock-poset}
	Let $C_1,\ldots,C_k$ be the $C$-blocks of the excessive decomposition of $T_0$, arranged in topological order. A \emph{green edge} is an edge of $G_0$ that runs from $\mathrm{col}(C_i)$ to $\mathrm{row}(C_j)$ for some $i < j$; such edges exist in $G_0$ but belong to no maximum matching (Corollary~\ref{cor:forbidden-dot}). Define a partial order $\leq$ on $\{C_1,\ldots,C_k\}$ by
	\[
		C_i \leq C_j \;\overset{\mathrm{def}}{\iff}\; \text{there exists a directed path of green edges from } C_i \text{ to } C_j.
	\]
	The resulting partially ordered set $(\{C_1,\ldots,C_k\}, \leq)$ is called the \emph{$C$-block poset} of $T_0$. A subset $J \subseteq \{1,\ldots,k\}$ is an \emph{ideal} of this poset if it is downward-closed: $C_j \in J$ and $C_i \leq C_j$ implies $C_i \in J$.
\end{definition}

\begin{remark}
	The relation $\leq$ is indeed a partial order. Reflexivity ($C_i \leq C_i$) holds by the trivial path of length zero. Transitivity follows from concatenation of directed paths. Antisymmetry holds because the BTF imposes a strict topological ordering: since all green edges run from $C_i$ to $C_j$ with $i < j$, no directed cycle among the $C$-blocks is possible, so $C_i \leq C_j$ and $C_j \leq C_i$ implies $i = j$.
\end{remark}

\begin{proposition}\label{prop:ideal-hole}
	For every ideal $J \subseteq \{1,\ldots,k\}$ of the $C$-block poset, the set
	\[
	X_J = \mathrm{col}(D) \cup \bigcup_{i \in J} \mathrm{col}(C_i)
	\]
	satisfies $|N(X_J)| = |X_J| - \delta$, and hence $(B \setminus N(X_J)) \times X_J$ is a hole of $\mathrm{vcount}$ $b + \delta$ in $T_0$.
\end{proposition}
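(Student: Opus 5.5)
We compute $N(X_J)$ directly from the block triangular form and count. The structural input is the BTF of the excessive decomposition $D, C_1,\ldots,C_k, E$: in the vertical strip of each block there are no dots above that block. In graph terms, a column of a block has neighbours only in the rows of that block or of \emph{later} blocks. Moreover, every dot of $T_0$ whose column lies in $\mathrm{col}(C_i)$ and whose row lies outside $\mathrm{row}(C_i)$ is a green dot into some $\mathrm{row}(C_j)$ with $i<j$, or into $\mathrm{row}(E)$.

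The plan has four steps.

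\begin{enumerate}
\item \textbf{Neighbours of $\mathrm{col}(D)$.} By construction $D$ is the vertical auxiliary brick of a maximum hole $V$ with $\mathrm{vcount}(V)=b+\delta$, and $\mathrm{row}(V)=B\setminus\mathrm{row}(D)$. Hence $N(\mathrm{col}(D))\subseteq \mathrm{row}(D)$. By Theorem~\ref{thm:structural_properties}, $|\mathrm{row}(D)|=|\mathrm{col}(D)|-\delta$.

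\item \textbf{Neighbours of the $C$-blocks.} Each $C_i$ is excessive and balanced, so it has a perfect matching. Therefore $\mathrm{row}(C_i)\subseteq N(\mathrm{col}(C_i))$, and every other neighbour of $\mathrm{col}(C_i)$ lies in $\mathrm{row}(C_j)$ for a later $C_j$ or in $\mathrm{row}(E)$.

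\item \textbf{Ideal property.} If $\mathrm{col}(C_i)$ has a neighbour in $\mathrm{row}(C_j)$, that dot is a green edge, so $C_i\le C_j$. We must rule out the case $j\notin J$. This needs some care about the direction of the order. Definition~\ref{def:cblock-poset} calls $J$ downward closed, and here we need closure under green successors. So we will read the ideal condition consistently with Lemma~\ref{lem:hall-deficiency-occurrences}, Step~4, where a green edge $C_i\to C_j$ with $j\in J$ forces $i\in J$.

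This is the main obstacle: pinning down the orientation. The orientation that makes $|N(X_J)|=|X_J|-\delta$ true is that $J$ is closed under green successors (the columns of $J$ reach only rows of $J$, of $D$, or of $E$). We would state the convention explicitly and check it against the green-edge direction and the direction in the proof of Lemma~\ref{lem:hall-deficiency-occurrences}.

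\item \textbf{Neighbours in $E$.} We must also rule out neighbours in $\mathrm{row}(E)$. Here we use that the rows of $V$ form $B\setminus\mathrm{row}(D)$. More precisely, we use the nested maximum holes $(B\setminus B_i)\times A_i$ of vcount $b+\delta$ displayed after Theorem~\ref{thm:structural_properties}: their rows include $\mathrm{row}(E)$, and their columns include all the $C$-columns. This shows that no $C$-column sees $\mathrm{row}(E)$.
\end{enumerate}

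With these, $N(X_J)=\mathrm{row}(D)\cup\bigcup_{i\in J}\mathrm{row}(C_i)$. Since each $C_i$ is square,
\[
|N(X_J)|=|\mathrm{col}(D)|-\delta+\sum_{i\in J}|\mathrm{col}(C_i)|=|X_J|-\delta .
\]
It remains to check properness. $X_J\ne\emptyset$ whenever $D$ exists, and $N(X_J)\ne B$ because $\mathrm{row}(E)$ is nonempty when $b-a+\delta>0$; the degenerate cases follow similarly. Then $(B\setminus N(X_J))\times X_J$ is a dot-free proper brick with
\[
\mathrm{vcount}=|X_J|+b-|N(X_J)|=b+\delta,
\]
which completes the argument.
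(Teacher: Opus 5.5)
\textbf{The gap is in Step~3.} Your skeleton is the paper's: $N(\mathrm{col}(D))\subseteq\mathrm{row}(D)$ with $|\mathrm{row}(D)|=|\mathrm{col}(D)|-\delta$, square $C$-blocks, and the ideal property to stop neighbours of $X_J$ from spilling into other rows. But the step carrying all the content is never established. That step says: for $j\in J$, every neighbour of $\mathrm{col}(C_j)$ lies in $\mathrm{row}(D)\cup\bigcup_{i\in J}\mathrm{row}(C_i)$. In Step~3 you call this ``the main obstacle'' and say you need closure under green successors. You then adopt the rule of Lemma~\ref{lem:hall-deficiency-occurrences}, Step~4 ($C_i\to C_j$ and $j\in J$ force $i\in J$), which is closure under predecessors, the opposite property. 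So as written Step~3 does not go through.

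\textbf{Why: Step~2 has the orientation backwards.} In the construction the hole $V$ sits at the upper-left corner and $D$, inside the vertical auxiliary brick, lies \emph{below} it. So ``above'' a block means the rows of \emph{later} blocks. You invoke the nested maximum holes $(B\setminus B_j)\times A_j$ in Step~4. They say that $\mathrm{col}(C_j)$ has no neighbour in $\mathrm{row}(C_i)$ for $i>j$, nor in $\mathrm{row}(E)$. Hence the off-block neighbours of $\mathrm{col}(C_j)$ lie in $\mathrm{row}(D)$ or in $\mathrm{row}(C_i)$ with $i<j$. Combining your reading with these holes, there would be no edges between distinct $C$-blocks at all. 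Your argument would then ``prove'' the identity for every $J\subseteq[k]$. That is false as soon as some $\mathrm{col}(C_j)$ meets some $\mathrm{row}(C_i)$: take $J=\{j\}$.

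\textbf{How to close the gap.} Locate the green dots correctly: they lie in $\mathrm{row}(C_i)\times\mathrm{col}(C_j)$ with $i<j$. Read the $C$-block poset so that such a dot puts $C_i$ below $C_j$. Taken literally, Definition~\ref{def:cblock-poset} puts green edges in $\mathrm{col}(C_i)\times\mathrm{row}(C_j)$ with $i<j$, but those cells lie inside the hole $(B\setminus B_i)\times A_i$ and are empty. Under the corrected reading, the paper's one-line step is valid: ``since $J$ is downward-closed, no green edges run from $\mathrm{col}(C_i)$, $i\in J$, to $\mathrm{row}(C_j)$, $j\notin J$.'' The rule you quoted from Lemma~\ref{lem:hall-deficiency-occurrences} is then exactly the right one; it only seemed to clash with what you need because of the reversed Step~2.

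With this orientation, downward closure says precisely that $N(\mathrm{col}(C_j))\subseteq\mathrm{row}(D)\cup\bigcup_{i\in J}\mathrm{row}(C_i)$ for $j\in J$. The exclusion of $\mathrm{row}(E)$ comes from the same holes. Together with Step~1 this gives $N(X_J)\subseteq\mathrm{row}(D)\cup\bigcup_{i\in J}\mathrm{row}(C_i)$, hence $|N(X_J)|\le|X_J|-\delta$. The reverse inequality holds for every $X\subseteq A$ because $\delta=\max_X(|X|-|N(X)|)$. So the perfect-matching argument of Step~2 is not needed, and the vcount computation then finishes as you wrote.
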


\begin{proof}
	By Lemma~\ref{lem:hall-deficiency-occurrences} (Step 3), $|N_{G_0}(\mathrm{col}(D))| = |\mathrm{col}(D)| - \delta$. Since $J$ is downward-closed, no green edges run from $\mathrm{col}(C_i)$ ($i \in J$) to $\mathrm{row}(C_j)$ ($j \notin J$). The row-sets $\mathrm{row}(D), \mathrm{row}(C_1), \ldots, \mathrm{row}(C_k), \mathrm{row}(E)$ are disjoint by the BTF, so the neighborhoods are pairwise disjoint. Hence
	\[
	N_{G_0}(X_J) = N_{G_0}(\mathrm{col}(D)) \cup \bigcup_{i \in J} N_{G_0}(\mathrm{col}(C_i)),
	\]
	giving $|N_{G_0}(X_J)| = (|\mathrm{col}(D)| - \delta) + \sum_{i \in J} |\mathrm{col}(C_i)| = |X_J| - \delta$. \qedhere
\end{proof}

\begin{proposition}[Bijection between ideals and maximum holes]\label{prop:ideal-bijection}
	The map $J \mapsto V_J = (B \setminus N(X_J)) \times X_J$ is a bijection from the set of ideals of the $C$-block poset onto the set of maximum holes of $T_0$.
\end{proposition}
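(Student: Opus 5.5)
The plan is to verify three things in turn: that the map lands in maximum holes, that it is injective, and that it is surjective. Throughout we work in the non-excessive setting ($\delta\ge 0$) in which $D$, the $C_i$ and $E$ are defined and every maximum hole has $\mathrm{vcount}$ $b+\delta$.

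\emph{Well-definedness.} By Proposition~\ref{prop:ideal-hole}, for every ideal $J$ the set $X_J$ satisfies $|N(X_J)|=|X_J|-\delta$. Hence $V_J$ is a hole of $\mathrm{vcount}$ $|X_J|+b-|N(X_J)|=b+\delta$, which is the maximum $\mathrm{vcount}$. It remains to check that $V_J$ is a proper brick, i.e. that $X_J\neq\emptyset,A$ and that $B\setminus N(X_J)\neq\emptyset,B$.
\begin{itemize}
\item $X_J\neq\emptyset$ and $N(X_J)\neq\emptyset$ follow from $\mathrm{col}(D)\neq\emptyset$ when $\delta>0$. In the liminal case $\delta=0$ with $D$ absent, the empty ideal would give an empty $X_J$; I would handle this by restricting to nonempty ideals, or by noting that the statement implicitly assumes $D$ present or $J\neq\emptyset$. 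I would flag this degenerate case explicitly.
\item $X_J\neq A$ and $N(X_J)\neq B$ follow from $\mathrm{col}(E)\not\subseteq X_J$ and $\mathrm{row}(E)\cap N(X_J)=\emptyset$ when $E$ is present. This uses the BTF property that no edges run backward from $D$ or the $C_i$ into $\mathrm{row}(E)$ in the wrong direction. Here I would check the orientation convention carefully against the disjointness of neighborhoods used in the proof of Proposition~\ref{prop:ideal-hole}.
\end{itemize}

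\emph{Injectivity.} Different ideals $J\neq J'$ give $X_J\neq X_{J'}$, because the column sets $\mathrm{col}(C_i)$ are nonempty and pairwise disjoint. The hole is determined by its column set, so $V_J\neq V_{J'}$.

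\emph{Surjectivity.} This is the main step. Let $V$ be any maximum hole and put $X=\mathrm{col}(V)$. Maximality forces $\mathrm{row}(V)=B\setminus N(X)$, since otherwise the rows could be enlarged. Because $\mathrm{vcount}(V)=b+\delta$, we get $|X|-|N(X)|=\delta$. Lemma~\ref{lem:hall-deficiency-occurrences} then gives
\[
X=\mathrm{col}(D)\cup\bigcup_{i\in J}\mathrm{col}(C_i)
\]
for the index set $J=\{i:\mathrm{col}(C_i)\subseteq X\}$, and its Step~4 shows that $J$ is downward-closed, i.e. an ideal. Hence $V=V_J$.

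I expect the main obstacle to be the reliance on Step~4 and on the direction of the poset. Step~4 as written proves that $J$ is closed under predecessors along green paths, and the orientation of green edges must agree with the orientation used in Proposition~\ref{prop:ideal-hole}, where the requirement is that no green edge leaves $J$. If these two conventions conflict, I would restate both conditions as ``no green edge from $\mathrm{col}(C_i)$, $i\in J$, into $\mathrm{row}(C_j)$, $j\notin J$'' and reprove the needed closure directly. To do so, I would add the offending block and use the counting argument of Step~4 to contradict $|X|-|N(X)|=\delta$: if a green edge from $\mathrm{col}(C_i)$ with $i\in J$ reached $\mathrm{row}(C_j)$ with $j\notin J$, then $N(X)$ would contain a row outside the union of the selected blocks' rows, so $|N(X)|>|X|-\delta$.
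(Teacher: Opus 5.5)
Your proposal follows the paper's own proof: well-definedness from Proposition~\ref{prop:ideal-hole}, injectivity from the disjointness of the sets $\mathrm{col}(C_i)$, and surjectivity from Lemma~\ref{lem:hall-deficiency-occurrences}; your observation that maximality forces $\mathrm{row}(V)=B\setminus N(X)$ supplies the step from ``$\mathrm{col}(V)=X_J$'' to $V=V_J$, which the paper leaves implicit. Both of your caveats are warranted: for $\delta=0$ the empty ideal yields no proper brick, nor does the full ideal when $E$ is also absent (it gives $X_J=A$); and the paper's own maximum holes $(B\setminus B_i)\times A_i$ contain every cell of $\mathrm{row}(C_j)\times\mathrm{col}(C_i)$ with $i<j$, so the green-edge orientation in Definition~\ref{def:cblock-poset} is reversed and the closure condition you restate and prove by counting is exactly the one needed.
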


\begin{proof}
	\emph{Well-definedness:} By Proposition~\ref{prop:ideal-hole}, every ideal $J$ produces an MH $V_J$.

	\emph{Injectivity:} If $J \ne J'$, then $X_J \ne X_{J'}$ since the column sets $\mathrm{col}(C_i)$ of distinct $C$-blocks are pairwise disjoint (Lemma~\ref{lem:excessive-components}). Hence $\mathrm{col}(V_J) = X_J \ne X_{J'} = \mathrm{col}(V_{J'})$, so $V_J \ne V_{J'}$.

	\emph{Surjectivity:} By Step~3 of Lemma~\ref{lem:hall-deficiency-occurrences}, every MH has column set of the form $X_J$ for some ideal $J$.
\end{proof}

\begin{corollary}\label{cor:MPIS}
Since for every hole $V$ with $\mathrm{vcount}$ $b+\delta$ we have $X = \mathrm{col}(V)$ satisfying $|N(X)| = |X| - \delta$, the maximum proper independent sets (MPIS) of $G_0$ can be described as follows:
	\begin{enumerate}
		\item Every maximum proper independent set $I$ contains $\mathrm{col}(D)$ (all column-vertices of $D$) and $\mathrm{row}(E)$ (all row-vertices of $E$).
		
		\item There exists a downward-closed subset $J \subseteq \{1,\dots,k\}$ in the poset of $C$-blocks (ordered by reachability along green edges) such that
		\[
		I \cap C
		=
		\bigcup_{i \in J} \mathrm{col}(C_i)
		\;\cup\;
		\bigcup_{i \notin J} \mathrm{row}(C_i).
		\]
		
		\item Under inclusion of ideals ($J \subseteq J'$), the selected subsets on the $A$-side form an increasing family
		\[
		\bigcup_{i \in J} \mathrm{col}(C_i) \subseteq \bigcup_{i \in J'} \mathrm{col}(C_i),
		\]
		while the selected subsets on the $B$-side form a decreasing family
		\[
		\bigcup_{i \notin J} \mathrm{row}(C_i) \supseteq \bigcup_{i \notin J'} \mathrm{row}(C_i).
		\]
	\end{enumerate}
	
\end{corollary}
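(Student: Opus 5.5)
The plan is to reduce everything to the ideal--hole bijection of Proposition~\ref{prop:ideal-bijection} together with the MPIS--hole correspondence of Proposition~\ref{prop:mpis-hole-corr}. Given an MPIS $I$, I will set $X = I \cap A$ and let $V = (B\setminus N(X))\times X$. This is a maximum hole, of $\mathrm{vcount}$ $b+\delta$, so $|N(X)| = |X|-\delta$. I will also use that $I = X \cup (B\setminus N(X))$: the set $I \cap B$ is contained in $B\setminus N(X)$, and maximality forces equality. By surjectivity in Proposition~\ref{prop:ideal-bijection} (equivalently Lemma~\ref{lem:hall-deficiency-occurrences}), there is an ideal $J$ with $X = X_J = \mathrm{col}(D)\cup\bigcup_{i\in J}\mathrm{col}(C_i)$. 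This immediately gives $\mathrm{col}(D)\subseteq I$ and identifies the $A$-side of $I\cap C$ as $\bigcup_{i\in J}\mathrm{col}(C_i)$.

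The key computation is the $B$-side, $I\cap B = B\setminus N(X_J)$. I will compute $N(X_J)$ blockwise using the BTF. First, $N(\mathrm{col}(D)) = \mathrm{row}(D)$: the count $|N(\mathrm{col}(D))| = |\mathrm{col}(D)|-\delta = |\mathrm{row}(D)|$ holds, and since $\mathrm{vstrip}(D)$ has no dots above $D$, all neighbours of $\mathrm{col}(D)$ in earlier blocks are excluded. Neighbours in later blocks lie in remote mates, and the counting identity in Proposition~\ref{prop:ideal-hole} rules them out. Second, for $i\in J$, the set $N(\mathrm{col}(C_i))$ contains $\mathrm{row}(C_i)$ because $C_i$ has a perfect matching. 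Any further neighbours lie in $\mathrm{row}(C_j)$ with $C_i\le C_j$, or in $\mathrm{row}(E)$. The disjoint-union count $|N(X_J)| = |X_J|-\delta$ from Proposition~\ref{prop:ideal-hole} forces $N(\mathrm{col}(C_i)) = \mathrm{row}(C_i)$ exactly. Hence $N(X_J) = \mathrm{row}(D)\cup\bigcup_{i\in J}\mathrm{row}(C_i)$, and therefore $B\setminus N(X_J) = \bigcup_{i\notin J}\mathrm{row}(C_i)\cup\mathrm{row}(E)$. This yields item~1 for $E$ and the formula in item~2.

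Item~3 is then pure set algebra. Because the blocks have pairwise disjoint column sets and pairwise disjoint row sets, $J\subseteq J'$ gives inclusion of the $A$-side unions and reverse inclusion of the complements $\{i\notin J'\}\subseteq\{i\notin J\}$ on the $B$-side.

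I expect the main obstacle to be the exactness $N(\mathrm{col}(C_i)) = \mathrm{row}(C_i)$ for $i \in J$, and in particular excluding edges from $\mathrm{col}(C_i)$ into $\mathrm{row}(E)$. That downward closure handles edges into later $C$-blocks is not enough on its own. I would first try to get exactness purely by counting: $N(X_J)\supseteq \mathrm{row}(D)\cup\bigcup_{i\in J}\mathrm{row}(C_i)$ already has size $|X_J|-\delta$, and $|N(X_J)|=|X_J|-\delta$ holds because $V_J$ is a maximum hole. So any extra neighbour would give a hole larger than $b+\delta$, a contradiction. This counting argument avoids any case analysis on green edges.
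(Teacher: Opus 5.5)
Your argument is correct in the form of your last paragraph. For the $A$-side it follows the paper. $I\cap A$ is the column set of a maximum hole, so Lemma~\ref{lem:hall-deficiency-occurrences} (surjectivity in Proposition~\ref{prop:ideal-bijection}) gives $I\cap A=X_J$ for an ideal $J$. This yields $\mathrm{col}(D)\subseteq I$ and the $A$-part of item~2.

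You differ from the paper on the $B$-side. The paper reads $I\cap B=B\setminus N(X_J)$ off Proposition~\ref{prop:ideal-hole}. That proposition evaluates $N(X_J)$ block by block: it asserts that the neighbourhoods of $\mathrm{col}(D)$ and of the $\mathrm{col}(C_i)$, $i\in J$, are pairwise disjoint, and it uses downward closure to exclude green edges leaving $J$. Your count uses only the following facts:
\begin{itemize}
\item $D$ has a matching saturating $\mathrm{row}(D)$, and each $C_i$ has a perfect matching;
\item $|\mathrm{row}(D)|=|\mathrm{col}(D)|-\delta$ and $|\mathrm{row}(C_i)|=|\mathrm{col}(C_i)|$, with pairwise disjoint row sets;
\item $|N(X_J)|=|X_J|-\delta$, simply because $I$ is an MPIS.
\end{itemize}
The superset $\mathrm{row}(D)\cup\bigcup_{i\in J}\mathrm{row}(C_i)$ of $N(X_J)$ then has the right size, so it equals $N(X_J)$.

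This route needs neither the orientation of green edges nor disjointness of neighbourhoods. That is a real gain, because the disjointness fails in general. Take $A=\{d_1,d_2,a_1,e_1\}$, $B=\{r_1,b_1,s_1,s_2\}$ and edges $d_1r_1,d_2r_1,a_1b_1,a_1r_1,e_1b_1,e_1s_1,e_1s_2$. Here $\delta=1$, $D=\{r_1\}\times\{d_1,d_2\}$, $C_1=\{b_1\}\times\{a_1\}$, $E=\{s_1,s_2\}\times\{e_1\}$. The ideal $J=\{1\}$ gives the MPIS $\{d_1,d_2,a_1,s_1,s_2\}$, yet $N(\mathrm{col}(C_1))=\{b_1,r_1\}$ meets $N(\mathrm{col}(D))=\{r_1\}$.

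The same example shows what to delete from your second paragraph. The count yields only the union identity $N(X_J)=\mathrm{row}(D)\cup\bigcup_{i\in J}\mathrm{row}(C_i)$. It does not give $N(\mathrm{col}(C_i))=\mathrm{row}(C_i)$ for each $i\in J$, and that statement is false.

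Your picture of where extra neighbours can sit is also inverted. In the BTF produced by the algorithm, a column of $C_i$ can see rows of $D$ and of other $C$-blocks through forbidden dots. It never sees a row of $E$, since $\mathrm{row}(E)\times\mathrm{col}(C_i)$ lies in the dot-free part of $\mathrm{vstrip}(C_i)$ above $C_i$. So the obstacle you anticipated does not arise, and your global count handles every case at once anyway.

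Finally, fix the direction of your contradiction. An extra neighbour of $X_J$ would force $|N(X_J)|>|X_J|-\delta$, so the hole $V_J$ would have $\mathrm{vcount}$ \emph{below} $b+\delta$, contradicting that $I$ is an MPIS. It does not produce a hole larger than $b+\delta$.
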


Consequently, the family of maximum proper independent sets, ordered by inclusion of their $A$-side projections (equivalently, by inclusion of the corresponding ideals $J$; see Remark~\ref{rem:lattice}), is isomorphic to the lattice of ideals of the poset of $C$-blocks (ordered by reachability along green edges), and thus forms a distributive lattice by Birkhoff's representation theorem \cite{Birkhoff1937}. The bijection is given by: each ideal $J$ determines an MPIS via the formula in (2), and each MPIS determines an ideal via Lemma~\ref{lem:hall-deficiency-occurrences}. While the distributive lattice structure of the maximum independent sets of a bipartite graph is classically known \cite{Minasyan2013}, the identification of the underlying poset as the $C$-block poset is the contribution here. This complements the classical viewpoint, in which the maximum independent sets (equivalently, minimum vertex covers) are organized through alternating paths relative to a \emph{fixed} maximum matching \cite{LovaszPlummer1986}: the present description replaces that matching-dependent order by the matching-independent reachability order on the $C$-blocks, so the generating poset is a canonical invariant of $G_0$ rather than an artifact of a chosen matching, and the lattice operations are simply union and intersection of order ideals.

\begin{remark}[Lattice structure of MPISs]
\label{rem:lattice}
Recall (see \cite{DaveyPriestley2008} for background on lattices and order) that a \emph{lattice} is a partially ordered set in which every pair of elements has a supremum ($\vee$) and an infimum ($\wedge$). A lattice is \emph{distributive} if
\[
  x \wedge (y \vee z) = (x \wedge y) \vee (x \wedge z)
\]
holds for all elements. By Birkhoff's representation theorem \cite{Birkhoff1937}, every finite distributive lattice is isomorphic to the lattice of ideals of a finite poset, with $J \vee J' = J \cup J'$ and $J \wedge J' = J \cap J'$.

It should be noted that the ordering on MPISs used in the preceding Corollary is \emph{not} set-inclusion of the MPISs themselves as subsets of $A \cup B$. Item~(3) shows that under $J \subseteq J'$ the $A$-side part of the MPIS \emph{grows} while the $B$-side part \emph{shrinks}, so two MPISs $I$ and $I'$ with $J \subsetneq J'$ satisfy neither $I \subseteq I'$ nor $I' \subseteq I$ as sets. The partial order is instead defined on the $A$-side projection:
\[
  I \leq I' \;\overset{\mathrm{def}}{\iff}\; I \cap A \subseteq I' \cap A
  \;\iff\; J \subseteq J',
\]
and the isomorphism with the ideal lattice is with respect to this order.
\end{remark}

\begin{theorem}[Intersection of all maximum holes and all minimum covers]
\label{thm:ExD-intersection}
	Assume $\delta > 0$ and $(b-a)+\delta > 0$, so that both $D$ and $E$ are present.
	Then:
	\begin{enumerate}
		\item The intersection of all maximum holes in $T_0$ is exactly the brick
		$\mathrm{row}(E)\times\mathrm{col}(D)$:
		\[
			\bigcap_{\substack{V\subseteq T_0\\V\;\mathrm{MH}}} V
			\;=\;
			\mathrm{row}(E)\times\mathrm{col}(D).
		\]
		\item The intersection of all minimum proper cover set bricks in $T_0$ is exactly
		the brick $\mathrm{row}(D)\times\mathrm{col}(E)$:
		\[
			\bigcap_{\substack{U\subseteq T_0\\U\;\mathrm{MPCS}}} U
			\;=\;
			\mathrm{row}(D)\times\mathrm{col}(E).
		\]
	\end{enumerate}
\end{theorem}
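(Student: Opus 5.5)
The plan is to use the description of maximum holes by ideals: Proposition~\ref{prop:ideal-bijection} and Corollary~\ref{cor:MPIS}. Every MH has the form $V_J=(B\setminus N(X_J))\times X_J$, with
\[
X_J=\mathrm{col}(D)\cup\bigcup_{i\in J}\mathrm{col}(C_i)
\]
for an ideal $J$ of the $C$-block poset. Since a brick is a product, the intersection of all MH is
\[
\Bigl(\bigcap_J (B\setminus N(X_J))\Bigr)\times\Bigl(\bigcap_J X_J\Bigr).
\]
We therefore compute the column and row intersections separately.

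\textbf{Columns.} For every $J$ we have $X_J\supseteq\mathrm{col}(D)$. Taking $J=\emptyset$, which is always an ideal, gives $X_\emptyset=\mathrm{col}(D)$. Hence the column intersection is exactly $\mathrm{col}(D)$.

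\textbf{Rows.} By the BTF, $N(X_J)$ is disjoint from $\mathrm{row}(E)$. This is where the argument needs care. Lemma~\ref{lem:hall-deficiency-occurrences} and Proposition~\ref{prop:ideal-hole} give
\[
N(X_J)=N(\mathrm{col}(D))\cup\bigcup_{i\in J}N(\mathrm{col}(C_i)).
\]
I will argue $N(\mathrm{col}(D))=\mathrm{row}(D)$ and $N(\mathrm{col}(C_i))=\mathrm{row}(C_i)$ for $i\in J$, using the ideal property and the fact that the vertical auxiliary brick above each extracted block is a hole. Consequently $B\setminus N(X_J)\supseteq\mathrm{row}(E)$ for all $J$.

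For the reverse inclusion, take the top ideal $J=\{1,\dots,k\}$. Then $X_J=A\setminus\mathrm{col}(E)$ and $N(X_J)=\mathrm{row}(D)\cup\bigcup_i\mathrm{row}(C_i)=B\setminus\mathrm{row}(E)$. So the row set of $V_J$ is exactly $\mathrm{row}(E)$, and the row intersection is $\mathrm{row}(E)$.

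The hypotheses $\delta>0$ and $(b-a)+\delta>0$ are used to ensure both $\mathrm{col}(D)$ and $\mathrm{row}(E)$ are nonempty. They also ensure that $V_\emptyset$ and $V_{\{1..k\}}$ are genuine proper holes. Properness holds because $\mathrm{col}(E)\neq\emptyset$ forces $X_J\subsetneq A$, and $\mathrm{row}(D)\neq\emptyset$ forces the row set to be proper.

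\textbf{Part (2).} Use Corollary~\ref{cor:remote-mate-bijection}: the MPCS are exactly the remote mates $U_J=N(X_J)\times(A\setminus X_J)$ of the MH. Their column sets $A\setminus X_J$ intersect to $A\setminus X_{\{1..k\}}=\mathrm{col}(E)$, since the $X_J$ increase with $J$. Their row sets $N(X_J)$ intersect to $N(X_\emptyset)=\mathrm{row}(D)$. Hence $\bigcap U=\mathrm{row}(D)\times\mathrm{col}(E)$, provided the row and column intersections again factor through the product, which they do for bricks.

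\textbf{Main obstacle.} The hardest step is the precise identification $N(X_J)=\mathrm{row}(D)\cup\bigcup_{i\in J}\mathrm{row}(C_i)$, in particular $N(\mathrm{col}(D))=\mathrm{row}(D)$ with no leakage into later rows. I would first derive it by counting. We have $|N(X_J)|=|X_J|-\delta$, which equals $|\mathrm{row}(D)|+\sum_{i\in J}|\mathrm{row}(C_i)|$. Each $C_i$ has a perfect matching, so $N(\mathrm{col}(C_i))\supseteq\mathrm{row}(C_i)$, and similarly $N(\mathrm{col}(D))\supseteq\mathrm{row}(D)$ via the matching of $D$. The neighbourhood thus contains that union and has the same cardinality, so equality holds.
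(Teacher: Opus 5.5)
Your proposal is correct and takes essentially the paper's route. The extreme ideals $J=\emptyset$ and $J=\{1,\dots,k\}$ are exactly the maximum holes in $\mathrm{vstrip}(D)$ and $\mathrm{hstrip}(E)$ that the paper takes from Corollary~\ref{cor:unique-DE} for the inclusion $\subseteq$, and part~(2) is the same remote-mate computation. The one difference is that the paper cites Corollary~\ref{cor:MPIS}(1) and takes $\mathrm{row}(D)=N(\mathrm{col}(D))$ from the construction, whereas your counting argument ($|N(X_J)|=|X_J|-\delta$ together with the saturating matchings of $D$ and the $C_i$) derives $N(X_J)=\mathrm{row}(D)\cup\bigcup_{i\in J}\mathrm{row}(C_i)$ explicitly, making both steps self-contained.
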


\begin{proof}
	We first recall the two facts used below: every MPIS contains both $\mathrm{col}(D)$ and $\mathrm{row}(E)$ (Corollary~\ref{cor:MPIS}), and the column set of every maximum hole has the form $\mathrm{col}(D)\cup\bigcup_{i\in J}\mathrm{col}(C_i)$ for some ideal $J$ of the $C$-block poset (Lemma~\ref{lem:hall-deficiency-occurrences}).
	\textbf{(1)}\;
	($\supseteq$)\;
	By Corollary~\ref{cor:MPIS}(1), every MPIS contains $\mathrm{col}(D)$ and
	$\mathrm{row}(E)$. Since every MH corresponds to an MPIS
	(Proposition~\ref{prop:mpis-hole-corr}),
	we have $\mathrm{row}(E)\times\mathrm{col}(D)\subseteq V$ for every MH $V$.

	($\subseteq$)\;
	Let $(r,c)\notin\mathrm{row}(E)\times\mathrm{col}(D)$.
	\begin{itemize}
		\item If $c\notin\mathrm{col}(D)$: by Corollary~\ref{cor:unique-DE},
		      $\mathrm{vstrip}(D)$ contains an MH $V$.
		      Since $\mathrm{vstrip}(D)=B\times\mathrm{col}(D)$, we have
		      $\mathrm{col}(V)=\mathrm{col}(D)$, so $(r,c)\notin V$.
		\item If $r\notin\mathrm{row}(E)$: by Corollary~\ref{cor:unique-DE},
		      $\mathrm{hstrip}(E)$ contains an MH $V$.
		      Since $\mathrm{hstrip}(E)=\mathrm{row}(E)\times A$, we have
		      $\mathrm{row}(V)=\mathrm{row}(E)$, so $(r,c)\notin V$.
	\end{itemize}

	\textbf{(2)}\;
	By Corollary~\ref{cor:remote-mate-bijection}, the remote-mate map is a bijection between
	MHs and MPCSs, with $\mathrm{row}(U)=B\setminus\mathrm{row}(V)$ and
	$\mathrm{col}(U)=A\setminus\mathrm{col}(V)$. Therefore
	\[
		\bigcap_{U}U
		= \Bigl(B\setminus\!\bigcup_{V}\mathrm{row}(V)\Bigr)\times\Bigl(A\setminus\!\bigcup_{V}\mathrm{col}(V)\Bigr),
	\]
	the intersections and unions ranging over all MHs $V$ and MPCSs $U$.
	The largest MH column set, attained by the full ideal $J=\{1,\ldots,k\}$, is
	\[
		\bigcup_V\mathrm{col}(V)=A\setminus\mathrm{col}(E),
	\]
	since $A$ is partitioned into $\mathrm{col}(D)$, the sets $\mathrm{col}(C_i)$, and $\mathrm{col}(E)$; hence $A\setminus\bigcup_V\mathrm{col}(V)=\mathrm{col}(E)$. Dually,
	\[
		\bigcup_V\mathrm{row}(V)=B\setminus\bigcap_V N(\mathrm{col}(V))
		=B\setminus N(\mathrm{col}(D))=B\setminus\mathrm{row}(D),
	\]
	using $\mathrm{row}(D)=N(\mathrm{col}(D))$ (the $D$-block is the vertical auxiliary brick of $V_\emptyset$). Hence $\bigcap_U U=\mathrm{row}(D)\times\mathrm{col}(E)$. \qedhere
\end{proof}

\begin{remark}
	Theorem~\ref{thm:ExD-intersection} has two notable consequences.

	\textbf{(a) Graph-theoretic reformulation.}
	The common part of all MPISs of $G_0$ is precisely
	$\mathrm{col}(D)\cup\mathrm{row}(E)$: every $A$-vertex of the $D$-block and every
	$B$-vertex of the $E$-block belong to every MPIS, and no other vertex does.
	Dually, by part~(2), the common part of all minimum proper vertex covers is
	precisely $\mathrm{row}(D)\cup\mathrm{col}(E)$.

	\textbf{(b) The brick $\mathrm{row}(E)\times\mathrm{col}(D)$ is always a hole.}
	Since every MH is dot-free and $\mathrm{row}(E)\times\mathrm{col}(D)$ is contained
	in every MH, this brick is dot-free.
	Equivalently, \emph{there is no edge in $G_0$ between any $E$-vertex and any
	$D$-vertex}.
\end{remark}

\begin{remark}[Geometric interpretation of ideals]
	When an MPIS $V$ in $T_0$ is made visible by permuting it to the origin, its vertical auxiliary brick $T_1$ contains $D$ (if it exists) together with precisely the $C$-blocks belonging to the corresponding ideal $J$, while the horizontal auxiliary brick $T_2$ contains the $C$-blocks not in $J$ together with $E$ (if it exists). Thus, the BTF permutation that makes $V$ visible simultaneously exhibits the ideal geometrically: $D$ and the ideal $J$ occupy the lower-left part of the grid, while the complement of $J$ and $E$ occupy the upper-right part. In this way, each MPIS corresponds to a visible partition of the grid into two regions separated by the hole $V$.

	Two blocks $C_i$ and $C_j$ can be swapped (yielding another valid BTF) if and only if there is no directed path of green edges from $C_i$ to $C_j$. Moreover, for every topologically ordered sequence of bricks $C_1, \ldots, C_k$, all MPISs determined by the associated increasing--decreasing system are simultaneously visible in the BTF form.
\end{remark}

\begin{corollary}[Choice-independence of the embedded SCCs]\label{cor:scc-invariant}
	For any maximum matching $M_C$ of the core, the strongly connected components of the embedded graph $H$ are exactly the $C$-blocks. In particular, the partition of $V(C)$ into SCCs does not depend on the choice of $M_C$.
\end{corollary}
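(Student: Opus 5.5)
The plan is to show that, for any perfect matching $M_C$ of the core, the vertex set of each $C$-block is a union of SCCs of $H$, and that each $C$-block is strongly connected in $H$. Together these identify the SCC partition with the block partition $\{C_1,\ldots,C_k\}$. That partition comes from the excessive decomposition and so depends only on $T_0$, not on $M_C$. The first step is to localize $M_C$. Any maximum matching $M_0$ of $T_0$ restricts on the core to a perfect matching of the core. Conversely, a perfect matching $M_C$ of the core together with maximum matchings of $D$ and $E$ is a maximum matching of $T_0$, by Lemma~\ref{lem:matching-union} applied at each split of the MPIS-driven procedure. By the forbidden-dot argument (Corollary~\ref{cor:forbidden-dot}), applied recursively to the liminal core along the splits that extract the $C_i$, every element of $M_C$ lies inside some $C_i$. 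Hence $M_C$ is the union of perfect matchings $M_i$ of the blocks $C_i$, and the vertices of $H$ correspond block by block to the $C_i$.

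Next I would prove strong connectivity inside each block. The subgraph of $H$ induced on the vertices of $C_i$ is exactly the embedded graph of the brick $C_i$ with respect to $M_i$. Since $C_i$ is balanced and excessive, it is atomic by Lemma~\ref{lem:balanced-excessive-atomic}. The forward direction of Proposition~\ref{prop103} then says this embedded graph is strongly connected. So each $C_i$ lies inside a single SCC of $H$.

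It remains to show that distinct blocks lie in distinct SCCs. Every dot of the core that lies outside the diagonal blocks is a green dot. In the BTF order it sits in the region where it induces an edge of $H$ from a vertex of $C_i$ to a vertex of $C_j$, always in one fixed direction determined by the ordering, say $i<j$ (I would check the row/column convention of the embedding against Definition~\ref{def:BTF} here). Since the $M_C$-elements are confined to the blocks, an edge of $H$ between vertices of different blocks arises only from such a green dot. So every inter-block edge of $H$ goes forward in the topological order, and no directed cycle of $H$ can leave a block and return to it. Therefore the SCCs are exactly the $C$-blocks, independently of $M_C$.

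The main obstacle is the first step: showing that \emph{every} perfect matching of the core, not just the one coming from the construction, stays inside the $C$-blocks. The inductive invariant needs care. At each split of a liminal brick, Corollary~\ref{cor:forbidden-dot} must apply within the current sub-brick (which is liminal and square, or non-deficient). I would argue by induction on the recursion tree, using that a maximum matching of the parent is the union of maximum matchings of its two auxiliary bricks, exactly as in Lemma~\ref{lem:matching-union}.
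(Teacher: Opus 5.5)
Your proposal is correct and follows the paper's proof step for step. You confine $M_C$ to the $C$-blocks through the block-wise matching factorization (Lemma~\ref{lem:matching-factorization}, built on Lemma~\ref{lem:matching-union}), get strong connectivity of each block's embedded subgraph from Proposition~\ref{prop103} because each $C_l$ is atomic, and separate distinct blocks because every inter-block edge of $H$ runs forward in the BTF order. The care you put into showing that \emph{every} perfect matching of the core avoids the inter-block dots is exactly what the paper compresses into its single citation of Lemma~\ref{lem:matching-factorization}.
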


\begin{proof}
	Edges between distinct $C$-blocks are forward edges of the BTF order and therefore lie in no perfect matching of the core; hence every maximum matching $M_C$ is the disjoint union of perfect matchings of the individual $C$-blocks (see Lemma~\ref{lem:matching-factorization}). Restricted to a $C$-block $C_l$, $M_C$ is thus a perfect matching of $C_l$, and since $C_l$ is atomic, the embedded subgraph on $C_l$ is strongly connected (Proposition~\ref{prop103}); so $C_l$ lies in a single SCC. As distinct $C$-blocks are joined only by forward edges, no directed cycle crosses a block boundary, so distinct blocks lie in distinct SCCs. Hence the SCCs are exactly the $C$-blocks. A different $M_C$ changes the matching within each block --- and thus the embedded graph $H$ (Remark~\ref{rem:matching-choice}) --- but produces the same partition into SCCs.
\end{proof}

\begin{theorem}\label{thm:canonical-unique}
	The excessive decomposition is canonical: it is uniquely determined by the deficiency structure of $G_0$, up to permutation of the $C$-blocks.
\end{theorem}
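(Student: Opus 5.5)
To prove Theorem~\ref{thm:canonical-unique}, I will show that each block of the excessive decomposition is determined by quantities intrinsic to $G_0$, independent of the oracle's choice of maximum hole at each recursive step. The natural invariant is the family of maximum holes of $T_0$ (equivalently the MPISs), which is a property of $G_0$ alone.

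\textbf{Step 1: the blocks $D$ and $E$.} Corollary~\ref{cor:unique-DE} together with Lemma~\ref{lem:hall-deficiency-occurrences} (Step~3) shows that $\mathrm{col}(D)$ is the unique minimal set $X\subseteq A$ with $|X|-|N(X)|=\delta$. Hence $\mathrm{col}(D)$ is the intersection of the column sets of all maximum holes, and $\mathrm{row}(D)=N(\mathrm{col}(D))$. Dually, $\mathrm{row}(E)$ is the intersection of the row sets of all maximum holes, and $\mathrm{col}(E)=N(\mathrm{row}(E))$. Both descriptions use only $\delta$ and the hole structure of $T_0$, so $D$ and $E$ do not depend on the run of Algorithm~\ref{mpis:btf-decomp}. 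Theorem~\ref{thm:ExD-intersection} already packages exactly this fact when both blocks are present. The degenerate cases, where $\delta=0$ or $(b-a)+\delta=0$ and the corresponding block is absent, are handled by the presence rules $\mathrm{imb}(D)=\delta$ and $\mathrm{imb}(E)=(b-a)+\delta$, which depend on $G_0$ alone.

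\textbf{Step 2: the core.} The remaining vertices induce $\mathrm{core}(T_0)$, so the core is canonical. To see that its partition into $C$-blocks is canonical, I will use Corollary~\ref{cor:scc-invariant}: for any perfect matching of the core, the strongly connected components of the embedded graph are exactly the $C$-blocks, and this partition does not depend on the matching. To close a possible gap, namely that a different run of the algorithm might produce a different set of squares, I will argue that any run outputs a partition of the core into excessive squares $C'_1,\dots,C'_{k'}$ in BTF. For any perfect matching $M_C$ of the core, each $C'_j$ is a union of matching edges (Lemma~\ref{lem:matching-factorization}) and is strongly connected by Proposition~\ref{prop103}, because it is balanced and excessive. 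The forward-edge BTF property then forces the $C'_j$ to be exactly the SCCs of the embedded graph for that $M_C$. Running the same argument with the original blocks shows that both runs yield the SCC partition for a common $M_C$, so the two partitions coincide.

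\textbf{Step 3: ordering.} The only freedom left is the linear order of the blocks. Between $C$-blocks it is constrained exactly by the green-edge poset of Definition~\ref{def:cblock-poset}, so any linear extension is admissible. $D$ must come first and $E$ last, by the unique-strip characterization of Corollary~\ref{cor:unique-DE}. This yields uniqueness up to permuting $C$-blocks that are incomparable in the poset.

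The main obstacle is the gap-closing argument in Step~2. It requires that the blocks from an arbitrary run of Algorithm~\ref{mpis:btf-decomp} really are excessive squares in BTF whose inter-block edges avoid every perfect matching of the core, so that Corollary~\ref{cor:scc-invariant} applies to that run and not only to a fixed one. I would establish this from the invariant argument following Theorem~\ref{thm:structural_properties}, which holds for every choice of maximum hole. Combined with Corollary~\ref{cor:forbidden-dot}, it confines every maximum matching to the blocks.
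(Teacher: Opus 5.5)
Your proposal is correct and follows essentially the paper's proof. As in the paper, you pin down $D$ and $E$ via Lemma~\ref{lem:hall-deficiency-occurrences}; your intersection-of-column/row-sets formulation is just a sharper restatement of the paper's ``$\mathrm{col}(D)$ is the same for all maximum holes.'' You then take the core as the complement and identify the $C$-blocks with the SCCs of the embedded graph, made matching-independent by Corollary~\ref{cor:scc-invariant}, and your explicit run-independence argument in Step~2 only spells out what the paper's Step~5 leaves implicit.
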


\begin{proof}
	\textbf{Step 1.} The value $\delta = \max_{X \subseteq A}(|X| - |N(X)|)$ is uniquely determined by $G_0$.

	\textbf{Step 2.} By Lemma~\ref{lem:hall-deficiency-occurrences}, every hole of $\mathrm{vcount}$ $b+\delta$ has column set $X = \mathrm{col}(D) \cup \bigcup_{i \in J} \mathrm{col}(C_i)$ for some ideal $J$; in particular $\mathrm{col}(D)$ is the same for all such holes. Hence the $D$-block is uniquely determined.

	\textbf{Step 3.} By the symmetric argument (applied to the $B$-side), the $E$-block is uniquely determined.

	\textbf{Step 4.} The core vertex set $V(C) = V(G_0) \setminus (V(D) \cup V(E))$ is therefore uniquely determined as the complement of the two uniquely determined blocks.

	\textbf{Step 5.} The $C$-blocks are the strongly connected components of the embedded directed graph $H$ induced by any maximum matching $M_C$ of $G_0$ restricted to the core (Definition~\ref{def:embedded-graph}, Figure~\ref{GM_PDF09}): the matching elements of $M_C$ serve as the vertices of $H$, and each non-matching dot in the core corresponds to a directed edge of $H$. By Proposition~\ref{prop103}, a sub-brick of the core is atomic if and only if its embedded graph is strongly connected, so the SCCs of $H$ are exactly the $C$-blocks. By Corollary~\ref{cor:scc-invariant}, this partition of $V(C)$ does not depend on the choice of $M_C$; hence the collection of $C$-blocks is uniquely determined by $G_0$.
\end{proof}

In the grid model, $D$, $C$ (the core), and $E$ correspond to horizontal, square, and vertical bricks, respectively. The blocks $D$ and $E$ are unbalanced excessive bricks, each of which may further decompose into atomic sub-blocks; the $C$-blocks are balanced excessive squares, hence atomic.

\begin{corollary}[DM-irreducible--Atomic equivalence]\label{cor:dm-irred}
	A connected bipartite graph $G_0$ is DM-irreducible if and only if it is atomic, i.e., if and only if $T_0$ is excessive.
\end{corollary}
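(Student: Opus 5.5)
The equivalence "atomic $\iff$ $T_0$ excessive" holds for connected $G_0$ by definition: atomic means connected and $m$-excessive for some $m>0$ (Definition~\ref{def:m-atomic}). So the real content is the equivalence between DM-irreducibility and excessiveness of $T_0$. By Definition~\ref{def:dm-irred-prelim}, DM-irreducibility means that the excessive decomposition has a single block. I will prove both directions through the MPIS-driven procedure (Algorithm~\ref{mpis:btf-decomp}) together with Theorem~\ref{thm:canonical-unique}, which guarantees that "the" excessive decomposition is well defined, so the block count does not depend on the run of the algorithm.

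\emph{Excessive $\Rightarrow$ DM-irreducible.} Suppose $T_0$ is excessive. Then every hole has $\mathrm{vcount}<\mathrm{long}(T_0)$, so the stopping test of \textsc{BTF\_decomp} succeeds at the very first call. The list $\mathcal{L}$ is therefore just $(T_0)$. This is exactly Lemma~\ref{lem:excessive-irreducible}, which also records the shape: a single $C$-block if $T_0$ is balanced, a single $E$-block if it is unbalanced. This matches the two alternatives in Definition~\ref{def:dm-irred-prelim}.

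\emph{DM-irreducible $\Rightarrow$ excessive.} I argue by contraposition. Assume $T_0$ is not excessive, so $m\le 0$ and a maximum hole $V$ has $\mathrm{vcount}(V)=b+\delta\ge b=\mathrm{long}(T_0)$. The procedure then splits $T_0$ along $V$ into the vertical auxiliary brick $T_1$ and the horizontal auxiliary brick $T_2$. Both are nonempty, because $V$ is a proper brick: $\mathrm{row}(V)\subsetneq B$ and $\mathrm{col}(V)\subsetneq A$. Every subsequent leaf lies inside $T_1$ or inside $T_2$, since the recursion only refines bricks. Hence $\mathcal{L}$ contains at least one block inside $T_1$ and at least one inside $T_2$, which gives at least two blocks. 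So $T_0$ is not DM-irreducible.

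The step I expect to need the most care is that each of $T_1$ and $T_2$ really contributes a leaf; in particular, no branch may disappear or collapse onto the whole of $T_0$. Nonemptiness of $T_1$ and $T_2$ settles this. In the recursion the vertex count strictly decreases, and the termination proposition shows every call on a nonempty brick eventually appends a nonempty brick. A degenerate auxiliary brick with an empty side cannot arise, because holes are proper and therefore have both sides nonempty. Finally, connectedness of $G_0$ is only used to ensure that "excessive" upgrades to "atomic". For that I will cite Proposition~\ref{prop:atomic-iff-connected}, or alternatively Lemma~\ref{lem:balanced-excessive-atomic} in the balanced case.
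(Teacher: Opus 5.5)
Your proof is correct and follows essentially the paper's route. The direction excessive $\Rightarrow$ DM-irreducible is the first-call stopping argument of Lemma~\ref{lem:excessive-irreducible}, which the paper reaches via the classification theorem. Your contrapositive for the other direction, using that $T_1$ and $T_2$ are nonempty, just spells out the paper's observation that a one-block decomposition must consist of the excessive leaf $T_0$ itself. The appeal to Theorem~\ref{thm:canonical-unique} is harmless but unnecessary, since every run of the procedure makes the same decision at the first call.
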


\begin{proof}
	If $G_0$ is DM-irreducible, then the excessive decomposition consists of a single block, which must be excessive (all blocks in the decomposition are excessive by Theorem~\ref{thm:structural_properties}). Since $G_0$ is connected and excessive, it is atomic by Proposition~\ref{prop:atomic-iff-connected}.

	Conversely, if $G_0$ is atomic, it is connected and $m$-excessive for some $m > 0$. By the classification theorem (Theorem~\ref{thm:main_classification}, Table~\ref{tab:block_composition}), an excessive connected brick falls into exactly one of two DM-irreducible classes: balanced with $k=1$ (a single $C$-block) or unbalanced with $k=0$ (a single $E$-block). In both cases the excessive decomposition consists of $G_0$ itself, so $G_0$ is DM-irreducible.
\end{proof}

\section{Atomic Decomposition}\label{sec:atomic}

The excessive decomposition (Section~\ref{sec:excessive-decomp}) yields blocks $D, C_1,\ldots,C_k, E$. The square blocks $C_1,\ldots,C_k$ are already atomic by Lemma~\ref{lem:balanced-excessive-atomic} and Proposition~\ref{prop:atomic-iff-connected}. The blocks $D$ and $E$ are excessive but not necessarily connected; their atomic refinement consists of the connected components of $G_0[D]$ and $G_0[E]$, which are unbalanced and excessive by Lemma~\ref{lem:excessive-components}, hence atomic. No edges exist between distinct components within the same region (grey cells); edges between components from different regions (green cells) are forbidden with respect to every maximum matching.

\begin{lemma}[Block-wise matching factorization]\label{lem:matching-factorization}
	Let $D_1,\ldots,D_p$, $C_1,\ldots,C_k$, $E_1,\ldots,E_q$ be the atomic blocks of $G_0$. Then
	\[
		\nu(G_0) \;=\; \sum_{\text{blocks } T} \mathrm{small}(T),
	\]
	and every maximum matching $M$ of $G_0$ uses no edge joining two distinct blocks, restricts to a maximum matching of each block, and, conversely, the disjoint union of maximum matchings of the individual blocks is a maximum matching of $G_0$.
\end{lemma}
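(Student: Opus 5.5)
We would prove Lemma~\ref{lem:matching-factorization} in two layers: first for the excessive decomposition $D, C_1,\ldots,C_k, E$, then for the refinement of $D$ and $E$ into components. The key numerical fact is the size of a maximum matching. By Lemma~\ref{lem:matching-union}, applied to a maximum hole of a non-excessive $T_0$, every maximum matching is the union of maximum matchings of the two auxiliary bricks $T_1$ and $T_2$, and it uses no dot of the remote mate $U$. (If $T_0$ is excessive, it is a single block and Corollary~\ref{cor:matching-from-hole} gives $\nu=\mathrm{small}(T_0)$ directly.)

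\textbf{Step 1: recursion.} We induct along the recursion tree of \textsc{BTF\_decomp}. At each internal node $T$ the chosen hole $V$ is a maximum hole of $T$, and $T$ is non-excessive (possibly liminal). By Lemma~\ref{auxiliary_brick}, the auxiliary bricks are non-deficient, so Lemma~\ref{lem:matching-union} applies to $T$ itself, regarded as an ambient brick. It shows that the maximum matchings of $T$ are exactly the unions $M_1\cup M_2$ of maximum matchings of its two children, and that no dot of the remote mate of $V$ in $T$ is used.

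Iterating down to the leaves, the maximum matchings of $T_0$ are exactly the disjoint unions of maximum matchings of the leaf blocks $D, C_i, E$. The sizes add, and each leaf is excessive, so Corollary~\ref{cor:matching-from-hole} gives its maximum matching size as $\mathrm{small}(\cdot)$. Any edge joining two distinct leaves lies in the remote mate of the hole at their lowest common ancestor in the recursion tree, so no maximum matching uses it.

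\textbf{Step 2: refining $D$ and $E$.} Take $E$, which is vertical; $D$ is symmetric. By Lemma~\ref{lem:excessive-components}, its components $E_1,\ldots,E_q$ are vertical and excessive. No edges join distinct components. Hence a matching of $E$ is exactly a disjoint union of matchings of the $E_j$, and
\[
\nu(E)=\sum_j \nu(E_j)=\sum_j |\mathrm{col}(E_j)|=|\mathrm{col}(E)|=\mathrm{small}(E).
\]
Therefore a matching of $E$ is maximum iff its restriction to every $E_j$ is maximum. The same holds for $D$ using rows, since its components are horizontal.

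\textbf{Step 3: combining.} Steps 1 and 2 together give
\[
\nu(G_0)=\sum_{\text{blocks } T}\mathrm{small}(T).
\]
They also give both directions of the characterization. A maximum matching uses no inter-block edge, and inside a region it restricts to maximum matchings of the components. Conversely, a union of block-maximum matchings is a matching, because the blocks have disjoint rows and columns, and it has the maximum size.

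\textbf{Main obstacle.} The delicate point is legitimately applying Lemma~\ref{lem:matching-union} at internal nodes of the recursion, where $T$ is only a sub-brick. That lemma assumes the hole is maximum in its ambient brick and that $T$ is non-excessive. Both hold because the algorithm calls $\mathcal A$ on $T$ and splits only when $\mathrm{vcount}\ge\mathrm{long}(T)$.

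A secondary subtlety is orientation. The proof of Lemma~\ref{lem:matching-union} assumes the setup $a\le b$, but a child such as the horizontal brick $T_1$ may violate it. For such children we would invoke the lemma on the transposed brick, where the roles of rows and columns are swapped.
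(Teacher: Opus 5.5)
Your proof is correct and is essentially the paper's argument: you iterate Lemma~\ref{lem:matching-union} down the recursion tree of \textsc{BTF\_decomp} to the excessive leaves $D, C_1,\ldots,C_k,E$, then split $D$ and $E$ into their components, whose $\mathrm{small}$ values add up because the components are horizontal resp.\ vertical. Your lowest-common-ancestor argument for inter-block edges, and your transposition remark for horizontal nodes, only make explicit what the paper states tersely as ``the joining dots are remote-mate dots of the holes used along the way.''
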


\begin{proof}
	The BTF recursion (Algorithm~\ref{mpis:btf-decomp}) splits a non-excessive brick at a maximum hole $V$ into its vertical and horizontal auxiliary bricks $T_1, T_2$. By Lemma~\ref{lem:matching-union}, a matching of the brick is maximum if and only if it is the union of a maximum matching of $T_1$ and a maximum matching of $T_2$; in particular $\nu = \nu(T_1) + \nu(T_2)$, and every dot in the remote mate of $V$ is forbidden (Corollary~\ref{cor:forbidden-dot}). Iterating down to the leaves of the recursion --- the excessive blocks $D, C_1,\ldots,C_k, E$ --- a maximum matching of $G_0$ is the union of maximum matchings of these blocks, no two of which are joined by a matching edge (the joining dots are remote-mate dots of the holes used along the way, hence forbidden), and $\nu(G_0) = \mathrm{small}(D) + \sum_{l} \mathrm{small}(C_l) + \mathrm{small}(E)$.

	The blocks $D$ and $E$ refine into their connected components $D_1,\ldots,D_p$ and $E_1,\ldots,E_q$, which are vertex-disjoint with no edges between them. As the rows of $D$ (resp.\ the columns of $E$) partition over its components, $\mathrm{small}(D) = \sum_i \mathrm{small}(D_i)$ and $\mathrm{small}(E) = \sum_j \mathrm{small}(E_j)$, so a maximum matching of $D$ (resp.\ $E$) is the disjoint union of maximum matchings of its components. Combining the two layers gives $\nu(G_0) = \sum_T \mathrm{small}(T)$ over all atomic blocks $T$ and the stated factorization; equality of sizes forces the restriction of any maximum matching to each block to be maximum there.
\end{proof}

\begin{theorem}[Atomic Decomposition]\label{thm:atomic-decomp}
	Starting from the excessive decomposition of $T_0$, the decomposition obtained by splitting $D$ and $E$ into their connected components in $G_0[D]$ and $G_0[E]$ respectively, and retaining the square blocks $C_1,\ldots,C_k$, yields the atomic decomposition of $G_0$. This decomposition has the following properties:
	\begin{enumerate}
		\item Every block is atomic: each $D_i$ and $E_j$ is unbalanced and $m$-atomic for some $m > 0$, and each $C_l$ is balanced and $m$-atomic for some $m > 0$. The value of $m$ may differ between blocks.
		\item No edges exist between distinct atomic blocks within the same region (grey cells in the grid model).
		\item Edges between atomic blocks from different regions (green cells) may exist but are forbidden with respect to every maximum matching of $G_0$.
		\item Every maximum matching of $G_0$ is the union of maximum matchings of the individual atomic blocks.
		\item The decomposition is canonical: it is independent of the choice of maximum matching and of the choice of maximum hole in the excessive decomposition.
	\end{enumerate}
\end{theorem}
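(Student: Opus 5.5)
The proof assembles results already established, taking the five properties in order.

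For property (1), the square blocks $C_1,\ldots,C_k$ are excessive leaves of \textsc{BTF\_decomp} (Theorem~\ref{thm:structural_properties}). Being balanced, they are connected and hence atomic by Lemma~\ref{lem:balanced-excessive-atomic}. The blocks $D$ and $E$ are excessive and unbalanced. If such a block is connected, it is itself atomic, and Proposition~\ref{excessive_expanding} shows it is unbalanced of the right orientation. If it is disconnected, Lemma~\ref{lem:excessive-components} applied to $D$ (resp.\ $E$), regarded as a brick in its own right, shows every component is unbalanced, excessive, of the same orientation, hence atomic. Proposition~\ref{prop:excess-components} then records that the values $m_i$ may differ while $\min_i\min\{m_i,\mathrm{imb}\}$ is the excess of the parent.

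Property (2) is immediate from the definition of connected components of $G_0[D]$ and $G_0[E]$. Two distinct $C$-blocks are different regions in the sense of (3), so nothing is needed for them.

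For property (3), every edge joining blocks of different regions is a dot in the remote mate of some maximum hole used during the recursion. This follows from the BTF invariant argument: when a brick $T$ is split at its MH $V$, all edges between the two auxiliary bricks lie in the remote mate of $V$ within $T$. Corollary~\ref{cor:forbidden-dot} makes such a dot forbidden in $T$. Lemma~\ref{lem:matching-union} propagates this upward, since maximum matchings of $G_0$ restrict to maximum matchings of each intermediate brick. Hence the dot is forbidden in $G_0$.

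Property (4) is precisely Lemma~\ref{lem:matching-factorization}.

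Property (5), canonicity, is where care is needed, and I expect it to be the main obstacle. By Theorem~\ref{thm:canonical-unique}, $D$, $E$ and the set of $C$-blocks are determined by $G_0$ alone. Concretely, $\mathrm{col}(D)$ is common to all maximum holes by Lemma~\ref{lem:hall-deficiency-occurrences}, and the $C$-blocks are the SCCs, independent of $M_C$ by Corollary~\ref{cor:scc-invariant}. The subtle point is that the recursion may choose different maximum holes at intermediate steps, so I would argue directly that any run produces leaves equal to this canonical family. The $D$-leaf is the unique horizontal block, and $\mathrm{col}(D)$ is forced by Corollary~\ref{cor:unique-DE}; symmetrically for $E$. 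The square leaves are atomic blocks partitioning the core, and any such partition must be the SCC partition by Proposition~\ref{prop103}. Once $D$ and $E$ are fixed as vertex sets, their connected components depend only on the induced subgraphs, so the refinement is canonical. It is also independent of any maximum matching, because the components are defined without reference to one.
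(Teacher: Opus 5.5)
Your proposal is correct and follows the paper's proof property by property, with the same key citations (Lemmas~\ref{lem:balanced-excessive-atomic} and~\ref{lem:excessive-components}, Corollary~\ref{cor:forbidden-dot}, Lemma~\ref{lem:matching-factorization}, Theorem~\ref{thm:canonical-unique}). In (3), your lifting of forbiddenness from intermediate bricks to $G_0$ via Lemma~\ref{lem:matching-union} makes explicit what the paper compresses into one line and uses in the proof of Lemma~\ref{lem:matching-factorization}; the one slip is in (1), where unbalance of a connected $D$ or $E$ comes from $\mathrm{imb}(D)=\delta>0$ and $\mathrm{imb}(E)=(b-a)+\delta>0$ in Theorem~\ref{thm:structural_properties}, not from Proposition~\ref{excessive_expanding}, which assumes unbalance.
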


\begin{proof}
	We verify the five properties in turn.

		\emph{(1) Atomicity.} The square blocks $C_l$ are balanced and excessive, hence connected and atomic (Lemma~\ref{lem:balanced-excessive-atomic}, Proposition~\ref{prop:atomic-iff-connected}). The blocks $D$ and $E$ are excessive; their connected components $D_i, E_j$ are unbalanced and excessive, hence atomic (Lemma~\ref{lem:excessive-components}).

		\emph{(2) No intra-region edges.} Distinct $D$-blocks (resp.\ $E$-blocks) are distinct connected components of $G_0[D]$ (resp.\ $G_0[E]$); by definition no edge joins them.

		\emph{(3) Inter-region edges are forbidden.} Each such (green) edge lies in the remote mate of some maximum hole of the excessive decomposition, hence belongs to no maximum matching (Corollary~\ref{cor:forbidden-dot}).

		\emph{(4) Matching factorization.} By Lemma~\ref{lem:matching-factorization}, every maximum matching of $G_0$ is the disjoint union of maximum matchings of the atomic blocks, and $\nu(G_0) = \sum_T \mathrm{small}(T)$.

		\emph{(5) Canonicity.} The vertex sets $V(D)$, $V(C_1),\ldots,V(C_k)$, $V(E)$ are uniquely determined, independently of the chosen maximum hole (Theorem~\ref{thm:canonical-unique}); their refinement into the components $D_i, E_j$ depends only on $G_0$, hence is independent of the chosen maximum matching, the DM decomposition being matching-independent \cite[Thm~3.2.4]{LovaszPlummer1986}. Corollary~\ref{cor:atomic-unique} records uniqueness of the full block collection.
\end{proof}

\begin{corollary}[Uniqueness of the atomic decomposition]\label{cor:atomic-unique}
	The atomic decomposition of $G_0$ is unique. More precisely:
	\begin{enumerate}
		\item The vertex sets $V(D)$, $V(C_1),\ldots,V(C_k)$, $V(E)$ of the excessive decomposition are uniquely determined (Theorem~\ref{thm:canonical-unique}).
		\item The partition of $V(D)$ into $V(D_1),\ldots,V(D_p)$ and the partition of $V(E)$ into $V(E_1),\ldots,V(E_q)$ are uniquely determined, as they are the connected components of the induced subgraphs $G_0[D]$ and $G_0[E]$ respectively.
		\item The collection $\{C_1,\ldots,C_k\}$ of $C$-blocks is uniquely determined as a set; their order in the BTF is unique up to permutation of topologically incomparable blocks.
	\end{enumerate}
\end{corollary}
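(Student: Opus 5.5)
The corollary assembles results already proved, so the plan is to verify its three items in order, each by reduction to an earlier statement.

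For item~(1) we invoke Theorem~\ref{thm:canonical-unique} directly. Step~2 of its proof shows $\mathrm{col}(D)$ is common to all maximum holes, and Lemma~\ref{lem:hall-deficiency-occurrences} shows every hole of $\mathrm{vcount}$ $b+\delta$ has column set of the form $\mathrm{col}(D)\cup\bigcup_{i\in J}\mathrm{col}(C_i)$. We then recover $\mathrm{row}(D)=N(\mathrm{col}(D))$. Dually, $E$ is fixed by $\mathrm{row}(E)$, which Corollary~\ref{cor:unique-DE} identifies as the unique row set whose horizontal strip carries a maximum hole, and $\mathrm{col}(E)=N(\mathrm{row}(E))$. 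The core is their complement. Its partition into $C$-blocks is fixed by Corollary~\ref{cor:scc-invariant}, because the SCCs of the embedded graph do not depend on the chosen perfect matching of the core.

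For item~(2), once $V(D)$ and $V(E)$ are fixed as vertex sets, the induced subgraphs $G_0[D]$ and $G_0[E]$ are determined by $G_0$. The connected-component partition of a fixed graph is unique, so nothing beyond item~(1) is needed. Lemma~\ref{lem:excessive-components} confirms that these components are exactly the atomic blocks.

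For item~(3), uniqueness of the $C$-blocks as a set is again Corollary~\ref{cor:scc-invariant}. For the order, we argue as follows. Any BTF arrangement of the $C$-blocks must place $C_i$ before $C_j$ whenever a green edge runs from $\mathrm{col}(C_i)$ to $\mathrm{row}(C_j)$, since a backward edge would violate Definition~\ref{def:BTF}. By transitivity, any valid order is therefore a linear extension of the $C$-block poset of Definition~\ref{def:cblock-poset}. Conversely, every linear extension has no backward green edges and so yields a valid BTF, with $D$ first and $E$ last. The valid orders are thus exactly the linear extensions, and two blocks can be interchanged precisely when they are incomparable, matching the geometric remark after Theorem~\ref{thm:ExD-intersection}.

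The main obstacle is item~(1) for the $E$-block, because the earlier Step~3 of Theorem~\ref{thm:canonical-unique} only says ``symmetric argument.'' To handle it, I would make the $B$-side analogue of Lemma~\ref{lem:hall-deficiency-occurrences} explicit. Every maximum hole has row set $B\setminus N(X_J)$. For every ideal $J$, this row set contains $\mathrm{row}(E)$ together with the rows of the $C$-blocks outside $J$; this holds because $N(X_J)$ avoids $\mathrm{row}(E)$, using that $E$ is last in the BTF so no edges enter $\mathrm{row}(E)$ from $\mathrm{col}(D)$ or earlier columns. Taking the full ideal $J=\{1,\dots,k\}$, the row set is exactly $\mathrm{row}(E)$. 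Hence $\mathrm{row}(E)$, and with it $E$, is the intersection of all maximum-hole row sets, and so it is choice-independent.
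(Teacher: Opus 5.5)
Your proposal is correct in substance and follows the paper's route. The paper gives no separate proof: each item is justified by the reference built into it (Theorem~\ref{thm:canonical-unique} for (1), uniqueness of connected components for (2), Corollary~\ref{cor:scc-invariant} and the remark on swapping $C$-blocks for (3)), and you invoke the same results. Two of your additions sharpen the argument:
\begin{itemize}
\item The description $\mathrm{row}(E)=\bigcap_V \mathrm{row}(V)$ over all maximum holes $V$, together with $\mathrm{col}(E)=N(\mathrm{row}(E))$, is intrinsic. It replaces the bare ``symmetric argument'' of Step~3 of Theorem~\ref{thm:canonical-unique}. It agrees with Theorem~\ref{thm:ExD-intersection}(1) when $D$ is present, but does not need $D$.
\item Reading (3) as ``the admissible orders are exactly the linear extensions of the $C$-block poset'' makes that item precise. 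The pairwise interchange criterion you quote is valid only for adjacent blocks.
\end{itemize}

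Two points need repair.

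\textbf{Inconsistent orientation.} Your two uses of the block order contradict each other.
\begin{itemize}
\item In your last paragraph you use that no column of an earlier block is adjacent to a row of a later block, so that $N(X_J)\cap\mathrm{row}(E)=\emptyset$. This is what the procedure actually produces. At each split, the columns of the earlier piece (the vertical auxiliary brick) and the rows of the later piece (the horizontal auxiliary brick) span the dot-free hole $V$. The only dots joining the two pieces lie in the remote mate, i.e.\ they join rows of the earlier piece to columns of the later one.
\item In item~(3), however, you say that an edge between $\mathrm{col}(C_i)$ and $\mathrm{row}(C_j)$ forces $C_i$ before $C_j$. With $C_i$ first, such an edge would lie inside a hole, so the correct constraint is $C_j$ before $C_i$. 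The flip is inherited from the wording of Definition~\ref{def:cblock-poset}.
\end{itemize}
Since incomparability is symmetric, the conclusion of (3) is unaffected. You should still fix one convention throughout, because your $E$-argument depends on the correct one.

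\textbf{The citation of Corollary~\ref{cor:unique-DE}.} This step cannot give choice-independence. That corollary singles out $E$ only among the blocks of one fixed decomposition. Your paraphrase of it (the unique row set whose horizontal strip carries a maximum hole) is false, since every row set containing $\mathrm{row}(E)$ has the maximum hole $\mathrm{row}(E)\times(A\setminus\mathrm{col}(E))$ in its horizontal strip. Let the intersection characterization do the work instead, and use it for $D$ as well. Knowing that $\mathrm{col}(D)$ lies in every maximum-hole column set gives only one inclusion. You also need a maximum hole whose column set is exactly $\mathrm{col}(D)$ (the empty ideal, Proposition~\ref{prop:ideal-hole}) to conclude $\mathrm{col}(D)=\bigcap_V\mathrm{col}(V)$.
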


\begin{proposition}[Atomic bricks are the minimal excessive units]\label{prop:atomic-minimal}
	A brick is atomic if and only if it is excessive and cannot be partitioned into two or more excessive sub-bricks. That is, the atomic bricks are exactly the minimal elements of the excessive decomposition: applying the excessive decomposition to an atomic brick returns the brick itself.
\end{proposition}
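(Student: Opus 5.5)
We read ``partitioned into excessive sub-bricks'' in the sense in which the statement itself glosses it: a partition of the kind produced by the excessive decomposition. That is, a BTF partition (Definition~\ref{def:BTF}) into excessive sub-bricks in which every dot joining two distinct blocks is forbidden; the same reading applies to splitting a disconnected brick into its components, where no joining dots exist at all. This qualification is necessary. Without it the statement fails already for $K_{2,2}$, which is atomic but is the vertex-disjoint union of two copies of $K_{1,1}$, each excessive by the convention for complete bipartite bricks. With this reading, the plan is to prove the two directions separately, using Lemma~\ref{lem:excessive-components} for one and Proposition~\ref{extendable} for the other.

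\emph{(Excessive and indecomposable $\Rightarrow$ atomic.)} Let $T$ be excessive. Suppose $T$ were not connected. By Lemma~\ref{lem:excessive-components}, its connected components are excessive (indeed atomic and unbalanced). No dots join distinct components, so the components trivially satisfy the forbidden-dot condition. Hence they form a partition of $T$ into at least two excessive sub-bricks, contradicting indecomposability. So $T$ is connected and excessive, which is the definition of atomic (Definition~\ref{def:m-atomic}).

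\emph{(Atomic $\Rightarrow$ indecomposable.)} Let $T$ be atomic, say $m$-atomic with $m>0$, and suppose it admits a BTF partition into excessive blocks $B_1,\ldots,B_r$ with $r\ge 2$. The steps, in order, are as follows.
\begin{enumerate}
\item Since $T$ is connected and $r\ge2$, at least one dot joins two distinct blocks; in the BTF it runs forward, from some $B_i$ to some $B_j$ with $i<j$.
\item By Proposition~\ref{extendable}, $T$ is $m$-extendable with $m\ge1$. Hence every dot of $T$ lies in some maximum matching of $T$, i.e.\ $T$ has no forbidden dot (cf.\ the remark following Proposition~\ref{prop:atomic-iff-connected}).
\item This contradicts the requirement that all inter-block dots are forbidden, so no such partition exists.
\end{enumerate}
As a consistency check against the canonical construction: Lemma~\ref{lem:excessive-irreducible} shows that \textsc{BTF\_decomp} stops at the first call on an excessive $T$. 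So ``applying the excessive decomposition returns the brick itself'' holds directly, and by Theorem~\ref{thm:canonical-unique} the canonical decomposition of $T$ is $\{T\}$.

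The main obstacle is the forward direction if the forbidden-dot condition is \emph{not} part of the notion of partition. Then one must exclude a BTF split of an atomic brick by hole counting alone. For a prefix $B_{\le t}$, the region $\mathrm{row}(B_{>t})\times\mathrm{col}(B_{\le t})$ is a hole of $\mathrm{vcount}$ $b-|\mathrm{row}(B_{\le t})|+|\mathrm{col}(B_{\le t})|$. Excessiveness of $T$ forces $|\mathrm{row}(B_{\le t})|>|\mathrm{col}(B_{\le t})|$ for every $t$, and this does not immediately contradict anything. I would first try to combine these inequalities with the matchings of size $\mathrm{small}(B_i)$ that each block admits by Corollary~\ref{cor:matching-from-hole}, aiming to show that an inter-block dot lies in the remote mate of a hole of $\mathrm{vcount}\ge b$. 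The $K_{2,2}$ example shows that this can fail in general, which is why the proof should rest on the forbidden-dot formulation matching the excessive decomposition.
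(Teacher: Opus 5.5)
Your reverse direction (excessive and indecomposable $\Rightarrow$ atomic) is the paper's own argument: a disconnected excessive brick splits into its components, which are excessive by Lemma~\ref{lem:excessive-components}.

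Your forward direction is different, and yours is the sound version. The paper asserts, citing Corollary~\ref{cor:forbidden-dot}, that in any BTF of excessive sub-bricks the inter-block edges are forbidden, and then appeals to connectivity of $T$. That corollary concerns remote mates of maximum holes in a \emph{non}-excessive brick, so it says nothing about an arbitrary BTF split of an excessive $T$. The contradiction the paper draws, that deleting those edges would disconnect $T$, is also not a contradiction. You instead build forbiddenness of the inter-block dots into the notion of partition. You then get the contradiction from the fact that an atomic brick has no forbidden dot, and that argument is correct.

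Your qualification is genuinely necessary: without it the forward direction fails even for BTF partitions.
\begin{itemize}
\item The path $y_2x_1y_1x_2y_3$ is connected and $1$-excessive, hence atomic. Yet $\{x_1,y_1,y_2\}$, $\{x_2,y_3\}$ is a BTF split into $K_{1,2}$ and $K_{1,1}$, both excessive under the complete-bipartite convention. The inter-block edge $x_2y_1$ lies in the maximum matching $\{x_1y_2,x_2y_1\}$.
\item A non-degenerate instance: join a $6$-cycle to the path $y_1x_1y_2x_2y_3$ by one edge from a column of the cycle to $y_1$. Both blocks are $1$-excessive, the union is connected and $1$-excessive, and the joining edge lies in a maximum matching.
\end{itemize}
So your reading costs a notion of partition tailored to the excessive decomposition, and what it buys is a statement that is actually true.

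Two corrections to your write-up.
\begin{itemize}
\item $K_{2,2}$ does not show what your last paragraph claims. Its only split into two sub-bricks, two copies of $K_{1,1}$, has cross edges in both directions, so no ordering of it is block triangular. It refutes only the reading with no BTF requirement; use an example like the ones above for the BTF case.
\item In step~2, $m$-extendability with $m\ge 2$ does not by itself put every single dot into a maximum matching. Rerun the proof of Proposition~\ref{extendable} with one dot in place of $M_m$. The remote mate is then $(b-1)\times(a-1)$, and its holes are holes of $T$ with $\mathrm{vcount}\le b-m\le b-1$, so it has a matching of size $a-1$. (If $a=1$, a single dot is already a maximum matching.) This gives $1$-extendability directly, and it also covers the case $m\ge a$, which the proof of Proposition~\ref{extendable} skips.
\end{itemize}
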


\begin{proof}
	($\Rightarrow$) Let $T$ be atomic, i.e., connected and $m$-excessive with $m>0$. Suppose $T$ were partitioned into excessive sub-bricks $W_1,\ldots,W_r$ with $r\ge2$ arranged in BTF. Since $T$ is connected, some edge joins two distinct parts; but in a BTF of excessive bricks the only inter-block edges are forbidden (Corollary~\ref{cor:forbidden-dot}), and removing them would disconnect $T$, contradicting connectivity. Hence $r=1$, and the excessive decomposition of $T$ is $T$ itself.

	($\Leftarrow$) Suppose $T$ is excessive and admits no partition into two or more excessive sub-bricks. If $T$ were disconnected, its connected components would be excessive by Lemma~\ref{lem:excessive-components} (for unbalanced $T$) or would each be balanced excessive, in either case giving a partition into $\ge2$ excessive sub-bricks, a contradiction. Hence $T$ is connected and excessive, i.e., atomic.
\end{proof}

\begin{remark}
	Proposition~\ref{prop:atomic-minimal} justifies the term \emph{atomic}: an atomic brick is indivisible with respect to the excessive decomposition, just as the excessive decomposition is the finest BTF-decomposition into excessive blocks. The atomic decomposition is thus the unique finest decomposition of $G_0$ into excessive blocks.
\end{remark}

\begin{remark}[No single equivalence relation underlies the atomic decomposition]
\label{rem:no-single-equiv}
	The atomic blocks are \emph{not} the classes of a single equivalence relation on $V(G_0)$. Rather, the decomposition is a two-step construction involving two distinct partition principles:
	\begin{itemize}
		\item For the \emph{$C$-blocks}: mutual reachability in the directed graph induced by a maximum matching on the core (strong connectivity). This is an equivalence relation on $V(C)$.
		\item For the \emph{$D$- and $E$-blocks}: graph connectivity of the induced subgraphs $G_0[D]$ and $G_0[E]$. This is an equivalence relation on $V(D)$ and $V(E)$ separately, but it is a different relation from the one used for the $C$-blocks.
	\end{itemize}
	There is no single equivalence relation on $V(G_0)$ whose classes are precisely the atomic blocks.
\end{remark}

\begin{remark}[Atomic decomposition and canonical structure]
	The atomic decomposition is not merely a technical refinement of the
	Dul\-mage--Men\-del\-sohn decomposition, but the outcome of a natural structural
	hierarchy. The excessive decomposition identifies the regions $D$, $C$, $E$
	and their block structure; the atomic decomposition further resolves $D$ and
	$E$ into their irreducible connected components. If all forbidden edges ---
	corresponding to the green region of the grid model --- are removed from $G_0$,
	the remaining graph decomposes exactly into the atomic blocks.

	\textbf{Relation to the DM decomposition.}
	The DM decomposition of $G_0$ yields three regions: $D$, $C$ (as a single
	undifferentiated block), and $E$. The atomic decomposition \emph{refines} this
	in the following precise sense: each atomic block $D_i$, $C_l$, $E_j$ is a
	vertex-induced subgraph of the corresponding DM region ($D$, $C$, $E$
	respectively), and each DM region is the disjoint union of its atomic blocks.
	The atomic decomposition \emph{coincides} with the DM decomposition if and only
	if $p = q = 1$ and $k = 1$, i.e., $G_0[D]$ and $G_0[E]$ are each connected and
	the core consists of a single SCC. In all other cases, the atomic decomposition
	is strictly finer.

	Within each atomic block $T$ the characteristic $m$ is fixed and no further
	decomposition is possible: by excessiveness there is no hole of $\mathrm{vcount} \ge
	\mathrm{long}(T)$, so the block cannot be split into excessive sub-blocks
	(Proposition~\ref{prop:atomic-minimal}). The grey and yellow regions are structurally
	absent for distinct reasons: yellow cells correspond to maximum holes enforced
	by the excessive decomposition, while grey cells correspond to the absence of
	edges between distinct atomic blocks within the same region, enforced by the
	connectivity structure of $G_0[D]$ and $G_0[E]$.

	The blocks $D$ and $E$ in the excessive decomposition are excessive but may be disconnected; their atomic sub-blocks may differ in their individual excess values $m_i > 0$. The excess of $D$ (resp.\ $E$) as a whole equals the minimum of the excesses of $D_1,\ldots,D_p$ (resp.\ $E_1,\ldots,E_q$), by Proposition~\ref{prop:excess-components}.

	The atomic blocks arise directly from the Hall-deficient and excessive structure; matching-theoretic concepts provide an a posteriori characterization (every maximum matching decomposes into maximum matchings of the atomic blocks). Thus, the atomic decomposition yields a canonical normal form of bipartite graphs with respect to the Hall condition, proper independent sets, and characteristic invariants. The structure of the atomic decomposition is illustrated in Figure~\ref{fig:gridmodel}.
\end{remark}

\section{Conclusion and Further Directions}
The present work establishes three results that refine or extend the prior literature; the precise relationship to prior work is detailed in Section~\ref{sec:related}. First, the connected components of $G_0[D]$ and $G_0[E]$ are proved to be excessive (Lemma~\ref{lem:excessive-components}), yielding a structural characterization of the atomic blocks and a unified geometric framework via the grid model; this is the main structural result of this paper. Second, a complete classification of bipartite graphs into exactly eleven structural classes is derived, with formal impossibility arguments for the seven excluded combinations. Third, the family of maximum proper independent sets is identified explicitly with the ideal lattice of the $C$-block poset, realized geometrically in the grid model.

Beyond the comparison in Section~\ref{sec:related}, one further point is worth noting: the minimum vertex covers visible in a fixed BTF are enumerated in \cite[p.~140]{LovaszPlummer1986}, whereas the complete family across all BTF-isotopes is here shown to form a distributive lattice, isomorphic to the ideal lattice of the $C$-block poset.

The aim of the paper is to establish this structural framework rather than specific applications. Nevertheless, the grid-model perspective suggests several directions for further research. The atomic decomposition of the $D$- and $E$-blocks may have implications for matrix-theoretic applications: in Murota's framework \cite{Murota2000}, the $D$- and $E$-blocks represent degrees of freedom and redundancy in structured systems of equations; the present work shows that each connected component is an irreducible unit of underdetermination or overdetermination, potentially yielding refined solvability criteria. The grid model also provides a natural setting for augmentation problems, such as determining the minimum number of edges needed to make a reducible bipartite graph DM-irreducible \cite{Berczi2018}. Further directions include a Dilworth-type analysis of the $C$-block poset --- in particular, the minimum number of chains into which the blocks decompose, equal by Dilworth's theorem \cite{Dilworth1950} to the size of the largest antichain of mutually incomparable blocks --- and visually traceable formulations of matching algorithms such as Hopcroft--Karp \cite{HopcroftKarp1973}.

\begin{remark}[Extension to multigraphs]
	The grid model represents simple graphs, since each cell holds at most one dot. The structural arguments use only the hole-based characteristic, the Hall condition, and the connectivity of components --- none of which depends on the simplicity assumption --- so the results carry over to bipartite multigraphs, the multiplicities affecting only which dots may be filled.
\end{remark}

	\bibliographystyle{plain}
	\bibliography{references}
	
\end{document}